\DeclareMathOperator{\Spf}{Spf}
\DeclareMathOperator{\Spec}{Spec}
\DeclareMathOperator{\Tr}{Tr}
\DeclareMathOperator{\Nm}{Nm}
\DeclareMathOperator{\Trd}{Trd}
\DeclareMathOperator{\End}{End}
\DeclareMathOperator{\Lie}{Lie}
\DeclareMathOperator{\Hom}{Hom}
\DeclareMathOperator{\disc}{disc}
\DeclareMathOperator{\charp}{char}
\DeclareMathOperator{\Gal}{Gal}
\DeclareMathOperator{\SU}{SU}
\DeclareMathOperator{\GU}{GU}
\DeclareMathOperator{\U}{U}
\DeclareMathOperator{\SL}{SL}
\DeclareMathOperator{\GL}{GL}
\DeclareMathOperator{\GSp}{GSp}
\DeclareMathOperator{\QIsog}{QIsog}
\DeclareMathOperator{\red}{red}
\DeclareMathOperator{\nat}{nat}
\DeclareMathOperator{\Gr}{Gr}
\DeclareMathOperator{\Nilp}{Nilp_{\breve{O}_F}}
\DeclareMathOperator{\weq}{\widehat{=}}
\DeclareMathOperator{\Defo}{Def}
\newcommand{\NEnaive}{\mathcal{N}_E^{\mathrm{naive}}}
\newcommand{\NEL}{\mathcal{N}_{E, \Lambda}}
\newcommand{\NELpr}{\mathcal{N}_{E, \Lambda'}}
\newcommand{\NE}{\mathcal{N}_E}
\newcommand{\MDr}{\mathcal{M}_{Dr}}
\newcommand{\NEnaiverig}{\mathcal{N}_E^{\mathrm{naive,rig}}}
\newcommand{\NErig}{\mathcal{N}_E^{\mathrm{rig}}}
\newcommand{\ME}{\mathcal{M}_E}
\newcommand{\MEpol}{\mathcal{M}_{E,\mathrm{pol}}}
\newcommand{\MEloc}{\mathrm{M}_E^{\mathrm{loc}}}
\newcommand{\MEpolloc}{\mathrm{M}_{E,\mathrm{pol}}^{\mathrm{loc}}}
\newcommand{\MElarge}{\mathcal{M}_E^{\mathrm{large}}}
\newcommand{\MEpollarge}{\mathcal{M}_{E,\mathrm{pol}}^{\mathrm{large}}}
\newcommand{\MElochat}{\widehat{\mathrm{M}}_E^{\mathrm{loc}}}
\newcommand{\MEpollochat}{\widehat{\mathrm{M}}_{E,\mathrm{pol}}^{\mathrm{loc}}}
\theoremstyle{plain}
\newtheorem{thm}{Theorem}[section]
\newtheorem{lem}[thm]{Lemma}
\newtheorem{prop}[thm]{Proposition}
\theoremstyle{definition}
\newtheorem{rmk}[thm]{Remark}
\newtheorem{defn}[thm]{Definition}
\newtheorem{qu}[thm]{Question}
\newcommand{\overbar}[1]{\mkern 1.5mu\overline{\mkern-1.5mu#1\mkern-1.5mu}\mkern 1.5mu}
\renewenvironment{enumerate}				% replaces enumerate environment, less identation and spacing
	{\begin{list}
		{\textup{(\theenumi)} }
		{\usecounter{enumi}
			\setlength{\labelwidth}{0pt}
			\setlength{\labelsep}{0pt}
			\setlength{\leftmargin}{0pt}
			\setlength{\itemsep}{\the\smallskipamount}
		}
	}
	{\end{list}}
\renewenvironment{itemize}					% replaces itemize environment, less identation and spacing
	{\begin{list}
		{$\bullet$}
		{\setlength{\labelwidth}{0pt}
			\setlength{\itemindent}{5pt}
			\setlength{\labelsep}{5pt}
			\setlength{\leftmargin}{0pt}
			\setlength{\itemsep}{\the\smallskipamount}
		}
	}
	{\end{list}}
\newcommand{\isoarrow}						% isomorphism arrow, short inline and long in display mode
	{\ifbool{@display}
		{\overset{\smash{\raisebox{-0.65ex}{$\scriptstyle\sim$}}}{\longrightarrow}}
		%{\xrightarrow{\,\,\smash{\raisebox{-0.65ex}{$\sim$}}\,\,}}
		{\xrightarrow{\,\smash{\raisebox{-0.65ex}{$\scriptstyle\sim$}}\,}}
	}
\newcommand*{\longhookrightarrow}{\ensuremath{\lhook\joinrel\relbar\joinrel\rightarrow}}
\newcommand*{\inj}{%						% hook right arrow, short in inlines and long in displays
	\ifbool{@display}{\longhookrightarrow}{\hookrightarrow}%
	}
\renewcommand{\to}{%						% basic right arrow, short in inlines and long in displays
   \ifbool{@display}{\longrightarrow}{\rightarrow}%
   }
\let\shortmapsto\mapsto						% redefine \mapsto to be short in inlines and long in displays
\renewcommand{\mapsto}{%
   \ifbool{@display}{\longmapsto}{\shortmapsto}%
   }
\numberwithin{equation}{section} 			% numbering of equations separately for each section
\begin{document}
\hyphenation{Dieu-don-n\'e}
\nocite{Dri76}

\title[Construction of a RZ-spaces for $2$-adic ramified $\GU(1,1)$]{Construction of a Rapoport-Zink space for $\GU(1,1)$ in the ramified $2$-adic case}
\author{Daniel Kirch}
\date{\today}
\begin{abstract}
	Let $F|\mathbb{Q}_2$ be a finite extension.
	In this paper, we construct an RZ-space $\NE$ for split $\GU(1,1)$ over a ramified quadratic extension $E|F$.
	For this, we first introduce the naive moduli problem $\NEnaive$ and then define $\NE \subseteq \NEnaive$ as a canonical closed formal subscheme, using the so-called straightening condition.
	We establish an isomorphism between $\NE$ and the Drinfeld moduli problem, proving the $2$-adic analogue of a theorem of Kudla and Rapoport.
%	We also give the definition of a local model for $\NE$ as a flat projective scheme over $O_F$ which, locally for the \'etale topology, models the singularities of $\NE$.
	The formulation of the straightening condition uses the existence of certain polarizations on the points of the moduli space $\NEnaive$.
	We show the existence of these polarizations in a more general setting over any quadratic extension $E|F$, where $F|\mathbb{Q}_p$ is a finite extension for any prime $p$.
	
	[2010 Mathematics Subject Classification: 11G18,14G35]
\end{abstract}
%%%%%%%%%%%%%%%%%%%%%%%%%%%%%%%%%%%%%%%%%%%%%%%%%%%%%%%%%%%%%%%%%%%%%%%%%%%%%%%
\maketitle
\vfill
\tableofcontents
\vfill

\newpage

\section{Introduction}

Rapoport-Zink spaces (short RZ-spaces) are moduli spaces of $p$-divisible groups endowed with additional structure.
In \cite{RZ96}, Rapoport and Zink study two major classes of RZ-spaces, called (EL) type and (PEL) type.
The abbreviations (EL) and (PEL) indicate, in analogy to the case of Shimura varieties, whether the extra structure comes in form of \textbf{E}ndomorphisms and \textbf{L}evel structure or in form of \textbf{P}olarizations, \textbf{E}ndomorphisms and \textbf{L}evel structure.
\cite{RZ96} develops a theory of these spaces, including important theorems about the existence of local models and non-archimedean uniformization of Shimura varieties, for the (EL) type and for the (PEL) type whenever $p \neq 2$.

The blanket assumption $p \neq 2$ made by Rapoport and Zink in the (PEL) case is by no means of cosmetical nature, but originates to various serious difficulties that arise for $p=2$.
However, we recall that one can still use their definition in that case to obtain ``naive'' moduli spaces that still satisfy basic properties like being representable by a formal scheme.

In this paper, we construct the $2$-adic Rapoport-Zink space $\NE$ corresponding to the group of unitary similitudes of size $2$ relative to any (wildly) ramified quadratic extension $E|F$, where $F|\mathbb{Q}_2$ is a finite extension.
It is given as the closed formal subscheme of the corresponding naive RZ-space $\NEnaive$ described by the so-called ``straightening condition'', which is defined below.
The main result of this paper is a natural isomorphism $\eta: \MDr \isoarrow \NE$, where $\MDr$ is Deligne's formal model of the Drinfeld upper halfplane (\emph{cf.}\ \cite{BC91}).
This result is in analogy with \cite{KR14}, where Kudla and Rapoport construct a corresponding isomorphism for $p \neq 2$ and also for $p=2$ when $E|F$ is an unramified extension.
The formal scheme $\MDr$ solves a certain moduli problem of $p$-divisible groups and, in this way, it carries the structure of an RZ-space of (EL) type.
In particular, $\MDr$ is defined even for $p=2$. 

As in loc.\ cit., there are natural group actions by $\SL_2(F)$ and the split $\SU_2(F)$ on the spaces $\MDr$ and $\NE$, respectively.
The isomorphism $\eta$ is hence a geometric realization of the exceptional isomorphism of these groups.
As a consequence, one cannot expect a similar result in higher dimensions.
Of course, the existence of ``good'' RZ-spaces is still expected, but a general definition will probably need a different approach.

The study of residue characteristic $2$ is interesting and important for the following reasons:
First of all, from the general philosophy of RZ-spaces and, more generally, of local Shimura varieties \cite{RV14}, it follows that there should be uniform approach for all primes $p$.
In this sense, the present paper is in the same spirit as the recent constructions of RZ-spaces of Hodge type of W. Kim \cite{Kim}, Howard and Pappas \cite{HP} and Bültel and Pappas \cite{BP}.
Second, Rapoport-Zink spaces have been used to determine the arithmetic intersection numbers of special cycles on Shimura varieties \cite{KRY}; in this kind of problem, it is necessary to deal with all places, even those of residue characteristic $2$.
Finally, studying the cases of residue characteristic $2$ also throws light on the cases previously known.
In the specific case at hand, the methods we develop in the present paper also give a simplification of the proof for $p\neq 2$ of Kudla and Rapoport \cite{KR14}, see Remark \ref{POL_p>2} \eqref{POL_p>2eq}.
\smallskip

We will now explain the results of this paper in greater detail.
Let $F$ be a finite extension of $\mathbb{Q}_2$ and $E|F$ a ramified quadratic extension.
Following \cite{Jac62}, we consider the following dichotomy for this extension (see section \ref{LA}):
\begin{itemize}
	\item[(R-P)] There is a uniformizer $\pi_0 \in F$, such that $E = F[\Pi]$ with $\Pi^2 + \pi_0 = 0$.
	Then the rings of integers $O_F$ of $F$ and $O_E$ of $E$ satisfy $O_E = O_F[\Pi]$.
	\item[(R-U)] $E|F$ is given by an Eisenstein equation of the form $\Pi^2 - t\Pi + \pi_0 = 0$.
	Here, $\pi_0$ is again a uniformizer in $F$ and $t \in O_F$ satisfies $\pi_0|t|2$. We still have $O_E = O_F[\Pi]$.
	Note that in this case $E|F$ is generated by a square root of the unit $1-4\pi_0/t^2$ in $F$.
%	$E|F$ is generated by the square root $\vartheta$ of a unit in $F$.
%	We can choose $\vartheta$ such that $\vartheta^2 = 1 + \pi_0^{2k+1} \varepsilon$ for a unit $\varepsilon \in O_F^{\times}$ and for an integer $k$ with $|2| < |\pi_0^k| \leq |1|$, where $|\cdot|$ is the (normalized) absolute value on $F$.
\end{itemize}
An example for an extension of type (R-P) is $\mathbb{Q}_2(\sqrt{-2})| \mathbb{Q}_2$, whereas $\mathbb{Q}_2(\sqrt{-1})| \mathbb{Q}_2$ is of type (R-U).
Note that for $p>2$, any ramified quadratic extension over $\mathbb{Q}_p$ is of the form (R-P).

Our results in the cases (R-P) and (R-U) are similar, but different.
We first describe the results in the case (R-P).
Let $E|F$ be of type (R-P).
\smallskip

We first define a naive moduli problem $\NEnaive$, that merely copies the definition from $p \neq 2$ (\emph{cf.}\ \cite{KR14}).
Let $\breve{F}$ be the completion of the maximal unramified extension of $F$ and $\breve{O}_F$ its ring of integers.
Then $\NEnaive$ is a set-valued functor on $\Nilp$, the category of $\breve{O}_F$-schemes where $\pi_0$ is locally nilpotent.
For $S \in \Nilp$, the set $\NEnaive(S)$ is the set of equivalence classes of tuples $(X,\iota,\lambda,\varrho)$.
Here, $X/S$ is a formal $O_F$-module of height $4$ and dimension $2$, equipped with an action $\iota: O_E \to \End(X)$.
This action satisfies the Kottwitz condition of signature $(1,1)$, \emph{i.e.}, for any $\alpha \in O_E$, the characteristic polynomial of $\iota(\alpha)$ on $\Lie X$ is given by
\begin{equation*}
	\charp(\Lie X, T \mid \iota(\alpha)) = (T - \alpha)(T - \overbar{\alpha}).
\end{equation*}
Here, $\alpha \mapsto \overbar{\alpha}$ denotes the Galois conjugation of $E|F$.
The right hand side of this equation is a polynomial with coefficients in $\mathcal{O}_S$ via the structure map $O_F \inj \breve{O}_F \to \mathcal{O}_S$.
The third entry $\lambda$ is a principal polarization $\lambda: X \to X^{\vee}$ such that the induced Rosati involution satisfies $\iota(\alpha)^{\ast} = \iota(\overbar{\alpha})$ for all $\alpha \in O_E$. (Here, $X^{\vee}$ is the dual of $X$ as formal $O_F$-module.)
Finally, $\varrho$ is a quasi-isogeny of height $0$ (and compatible with all previous data) to a fixed framing object $(\mathbb{X}, \iota_{\mathbb{X}}, \lambda_{\mathbb{X}})$ over $\overbar{k} = \breve{O}_F / \pi_0$.
This framing object is unique up to isogeny under the condition that
\begin{equation*}
	\{ \varphi \in \End^0(\mathbb{X}, \iota_{\mathbb{X}}) \mid \varphi^{\ast}(\lambda_{\mathbb{X}}) = \lambda_{\mathbb{X}} \} \simeq \U(C,h),
\end{equation*}
for a split $E|F$-hermitian vector space $(C,h)$ of dimension $2$, see Lemma \ref{RP_frnaive}.

Recall that this is exactly the definition used in loc.\ cit.\ for the ramified case with $p > 2$.
There, $\NE = \NEnaive$ and we have natural isomorphism
\begin{equation*}
	\eta: \MDr \isoarrow \NE,
\end{equation*}
where $\MDr$ is the Drinfeld moduli problem mentioned above.

However, for $p=2$, it turns out that the definition of $\NEnaive$ is not the ``correct'' one in the sense that it is not isomorphic to the Drinfeld moduli problem.
Hence this naive definition of the moduli space is not in line with the results from \cite{KR14} and the general philosophy of (conjectural) local Shimura varieties (see \cite{RV14}).
In order to remedy this, we will describe a new condition on $\NEnaive$, which we call the \emph{straightening condition}, and show that this cuts out a closed formal subscheme $\NE \subseteq \NEnaive$ that is naturally isomorphic to $\MDr$.
Interestingly, the straightening condition is not trivial on the rigid-analytic generic fiber of $\NEnaive$ (as originally assumed by the author), but it cuts out an (admissible) open and closed subspace, see Remark \ref{RP_genfiber}.

We would like to explicate the defect of the naive moduli space.
For this, let us recall the definition of $\MDr$.
It is a functor on $\Nilp$, mapping a scheme $S$ to the set $\MDr(S)$ of equivalence classes  of tuples $(X,\iota_B,\varrho)$.
Again, $X/S$ is a formal $O_F$-module of height $4$ and dimension $2$.
Let $B$ be the quaternion division algebra over $F$ and $O_B$ its ring of integers.
Then $\iota_B$ is an action of $O_B$ on $X$, satisfying the \emph{special} condition of Drinfeld (see \cite{BC91} or section \ref{RP3} below).
The last entry $\varrho$ is an $O_B$-linear quasi-isogeny of height $0$ to a fixed framing object $(\mathbb{X}, \iota_{\mathbb{X},B})$ over $\overbar{k}$.
This framing object is unique up to isogeny (\emph{cf.}\ \cite[II.\ Prop.\ 5.2]{BC91}).

Fix an embedding $O_E \inj O_B$ and consider the involution $b \mapsto b^{\ast} = \Pi b' \Pi^{-1}$ on $B$, where $b \mapsto b'$ is the standard involution.
By Drinfeld (see Proposition \ref{RP_Dr} below), there exists a principal polarization $\lambda_{\mathbb{X}}$ on the framing object $(\mathbb{X}, \iota_{\mathbb{X},B})$ of $\MDr$, such that the induced Rosati involution satisfies $\iota_{\mathbb{X},B}(b)^{\ast} = \iota_{\mathbb{X},B}(b^{\ast})$ for all $b \in O_B$.
This polarization is unique up to a scalar in $O_F^{\times}$.
Furthermore, for any $(X,\iota_B,\varrho) \in \MDr(S)$, the pullback $\lambda = \varrho^{\ast}(\lambda_{\mathbb{X}})$ is a principal polarization on $X$.

We now set
\begin{equation*}
	\eta(X,\iota_B,\varrho) = (X,\iota_B|_{O_E},\lambda,\varrho).
\end{equation*}
By Lemma \ref{RP_clemb}, this defines a closed embedding $\eta: \MDr \inj \NEnaive$.
But $\eta$ is far from being an isomorphism, as the following proposition shows:

\begin{prop}
	The induced map $\eta(\overbar{k}): \MDr(\overbar{k}) \to \NEnaive(\overbar{k})$ is not surjective.
\end{prop}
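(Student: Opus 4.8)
The plan is to pass to (relative) Dieudonné theory, where both sides become explicit sets of lattices in a single isocrystal, and then to exhibit one lattice lying in $\NEnaive(\overbar{k})$ but not in the image of $\eta$. First I would fix the Drinfeld framing object $(\mathbb{X}, \iota_{\mathbb{X},B}, \lambda_{\mathbb{X}})$ and form its rational Dieudonné module $N = M(\mathbb{X}) \otimes \breve{F}$, a $4$-dimensional $\breve{F}$-isocrystal with $\sigma$-semilinear Frobenius $\mathcal{F}$ and Verschiebung $\mathcal{V} = \pi_0 \mathcal{F}^{-1}$, where $\mathcal{F}\mathcal{V} = \pi_0$. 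Since $\mathbb{X}$ is basic, $N$ is isoclinic of slope $1/2$, and it carries the $\breve{F}$-linear action of the quaternion algebra $B$ (hence of $E$) together with the alternating form induced by $\lambda_{\mathbb{X}}$. Taking the framing object of $\NEnaive$ to be the image under $\eta$ of the Drinfeld one, the quasi-isogeny $\varrho$ and Dieudonné theory identify $\NEnaive(\overbar{k})$ with the set of $\breve{O}_F$-lattices $M \subset N$ that are stable under $\iota(O_E)$, $\mathcal{F}$ and $\mathcal{V}$ and self-dual for the form (principality of $\lambda$); over the reduced point the dimension condition $\dim_{\overbar{k}} M/\mathcal{V}M = 2$ and the Kottwitz condition are automatic for such $M$, since $\iota(\Pi)^2 = \iota(-\pi_0) = 0$ in $\overbar{k}$ and self-duality interchanges $\mathcal{F}$ and $\mathcal{V}$.

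The crucial observation is that $O_B$-stability is already a necessary condition for membership in the image of $\eta$: if $M = M(X)$ with $(X,\iota_B|_{O_E},\lambda,\varrho)$ in the image, then $X$ carries an $O_B$-action, so $M$ is stable under $\iota_B(O_B)$. Writing $B = E \oplus Ej$ with $j\alpha = \overbar{\alpha}j$ for $\alpha \in E$, it therefore suffices to produce a single self-dual, $\mathcal{F},\mathcal{V}$-stable $\breve{O}_E$-lattice $M_0$ with $jM_0 \neq M_0$; its finer failure of Drinfeld's special condition need not even be examined. Using the ramified extension $E|F$, I would regard $N$ as a rank-$2$ module over $\breve{E} = E \otimes_F \breve{F}$ with a split $\breve{E}|\breve{F}$-hermitian form $h$ refining the alternating form, so that $j$ becomes a $\sigma_E$-semilinear operator on $N$ and $\NEnaive(\overbar{k})$ becomes the set of self-dual $\breve{O}_E$-Dieudonné lattices.

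The heart of the matter is then the explicit construction of $M_0$. I would choose a $\breve{E}$-basis of $N$ adapted simultaneously to $\mathcal{F}$ and to $h$ (for instance a hyperbolic basis for $h$ permuted by the Frobenius), let $M_{\mathrm{st}}$ be the resulting standard self-dual lattice, which is $O_B$-stable, and then modify $M_{\mathrm{st}}$ along an isotropic $\breve{O}_E$-line to obtain a second self-dual, $\mathcal{F},\mathcal{V}$-stable lattice $M_0$, with the line chosen so that $j$ does not preserve it. One reads off $jM_0 \neq M_0$ directly from the action of $j$ on the basis, while self-duality and Dieudonné-stability of $M_0$ are verified by the corresponding lattice computation; this exhibits $M_0 \in \NEnaive(\overbar{k}) \setminus \mathrm{im}\,\eta(\overbar{k})$.

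The main obstacle, and the only place where $p = 2$ enters, is that the modification must preserve self-duality for $h$ \emph{and} break $j$-stability at the same time. For $p \neq 2$ these two demands are incompatible: every self-dual $O_E$-Dieudonné lattice is then automatically $O_B$-stable, which is precisely why $\NE = \NEnaive$ in that case. The construction must therefore exploit the degeneration of the self-duality constraint at $p = 2$ forced by $\Pi^2 = -\pi_0$ and $2 = 0$ in $\overbar{k}$. I would set up $h$ and $j$ so that the failure $jM_0 \neq M_0$ is visible as a strict inclusion of lattices of the wrong index, making the obstruction manifest; checking that $M_0$ nevertheless remains self-dual is the delicate computation on which the whole argument turns.
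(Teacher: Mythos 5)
Your reduction is sound as far as it goes: passing to relative Dieudonn\'e theory, identifying $\NEnaive(\overbar{k})$ with the self-dual $\breve{O}_E$-stable Dieudonn\'e lattices in $N$, and observing that $O_B$-stability is a necessary condition for a lattice to lie in the image of $\eta(\overbar{k})$ all match the paper's setup, and the auxiliary claims (the dimension and Kottwitz conditions being automatic over $\overbar{k}$) are correct. The genuine gap is exactly at the point you flag as ``the delicate computation on which the whole argument turns'': the existence of a self-dual Dieudonn\'e lattice $M_0$ that is not $O_B$-stable is asserted rather than proved, and the construction you sketch is doubtful on both natural readings. An index-one modification of a self-dual lattice $M_{\mathrm{st}}$ along a line in $M_{\mathrm{st}}/\Pi M_{\mathrm{st}}$ yields a lattice $M_0$ with $[M_0^{\sharp}:M_0]=2$, hence no longer self-dual (the self-dual index-one sublattices appearing in Lemma \ref{RP_IS} sit inside $\Pi^{-1}$-modular lattices, not unimodular ones); and rescaling an isotropic $\breve{O}_E$-line by powers of $\Pi$ keeps you inside the hyperbolic, hence $O_B$-stable, family. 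So ``modifying along an isotropic line'' does not produce the lattice you need.

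What closes the gap in the paper is the classification of hermitian $O_E$-lattices over $2$-adic fields (Jacobowitz; Proposition \ref{LA_latt}): unimodular lattices in $(C,h)$ fall into several $\U(C,h)$-orbits distinguished by the norm ideal $\Nm(\Lambda)$, which ranges over all ideals with $2O_F \subseteq \Nm(\Lambda) \subseteq O_F$, and only the minimal-norm (hyperbolic) ones correspond to points in the image of $\eta$. Concretely, for a hyperbolic basis $(e_1,e_2)$ of $C$ the lattice $\Lambda' = O_E(e_1+\tfrac{1}{2}e_2) + O_E e_2$ has Gram matrix $\left(\begin{smallmatrix}1&1\\1&0\end{smallmatrix}\right)$, so it is unimodular with $\Nm(\Lambda')=O_F$, and $M_0 = \Lambda'\otimes_{O_E}\breve{O}_E$ is a $\tau$-stable self-dual Dieudonn\'e lattice; it cannot be $O_B$-stable, since self-duality plus $O_B$-stability forces the special condition and hence membership in $\MDr(\overbar{k})$, whose points all lie on lines attached to hyperbolic lattices of norm $2O_F$ (Remark \ref{RP_rmklatt}, Theorem \ref{RP_thm}), contradicting $\Nm(\Lambda')=O_F$. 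Some version of this lattice-classification input --- which is precisely the $2$-adic phenomenon you allude to but do not pin down --- is indispensable; without it your argument does not close.
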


Let us sketch the proof here.
Using Dieudonn\'e theory, we can write $\NEnaive(\overbar{k})$ naturally as a union
\begin{equation*}
	\NEnaive(\overbar{k}) = \smashoperator[r]{\bigcup_{\Lambda \subseteq C}} \mathbb{P}(\Lambda / \Pi \Lambda)(\overbar{k}),
\end{equation*}
where the union runs over all $O_E$-lattices $\Lambda$ in the hermitian vector space $(C,h)$ that are $\Pi^{-1}$-modular, \emph{i.e.}, the dual $\Lambda^{\sharp}$ of $\Lambda$ with respect to $h$ is given by $\Lambda = \Pi^{-1} \Lambda^{\sharp}$ (see Lemma \ref{RP_IS}).
By Jacobowitz (\cite{Jac62}), there exist different types (\emph{i.e.}, $\U(C,h)$-orbits) of such lattices $\Lambda \subseteq C$ that are parametrized by their norm ideal $\Nm (\Lambda) = \langle \{ h(x,x) | x \in \Lambda \} \rangle \subseteq F$.
In the case at hand, $\Nm(\Lambda)$ can be any ideal with $2 O_F \subseteq \Nm(\Lambda) \subseteq O_F$.
It is easily checked (see Chapter \ref{LA}) that the norm ideal of $\Lambda$ is minimal, that is $\Nm (\Lambda) = 2 O_F$, if and only if $\Lambda$ admits a basis consisting of isotropic vectors, and hence we call these lattices \emph{hyperbolic}.
Now, the image under $\eta$ of $\MDr(\overbar{k})$ is the union of all lines $\mathbb{P}(\Lambda / \Pi \Lambda)(\overbar{k})$ where $\Lambda \subseteq C$ is hyperbolic.
This is a consequence of Remark \ref{RP_rmkNE} and Theorem \ref{RP_thm} below.

On the framing object $(\mathbb{X}, \iota_{\mathbb{X}}, \lambda_{\mathbb{X}})$ of $\NEnaive$, there exists a principal polarization $\widetilde{\lambda}_{\mathbb{X}}$ such that the induced Rosati involution is the identity on $O_E$.
This polarization is unique up to a scalar in $O_E^{\times}$ (see Thm.\ \ref{POL_thm} \eqref{POL_thm1}).
On $C$, the polarization $\widetilde{\lambda}_{\mathbb{X}}$ induces an $E$-linear alternating form $b$, such that $\det b$ and $\det h$ differ only by a unit (for a fixed basis of $C$).
After possibly rescaling $b$ by a unit in $O_E^{\times}$, a $\Pi^{-1}$-modular lattice $\Lambda \subseteq C$ is hyperbolic if and only if $b(x,y) + h(x,y) \in 2 O_F$ for all $x,y \in \Lambda$.
This enables us to describe the ``hyperbolic'' points of $\NEnaive$ (\emph{i.e.}, those that lie on a projective line corresponding to a hyperbolic lattice $\Lambda \subseteq C$) in terms of polarizations.

We now formulate the closed condition that characterizes $\NE$ as a closed formal subscheme of $\NEnaive$.
For a suitable choice of $(\mathbb{X}, \iota_{\mathbb{X}}, \lambda_{\mathbb{X}})$ and $\widetilde{\lambda}_{\mathbb{X}}$, we may assume that $\frac{1}{2}(\lambda_{\mathbb{X}} + \widetilde{\lambda}_{\mathbb{X}})$ is a polarization on $\mathbb{X}$.
The following definition is a reformulation of Definition \ref{RP_strdef}.

\begin{defn}
	Let $S \in \Nilp$.
	An object $(X,\iota,\lambda,\varrho) \in \NEnaive(S)$ satisfies the \emph{straightening} condition, if $\lambda_1 = \frac{1}{2} (\lambda + \widetilde{\lambda})$ is a polarization on $X$.
	Here, $\widetilde{\lambda} = \varrho^{\ast}(\widetilde{\lambda}_{\mathbb{X}})$.
\end{defn}

We remark that $\widetilde{\lambda} = \varrho^{\ast}(\widetilde{\lambda}_{\mathbb{X}})$ is a polarization on $X$.
This is a consequence of Theorem \ref{POL_thm}, which states the existence of certain polarizations on points of a larger moduli space $\ME$ containing $\NEnaive$, see below.

For $S \in \Nilp$, let $\NE(S) \subseteq \NEnaive(S)$ be the subset of all tuples $(X,\iota,\lambda,\varrho)$ that satisfy the straightening condition.
By \cite[Prop.\ 2.9]{RZ96}, this defines a closed formal subscheme $\NE \subseteq \NEnaive$.
An application of Drinfeld's Proposition (Proposition \ref{RP_Dr}, see also \cite{BC91}) shows that the image of $\MDr$ under $\eta$ lies in $\NE$.
The main theorem in the (R-P) case can now be stated as follows, see Theorem \ref{RP_thm}.

\begin{thm}
	$\eta: \MDr \to \NE$ is an isomorphism of formal schemes.
\end{thm}

This concludes our discussion of the (R-P) case.
From now on, we assume that $E|F$ is of type (R-U).

In the case (R-U), we have to make some adaptions for $\NEnaive$.
%since on the framing object $(\mathbb{X}, \iota_{\mathbb{X},B})$ of $\MDr$, there exists no principal polarization $\lambda_{\mathbb{X}}$ such that the Rosati involution induces the conjugation on $O_E$ (for any embedding $E \inj B$).
For $S \in \Nilp$, let $\NEnaive(S)$ be the set of equivalence classes of tuples $(X,\iota,\lambda,\varrho)$ with $(X,\iota)$ as in the \mbox{(R-P)} case.
But now, the polarization $\lambda: X \to X^{\vee}$ is supposed to have kernel $\ker \lambda = X[\Pi]$ (in contrast to the (R-P) case, where $\lambda$ is a principal polarization).
As before, the Rosati involution of $\lambda$ induces the conjugation on $O_E$.
There exists a framing object $(\mathbb{X}, \iota_{\mathbb{X}}, \lambda_{\mathbb{X}})$ over $\Spec \overbar{k}$ for $\NEnaive$, which is unique up to isogeny under the condition that
\begin{equation*}
	\{ \varphi \in \End^0(\mathbb{X}, \iota_{\mathbb{X}}) \mid \varphi^{\ast}(\lambda_{\mathbb{X}}) = \lambda_{\mathbb{X}} \} \simeq \U(C,h),
\end{equation*}
where $(C,h)$ is a split $E|F$-hermitian vector space of dimension $2$ (see Proposition \ref{RU_frnaive}).
Finally, $\varrho$ is a quasi-isogeny of height $0$ from $X$ to $\mathbb{X}$, respecting all structure.

Fix an embedding $E \inj B$.
Using some subtle choices of elements in $B$ (these are described in Lemma \ref{LA_quat} \eqref{LA_quatRU}) and by Drinfeld's Proposition, we can construct a polarization $\lambda$ as above for any $(X,\iota_B,\varrho) \in \MDr(S)$.
This induces a closed embedding
\begin{equation*}
	\eta: \MDr \to \NEnaive, (X,\iota_B,\varrho) \mapsto (X,\iota_B|_{O_E},\lambda,\varrho).
\end{equation*}
We can write $\NEnaive(\overbar{k})$ as a union of projective lines,
\begin{equation*}
	\NEnaive(\overbar{k}) = \smashoperator[r]{\bigcup_{\Lambda \subseteq C}} \mathbb{P}(\Lambda / \Pi \Lambda)(\overbar{k}),
\end{equation*}
where the union now runs over all selfdual $O_E$-lattices $\Lambda \subseteq (C,h)$ with $\Nm (\Lambda) \subseteq \pi_0 O_F$.
As in the (R-P) case, these lattices $\Lambda \subseteq C$ are classified up to isomorphism by their norm ideal $\Nm(\Lambda)$.
Since $\Lambda$ is selfdual with respect to $h$, the norm ideal can be any ideal satisfying $t O_F \subseteq \Nm(\Lambda) \subseteq O_F$.
We call $\Lambda$ \emph{hyperbolic} when the norm ideal is minimal, \emph{i.e.}, $\Nm (\Lambda) = t O_F$.
Equivalently, the lattice $\Lambda$ has a basis consisting of isotropic vectors.
Recall that here $t$ is the element showing up in the Eisenstein equation for the (R-U) extension $E|F$ and that $\pi_0|t|2$.
Hence there exists at least one type of selfdual lattices $\Lambda \subseteq C$ with $\Nm (\Lambda) \subseteq \pi_0 O_F$.
In the case (R-U), it may happen that $|t| = |\pi_0|$, in which case all lattices $\Lambda$ in the description of $\NEnaive(\overbar{k})$ are hyperbolic.

The image of $\MDr(\overbar{k})$ under $\eta$ in $\NEnaive(\overbar{k})$ is the union of all projective lines corresponding to hyperbolic lattices.
Unless $|t| = |\pi_0|$, it follows that $\eta(\overbar{k})$ is not surjective and thus $\eta$ cannot be an isomorphism.
For the case $|t| = |\pi_0|$, we will show that $\eta$ is an isomorphism on reduced loci $(\MDr)_{\red} \isoarrow (\NEnaive)_{\red}$ (see Remark \ref{RU_rmkNE}), but $\eta$ is not an isomorphism of formal schemes.
This follows from the non-flatness of the deformation ring for certain points of $\NEnaive$, see section \ref{LM_naive}.

On the framing object $(\mathbb{X}, \iota_{\mathbb{X}}, \lambda_{\mathbb{X}})$ of $\NEnaive$, there exists a polarization $\widetilde{\lambda}_{\mathbb{X}}$ such that $\ker \widetilde{\lambda}_{\mathbb{X}} = \mathbb{X}[\Pi]$ and such that the Rosati involution induces the identity on $O_E$.
After a suitable choice of $(\mathbb{X}, \iota_{\mathbb{X}}, \lambda_{\mathbb{X}})$ and $\widetilde{\lambda}_{\mathbb{X}}$, we may assume that $\frac{1}{t} (\lambda_{\mathbb{X}} + \widetilde{\lambda}_{\mathbb{X}})$ is a polarization on $\mathbb{X}$.
The straightening condition for the (R-U) case is given as follows (see Definition \ref{RU_strdef}).

\begin{defn}
	Let $S \in \Nilp$.
	An object $(X,\iota,\lambda,\varrho) \in \NEnaive(S)$ satisfies the \emph{straightening} condition, if $\lambda_1 = \frac{1}{t}(\lambda + \widetilde{\lambda})$ is a polarization on $X$.
	Here, $\widetilde{\lambda} = \varrho^{\ast}(\widetilde{\lambda}_{\mathbb{X}})$.
\end{defn}

Note that $\widetilde{\lambda} = \varrho^{\ast}(\widetilde{\lambda}_{\mathbb{X}})$ is a polarization on $X$ by Theorem \ref{POL_thm}.

The straightening condition defines a closed formal subscheme $\NE \subseteq \NEnaive$ that contains the image of $\MDr$ under $\eta$.
The main theorem in the (R-U) case can now be stated as follows, compare Theorem \ref{RU_thm}.

\begin{thm}
	$\eta: \MDr \to \NE$ is an isomorphism of formal schemes.
\end{thm}

When formulating the straightening condition in the (R-U) and the (R-P) case, we mentioned that $\widetilde{\lambda} =  \varrho^{\ast}(\widetilde{\lambda}_{\mathbb{X}})$ is a polarization for any $(X,\iota,\lambda,\varrho) \in \NEnaive(S)$.
This fact is a corollary of Theorem \ref{POL_thm}, that states the existence of this polarization in the following more general setting.

Let $F|\mathbb{Q}_p$ be a finite extension for any prime $p$ and $E|F$ an arbitrary quadratic extension.
We consider the following moduli space $\ME$ of (EL) type.
For $S \in \Nilp$, the set $\ME(S)$ consists of equivalence classes of tuples $(X,\iota_E,\varrho)$, where $X$ is a formal $O_F$-module of height $4$ and dimension $2$ and $\iota_E$ is an $O_E$-action on $X$ satisfying the Kottwitz condition of signature $(1,1)$ as above.
The entry $\varrho$ is an $O_E$-linear quasi-isogeny of height $0$ to a supersingular framing object $(\mathbb{X}, \iota_{\mathbb{X},E})$.

The points of $\ME$ are equipped with polarizations in the following natural way, see Theorem \ref{POL_thm}.

\begin{thm} \label{IN_POL}
	\begin{enumerate}
		\item \label{IN_POL1} There exists a principal polarization $\widetilde{\lambda}_{\mathbb{X}}$ on $(\mathbb{X},\iota_{\mathbb{X},E})$ such that the Rosati involution induces the identity on $O_E$, \emph{i.e.}, $\iota(\alpha)^{\ast} = \iota(\alpha)$ for all $\alpha \in O_E$.
		This polarization is unique up to a scalar in $O_E^{\times}$.
		\item Fix $\widetilde{\lambda}_{\mathbb{X}}$ as in part \eqref{IN_POL1}.
		For any $S \in \Nilp$ and $(X,\iota_E,\varrho) \in \ME(S)$, there exists a unique principal polarization $\widetilde{\lambda}$ on $X$ such that the Rosati involution induces the identity on $O_E$ and such that $\widetilde{\lambda} = \varrho^{\ast}(\widetilde{\lambda}_{\mathbb{X}})$.
	\end{enumerate}
\end{thm}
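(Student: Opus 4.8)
The plan is to prove both assertions by translating polarizations into alternating pairings on Dieudonné modules: first over $\overbar{k}$ by classical (relative, covariant) Dieudonné theory for part \eqref{IN_POL1}, and then over an arbitrary $S\in\Nilp$ by crystalline Dieudonné theory (Grothendieck--Messing theory, or the theory of displays) for the second part. Throughout I use that a principal polarization $\mu$ on a formal $O_F$-module of height $4$ and dimension $2$ with $O_E$-action corresponds to a perfect alternating $\breve{O}_F$-bilinear pairing on its Dieudonné module, compatible with $F$ and $V$, and that the associated Rosati involution is the adjoint involution of this pairing. Hence the condition $\iota(\alpha)^{\ast}=\iota(\alpha)$ for all $\alpha\in O_E$ is equivalent to $O_E$ acting by self-adjoint operators, i.e.\ (since $O_E=O_F[\Pi]$) to $\Pi$ being self-adjoint. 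An alternating pairing for which $\Pi$ is self-adjoint is precisely the datum of an \emph{$E$-bilinear} alternating form. This is the structural point that distinguishes $\widetilde{\lambda}_{\mathbb{X}}$ from the polarizations $\lambda_{\mathbb{X}}$ of $\NEnaive$, whose Rosati induces the conjugation and which therefore correspond to $E|F$-hermitian forms.

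For part \eqref{IN_POL1} I would argue on the framing object $\mathbb{X}$ as follows. Its $E$-linear quasi-endomorphism algebra, together with the supersingular (isoclinic, slope $1/2$) structure, identifies the isocrystal $N$ of $\mathbb{X}$ with the base change $C\otimes_F\breve{F}$ of the split $2$-dimensional $E|F$-hermitian space $(C,h)$ attached to $\mathbb{X}$. Because $\dim_E C=2$, an $E$-bilinear alternating form on $C$ is a scalar multiple of $\det$, hence nonzero and nondegenerate for every scalar in $E^{\times}$ and unique up to $E^{\times}$; equivalently, starting from the known conjugation-type polarization $\lambda_{\mathbb{X}}$ one twists by a conjugate-$E$-linear, totally positive, Rosati-symmetric quasi-automorphism $\psi$ and sets $\widetilde{\lambda}_{\mathbb{X}}=\lambda_{\mathbb{X}}\circ\psi$. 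Either construction produces a quasi-polarization of identity type; one then checks the $F$--$V$ compatibility and positivity (so that it is a genuine polarization, using that $\psi$ may be chosen totally positive and compatible) and rescales the scalar so that the pairing is perfect, yielding a principal polarization. Uniqueness up to $O_E^{\times}$ follows because two identity-type principal polarizations differ by an $E$-bilinear automorphism commuting with all the structure, hence by a scalar in $E^{\times}$, and perfectness on both sides forces this scalar into $O_E^{\times}$.

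For the second part, given $(X,\iota_E,\varrho)\in\ME(S)$ I set $\widetilde{\lambda}:=\varrho^{\ast}(\widetilde{\lambda}_{\mathbb{X}})$, which is a priori only a quasi-polarization. That its Rosati induces the identity on $O_E$, and that it is the \emph{unique} principal polarization satisfying $\widetilde{\lambda}=\varrho^{\ast}(\widetilde{\lambda}_{\mathbb{X}})$, are then formal consequences of part \eqref{IN_POL1} together with the fact that $\varrho$ is $O_E$-linear of height $0$. The substance of the statement is that $\widetilde{\lambda}$ is a genuine \emph{principal} polarization for \emph{every} point of $\ME$, with no further condition imposed. I would prove this by descending to the Dieudonné crystal (or the display) of $X$ and showing that the induced alternating pairing is integral and perfect, equivalently that the Hodge filtration in the crystal of $X$ is isotropic of half rank for it. The two inputs are that the pairing is $E$-bilinear (identity type) and that $\varrho$ is $O_E$-linear of height $0$, combined with the Kottwitz signature $(1,1)$ condition: the latter pins down $\Lie X$ as an $O_E$-stable subspace on which $\iota(\alpha)$ has characteristic polynomial $(T-\alpha)(T-\overbar{\alpha})$, and the key computation is that for the $E$-bilinear identity-type pairing such a subspace is isotropic of half dimension, forcing self-duality.

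The main obstacle is precisely this last step over a general, possibly non-reduced base $S$, where one cannot reason with lattices inside the isocrystal. One must show, using only the $E$-bilinearity of the identity-type pairing and the signature condition --- and crucially \emph{not} any self-duality hypothesis of the kind built into $\NEnaive$ --- that the pairing on the crystal of $X$ remains perfect, so that $\widetilde{\lambda}$ is an isomorphism onto $X^{\vee}$. This is where the identity type of $\widetilde{\lambda}_{\mathbb{X}}$, as opposed to the conjugation type of $\lambda_{\mathbb{X}}$, is essential, and it is the technical heart of the theorem; positivity of $\widetilde{\lambda}$ is then inherited from $\widetilde{\lambda}_{\mathbb{X}}$ along the height-$0$, $O_E$-linear quasi-isogeny $\varrho$.
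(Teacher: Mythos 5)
Your overall strategy is the same as the paper's: part \eqref{IN_POL1} is proved on the framing object by Dieudonn\'e theory, identifying identity-type principal polarizations with perfect $E$-bilinear alternating forms on the $O_E$-lattice $\Lambda\subseteq C$ (whence existence and uniqueness up to $O_E^{\times}$), and part (2) is reduced to showing that the quasi-polarization $\varrho^{\ast}(\widetilde{\lambda}_{\mathbb{X}})$ stays an honest principal polarization over every $S\in\Nilp$, which in turn comes down to the Hodge filtration in the crystal of $X$ being totally isotropic for the identity-type form. Your framing of what needs to be shown, and why the identity type (as opposed to the conjugation type) is the relevant structure, is accurate.

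The gap is that the step you yourself single out as ``the technical heart'' is asserted, not proved: you claim as ``the key computation'' that an $O_E$-stable rank-$2$ direct summand $\mathcal{F}\subseteq\Lambda\otimes_{O_F}R$ satisfying the Kottwitz condition of signature $(1,1)$ is automatically totally isotropic for the $E$-bilinear alternating form, over an arbitrary (possibly non-reduced) $O_F$-algebra $R$. This is not a routine verification. In the wildly ramified case $O_E\otimes_{O_F}R$ is non-reduced, there is no eigenspace decomposition, and a direct computation with the characteristic polynomial does not obviously yield $(\mathcal{F},\mathcal{F})=0$; the statement is exactly the assertion $\MEpolloc=\MEloc$ about local models, which the paper takes from Deligne--Pappas and whose proof rests on flatness of the local model (both conditions cut out the flat closure of the common generic fibre). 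The paper then transports this equality to the moduli spaces via the local model diagrams for $\ME$ and $\MEpol$, obtaining $\widehat{\mathcal{O}}_{\MEpol,x}\cong\widehat{\mathcal{O}}_{\ME,x}$ at every $\overbar{k}$-point and hence $\MEpol=\ME$. Without either this citation or a substitute argument, your proof of part (2) is incomplete precisely at its decisive step. (A minor further point: for formal $O_F$-modules there is no positivity to check; ``polarization'' here means an anti-symmetric isogeny, so the issue is integrality and perfectness of the pairing, not total positivity of a twisting quasi-automorphism.)
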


If $p = 2$ and $E|F$ is ramified of (R-P) or (R-U) type, then there is a canonical closed embedding $\NE \inj \ME$ that forgets about the polarization $\lambda$.
In this way, it follows that $\widetilde{\lambda}$ is a polarization for any $(X,\iota,\lambda,\varrho) \in \NEnaive(S)$.

The statement of Theorem \ref{IN_POL} can also be expressed in terms of an isomorphism of moduli spaces $\MEpol \isoarrow \ME$.
Here $\MEpol$ is a moduli space of (PEL) type, defined by mapping $S \in \Nilp$ to the set of tuples $(X,\iota,\widetilde{\lambda},\varrho)$ where $(X,\iota,\varrho) \in \ME(S)$ and $\widetilde{\lambda}$ is a polarization as in the theorem.
\smallskip

We now briefly describe the contents of the subsequent sections of this paper.
In section \ref{LA}, we recall some facts about the quadratic extensions of $F$, the quaternion algebra $B|F$ and hermitian forms.
In the next two sections, sections \ref{RP} and \ref{RU}, we define the moduli spaces $\NEnaive$, introduce the straightening condition describing $\NE \subseteq \NEnaive$ and prove our main theorem in both the cases (R-P) and (R-U).
Although the techniques are quite similar in both cases, we decided to treat these cases separately, since the results in both cases differ in important details.
%In section \ref{LM}, we define the local model $\NEloc$, prove that it is isomorphic to $\MDrloc$ and construct the local model diagram connecting $\NEloc$ and $\NE$.
%Again, we explain these results separately for the cases (R-P) and (R-U).
%The section \ref{LM_naive} is dedicated to a discussion of the naive local model $\NElocnaive$ for $\NEnaive$, in the case where $E|F$ is of type (R-U) with $|t| = |\pi_0|$.
%In particular, we prove the necessity of the straightening condition in that specific case.
Finally, in Section \ref{POL} we prove Theorem \ref{IN_POL} on the existence of the polarizations $\widetilde{\lambda}$.
\smallskip

\paragraph{\bf{Acknowledgements}}
First of all, I am very grateful to my advisor M.\ Rapoport for suggesting this topic and for his constant support and helpful discussions.
I thank the members of our Arbeitsgruppe in Bonn for numerous discussions and I also would like to thank the audience of my two AG talks for many helpful questions and comments.
Furthermore, I owe thanks to A.\ Genestier for many useful remarks and for pointing out a mistake in an earlier version of this paper.
I would also like to thank the referee for helpful comments.

This work is the author's PhD thesis at the University of Bonn, which was supported by the SFB/TR45 `Periods, Moduli Spaces and Arithmetic of Algebraic Varieties' of the DFG (German Research Foundation).
Parts of this paper were written during the fall semester program `New Geometric Methods in Number Theory and Automorphic Forms' at the MSRI in Berkeley.

\section{Preliminaries on quaternion algebras and hermitian forms}
\label{LA}

Let $F|\mathbb{Q}_2$ be a finite extension.
In this section we will recall some facts about the quadratic extensions of $F$, the quaternion division algebra $B|F$ and certain hermitian forms.
For more information on quaternion algebras, see for example the book by Vigneras \cite{Vig80}.
A systematic classification of hermitian forms over local fields has been done by Jacobowitz in \cite{Jac62}.

Let $E|F$ be a quadratic field extension and denote by $O_F$ resp.\ $O_E$ the rings of integers.
There are three mutually exclusive possibilities for $E|F$:
\begin{itemize} \label{LA_quadext}
	\item $E|F$ is unramified.
	Then $E = F[\delta]$ for $\delta$ a square root of a unit in $F$.
	We can choose $\delta$ such that $\delta^2 = 1 + 4u$ for some $u \in O_F^{\times}$.
	In this case, $O_E = O_F[\frac{1+\delta}{2}]$.
	The element $\gamma = \frac{1+\delta}{2}$ satisfies the Eisenstein equation $\gamma^2 - \gamma - u = 0$.
	In the following we will write $F^{(2)}$ instead of $E$ and $O_F^{(2)}$ instead of $O_E$ when talking about the unramified extension of $F$.
	\item $E|F$ is ramified and $E$ is generated by the square root of a uniformizer in $F$.
	That is, $E = F[\Pi]$ and $\Pi$ is given by the Eisenstein equation $\Pi^2 + \pi_0 = 0$ for a uniformizing element $\pi_0 \in O_F$.
	We also have $O_E = O_F[\Pi]$.
	Following Jacobowitz, we will say $E|F$ is of type (R-P) (which stands for ``ramified-prime'').
	\item  Finally, $E|F$ can be given by an Eisenstein equation of the form $\Pi^2 - t\Pi + \pi_0 = 0$ for a uniformizer $\pi_0$ and $t \in O_F$ such that $\pi_0|t|2$.
	Then $E|F$ is ramified and $O_E = O_F[\Pi]$.
	Here, $E$ is generated by the square root of a unit in $F$.
	Indeed, for $\vartheta = 1 - 2\Pi/t$ we have $\vartheta^2 = 1 - 4\pi_0/t^2 \in O_F^{\times}$.
	Thus $E|F$ is said to be of type (R-U) (for ``ramified-unit'').
\end{itemize}

We will use this notation throughout the paper.

\begin{rmk} %\label{LA_rmk}
%	\begin{enumerate}
%		\item
		The isomorphism classes of quadratic extension of $F$ correspond to the non-trivial equivalence classes of $F^{\times} / (F^{\times})^2$.
		We have $F^{\times} / (F^{\times})^2 \simeq \operatorname{H}^1(G_F, \mathbb{Z}/2\mathbb{Z})$ for  the absolute Galois group $G_F$ of $F$ and $\dim \operatorname{H}^1(G_F, \mathbb{Z}/2\mathbb{Z}) = 2 + d$, where $d = [F: \mathbb{Q}_2]$ is the degree of $F$ over $\mathbb{Q}_2$ (see, for example, \cite[Cor.\ 7.3.9]{NSW00}).
		%(Neukirch, Schmidt, Wingberg, ``Cohomology of Number Fields'')
	
		A representative of an equivalence class in $F^{\times} / F^{\times 2}$ can be chosen to be either a prime or a unit, and exactly half of the classes are represented by prime elements, the others being represented by units.
		It follows that there are, up to isomorphism, $2^{1 + d}$ different extensions $E|F$ of type (R-P) and $2^{1+d} -2$ extension of type (R-U).
		(We have to exclude the trivial element $1 \in F^{\times} / F^{\times 2}$ and one unit element corresponding to the unramified extension.)
%		\item \label{LA_rmk2} Note that $1 + \pi_0^{2k+1} \varepsilon$ is never a square in $O_F / \pi_0^{2k+2}$ for any unit $\varepsilon$ and any $k$ with $|2| < |\pi_0|^k \leq |1|$.
%		It follows that, if two elements $1 + \pi_0^{2k+1} \varepsilon$ and $1 + \pi_0^{2 k' +1} \varepsilon'$ lie in the same equivalence class of $F^{\times} / F^{\times 2}$ (\emph{i.e.} they induce the same quadratic extension of type (R-U)), then $k = k'$ and $\varepsilon \equiv \varepsilon' \mod \pi_0$.
%	\end{enumerate}
\end{rmk}

\begin{lem} \label{LA_diff}
	The inverse different of $E|F$ is given by $\mathfrak{D}_{E|F}^{-1} = \frac{1}{2\Pi}O_E$ in the case \emph{(R-P)} and by $\mathfrak{D}_{E|F}^{-1} = \frac{1}{t}O_E$ in the case \emph{(R-U)}.
\end{lem}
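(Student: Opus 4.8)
The plan is to use the standard description of the different for a \emph{monogenic} extension of local rings. In both cases (R-P) and (R-U) we have $O_E = O_F[\Pi]$, so $\Pi$ is a single generator of $O_E$ over $O_F$ and its minimal polynomial $f \in O_F[T]$ is the corresponding Eisenstein polynomial. I would first recall that for such a monogenic extension the codifferent (complementary module) is computed by Euler's formula as
\begin{equation*}
	\mathfrak{C}_{E|F} = \{ x \in E \mid \Tr_{E|F}(x O_E) \subseteq O_F \} = \frac{1}{f'(\Pi)} O_E,
\end{equation*}
so that the inverse different is $\mathfrak{D}_{E|F}^{-1} = \mathfrak{C}_{E|F} = \frac{1}{f'(\Pi)} O_E$ (see, e.g., Serre, \emph{Local Fields}, Ch.\ III). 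Thus the entire lemma reduces to computing $f'(\Pi)$ and identifying the ideal it generates in each case.

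In the case (R-P) the minimal polynomial is $f(T) = T^2 + \pi_0$, hence $f'(T) = 2T$ and $f'(\Pi) = 2\Pi$. This immediately gives $\mathfrak{D}_{E|F}^{-1} = \frac{1}{2\Pi} O_E$, as claimed. In the case (R-U) the minimal polynomial is $f(T) = T^2 - tT + \pi_0$, so $f'(T) = 2T - t$ and $f'(\Pi) = 2\Pi - t$. A priori this only yields $\mathfrak{D}_{E|F}^{-1} = \frac{1}{2\Pi - t} O_E$, and the remaining point is to check that $2\Pi - t$ and $t$ generate the same ideal of $O_E$, i.e.\ that they differ by a unit.

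For this I would use the element $\vartheta = 1 - 2\Pi/t$ introduced in the description of the (R-U) case, which satisfies $\vartheta^2 = 1 - 4\pi_0/t^2 \in O_F^{\times}$; since its square is a unit in $O_F$, the element $\vartheta$ is a unit in $O_E$. Now
\begin{equation*}
	2\Pi - t = -t\,(1 - 2\Pi/t) = -t\,\vartheta,
\end{equation*}
and because $\vartheta \in O_E^{\times}$ we get $(2\Pi - t)\,O_E = t\,O_E$. Therefore $\mathfrak{D}_{E|F}^{-1} = \frac{1}{2\Pi - t} O_E = \frac{1}{t} O_E$, completing the (R-U) case. The computation is essentially routine once the monogenic-different formula is invoked; the only genuinely case-specific step — and the only place where one must do more than differentiate — is the last identification in (R-U), where the inequality $\pi_0 \mid t$ could make the naive generator $2\Pi - t$ look smaller than $t$, and it is precisely the unit $\vartheta$ (coming from the fact that $E|F$ is generated by a square root of a unit) that rescues the equality of ideals.
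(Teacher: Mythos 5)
Your proof is correct, but it takes a different route from the paper's. The paper computes the trace-dual lattice directly: writing $\alpha = \alpha_1 + \Pi\alpha_2$ and imposing $\Tr_{E|F}(\alpha\cdot 1),\ \Tr_{E|F}(\alpha\cdot\Pi)\in O_F$, it reads off the codifferent from the values $\Pi+\overbar{\Pi}$ and $\Pi^2+\overbar{\Pi}^2$ in each case. You instead invoke the standard fact that for a monogenic extension $O_E = O_F[\Pi]$ the different is generated by $f'(\Pi)$, which reduces everything to differentiating the Eisenstein polynomial; this is legitimate here precisely because the paper has already recorded $O_E=O_F[\Pi]$ in both cases. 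Your treatment of the one nontrivial point is also right: in the (R-U) case $f'(\Pi)=2\Pi-t=-t\vartheta$ with $\vartheta=1-2\Pi/t$, and since $t\mid 2$ one has $\vartheta\in O_E$ with $\vartheta^2=1-4\pi_0/t^2\in O_F^{\times}$ (as stated in the paper's setup), so $\vartheta\in O_E^{\times}$ and $(2\Pi-t)O_E=tO_E$. What each approach buys: yours is shorter and conceptually transparent once the monogenic-different formula is cited, and it isolates the unit $\vartheta$ as the reason the answer is $\frac{1}{t}O_E$ rather than something smaller; the paper's is self-contained and elementary, at the cost of an explicit basis computation.
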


\begin{proof}
	The inverse different is defined as
	\begin{equation*}
		\mathfrak{D}_{E|F}^{-1} = \{ \alpha \in E \mid \Tr_{E|F}(\alpha O_E) \subseteq O_F \}.
	\end{equation*}
	It is enough to check the condition on the trace for the elements $1$ and $\Pi \in O_E$.
	If we write $\alpha = \alpha_1 + \Pi \alpha_2$ with $\alpha_1,\alpha_2 \in F$, we get
	\begin{align*}
		\Tr_{E|F}(\alpha \cdot 1) & = \alpha + \overbar{\alpha} = 2\alpha_1 + \alpha_2(\Pi + \overbar{\Pi}), \\
		\Tr_{E|F}(\alpha \cdot \Pi) & = \alpha \Pi + \overbar{\alpha \Pi} = \alpha_1 (\Pi + \overbar{\Pi}) + \alpha_2(\Pi^2 + \overbar{\Pi}^2).
	\end{align*}
	In the case (R-P) we have $\Pi + \overbar{\Pi} = 0$ and $\Pi^2 + \overbar{\Pi}^2 = 2\pi_0$, while in the case (R-U), $\Pi + \overbar{\Pi} = t$ and $\Pi^2 + \overbar{\Pi}^2 = t^2 - 2\pi_0$.
	It is now easy to deduce that the inverse different is of the claimed form.
\end{proof}

Over $F$, there exists up to isomorphism exactly one quaternion division algebra $B$, with unique maximal order $O_B$.
For every quadratic extension $E|F$, there exists an embedding $E \inj B$ and this induces an embedding $O_E \inj O_B$.
If $E|F$ is ramified, a basis for $O_E$ as $O_F$-module is given by $(1, \Pi)$.
We would like to extend this to an $O_F$-basis of $O_B$.

\begin{lem} \label{LA_quat}
	\begin{enumerate}
		\item \label{LA_quatRP} If $E|F$ is of type \emph{(R-P)}, there exists an embedding $F^{(2)} \inj B$ such that $\delta \Pi = - \Pi \delta$.
		An $O_F$-basis of $O_B$ is then given by $(1,\gamma,\Pi,\gamma \cdot \Pi)$, where $\gamma = \frac{1+\delta}{2}$.
		\item \label{LA_quatRU} If $E|F$ is of type \emph{(R-U)}, there exists an embedding $E_1 \inj B$, where $E_1|F$ is of type \emph{({R-P})} with uniformizer $\Pi_1$ such that $\vartheta \Pi_1 = - \Pi_1 \vartheta$.
		The tuple $(1,\vartheta,\Pi_1,\vartheta \Pi_1)$ is an $F$-basis of $B$.
		
		Furthermore, there is also an embedding $\widetilde{E} \inj B$ with $\widetilde{E}|F$ of type \emph{(R-U)} with elements $\widetilde{\Pi}$ and $\widetilde{\vartheta}$ as above, such that $\vartheta \widetilde{\vartheta} = - \widetilde{\vartheta} \vartheta$ and $\widetilde{\vartheta}^2 = 1 + (t^2/\pi_0) \cdot u$ for some unit $u \in F$.
		In terms of this embedding, an $O_F$-basis of $O_B$ is given by $(1,\Pi,\widetilde{\Pi},\Pi \cdot \widetilde{\Pi} / \pi_0)$.
		Also,
		\begin{equation} \label{LA_quatRUunr}
			\frac{\Pi \cdot \widetilde{\Pi}}{\pi_0} = \gamma
		\end{equation}
		for some embedding $F^{(2)} \inj B$ of the unramified extension and $\gamma^2 - \gamma - u = 0$.
		Hence, $O_B = O_F[\Pi, \gamma]$ as $O_F$-algebra.
	\end{enumerate}
\end{lem}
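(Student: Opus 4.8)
The plan is to construct the embeddings explicitly and verify the anticommutation and norm relations by direct computation inside the quaternion algebra $B$. Recall that $B$ is the unique quaternion division algebra over $F$, and that for any quadratic extension $K|F$ there is an embedding $K \inj B$; once $K$ is fixed as a subfield, the standard involution $b \mapsto b'$ restricts to the Galois conjugation on $K$, and any element $j \in B$ anticommuting with a generator of $K$ spans the complementary $K$-line, so that $B = K \oplus Kj$ with $j^2 \in F^\times$. This splitting is the main structural tool throughout.

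For part \eqref{LA_quatRP}, I would embed $F^{(2)}$ into $B$ and write $B = F^{(2)} \oplus F^{(2)} j$ with $j$ anticommuting with $\delta$ and $j^2 = c \in F^\times$. Since $B$ is the division algebra, $c$ must be a non-norm from $F^{(2)}$, hence (up to squares) a uniformizer; after rescaling $j$ I may take $j^2 = -\pi_0$, so that setting $\Pi = j$ gives an embedding $E = F[\Pi] \inj B$ of type (R-P) with $\delta \Pi = -\Pi\delta$ by construction. The verification that $(1,\gamma,\Pi,\gamma\Pi)$ is an $O_F$-basis of $O_B$ then reduces to checking that this $O_F$-lattice is a maximal order: it is closed under multiplication (using $\gamma^2 = \gamma + u$, $\Pi^2 = -\pi_0$, and $\gamma\Pi = \Pi(1-\gamma) + \Pi\gamma$ coming from $\delta\Pi = -\Pi\delta$), and its reduced discriminant equals that of $O_B$. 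Since $O_B$ is the unique maximal order, equality of the lattice with $O_B$ follows.

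For part \eqref{LA_quatRU}, the element $\vartheta = 1 - 2\Pi/t$ satisfies $\vartheta^2 = 1 - 4\pi_0/t^2 \in O_F^\times$ by the computation recalled in the introduction, so $F[\vartheta]$ is the unramified quadratic extension; I would set $E_1 = F[\vartheta]$ and realize it as type (R-P) by producing a uniformizer $\Pi_1$ anticommuting with $\vartheta$ exactly as in the first part, giving the $F$-basis $(1,\vartheta,\Pi_1,\vartheta\Pi_1)$. For the second embedding I would again use the decomposition $B = E \oplus Ej$ relative to the fixed $E = F[\Pi]$ of type (R-U), choosing $j$ with $\vartheta j = -j\vartheta$; writing $\widetilde\vartheta$ as the appropriate $E$-multiple of $j$ forces the normalization $\widetilde\vartheta^2 = 1 + (t^2/\pi_0)u$ after matching reduced norms, and produces a second (R-U) extension $\widetilde E = F[\widetilde\Pi]$. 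The relation \eqref{LA_quatRUunr}, namely $\Pi\widetilde\Pi/\pi_0 = \gamma$ with $\gamma^2 - \gamma - u = 0$, is then checked by multiplying out $\Pi\widetilde\Pi$ in terms of $\vartheta$ and $\widetilde\vartheta$ and comparing with the defining equation of $\gamma$; that $(1,\Pi,\widetilde\Pi,\Pi\widetilde\Pi/\pi_0)$ is an $O_F$-basis of $O_B$ follows, as before, from verifying closure under multiplication together with the discriminant count, whence $O_B = O_F[\Pi,\gamma]$.

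The main obstacle is bookkeeping the normalizations in the (R-U) case: one must simultaneously arrange the three distinct quadratic subfields (the given (R-U) extension $E$, the unramified $E_1 = F[\vartheta]$, and the second (R-U) extension $\widetilde E$) to sit compatibly inside $B$ with the prescribed anticommutation relations, and pin down the scalars so that the normalized products land exactly in $O_B$ rather than a proper suborder. Getting the unit $u$ and the factor $t^2/\pi_0$ to appear correctly — and confirming \eqref{LA_quatRUunr} on the nose — requires care, since several rescalings by units interact; the integrality of the final basis is what ultimately constrains these choices, so I would leave the discriminant comparison as the last step once all scalars are fixed.
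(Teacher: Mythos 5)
Your plan for part \eqref{LA_quatRP} is fine and is essentially the standard argument (the paper simply cites Vigneras at this point), but part \eqref{LA_quatRU} contains a genuine error and a genuine gap. The error: you assert that since $\vartheta = 1 - 2\Pi/t$ satisfies $\vartheta^2 = 1-4\pi_0/t^2 \in O_F^{\times}$, the field $F[\vartheta]$ is the unramified quadratic extension, and you then set $E_1 = F[\vartheta]$. This is false: $\Pi = t(1-\vartheta)/2 \in F[\vartheta]$, so $F[\vartheta] = F[\Pi] = E$ is the given \emph{ramified} extension of type (R-U). This is precisely the $2$-adic phenomenon the (R-U) case is built around --- a square root of a unit can generate a ramified extension --- so the $p>2$ reflex ``unit square root $\Rightarrow$ unramified'' cannot be used here. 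In the lemma, $E_1$ is not $F[\vartheta]$ at all; it is the (R-P) extension $F[\Pi_1]$ generated by a uniformizer $\Pi_1$ anticommuting with $\vartheta$, and its existence is obtained (as in the paper) from the fact that $\Nm_{E|F}(E^{\times})$ is an index-$2$ subgroup of $F^{\times}$ containing $F^{\times 2}$ while the classes of uniformizers generate $F^{\times}/F^{\times 2}$, so some uniformizer is a non-norm.

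The gap: for the second embedding you write that expressing $\widetilde\vartheta$ as an $E$-multiple of $j$ ``forces the normalization $\widetilde\vartheta^2 = 1 + (t^2/\pi_0)u$ after matching reduced norms.'' This skips the actual content of the step. To realize $\widetilde{E}$ inside $B$ with $\vartheta\widetilde{\vartheta} = -\widetilde{\vartheta}\vartheta$ one needs the specific element $1 + (t^2/\pi_0)u$ to lie in $F^{\times} \setminus \Nm_{E|F}(E^{\times})$, where $u$ is the unit with $\gamma^2 - \gamma - u = 0$ generating $F^{(2)}$; this is not automatic and is the heart of the paper's proof. The paper's argument is a valuation analysis: assuming $1+(t^2/\pi_0)u = \Nm_{E|F}(b)$, one shows $b$ must have the form $1 + x(t/\Pi)$ with $x \in O_E$, and reducing the norm equation modulo $\pi_0$ yields $u = x + x^2$ in the residue field, which has no solution precisely because $\gamma$ generates the unramified extension. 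Without this non-norm verification your ``rescaling $j$'' step cannot be carried out, and the identity \eqref{LA_quatRUunr} (which is what ties $u$ to the unramified subfield and makes $O_B = O_F[\Pi,\gamma]$ work) has no foundation. Your closing discriminant comparison for the integral basis does match the paper's final step, but it only becomes available once the two embeddings with the stated normalizations actually exist.
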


\begin{proof}
	\eqref{LA_quatRP} This is \cite[II.\ Cor.\ 1.7]{Vig80}.

	\eqref{LA_quatRU} By \cite[I.\ Cor.\ 2.4]{Vig80}, it suffices to find a uniformizer $\Pi_1^2 \in F^{\times} \setminus \Nm_{E|F}(E^{\times})$ in order to prove the first part.
	But $\Nm_{E|F}(E^{\times}) \subseteq F^{\times}$ is a subgroup of order $2$ and $F^{\times 2} \subseteq \Nm_{E|F}(E^{\times})$.
	On the other hand, the residue classes of uniformizing elements in $F^{\times} / F^{\times 2}$ generate the whole group.
	Thus they cannot all be contained in $\Nm_{E|F}(E^{\times})$.
	
	For the second part, choose a unit $\delta \in F^{(2)}$ with $\delta^2 = 1 + 4 u \in F^{\times} \setminus F^{\times2}$ for some $u \in O_F^{\times}$ and set $\gamma = \frac{1+\delta}{2}$.
	Let $\widetilde{E}|F$ be of type (R-U), generated by $\widetilde{\vartheta}$ with $\widetilde{\vartheta}^2 = 1 + (t^2/\pi_0)  \cdot u$.
	We have to show that $\widetilde{\vartheta}^2$ is not contained in $\Nm_{E|F}(E^{\times})$.
	
	Assume it is a norm, so $\widetilde{\vartheta}^2 = \Nm_{E|F}(b)$ for a unit $b \in E^{\times}$.
	Then $b$ is of the form $b = 1 + x \cdot (t/ \Pi)$ for some $x \in O_E$.
	Indeed, let $\ell$ be the $\Pi$-adic valuation of $b-1$, \emph{i.e.}, $b = 1 + x \cdot \Pi^{\ell}$ and $x \in O_E^{\times}$.
	We have
	\begin{equation} \label{LA_Nmeq}
		1 + (t^2/ \pi_0) \cdot u = \Nm_{E|F}(b) = 1 + \Tr_{E|F}(x \Pi^{\ell}) + \Nm_{E|F}(x \Pi^{\ell})
	\end{equation}
	Let $v$ be the $\pi_0$-adic valuation on $F$.
	Then $v(\Nm_{E|F}(x \Pi^{\ell})) = \ell$ and $v(\Tr_{E|F}(x \Pi^{\ell})) \geq v(t) + \lfloor \frac{\ell}{2} \rfloor$, by Lemma \ref{LA_diff}.
	On the left hand side, we have $v((t^2/ \pi_0) \cdot u) = 2v(t) -1$.
	Comparing the valuations on both sides of \eqref{LA_Nmeq}, the assumption $\ell < 2v(t)-1$ now quickly leads to a contradiction.
	
	Hence $\ell \geq 2v(t)-1$ and $b = 1 + x \cdot (t/ \Pi)$ for some $x \in O_E$.
	Again,
	\begin{equation*}
		1 + (t^2/ \pi_0) \cdot u = \Nm_{E|F}(b) = 1 + \Tr_{E|F}(xt/ \Pi) + \Nm_{E|F}(xt/ \Pi).
	\end{equation*}
	An easy calculation shows that the residue $\overbar{x} \in k = O_E / \Pi = O_F / \pi_0$ of $x$ satisfies $u = x + x^2$.
	But this equation has no solution in $k$, since a solution of $\gamma^2 - \gamma - u = 0$ generates the unramified quadratic extension of $F$.
	It follows that $\widetilde{\vartheta}^2$ cannot be a norm.
	
	Using again \cite[I.\ Cor.\ 2.4]{Vig80}, we find an embedding $\widetilde{E} \inj B$ such that $\vartheta \widetilde{\vartheta} = - \widetilde{\vartheta} \vartheta$.
	
%	After multiplying with a scalar in $F^{\times}$, we can write $b$ as $b = 1 + (2/ \pi_0^{k+1}) \cdot b_2 \Pi$ for some $b_2 \in F$.
%	Now,
%	\begin{align*}
%		\Nm_{E|F}(b) &= (1 + \frac{2}{\pi_0^{k+1}} \cdot b_2 \Pi) \cdot (1 + \frac{2}{\pi_0^{k+1}} \cdot b_2 \overbar{\Pi}) \\
%		&= 1 + \frac{4}{\pi_0^{2k+1}} (b_2 - \varepsilon b_2^2),
%	\end{align*}
%	and it follows from Remark \ref{LA_rmk} \eqref{LA_rmk2}, that $b_2$ is a unit with
%	\begin{equation*}
%		\widetilde{\varepsilon} \equiv b_2 - \varepsilon b_2^2 \mod \pi_0.
%	\end{equation*}
%	After multiplying the equation with $\varepsilon$ and setting $x = - \varepsilon b_2$, this becomes
%	\begin{equation*}
%		\varepsilon_0 = x + x^2 \mod \pi_0.
%	\end{equation*}
%	But a solution of this equation would lift to $O_F$ by Hensel's Lemma, and then
%	\begin{equation*}
%		(1 + 2x)^2 = 1 + 4 \varepsilon_0,
%	\end{equation*}
%	contradicting our assumptions on $\delta$.
%	Hence $\widetilde{\vartheta}^2 \notin \Nm_{E|F}(E^{\times})$ and we can choose an embedding $\widetilde{E} \inj B$ such that $\vartheta \widetilde{\vartheta} = - \widetilde{\vartheta} \vartheta$.

	We have $\Pi = t(1 + \vartheta) / 2$ and $\widetilde{\Pi} = \pi_0(1 + \widetilde{\vartheta}) / t$, thus
	\begin{equation*}
		\frac{\Pi \cdot \widetilde{\Pi}}{\pi_0} = \frac{(1 + \vartheta) \cdot (1 + \widetilde{\vartheta})}{2} = \frac{1 + \vartheta + \widetilde{\vartheta} + \vartheta \cdot \widetilde{\vartheta}}{2},
	\end{equation*}
	and
	\begin{align*}
		(\vartheta + \widetilde{\vartheta} + \vartheta \cdot \widetilde{\vartheta})^2 & = \vartheta^2 + \widetilde{\vartheta}^2 - \vartheta^2 \cdot \widetilde{\vartheta}^2 \\
		& = (1 - 4\pi_0/t^2) + (1 + t^2u/\pi_0) - (1 - 4\pi_0/t^2) (1 + t^2u/\pi_0) \\
		& = 1 + 4u.
	\end{align*}
	Hence $\gamma \mapsto \frac{\Pi \cdot \widetilde{\Pi}}{\pi_0}$ induces an embedding $F^{(2)} \inj B$.
	
	It remains to prove that the tuple $u = (1,\Pi,\widetilde{\Pi},\Pi \cdot \widetilde{\Pi} / \pi_0)$ is a basis of $O_B$ as $O_F$-module.
	By \cite[I.\ Cor.\ 4.8]{Vig80}, it suffices to check that the discriminant
	\begin{equation*}
		\disc(u) = \det(\Trd(u_i u_j)) \cdot O_F
	\end{equation*}
	is equal to $\disc(O_B)$.
	An easy calculation shows $\det(\Trd(u_i u_j)) \cdot O_F = \pi_0 O_F$ and then the assertion follows from \cite[V, II.\ Cor.\ 1.7]{Vig80}.
\end{proof}

For the remainder of this section, we will consider lattices $\Lambda$ in a $2$-dimensional $E$-vector space $C$ with a split $E|F$-hermitian\footnote{Here and in the following, sesquilinear forms will be linear from the left and semi-linear from the right.} form $h$.
Recall from \cite{Jac62} that, up to isomorphism, there are $2$ different $E|F$-hermitian vector spaces $(C,h)$ of fixed dimension $n$, parametrized by the discriminant $\disc(C,h) \in F^{\times} / \Nm_{E|F}(E^{\times})$.
A hermitian space $(C,h)$ is called \emph{split} whenever $\disc(C,h) = 1$.
In our case, where $(C,h)$ is split of dimension $2$, we can find a basis $(e_1,e_2)$ of $C$ with $h(e_i,e_i) = 0$ and $h(e_1,e_2) = 1$.

Denote by $\Lambda^{\sharp}$ the dual of a lattice $\Lambda \subseteq C$ with respect to $h$.
%The lattice $\Lambda$ is called \emph{unimodular} (or \emph{selfdual}) if $\Lambda^{\sharp} = \Lambda$ and called $\Pi$-\emph{modular} if $\Pi \Lambda^{\sharp} = \Lambda$.
The lattice $\Lambda$ is called $\Pi^i$-\emph{modular} if $\Lambda = \Pi^i \Lambda^{\sharp}$ (resp.\ \emph{unimodular} or \emph{selfdual} when $i=0$).
In contrast to the $p$-adic case with $p > 2$, there exists more than one type of $\Pi^i$-modular lattices in our case (\emph{cf.}\ \cite{Jac62}):

\begin{prop} \label{LA_latt}
	Define the norm ideal $\Nm(\Lambda)$ of $\Lambda$ by
	\begin{equation}
		\Nm(\Lambda) = \langle \{ h(x,x) | x \in \Lambda \} \rangle \subseteq F.
	\end{equation}
	%Let $\Nm(\Lambda) = \langle \{ h(x,x) | x \in \Lambda \} \rangle \subseteq F$.
	Any $\Pi^i$-modular lattice $\Lambda \subseteq C$ is determined up to the action of $\U(C,h)$ by the ideal $\Nm(\Lambda) = \pi_0^{\ell} O_F \subseteq F$.
	For $i= 0$ or $1$, the exponent $\ell$ can be any integer such that 
	\begin{align*}
		|2| & \leq |\pi_0|^{\ell} \leq |1| \text{ (for } E|F \text{ (R-P), unimodular } \Lambda), \\
		|2\pi_0| & \leq |\pi_0|^{\ell} \leq |\pi_0| \text{ (for } E|F \text{ (R-P), } \Pi \text{-modular } \Lambda), \\
		|t| & \leq |\pi_0|^{\ell} \leq |1| \text{ (for } E|F \text{ (R-U), unimodular } \Lambda), \\
		|t| & \leq |\pi_0|^{\ell} \leq |\pi_0| \text{ (for } E|F \text{ (R-U), } \Pi \text{-modular } \Lambda),
	\end{align*}
	where $|\cdot|$ is the (normalized) absolute value on $F$.
	Two $\Pi^i$-modular lattices $\Lambda$ and $\Lambda'$ are isomorphic if and only if $\Nm(\Lambda) = \Nm(\Lambda')$.
	\qed
\end{prop}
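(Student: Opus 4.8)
The plan is to prove that $\Nm(\Lambda)$ is a complete $\U(C,h)$-invariant of $\Pi^i$-modular lattices for $i\in\{0,1\}$ and to determine its possible values, following Jacobowitz's classification of hermitian lattices over dyadic fields \cite{Jac62}. First I would record that $\Nm(\Lambda)$ is genuinely a fractional $O_F$-ideal: since $h$ is hermitian we have $h(x,x)\in F$ for every $x$, and $\Lambda$ is a finitely generated $O_E$-module, so the values $h(x,x)$ generate a fractional ideal, necessarily of the form $\pi_0^{\ell}O_F$. It is preserved by $\U(C,h)$ because every $g\in\U(C,h)$ preserves $h$. Thus the norm ideal is automatically an invariant, and the content of the proposition is the converse, together with the determination of the range of $\ell$.

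Next I would establish the two-sided bound
\[
\Tr_{E|F}\bigl(\mathfrak{s}(\Lambda)\bigr)\ \subseteq\ \Nm(\Lambda)\ \subseteq\ \mathfrak{s}(\Lambda)\cap F,
\]
where $\mathfrak{s}(\Lambda)=\langle h(x,y)\mid x,y\in\Lambda\rangle$ is the scale ideal. The right inclusion is immediate, since each $h(x,x)$ lies in $\mathfrak{s}(\Lambda)\cap F$. For the left inclusion, for any $x,y\in\Lambda$ and $\alpha\in O_E$ one has $\Tr_{E|F}\bigl(\alpha\, h(x,y)\bigr)=h(\alpha x+y,\alpha x+y)-h(\alpha x,\alpha x)-h(y,y)\in\Nm(\Lambda)$, and such elements generate $\Tr_{E|F}(\mathfrak{s}(\Lambda))$. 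The $\Pi^i$-modularity fixes the scale, $\mathfrak{s}(\Lambda)=\Pi^i O_E$ for $i=0,1$. Feeding $\mathfrak{s}(\Lambda)\cap F$ and $\Tr_{E|F}(\Pi^i O_E)$ — both computed exactly as in Lemma \ref{LA_diff} — into the display yields the stated bounds in each of the four cases; for instance in the unimodular (R-P) case one has $\mathfrak{s}(\Lambda)\cap F=O_F$ and $\Tr_{E|F}(O_E)=2O_F$, giving $0\le\ell\le v(2)$.

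For the realizability of each admissible $\ell$ I would exhibit explicit Gram matrices. In the unimodular (R-P) case the matrix $\bigl(\begin{smallmatrix}\pi_0^{\ell}u & 1\\ 1 & 0\end{smallmatrix}\bigr)$ with $u\in O_F^{\times}$ has unit determinant (so the lattice is unimodular), discriminant $1$ (so its ambient space is split and may be identified with $(C,h)$), and norm ideal $\pi_0^{\ell}O_F+2O_F=\pi_0^{\ell}O_F$ for $0\le\ell\le v(2)$; the three remaining cases are handled by the analogous matrices $\bigl(\begin{smallmatrix}\pi_0^{\ell}u & \Pi\\ \overbar{\Pi} & 0\end{smallmatrix}\bigr)$ and their (R-U) counterparts, where one checks the scale, the discriminant and the norm ideal in the same way. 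This produces a $\Pi^i$-modular lattice in $(C,h)$ realizing every $\ell$ in the prescribed range.

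Finally, completeness is the main obstacle. Here I would show that an arbitrary $\Pi^i$-modular $\Lambda$ is isometric to one of the normal forms above with $\ell$ equal to its own norm valuation. The strategy is: choose a primitive $x\in\Lambda$ with $v\bigl(h(x,x)\bigr)=\ell$ realizing a generator of $\Nm(\Lambda)$; use $\Pi^i$-modularity, which makes $h$ a near-perfect pairing on $\Lambda$, to find $y$ with $h(x,y)$ a generator $\Pi^i$ of the scale; and then normalize $h(y,y)$ by replacing $y\mapsto y+cx$. The remaining task — to show that the leftover unit parameter is immaterial, i.e.\ that any two such normal forms with the same $\ell$ lie in a single $\U(C,h)$-orbit — is exactly the dyadic heart of the matter and is where I expect the real work to be. It amounts to solving norm equations in $O_E$ whose solvability, once the ambient (split) discriminant and the scale are fixed, is governed by Jacobowitz's invariant theory \cite{Jac62}; the boundary values $\ell\in\{0,v(2)\}$ (respectively the (R-U) endpoints) need separate attention, because there the norm generator becomes a unit or meets the different, altering which vectors are primitive or isotropic. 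Granting this normalization, the norm ideal determines $\Lambda$ up to $\U(C,h)$, which together with the preceding steps completes the proof.
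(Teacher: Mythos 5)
The paper offers no proof of this proposition at all: it is imported wholesale from Jacobowitz \cite{Jac62} (note the \verb|\qed| directly after the statement and the ``\emph{cf.}\ \cite{Jac62}'' in the surrounding text), so there is no in-paper argument to compare yours against. Measured on its own terms, your proposal correctly and cleanly establishes two of the three ingredients. The sandwich $\Tr_{E|F}(\mathfrak{s}(\Lambda))\subseteq\Nm(\Lambda)\subseteq\mathfrak{s}(\Lambda)\cap F$ is right (the polarization identity $h(\alpha x+y,\alpha x+y)-h(\alpha x,\alpha x)-h(y,y)=\Tr_{E|F}(\alpha h(x,y))$ does generate the trace of the scale), and feeding in $\Tr_{E|F}(O_E)=2O_F$, $\Tr_{E|F}(\Pi O_E)=2\pi_0O_F$ for (R-P) and $\Tr_{E|F}(O_E)=\Tr_{E|F}(\Pi O_E)=tO_F$ for (R-U), together with $\Pi^iO_E\cap F$, reproduces exactly the four stated ranges for $\ell$. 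The explicit Gram matrices $\bigl(\begin{smallmatrix}\pi_0^{\ell}u&\Pi^i\\ \overbar{\Pi}^i&0\end{smallmatrix}\bigr)$ do realize every admissible $\ell$ in the split space (their determinants are $-1$ resp.\ $-\Nm(\Pi)$, hence trivial discriminant), consistent with the normal form \eqref{LA_lattmatrix} used later in the paper.

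The genuine gap is the one you yourself flag: the claim that $\Nm(\Lambda)$ is a \emph{complete} $\U(C,h)$-invariant is asserted, not proven. Your reduction (pick a primitive vector realizing the norm generator, complete it to a basis using $\Pi^i$-modularity, normalize the second diagonal entry) gets you to the normal form $\bigl(\begin{smallmatrix}\pi_0^{\ell}u&\Pi^i\\ \overbar{\Pi}^i&0\end{smallmatrix}\bigr)$, but the remaining step --- that the unit $u$ can be changed arbitrarily within the same $\Nm$-class by an isometry, i.e.\ that two such forms with the same $\ell$ are $\U(C,h)$-equivalent --- is precisely the dyadic computation that constitutes Jacobowitz's classification of binary modular lattices, and you do not carry it out. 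Since the paper itself rests the proposition entirely on that citation, your proposal is not weaker than the paper's treatment --- indeed it supplies the bounds and existence arguments the paper leaves implicit --- but as a self-contained proof it is incomplete at exactly the point you identify: solvability of the relevant norm/trace equations in $O_E$, with the boundary cases $\ell=0$ and $|\pi_0|^{\ell}$ minimal requiring separate care.
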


For any other $i$, the possible values of $\ell$ for a given $\Pi^i$-modular lattice $\Lambda$ are easily obtained by shifting.
In fact, we can choose an integer $j$ such that $\Pi^j \Lambda$ is either unimodular or $\Pi$-modular.
%There is a similiar characterization of $\Pi^i$-modular lattices for an arbitrary integer $i$:
%For a given $\Pi^i$-modular lattice $\Lambda$, choose an integer $j$ such that $\pi_0^j \Lambda$ is unimodular or $\Pi$-modular.
Then $\Nm (\Lambda) = \pi_0^{-j} \Nm (\Pi^j \Lambda)$ and we can apply the proposition above.

Since $(C,h)$ is split, any $\Pi^i$-modular lattice $\Lambda$ contains an \emph{isotropic} vector $v$ (\emph{i.e.}, with $h(v,v) = 0$).
After rescaling with a suitable power of $\Pi$, we can extend $v$ to a basis of $\Lambda$.
Hence there always exists a basis $(e_1,e_2)$ of $\Lambda$ such that $h$ is represented by a matrix of the form
\begin{equation} \label{LA_lattmatrix}
	H_{\Lambda} = \begin{pmatrix}
					  x & \overbar{\Pi}^i \\
		\Pi^i &		\\
	\end{pmatrix}, \quad x \in F.
\end{equation}
If $x = 0$ in this representation, then $\Nm(\Lambda) = \pi_0^{\ell} O_F$ is as small as possible, or in other words, the absolute value of $|\pi_0|^{\ell}$ is minimal.
On the other hand, whenever $|\pi_0|^{\ell}$ takes the minimal absolute value for a given $\Pi^i$-modular lattice $\Lambda$, there exists a basis $(e_1,e_2)$ of $\Lambda$ such that $h$ is represented by $H_{\Lambda}$ with $x = 0$.
Indeed, this follows because the ideal $\Nm(\Lambda)$ already determines $\Lambda$ up to isomorphism.
In this case (when $x = 0$), we call $\Lambda$ a \emph{hyperbolic} lattice.
By the arguments above, a $\Pi^i$-modular lattice is thus hyperbolic if and only if its norm is minimal.
In all other cases, where $\Lambda$ is $\Pi^i$-modular but not hyperbolic, we have $\Nm(\Lambda) = x O_F$.

For further reference, we explicitly write down the norm of a hyperbolic lattice for the cases that we need later.
For other values of $i$, the norm can easily be deduced from this by shifting (see also \cite[Table 9.1]{Jac62}).

\begin{lem} \label{LA_hyp}
	A $\Pi^i$-modular lattice $\Lambda$ is hyperbolic if and only if
	\begin{align*}
		\Nm(\Lambda) & = 2 O_F, & & \text{for } E|F \text{ (R-P), } i = 0 \text{ or } -1, \\
		\Nm(\Lambda) & = t O_F, & & \text{for } E|F \text{ (R-U), } i = 0 \text{ or } 1.
	\end{align*}
	The norm ideal of $\Lambda$ is minimal among all norm ideals for $\Pi^i$-modular lattices in $C$.
	\qed
\end{lem}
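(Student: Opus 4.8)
The plan is to deduce the lemma directly from Proposition \ref{LA_latt}, together with the characterization established in the discussion preceding the statement: a $\Pi^i$-modular lattice is hyperbolic precisely when its norm ideal $\Nm(\Lambda)$ is minimal among all $\Pi^i$-modular lattices in $C$. Granting this, the whole content reduces to identifying, in each case, the smallest admissible ideal $\Nm(\Lambda) = \pi_0^\ell O_F$ allowed by the bounds in Proposition \ref{LA_latt}. The equivalence ``hyperbolic $\Leftrightarrow$ minimal norm'' then simultaneously yields the ``if and only if'' and the final sentence of the lemma, so only the numerical computation remains.

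For the three cases literally covered by Proposition \ref{LA_latt}, namely $i = 0$ in type \emph{(R-P)} and $i = 0$ or $i = 1$ in type \emph{(R-U)}, I would simply read off the extremal value. The ideal $\pi_0^\ell O_F$ is smallest exactly when $|\pi_0|^\ell$ attains the lower bound of the relevant inequality: in the \emph{(R-P)} unimodular case this lower bound is $|2|$, giving $\Nm(\Lambda) = 2O_F$, while in both \emph{(R-U)} cases the lower bound is $|t|$, giving $\Nm(\Lambda) = tO_F$. This settles the \emph{(R-U)} assertion entirely and the $i=0$ half of the \emph{(R-P)} assertion.

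The only remaining case is $i = -1$ in type \emph{(R-P)}, which I would treat by the shifting argument sketched after Proposition \ref{LA_latt}. The key point is that multiplication by $\Pi$ moves the modularity index by $2$: since the minimal polynomial of $\Pi$ has constant term $\pi_0$ we have $\Nm_{E|F}(\Pi) = \pi_0$, and in both ramified types $\overbar{\Pi}$ and $\Pi$ generate the same $O_E$-ideal, so that $(\Pi\Lambda)^{\sharp} = \overbar{\Pi}^{-1}\Lambda^{\sharp}$ differs from $\Pi^{-1}\Lambda^{\sharp}$ only by a unit. Hence, if $\Lambda$ is $\Pi^{-1}$-modular, then $\Pi\Lambda$ is $\Pi$-modular, and from $\Nm(\Pi\Lambda) = \pi_0\,\Nm(\Lambda)$ one sees that $\Nm(\Lambda)$ is minimal exactly when $\Nm(\Pi\Lambda)$ is. Applying the already-treated $\Pi$-modular \emph{(R-P)} case, whose minimal norm is $2\pi_0 O_F$, gives $\Nm(\Lambda) = \pi_0^{-1}\cdot 2\pi_0 O_F = 2O_F$, as claimed.

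Since every step is an immediate consequence of Proposition \ref{LA_latt} and the norm-shifting identity, no genuine difficulty arises. The only point demanding care is the bookkeeping in the shifting step, specifically the verification that multiplication by $\Pi$ shifts the modularity index by $2$ rather than by $1$; this rests precisely on the fact that $\overbar{\Pi}/\Pi$ is a unit in $O_E$ in both ramified types.
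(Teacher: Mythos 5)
Your proposal is correct and follows exactly the route the paper intends: the lemma carries no separate proof there, being read off from Proposition \ref{LA_latt} together with the preceding observation that a $\Pi^i$-modular lattice is hyperbolic if and only if its norm ideal is minimal, with the remaining case $i=-1$ in type (R-P) handled by the shifting remark after that proposition. Your bookkeeping for the shift (that $\Pi\Lambda$ is $\Pi$-modular when $\Lambda$ is $\Pi^{-1}$-modular, and that $\Nm(\Pi\Lambda)=\pi_0\Nm(\Lambda)$) is accurate.
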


In the following, we will only consider the cases $i = 0$ or $-1$ for $E|F$ (R-P) and the cases $i = 0$ or $1$ for $E|F$ (R-U), since these are the cases we will need later.
We want to study the following question:

\begin{qu} \label{LA_lattqu}
	Assume $E|F$ is (R-P).
	Fix a $\Pi^{-1}$-modular lattice $\Lambda_{-1} \subseteq C$ (not necessarily hyperbolic).
	How many unimodular lattices $\Lambda_0 \subseteq \Lambda_{-1}$ are there and what norms $\Nm(\Lambda_0)$ can appear?
	Dually, for a fixed unimodular lattice $\Lambda_0 \subseteq C$, how many $\Pi^{-1}$-modular lattices $\Lambda_{-1}$ with $\Lambda_0 \subseteq \Lambda_{-1}$ do exist and what are their norms?
	
	Same question for $E|F$ (R-U) and unimodular resp.\ $\Pi$-modular lattices.
\end{qu}

Of course, such an inclusion is always of index $1$.
The inclusions $\Lambda_0 \subseteq \Lambda_{-1}$ of index $1$ correspond to lines in $\Lambda_{-1} / \Pi \Lambda_{-1}$.
Denote by $q$ the number of elements in the common residue field of $O_F$ and $O_E$.
Then there exist at most $q+1$ such $\Pi$-modular lattices $\Lambda_0$ for a given $\Lambda_{-1}$.
The same bound holds in the dual case, \emph{i.e.}, there are at most $q+1$ $\Pi^{-1}$-modular lattices containing a given unimodular lattice $\Lambda_0$.
The Propositions \ref{LA_lattRP} and \ref{LA_lattRU} below provide an exhaustive answer to Question \ref{LA_lattqu}.
Since the proofs consist of a lengthy but simple case-by-case analysis, we will leave it to the interested reader.
%In the following, $q$ denotes the number of elements in the common residue field of $O_F$ and $O_E$.

\begin{prop} \label{LA_lattRP} Let $E|F$ of type \emph{(R-P)}.
	\begin{enumerate}
		\item Let $\Lambda_{-1} \subseteq C$ be a $\Pi^{-1}$-modular hyperbolic lattice.
		There are $q+1$ hyperbolic unimodular lattices contained in $\Lambda_{-1}$.
		\item Let $\Lambda_{-1} \subseteq C$ be a $\Pi^{-1}$-modular non-hyperbolic lattice.
		Let $\Nm(\Lambda_{-1}) = \pi_0^{\ell}O_F$.
		Then $\Lambda_{-1}$ contains one unimodular lattice $\Lambda_0$ with $\Nm(\Lambda_0) = \pi_0^{\ell+1}O_F$ and $q$ unimodular lattices of norm $\pi_0^{\ell}O_F$.
		\item Let $\Lambda_0 \subseteq C$ be a unimodular hyperbolic lattice.
		There are two hyperbolic $\Pi^{-1}$-modular lattices $\Lambda_{-1}\supseteq \Lambda_0$ and $q-1$ non-hyperbolic $\Pi^{-1}$-modular lattices $\Lambda_{-1} \supseteq \Lambda_0$ with $\Nm(\Lambda_{-1}) = 2/\pi_0 O_F$.
		\item \label{LA_lattRPmin} Let $\Lambda_0 \subseteq C$ be unimodular non-hyperbolic.
		Let $\Nm(\Lambda_0) = \pi_0^{\ell}O_F$.
		There exists one $\Pi^{-1}$-modular lattice $\Lambda_{-1} \supseteq \Lambda_0$ with $\Nm(\Lambda_{-1}) = \pi_0^{\ell}O_F$ and, unless $\ell = 0$, there are $q$ non-hyperbolic $\Pi^{-1}$-modular lattices $\Lambda_{-1} \supseteq \Lambda_0$ with $\Nm(\Lambda_{-1}) = \pi_0^{\ell-1}O_F$.
		%\emph{(}If $\ell = 0$, just one $\Pi$-modular lattice is contained in $\Lambda_0$.\emph{)}
	\end{enumerate}
\end{prop}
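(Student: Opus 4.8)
The plan is to reduce the entire statement to linear algebra over the residue field $k = O_E/\Pi O_E = O_F/\pi_0 O_F$, which has $q$ elements and, crucially, characteristic $2$. Throughout I fix one of the two lattices and study the other through its image in a two-dimensional $k$-vector space. Since every inclusion in question has colength $1$, a unimodular sublattice $\Lambda_0 \subseteq \Lambda_{-1}$ automatically lies between $\Pi\Lambda_{-1}$ and $\Lambda_{-1}$: dualizing $\Lambda_0\subseteq\Lambda_{-1}$ and using $\Lambda_0=\Lambda_0^{\sharp}$ together with $\Lambda_{-1}^{\sharp}=\Pi\Lambda_{-1}$ gives $\Pi\Lambda_{-1}\subseteq\Lambda_0\subseteq\Lambda_{-1}$. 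Such $\Lambda_0$ therefore correspond bijectively to the $q+1$ lines in $V=\Lambda_{-1}/\Pi\Lambda_{-1}$; dually, the candidate superlattices $\Lambda_{-1}\supseteq\Lambda_0$ (which satisfy $\Lambda_0\subseteq\Lambda_{-1}\subseteq\Pi^{-1}\Lambda_0$) correspond to the $q+1$ lines in $W=\Lambda_0/\Pi\Lambda_0$. In each case I fix an adapted basis realizing $h$ by the matrix $H_\Lambda$ of \eqref{LA_lattmatrix}, so that the relevant lattice is hyperbolic precisely when the diagonal entry vanishes, and then compute directly.

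For parts (1) and (2) I would first observe that selfduality is automatic. Scaling $h$ by $\Pi$ and reducing modulo $\Pi$ yields a well-defined $k$-bilinear form $\overbar h$ on $V$; using $\overbar\Pi=-\Pi$ and $\charp k = 2$ one checks that $\overbar h$ is alternating and nondegenerate, so in the two-dimensional space $V$ every line is its own orthogonal complement. Since the dual $\Lambda_0^{\sharp}$ corresponds to the $\overbar h$-orthogonal of the line of $\Lambda_0$, every colength-one sublattice is unimodular, which accounts for the total count $q+1$ in both cases. The remaining content is the norm: writing a generator $v = ae_1+be_2$, the value $h(v,v)$ is congruent to $\Nm(a)\,x$ modulo $2O_F=\Tr_{E|F}(O_E)$, and combining this with the contribution of $\Pi\Lambda_{-1}$ yields $\Nm(\Lambda_0)$. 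The dichotomy is governed by whether the reduction $\overbar a\in k$ vanishes: this singles out one line from the remaining $q$, producing the split ``$1$ of norm $\pi_0^{\ell+1}$ and $q$ of norm $\pi_0^{\ell}$'' in (2), whereas for hyperbolic $\Lambda_{-1}$ (diagonal $0$) all $q+1$ sublattices inherit $\Nm=2O_F$ and are hyperbolic, giving (1).

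For parts (3) and (4) the $\Pi^{-1}$-modularity of a superlattice is now a genuine constraint, which I would encode as the equality in $W$ of two lines: that of $\Pi\Lambda_{-1}$ and that of $\Lambda_{-1}^{\sharp}$ (both lie between $\Pi\Lambda_0$ and $\Lambda_0$, by dualizing the sandwich $\Lambda_0\subseteq\Lambda_{-1}\subseteq\Pi^{-1}\Lambda_0$). Writing $w=\Pi^{-1}(ae_1+be_2)$ and using the Gram matrix of $\Lambda_0$ with diagonal entry $x_0$, this equality reduces to a single equation over $k$ in which the reduction $\overbar{x_0}$ appears. When $\overbar{x_0}=0$, i.e. when $\Lambda_0$ is hyperbolic or non-hyperbolic with $\ell\ge 1$, the cross terms cancel in characteristic $2$ and the equation holds for every line, so all $q+1$ superlattices are $\Pi^{-1}$-modular; when $\overbar{x_0}\ne 0$, i.e. $\ell=0$, only the line $\overbar a=0$ survives, which is the source of the clause ``unless $\ell=0$'' in (4). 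The norms are then read off from $h(w,w)=\pi_0^{-1}\bigl(\Nm(a)x_0+\Tr_{E|F}(a\overbar b)\bigr)$, with Lemma \ref{LA_diff} controlling the trace term; the reductions $\overbar a\,\overbar b$ and $\Nm(a)\equiv\overbar a^{\,2}$ cut out the counts $2$ versus $q-1$ in (3) and $1$ versus $q$ in (4), the hyperbolicity of each lattice being decided by Lemma \ref{LA_hyp}.

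The conceptual heart—automatic selfduality from the alternating residual form, and the $\overbar{x_0}$-dichotomy for modularity—is short; the main obstacle is purely bookkeeping. One must track the $\pi_0$-adic valuations of $h(v,v)$ and $h(w,w)$ against the boundary value $v(2)$ and the admissible range $0\le\ell\le v(2)-1$ of Proposition \ref{LA_latt}, and verify case by case that the resulting norm ideal lands on exactly the predicted power of $\pi_0$. This is the ``lengthy but simple'' analysis referred to in the text, and it repeatedly hinges on $\charp k = 2$ (through $\Tr_{E|F}(O_E)=2O_F$ and the vanishing $2\,\overbar a\,\overbar b=0$), which is precisely what distinguishes the present count from the $p>2$ situation.
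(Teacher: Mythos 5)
The paper does not actually prove Proposition \ref{LA_lattRP}: it is stated with the remark that the proof is ``a lengthy but simple case-by-case analysis'' left to the reader, so there is no written argument to compare yours against line by line. Your proposal is correct and supplies exactly that analysis, organized the same way the paper handles the adjacent Lemma \ref{RP_IS}: colength-one sublattices (resp.\ overlattices) are parametrized by lines in $\Lambda_{-1}/\Pi\Lambda_{-1}$ (resp.\ $\Lambda_0/\Pi\Lambda_0$), and one computes with the normal form \eqref{LA_lattmatrix}. Your two structural observations are the right ones and both check out: the reduction of $\Pi h$ modulo $\Pi$ is a nondegenerate alternating form on $\Lambda_{-1}/\Pi\Lambda_{-1}$ (alternating because $h(x,x)\in F\cap\Pi^{-1}O_E=O_F$), so every line is its own orthogonal and every colength-one sublattice of a $\Pi^{-1}$-modular lattice is automatically selfdual, giving the count $q+1$ in (1)--(2); and for (3)--(4) the $\Pi^{-1}$-modularity of $\Lambda_{-1}=\Lambda_0+O_E\Pi^{-1}u$ reduces to $h(u,u)\in\pi_0O_F$, i.e.\ $\Nm(a)x_0\in\pi_0O_F$ since $\Tr_{E|F}(a\overbar{b})\in 2O_F\subseteq\pi_0O_F$, which yields the dichotomy on $\overbar{x_0}$ and the clause ``unless $\ell=0$''. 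The norm identifications also come out as you predict once one notes $\Tr_{E|F}(O_E)=\Tr_{E|F}(\Pi^{-1}O_E)=2O_F$ (both traces occur) and that the cross terms $\Tr_{E|F}(\alpha h(w,u_0))$ always land in $2O_F$, hence never enlarge the norm ideal; in particular in (3) one gets $\Tr_{E|F}(a\overbar{b})=2(a_1b_1+\pi_0a_2b_2)$, whose valuation is exactly $v(2)$ unless $\overbar{a}\,\overbar{b}=0$, producing the split $2$ versus $q-1$. I see no gap; what remains unwritten in your outline is only the routine verification of these valuations, which is precisely the bookkeeping the paper itself declines to print.
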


Note that the total amount of unimodular resp.\ $\Pi^{-1}$-modular lattices found for $\Lambda = \Lambda_{-1}$ resp.\ $\Lambda_0$ is $q+1$ except in the case of Proposition \ref{LA_lattRP} \eqref{LA_lattRPmin} when $\ell = 0$.
In that particular case, there is just one $\Pi^{-1}$-modular lattice contained in $\Lambda_0$.
The same phenomenon also appears in the case (R-U), see part \eqref{LA_lattRUmin} of the following proposition.

\begin{prop} \label{LA_lattRU} Let $E|F$ of type \emph{(R-U)}.
	\begin{enumerate}
		\item Let $\Lambda_0 \subseteq C$ be a unimodular hyperbolic lattice.
		There are $q+1$ hyperbolic $\Pi$-modular lattices $\Lambda_1 \subseteq \Lambda_0$.
		\item \label{LA_lattRUmin} Let $\Lambda_0 \subseteq C$ be unimodular non-hyperbolic with $\Nm(\Lambda_0) = \pi_0^{\ell}O_F$.
		There is one $\Pi$-modular lattice $\Lambda_1 \subseteq \Lambda_0$ with norm ideal $\Nm(\Lambda_1) = \pi_0^{\ell+1}O_F$ and if $\ell \neq 0$, there are also $q$ non-hyperbolic $\Pi$-modular lattices $\Lambda_1 \subseteq \Lambda_0$ with $\Nm(\Lambda_1) = \pi_0^{\ell}O_F$.
		\item \label{LA_lattRUh} Let $\Lambda_1 \subseteq C$ be a $\Pi$-modular hyperbolic lattice.
		There are two unimodular hyperbolic lattices containing $\Lambda_1$ and $q-1$ unimodular lattices $\Lambda_0$ with $\Lambda_1 \subseteq \Lambda_0$ and $\Nm(\Lambda_0) =  t/ \pi_0 O_F$.
		\item \label{LA_lattRUnh} Let $\Lambda_1 \subseteq C$ be a $\Pi$-modular non-hyperbolic lattice and let $\Nm(\Lambda_1) = \pi_0^{\ell}O_F$.
		The lattice $\Lambda_1$ is contained in $q$ unimodular lattices of norm $\pi_0^{\ell-1}O_F$ and in one unimodular lattice $\Lambda_0$ with $\Nm(\Lambda_0) = \pi_0^{\ell}O_F$.
	\end{enumerate}
\end{prop}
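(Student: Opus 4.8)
The plan is to prove all four statements by a single explicit computation, handling the two directions (fixing $\Lambda_0$ versus fixing $\Lambda_1$) in parallel. The key reduction is the dictionary already invoked in the text: an index-one inclusion $\Lambda_1 \subseteq \Lambda_0$ is the same datum as a line in the two-dimensional $k$-vector space $\Lambda_0/\Pi\Lambda_0$, the sublattice attached to a line $\ell$ being $\Lambda_1 = \{v \in \Lambda_0 : \overbar{v} \in \ell\}$; dually, a colength-one overlattice $\Lambda_0 \supseteq \Lambda_1$ corresponds to a line in $\Pi^{-1}\Lambda_1/\Lambda_1 \cong \Lambda_1/\Pi\Lambda_1$. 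In each direction there are exactly $q+1$ candidates, indexed by $\mathbb{P}^1(k)$, so the entire content is to decide, candidate by candidate, whether the resulting lattice is $\Pi$-modular (resp.\ unimodular) and to read off its norm ideal.

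To carry this out I would fix convenient coordinates. By \eqref{LA_lattmatrix} a unimodular $\Lambda_0$ has a basis with Gram matrix $\bigl(\begin{smallmatrix} x & 1 \\ 1 & 0\end{smallmatrix}\bigr)$, $x \in O_F$, and a $\Pi$-modular $\Lambda_1$ has Gram matrix $\bigl(\begin{smallmatrix} y & \overbar{\Pi} \\ \Pi & 0\end{smallmatrix}\bigr)$; by Lemma \ref{LA_hyp} the lattice is hyperbolic exactly when the diagonal entry lies in $tO_F$, in which case one may take it to be $0$. For each line I would write down an explicit basis of the candidate lattice --- for instance $(\Pi e_1, e_2)$ for the line $[0:1]$ and $(e_1 + s e_2, \Pi e_2)$ for the lines $[1:s]$, $s \in k$, in the ``fixed $\Lambda_0$'' direction, and $(g_1, \Pi^{-1}g_2)$, $(\Pi^{-1}(g_1 + sg_2), g_2)$ in the ``fixed $\Lambda_1$'' direction --- and compute the Gram matrix. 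Two routine inputs then settle every case: the modularity type is read off from the Smith normal form over $O_E$ (equivalently from $v_E(\det)$ together with the $\Pi$-adic gcd of the entries), and the norm is $\Nm(\Lambda) = (\text{diagonal entry}) + tO_F$, since the off-diagonal trace contribution is always $\Tr_{E|F}(\Pi O_E) = \Tr_{E|F}(O_E) = tO_F$ by Lemma \ref{LA_diff} (using $\Tr(1) = 2 \in tO_F$, $\Tr(\Pi) = t$, and $v(\Tr(\Pi^2)) = v(t^2 - 2\pi_0) > v(t)$).

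With this in place the four counts emerge mechanically. For fixed $\Lambda_0$ of norm $\pi_0^\ell O_F$ (parts (1),(2)), the line $[0:1]$ gives a $\Pi$-modular lattice with diagonal $\pi_0 x$, hence norm $(\pi_0 x) + tO_F = \pi_0^{\ell+1}O_F$; the other $q$ lines give diagonal $x + \Tr(s)$, which yields a $\Pi$-modular lattice precisely when this entry is a non-unit. As $\Tr(s) \in tO_F \subseteq \mathfrak{m}$, this fails only when $x$ is a unit, i.e.\ when $\ell = 0$, which explains why for $\ell = 0$ a single $\Pi$-modular sublattice survives while for $\ell \geq 1$ all $q$ remaining lines give norm $\pi_0^\ell O_F$. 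Dually, for fixed non-hyperbolic $\Lambda_1$ of norm $\pi_0^\ell O_F$ (part (4)), the line $[0:1]$ gives a unimodular lattice of norm $\pi_0^\ell O_F$, and the $q$ lines $\Pi^{-1}(g_1 + s g_2)$ give diagonal $\pi_0^{-1}(y + \Tr(s\Pi))$ of valuation $\ell - 1$, hence norm $\pi_0^{\ell-1}O_F$.

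The main obstacle I expect is the valuation bookkeeping that separates the hyperbolic from the non-hyperbolic outcomes, and in particular the appearance of the number \emph{two} in part (3). When $\Lambda_1$ is hyperbolic ($y = 0$), the diagonal attached to $\Pi^{-1}(g_1 + s g_2)$ is $\pi_0^{-1}\Tr(s\Pi)$, and one must determine for how many residues $\overbar{s} \in k$ the valuation $v(\Tr(s\Pi))$ strictly exceeds $v(t)$, thereby forcing the norm down to the minimal $tO_F$. Writing $s = s_0 + s_1\Pi$ with $s_0, s_1 \in O_F$ and using $\Tr(s\Pi) = s_0 t + s_1(t^2 - 2\pi_0)$ together with $v(t^2 - 2\pi_0) > v(t)$, this occurs exactly when $s_0 \in \mathfrak{m}$, i.e.\ for the single residue $\overbar{s} = 0$; combined with the line $[0:1]$ this produces precisely two hyperbolic overlattices and $q-1$ of norm $(t/\pi_0)O_F = \pi_0^{v(t)-1}O_F$, as claimed. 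The analogous borderline analysis at $\ell = 0$ in part (2) and at the minimal norm in part (4) is what makes the case distinction, while elementary, genuinely \emph{ad hoc}; a useful global consistency check is that dualizing $\Lambda \mapsto \Lambda^{\sharp}$ interchanges the pair (1),(2) with the pair (3),(4).
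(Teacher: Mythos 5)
Your proposal is correct and is precisely the ``lengthy but simple case-by-case analysis'' that the paper explicitly leaves to the reader: you use the same line-correspondence for index-one inclusions set up around Question \ref{LA_lattqu}, the normal form \eqref{LA_lattmatrix}, and Lemmas \ref{LA_diff} and \ref{LA_hyp}, and the valuation bookkeeping (in particular $\Tr(O_E)=\Tr(\Pi O_E)=tO_F$ and $v(t^2-2\pi_0)>v(t)$, which isolates the two hyperbolic overlattices in part \eqref{LA_lattRUh} and the borderline $\ell=0$ case in part \eqref{LA_lattRUmin}) all checks out. No gaps.
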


%The combinatorics described in Prop.\ \ref{LA_lattRP} and \ref{LA_lattRU}  are very similar, with the unimodular (resp.\ $\Pi$-modular) lattices in the case (R-U) taking the role of $\Pi^{-1}$-modular (resp.\ unimodular) lattices in the case (R-P).
%There are however, two differences.
%First, there are always more possible values for $\Nm(\Lambda)$ if $E|F$ is (R-P) and secondly, there is an asymmetry for the case $\ell = 0$.
%Both for (R-P) and (R-U), the 
%

If $E|F$ is a quadratic extension of type (R-U) such that $|t| = |\pi_0|$, there exist only hyperbolic $\Pi$-modular lattices in $C$ and hence case \eqref{LA_lattRUnh} of Proposition \ref{LA_lattRU} does not appear.
%(See page \pageref{LA_quadext} for the definition of the parameter $k$ in the (R-U) case.)

%%%%%%%%%%%%%%%%%%%%%%%%%%%%%%%%%%%%%%%%%%%%%%%%%%%%%%%%%%%%%%%%%%%%%%%%%%%%%%%

\section{The moduli problem in the case (R-P)}
\label{RP}

Throughout this section, $E|F$ is a quadratic extension of type (R-P), \emph{i.e.}, there exist uniformizing elements $\pi_0 \in F$ and $\Pi \in E$ such that $\Pi^2 + \pi_0 = 0$.
Then $O_E = O_F[\Pi]$ for the rings of integers $O_F$ and $O_E$ of $F$ and $E$, respectively.
Let $k$ be the common residue field with $q$ elements, $\overbar{k}$ an algebraic closure, and $\breve{F}$ the completion of the maximal unramified extension of $F$, with ring of integers $\breve{O}_F = W_{O_F}(\overbar{k})$.
Let $\sigma$ be the lift of the Frobenius in $\Gal(\overbar{k}|k)$ to $\Gal(\breve{O}_F|O_F)$.

\subsection{The definition of the naive moduli problem $\NEnaive$}

We first construct a functor $\NEnaive$ on $\Nilp$, the category of $\breve{O}_F$-schemes $S$ such that $\pi_0 \mathcal{O}_S$ is locally nilpotent.
We consider tuples $(X, \iota, \lambda)$, where
\begin{itemize}
	\item $X$ is a formal $O_F$-module over $S$ of dimension $2$ and height $4$.
	\item $\iota: O_E \to \End(X)$ is an action of $O_E$ satisfying the \emph{Kottwitz condition}:
	The characteristic polynomial of $\iota(\alpha)$ on $\Lie X$ for any $\alpha \in O_E$ is
	\begin{equation*}
		\charp(\Lie X, T \mid \iota(\alpha)) = (T - \alpha)(T - \overbar{\alpha}).
	\end{equation*}
	Here $\alpha \mapsto \overbar{\alpha}$ is the non-trivial Galois automorphism and the right hand side is a polynomial with coefficients in $\mathcal{O}_S$ via the composition $O_F[T] \inj \breve{O}_F[T] \to \mathcal{O}_S[T]$.
	
	\item $\lambda: X \to X^{\vee}$ is a principal polarization on $X$ such that the Rosati involution satisfies $\iota(\alpha)^{\ast} = \iota(\overbar{\alpha})$ for $\alpha \in O_E$.
\end{itemize}

\begin{defn} \label{RP_isonaive}
	A \emph{quasi-isogeny} (resp.\ an \emph{isomorphism}) $\varphi: (X,\iota,\lambda) \to (X', \iota', \lambda')$ of two such tuples $(X,\iota,\lambda)$ and $(X', \iota', \lambda')$ over $S$ is an $O_E$-linear quasi-isogeny of height $0$ (resp.\ an $O_E$-linear isomorphism) $\varphi: X \to X'$ such that $\lambda = \varphi^{\ast}(\lambda')$.
	
	Denote the group of quasi-isogenies $\varphi: (X,\iota,\lambda) \to (X,\iota,\lambda)$ by $\QIsog(X,\iota,\lambda)$.
\end{defn}

%We say that two tuples $(X,\iota,\lambda)$ and $(X', \iota', \lambda')$ over $S$ are isogenous (resp.\ isomorphic) whenever there exists an $O_E$-linear quasi-isogeny of height $0$ (resp.\ an $O_E$-linear isomorphism) $\varphi: X \to X'$ such that, locally on $S$, the polarizations $\lambda$ and $\varphi^{\ast}(\lambda')$ only differ by a scalar in $O_F^{\times}$.
For $S = \Spec \overbar{k}$ we have the following proposition:

\begin{prop} \label{RP_frnaive}
	Up to isogeny, there exists precisely one tuple $(\mathbb{X},\iota_{\mathbb{X}},\lambda_{\mathbb{X}})$ over $\Spec \overbar{k}$ such that the group $\QIsog(\mathbb{X},\iota_{\mathbb{X}},\lambda_{\mathbb{X}})$ contains $\SU(C,h)$ as a closed subgroup.
	Here $\SU(C,h)$ is the special unitary group for a $2$-dimensional $E$-vector space $C$ with split $E|F$-hermitian form $h$.
\end{prop}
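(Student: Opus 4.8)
The plan is to translate the problem into (relative) Dieudonné theory and classify the resulting isocrystals with extra structure; since we work over the perfect field $\overbar{k}$ and only up to isogeny, this is insensitive to $p=2$, so the genuine $2$-adic difficulties (which concern integral structures over non-reduced bases) do not intervene here. Concretely, by covariant relative Dieudonné theory a tuple $(\mathbb{X},\iota_{\mathbb{X}},\lambda_{\mathbb{X}})$ over $\overbar{k}$, taken up to isogeny, is the same as its rational Dieudonné module: a $4$-dimensional $\breve{F}$-vector space $N$ with a $\sigma$-linear bijection $\mathbf{F}$ and $\mathbf{V}=\pi_0\mathbf{F}^{-1}$, together with an $O_E$-action commuting with $\mathbf{F}$ that makes $N$ free of rank $2$ over $\breve{E}=E\otimes_F\breve{F}$, and a nondegenerate alternating $\breve{F}$-bilinear form $\langle\cdot,\cdot\rangle$ coming from $\lambda_{\mathbb{X}}$ with $\langle\mathbf{F}x,y\rangle=\sigma\langle x,\mathbf{V}y\rangle$ and $\langle\iota(\alpha)x,y\rangle=\langle x,\iota(\overbar{\alpha})y\rangle$. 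Using the inverse different from Lemma \ref{LA_diff}, these last two identities let me rewrite $\langle\cdot,\cdot\rangle$ as the $F$-trace of a unique $\breve{E}/\breve{F}$-sesquilinear form, so the polarization is equivalent to a hermitian form on the $\breve{E}$-module $N$ that is compatible with $\mathbf{F}$; consequently $\QIsog(\mathbb{X},\iota_{\mathbb{X}},\lambda_{\mathbb{X}})$ is the group of $F$-points of the unitary group of this hermitian isocrystal.

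Next I would identify this group and extract the invariant. Requiring that $\QIsog$ contain the $3$-dimensional semisimple group $\SU(C,h)$ forces $N$ to be basic, i.e.\ isoclinic of slope $1/2$, since for any other Newton polygon the centralizer of $\mathbf{F}$ is too small to contain a form of $\SL_2$. For supersingular $N$ the bare isocrystal of height $4$ is unique up to isomorphism and $\End_{\breve{F},\mathbf{F}}(N)\cong M_2(B)$, with $B$ the quaternion division algebra over $F$; by Skolem--Noether all $O_E$-actions are then conjugate, so $(N,\mathbf{F},\iota)$ is itself unique up to isomorphism. The centralizer of $\iota(E)$ in $M_2(B)$ has invariant $\operatorname{inv}_E=[E\!:\!F]\cdot\operatorname{inv}_F M_2(B)=2\cdot\tfrac12\equiv 0$, hence $\End^0(\mathbb{X},\iota_{\mathbb{X}})\cong M_2(E)$ is split, and the involution induced by $\langle\cdot,\cdot\rangle$ is of the second kind over $E|F$. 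It therefore corresponds to a nondegenerate $2$-dimensional $E|F$-hermitian space $(C',h')$ with $\QIsog=\U(C',h')(F)\supseteq\SU(C',h')$; conversely, the isomorphism class of $(N,\mathbf{F},\iota,\langle\cdot,\cdot\rangle)$ --- and hence the isogeny class of the tuple --- is determined by $(C',h')$.

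This yields uniqueness: by the classification of hermitian spaces recalled before Proposition \ref{LA_latt} (following \cite{Jac62}) there are exactly two spaces $(C',h')$ of dimension $2$, the split and the non-split one, and since $\SU$ of the non-split space is anisotropic it cannot contain the split group $\SU(C,h)$; thus the hypothesis forces $(C',h')\cong(C,h)$ split, and any two tuples satisfying it have isomorphic isocrystals with structure, so are isogenous. For existence I would exhibit a tuple realizing the split space. The clean route is to start from the Drinfeld framing object $(\mathbb{X},\iota_{\mathbb{X},B})$, which exists and is unique up to isogeny by \cite[II.\ Prop.\ 5.2]{BC91}: restricting its $O_B$-action along $O_E\inj O_B$ yields an $O_E$-action satisfying the Kottwitz condition of signature $(1,1)$ (a direct consequence of Drinfeld's special condition), and by Proposition \ref{RP_Dr} there is a principal polarization $\lambda_{\mathbb{X}}$ whose Rosati involution is $b\mapsto\Pi b'\Pi^{-1}$, which restricts to the conjugation on $O_E$. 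Hence $(\mathbb{X},\iota_{\mathbb{X},B}|_{O_E},\lambda_{\mathbb{X}})$ is a tuple of the required type, and a direct computation of its associated hermitian form (equivalently, of its unitary group) identifies $(C',h')$ with the split space $(C,h)$.

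I expect the technical heart to lie in the first two steps: setting up relative Dieudonné theory for a formal $O_F$-module with ramified $O_E$-action and converting the polarization into a hermitian form via Lemma \ref{LA_diff}, and then identifying $\QIsog$ with $\U(C,h)$ together with the proof that the isogeny class is a complete invariant of the hermitian space. By contrast, the reduction to the supersingular case and the anisotropy argument separating the split from the non-split class are comparatively formal.
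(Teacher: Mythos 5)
Your proposal is correct, and the uniqueness half follows essentially the same path as the paper: pass to the rational Dieudonn\'e module, convert the polarization into an $\breve{E}|\breve{F}$-hermitian form using the inverse different from Lemma \ref{LA_diff}, reduce to the slope-$1/2$ case, and rule out the non-split form because the two unitary groups cannot contain one another as closed subgroups. Where you diverge is in the bookkeeping and in the existence step. For the identification of the invariant, the paper works directly with the $\sigma$-linear operator $\tau=\Pi\mathbf{V}^{-1}$ and takes $C=N^{\tau}$, so the hermitian space appears immediately as the $\tau$-invariants; your detour through $\End_{\breve F,\mathbf F}(N)\cong M_2(B)$, Skolem--Noether, and the invariant computation $\operatorname{inv}_E=2\cdot\tfrac12\equiv 0$ reaches the same place but is heavier than needed (and your phrase ``the centralizer of $\mathbf{F}$ is too small'' should really refer to the automorphism group of the isocrystal \emph{with} its $E$-action and polarization --- the bare centralizer of $\mathbf F$ in the slope-$(0,1)$ case is $\GL_2(F)\times\GL_2(F)$, which does contain forms of $\SL_2$; it is the polarization pairing the two slope pieces that cuts this down to a torus). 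For existence, the paper gives a purely linear-algebraic construction: take a unimodular lattice $\Lambda$ in the split $(C,h)$, set $M=\Lambda\otimes_{O_E}\breve O_E$, define the alternating form by the trace formula, and choose $\mathbf F,\mathbf V$ with $\Lambda=M^{\tau}$. You instead import the Drinfeld framing object and Drinfeld's polarization from Proposition \ref{RP_Dr}; this is legitimate and non-circular, and is in fact exactly the verification the paper carries out later in Lemma \ref{RP_clemb} (where the containment $\SU(C,h)\subseteq\QIsog$ is checked via $G_{(\mathbb{X},\iota_{\mathbb{X}})}\simeq\SL_{2,F}\simeq\SU(C,h)$), but it makes the proposition depend on the Drinfeld theory, whereas the paper's self-contained lattice construction keeps Proposition \ref{RP_frnaive} independent of \S\ref{RP3}.
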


%Here $\Aut^{0}(\mathbb{X},\iota_{\mathbb{X}},\lambda_{\mathbb{X}})$ is the group of quasi-isogenies $\varphi: (\mathbb{X},\iota_{\mathbb{X}},\lambda_{\mathbb{X}}) \to (\mathbb{X},\iota_{\mathbb{X}},\lambda_{\mathbb{X}})$.
%Both $G_{(\mathbb{X},\iota_{\mathbb{X}},\lambda_{\mathbb{X}})}$ and $\SU(C,h)$ are considered as linear algebraic groups over $F$.

\begin{rmk} \label{RP_rmknaive}
	If $(\mathbb{X},\iota_{\mathbb{X}},\lambda_{\mathbb{X}})$ is as in the proposition, we always have $\QIsog(\mathbb{X},\iota_{\mathbb{X}},\lambda_{\mathbb{X}}) \cong \U(C,h)$.
	This follows directly from the proof and gives a more natural way to describe the framing object.
	However, we will need the slightly stronger statement of the Proposition later, in Lemma \ref{RP_clemb}.
%	We will show uniqueness of the tuple $(\mathbb{X},\iota_{\mathbb{X}},\lambda_{\mathbb{X}})$ even for the slightly weakened condition that we have just a closed embedding $\SU(C,h) \inj G_{(\mathbb{X},\iota_{\mathbb{X}},\lambda_{\mathbb{X}})}$ of linear algebraic groups over $F$ (and not necessarily an isomorphism).
	%$\SU(C,h)$ is just a Zariski-closed subgroup of $G_{(\mathbb{X},\iota_{\mathbb{X}},\lambda_{\mathbb{X}})}$ (and not necessarily isomorphic).
\end{rmk}

\begin{proof}[Proof of Proposition \ref{RP_frnaive}] %[Proof \emph{(of uniqueness)}]
	%Existence will be seen later.
	%(Up to some subtleties, the proof for uniqueness mainly uses the ideas from [KR, 3.].)
	We first show uniqueness.
	Let $(X,\iota,\lambda) / \Spec \overbar{k}$ be such a tuple.
	Its (relative) rational Dieudonn\'e module $N_X$ is a $4$-dimensional vector space over $\breve{F}$ with an action of $E$ and an alternating form $\langle \,,\rangle$ such that for all $x,y \in N_X$,
	\begin{equation} \label{RP_alt}
		\langle x, \Pi y \rangle = - \langle \Pi x, y \rangle.
	\end{equation}
	
	The space $N_X$ has the structure of a $2$-dimensional vector space over $\breve{E} = E \otimes_F \breve{F}$ and we can define an $\breve{E}|\breve{F}$-hermitian form on it via
	\begin{equation} \label{RP_herm}
		h(x,y) = \langle \Pi x, y \rangle + \Pi \langle x, y \rangle.
	\end{equation}
%	where $\delta \in \breve{O}_F$ is a unit generating the unramified quadratic extension of $F$, chosen in such a way that $\delta^2 = 1 + 4\varepsilon_0$ for some $\varepsilon_0 \in O_F^{\times}$.
	The alternating form can be recovered from $h$ by
	\begin{equation} \label{RP_altherm}
		\langle x, y \rangle = \Tr_{\breve{E}|\breve{F}} \left(\frac{1}{2 \Pi} \cdot h(x.y) \right).
	\end{equation}
	%(Note here that $\frac{1}{2\Pi}O_E$ is actually the inverse different of $E|F$, thus for a lattice in $M \subseteq N_X$ the dual lattices with respect to $h$ and $\langle \,, \rangle$ are equal.
	%This will be of importance later.)
	Furthermore we have on $N_X$ a $\sigma$-linear operator ${\bf F}$, the Frobenius, and a $\sigma^{-1}$-linear operator ${\bf V}$, the Verschiebung, that satisfy ${\bf V}{\bf F} = {\bf F}{\bf V} = \pi_0$.
	Recall that $\sigma$ is the lift of the Frobenius on $\breve{O}_F$.
	Since $\langle \,, \rangle$ comes from a polarization, we have 
	\begin{align*}
		\langle {\bf F}x, y \rangle & = \langle x, {\bf V}y \rangle ^{\sigma}, \\
	\intertext{and} %together with $\delta^{\sigma} = -\delta$, this yields
		h({\bf F}x, y) & = h(x,{\bf V}y)^{\sigma},
	\end{align*}
	for all $x,y \in N_X$.
	Let us consider the $\sigma$-linear operator $\tau = \Pi {\bf V}^{-1}$.
	Its slopes are all zero, since $N_X$ is isotypical of slope $\frac{1}{2}$.
	(This follows from the condition on $\QIsog(\mathbb{X},\iota_{\mathbb{X}},\lambda_{\mathbb{X}})$.)
	We set $C = N_X^{\tau}$.
	This is a $2$-dimensional vector space over $E$ and $N_X = C \otimes_E \breve{E}$.
	Now $h$ induces an $E|F$-hermitian form on $C$ since
	\begin{equation*}
		h(\tau x, \tau y) = h(-{\bf F} \Pi^{-1} x, \Pi {\bf V}^{-1}y) = - h(\Pi^{-1} x, \Pi y)^{\sigma} = h(x,y)^{\sigma}.
	\end{equation*}
	A priori, there are up to isomorphism two possibilities for $(C,h)$, either $h$ is split on $C$ or non-split.
	But automorphisms of $(C,h)$ correspond to elements of $\QIsog(\mathbb{X},\iota_{\mathbb{X}},\lambda_{\mathbb{X}})$.
%	The special unitary groups of $(C,h)$ for $h$ split and $h$ non-split are not isomorphic, hence the condition on $G_{(\mathbb{X},\iota_{\mathbb{X}},\lambda_{\mathbb{X}})}$ implies that $h$ is split.
	The unitary groups of $(C,h)$ for $h$ split and $h$ non-split are not isomorphic and they cannot contain each other as a closed subgroup.
	Hence the condition on $\QIsog(\mathbb{X},\iota_{\mathbb{X}},\lambda_{\mathbb{X}})$ implies that $h$ is split.
	
	Assume now we have two different objects $(X,\iota,\lambda)$ and $(X',\iota',\lambda')$ as in the proposition.
	These give us isomorphic vector spaces $(C,h)$ and $(C',h')$ and an isomorphism between these extends to an isomorphism between $N_X$ and $N_X'$ (respecting all rational structure) which corresponds to a quasi-isogeny between $(X,\iota,\lambda)$ and $(X',\iota',\lambda')$.
	
	The existence of $(\mathbb{X},\iota_{\mathbb{X}},\lambda_{\mathbb{X}})$ now follows from the fact that a $2$-dimensional $E$-vector space $(C,h)$ with split $E|F$-hermitian form contains a unimodular lattice $\Lambda$.
	Indeed, this gives us a lattice $M = \Lambda \otimes_{O_E} \breve{O}_E \subseteq C \otimes_E \breve{E}$.
	We extend $h$ to $N = C \otimes_E \breve{E}$ and define the $\breve{F}$-linear alternating form $\langle \,, \rangle$ as in \eqref{RP_altherm}.
	Now $M$ is unimodular with respect to $\langle \,, \rangle$, because $\frac{1}{2 \Pi} \breve{O}_E$ is the inverse different of $\breve{E}|\breve{F}$ (see Lemma \ref{LA_diff}).
	We choose the operators ${\bf F}$ and ${\bf V}$ on $M$ such that ${\bf F}{\bf V} = {\bf V}{\bf F} = \pi_0$ and $\Lambda = M^{\tau}$ for $\tau = \Pi {\bf V}^{-1}$.
	This makes $M$ a (relative) Dieudonn\'e module and we define $(\mathbb{X},\iota_{\mathbb{X}},\lambda_{\mathbb{X}})$ as the corresponding formal $O_F$-module.
\end{proof}

We fix such a framing object $(\mathbb{X},\iota_{\mathbb{X}},\lambda_{\mathbb{X}})$ over $\Spec \overbar{k}$.

\begin{defn} \label{RP_defnaive}
	For arbitrary $S \in \Nilp$, let $\overbar{S} = S \times_{\Spf \breve{O}_F} \Spec \overbar{k}$.
	Define $\NEnaive(S)$ as the set of equivalence classes of tuples $(X,\iota,\lambda,\varrho)$ over $S$, where $(X,\iota,\lambda)$ as above and
	\begin{equation*}
		\varrho: X \times_S \overbar{S} \to \mathbb{X} \times_{\Spec \overbar{k}} \overbar{S}
	\end{equation*}
	is a quasi-isogeny between the tuple $(X,\iota,\lambda)$ and the framing object $(\mathbb{X},\iota_{\mathbb{X}},\lambda_{\mathbb{X}})$ (after base change to $\overbar{S}$).
	Two objects $(X,\iota,\lambda,\varrho)$ and $(X',\iota',\lambda',\varrho')$ are equivalent if and only if there exists an isomorphism $\varphi: (X,\iota,\lambda) \to (X',\iota',\lambda')$ such that $\varrho = \varrho' \circ (\varphi \times_S \overbar{S})$.
\end{defn}

\begin{rmk} \label{RP_loftnaive}
	\begin{enumerate}
		\item \label{RP_loftnaive1} The morphism $\varrho$ is a quasi-isogeny in the sense of Definition \ref{RP_isonaive}, \emph{i.e.}, we have $\lambda = \varrho^{\ast}(\lambda_{\mathbb{X}})$.
		Similarly, we have $\lambda = \varphi^{\ast}(\lambda')$ for the isomorphism $\varphi$.
		We obtain an equivalent definition of $\NEnaive$ if we replace strict equality by the condition that, locally on $S$, $\lambda$ and $\varrho^{\ast}(\lambda_{\mathbb{X}})$ (resp.\ $\varphi^{\ast}(\lambda')$) only differ by a scalar in $O_F^{\times}$.
		This variant is used in the definition of RZ-spaces of (PEL) type for $p > 2$ in \cite{RZ96}.
		In this paper we will use the version with strict equality, since it simplifies the formulation of the straightening condition, see Definition \ref{RP_strdef} below.
		
		\item \label{RP_loftnaive2} $\NEnaive$ is pro-representable by a formal scheme, formally locally of finite type over $\Spf \breve{O}_F$.
	This follows from \cite[Thm.\ 3.25]{RZ96}.
	\end{enumerate}
\end{rmk}

As a next step, we use Dieudonn\'e theory in order to get a better understanding of the special fiber of $\NEnaive$.
Let $N = N_{\mathbb{X}}$ be the rational Dieudonn\'e module of the base point $(\mathbb{X},\iota_{\mathbb{X}},\lambda_{\mathbb{X}})$ of $\NEnaive$.
This is a $4$-dimensional vector space over $\breve{F}$, equipped with an $E$-action, an alternating form $\langle \,, \rangle$ and two operators ${\bf V}$ and ${\bf F}$.
As in the proof of Proposition \ref{RP_frnaive}, the form $\langle \,, \rangle$ satisfies condition \eqref{RP_alt}:
\begin{equation}
	\langle x, \Pi y \rangle = - \langle \Pi x, y \rangle.
\end{equation}

A point $(X,\iota,\lambda,\varrho) \in \NEnaive(\overbar{k})$ corresponds to an $\breve{O}_F$-lattice $M_X \subseteq N$.
It is stable under the actions of the operators ${\bf V}$ and ${\bf F}$ and of the ring $O_E$.
Furthermore $M_X$ is unimodular under $\langle \,, \rangle$, \emph{i.e.}, $M_X = M_X^{\vee}$, where
\begin{equation*}
	M_X^{\vee} = \{ x \in N \mid \langle x, y \rangle \in \breve{O}_F \text{ for all } y \in M_X \}.
\end{equation*}
We can regard $N$ as a $2$-dimensional vector space over $\breve{E}$ with the $\breve{E}|\breve{F}$-hermitian form $h$ defined by
\begin{equation}
	h(x,y) = \langle \Pi x, y \rangle + \Pi \langle x, y \rangle.
\end{equation}
Let $\breve{O}_E = O_E \otimes_{O_F} \breve{O}_F$.
Then $M_X \subseteq N$ is an $\breve{O}_E$-lattice and we have
\begin{equation*}
	M_X = M_X^{\vee} = M_X^{\sharp},
\end{equation*}
where $M_X^{\sharp}$ is the dual lattice of $M_X$ with respect to $h$.
The latter equality follows from the formula
\begin{equation}
	\langle x, y \rangle = \Tr_{\breve{E}|\breve{F}} \left(\frac{1}{2 \Pi} \cdot h(x.y) \right)
\end{equation}
and the fact that the inverse different of $E|F$ is $\mathfrak{D}_{E|F}^{-1} = \frac{1}{2\Pi}O_E$ (see Lemma \ref{LA_diff}).
We can thus write the set $\NEnaive(\overbar{k})$ as
\begin{equation} \label{RP_geompts}
	\NEnaive(\overbar{k}) = \{ \breve{O}_E \text{-lattices } M \subseteq N_{\mathbb{X}} \mid M^{\sharp} = M, \pi_0 M \subseteq {\bf V}M \subseteq M \}.
\end{equation}
Let $\tau = \Pi {\bf V}^{-1}$.
This is a $\sigma$-linear operator on $N$ with all slopes zero.
The elements invariant under $\tau$ form a $2$-dimensional $E$-vector space $C = N^{\tau}$.
%We have $N = C \otimes_E \breve{E}$.
The hermitian form $h$ is invariant under $\tau$, hence it induces a split hermitian form on $C$ which we denote again by $h$.
With the same proof as in \cite[Lemma 3.2]{KR14}, we have:

\begin{lem} \label{RP_latt}
	Let $M \in \NEnaive(\overbar{k})$.
	Then:
	\begin{enumerate}
		\item $M + \tau(M)$ is $\tau$-stable.
		\item Either $M$ is $\tau$-stable and $\Lambda_0 = M^{\tau} \subseteq C$ is unimodular $(\Lambda_0^{\sharp} = \Lambda_0)$ or $M$ is not $\tau$-stable and then $\Lambda_{-1} = (M + \tau(M))^{\tau} \subseteq C$ is $\Pi^{-1}$-modular $(\Lambda_{-1}^{\sharp} = \Pi \Lambda_{-1})$.
	\end{enumerate}
\end{lem}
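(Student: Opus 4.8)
The plan is to exploit two translations of the hypotheses on $M$ and then run a short length count, isolating the $\tau$-stability of $M+\tau(M)$ as the one point where the polarization is genuinely used. First I would record the two basic consequences of $M$ lying in $\NEnaive(\overbar k)$, i.e.\ of $M = M^\sharp$ together with $\pi_0 M \subseteq {\bf V}M \subseteq M$. Writing $\tau = \Pi {\bf V}^{-1}$, the chain $\pi_0 M \subseteq {\bf V}M \subseteq M$ is equivalent to the sandwich $\Pi M \subseteq \tau(M) \subseteq \Pi^{-1}M$ (using $\pi_0 = -\Pi^2$ and that $\Pi$ commutes with ${\bf V}$). Moreover $\tau(M)$ is again self-dual: from the polarization compatibility one gets $h(\tau x,\tau y) = h(x,y)^\sigma$ (as in the displayed computation in the proof of Proposition \ref{RP_frnaive}), and since $\sigma$ preserves $\breve O_E$ this yields $\tau(M)^\sharp = \tau(M^\sharp) = \tau(M)$.

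With both $M$ and $\tau(M)$ self-dual and sandwiched, the dichotomy is pure lattice bookkeeping. Dualizing turns sums into intersections, so $(M + \tau(M))^\sharp = M \cap \tau(M)$, and the sandwich gives $\Pi(M+\tau(M)) \subseteq M \cap \tau(M) \subseteq M + \tau(M)$. Since $M$ and $\tau(M)$ are self-dual they have equal covolume, hence $\operatorname{len}((M+\tau(M))/M) = \operatorname{len}(M/(M\cap\tau(M))) =: d$, so $\operatorname{len}((M+\tau(M))/(M\cap\tau(M))) = 2d$; as this length is at most $\operatorname{len}((M+\tau(M))/\Pi(M+\tau(M))) = 2$, we get $d \in \{0,1\}$. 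If $d = 0$ then $M = \tau(M)$ is $\tau$-stable; if $d = 1$ then $M \cap \tau(M) = \Pi(M+\tau(M))$, i.e.\ $M + \tau(M)$ is $\Pi^{-1}$-modular. This is exactly the case distinction of part (2), and once part (1) is known, part (2) follows by étale descent along $\tau$ (all slopes zero): for a $\tau$-stable lattice $L$, $L^\tau$ is an $O_E$-lattice in $C$ with $L = L^\tau \otimes_{O_E}\breve O_E$, and taking $\tau$-invariants commutes with $\sharp$ and with multiplication by $\Pi$; applied to $L = M$ (resp.\ $L = M + \tau(M)$) this gives $\Lambda_0^\sharp = \Lambda_0$ (resp.\ $\Lambda_{-1}^\sharp = \Pi\Lambda_{-1}$).

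It remains to prove part (1) in the case $d = 1$, namely that $L := M + \tau(M)$ is $\tau$-stable. Since $\tau$ preserves covolume and both $L$ and $\tau(L)$ are $\Pi^{-1}$-modular, it suffices to show $\tau(L) \subseteq L$, i.e.\ $\tau^2(M) \subseteq M + \tau(M)$. Using $M + \tau(M) = (M \cap \tau(M))^\sharp$ and the relation $h(\tau x,\tau y) = h(x,y)^\sigma$, a direct computation rewrites this containment as $M \cap \tau^{-1}(M) \subseteq \tau(M)$, and then (in case $d = 1$) as $\Pi M \subseteq \tau^2(M)$; finally, unwinding $\tau = \Pi{\bf V}^{-1}$ shows this is equivalent to the clean statement
\[
{\bf V}^2 M \subseteq \Pi M,
\]
i.e.\ to $\overline{\bf V}^{\,2} = 0$ for the induced $\sigma^{-1}$-linear operator $\overline{\bf V}$ on the two-dimensional $\overbar k$-space $M/\Pi M$.

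I expect this last step to be the crux, and it is exactly where the polarization (and residue characteristic $2$) enters. First, $\overline{\bf V}$ has rank at most $1$: otherwise ${\bf V}M + \Pi M = M$, forcing $\iota(\Pi)$ to act surjectively, hence bijectively, on $\Lie X = M/{\bf V}M$, which is impossible because the Kottwitz condition makes $\iota(\Pi)$ nilpotent ($\Pi^2 = -\pi_0 \equiv 0$) on $\Lie X$. Thus $\overline{\bf V}$ has rank $0$ (whence $\overline{\bf V}^{\,2} = 0$ trivially, the case $M = \tau(M)$) or rank $1$, and in the latter case $\overline{\bf V}^{\,2} = 0$ amounts to $\operatorname{im}\overline{\bf V} = \ker\overline{\bf V}$. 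To force this I would reduce the hermitian form: $\overline h(\bar x,\bar y) = h(x,y) \bmod \Pi$ is a non-degenerate symmetric $\overbar k$-bilinear form on $M/\Pi M$ with associated quadratic form $\bar q(\bar x) = h(x,x) \bmod \pi_0$, and the identity $h({\bf V}x,{\bf V}y) = \pi_0\, h(x,y)^{\sigma^{-1}}$ shows $\bar q$ vanishes on $\operatorname{im}\overline{\bf V}$, so $\operatorname{im}\overline{\bf V}$ is isotropic; the adjunction $\overline h(\overline{\bf V}x,y) = \overline h(x,\overline{\bf F}y)^{\sigma^{-1}}$ together with ${\bf F}{\bf V} = \pi_0$ identifies both $\operatorname{im}\overline{\bf V}$ and $\ker\overline{\bf V}$ with isotropic lines of $\overline h$. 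The decisive point, special to characteristic $2$, is that when $\bar q \not\equiv 0$ its polar form $2\overline h$ vanishes, so $\bar q$ is Frobenius-semilinear and has a \emph{unique} isotropic line; hence $\operatorname{im}\overline{\bf V} = \ker\overline{\bf V}$ and $\overline{\bf V}^{\,2} = 0$. The genuinely delicate case is the hyperbolic one $\bar q \equiv 0$ (so $\overline h$ is alternating and every line is isotropic): here the mod-$\Pi$ data alone do not pin down $\operatorname{im}\overline{\bf V} = \ker\overline{\bf V}$, and one must descend to the refined quadratic form $\tfrac{1}{\pi_0}h(x,x) \bmod \Pi$ (well-defined precisely because $\bar q \equiv 0$ and $\overline 2 = 0$), repeating until the norm ideal $\Nm(M)$ is detected. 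Handling this finite refinement is the main obstacle, and it is here that I would follow the argument of \cite[Lemma 3.2]{KR14} most closely.
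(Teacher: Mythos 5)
Your reductions are sound: deducing part (2) from part (1) by descent along $\tau$, the duality--length count giving $d\in\{0,1\}$ and the $\Pi^{-1}$-modularity of $M+\tau(M)$ when $d=1$, and the translation of the $\tau$-stability of $M+\tau(M)$ into ${\bf V}^2M\subseteq\Pi M$, i.e.\ $\overline{\bf V}{}^2=0$ on $M/\Pi M$, are all correct, and so is the treatment of the case $\bar q\not\equiv 0$ via the unique isotropic line of an additive Frobenius-semilinear form. But the case you defer, $\bar q\equiv 0$ (i.e.\ norm ideal contained in $\pi_0O_F$), is a genuine gap rather than a routine refinement, and it is the case that actually matters: it contains all hyperbolic lattices, i.e.\ precisely the points of $\NE$. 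Moreover the iterated refinement you propose provably stalls at the last step. The forms $q_j(x)=\pi_0^{-j}h(x,x)\bmod \pi_0$ have polar form $(2/\pi_0^{j})\,\overline{h}$ up to units, so they are additive (and have a unique isotropic line) only for $j<e:=v(2)$; at $j=e$, which is reached exactly when $\Nm = 2O_F$ (the hyperbolic case), $q_e$ is a nondegenerate binary quadratic form in characteristic $2$ with nondegenerate polar form, hence split with exactly two isotropic lines. Both $\operatorname{im}\overline{\bf V}$ and $\ker\overline{\bf V}=\operatorname{im}\overline{\bf F}$ are isotropic for $q_e$, but nothing in this data forces them to be the \emph{same} one of the two lines, so the ``unique isotropic line'' engine fails exactly where it is needed. (For $p$ odd one has $\Tr(O_E)=O_F$, so $\bar q\not\equiv 0$ for every self-dual lattice and your argument would be complete; the hyperbolic case is the genuinely $2$-adic phenomenon, so ``following [KR14]'' here is not automatically available.)

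For comparison: the paper gives no proof at all, asserting the lemma ``with the same proof as in [KR14, Lemma 3.2]''. The standard argument behind that citation closes the gap without any quadratic forms. Set $Q_k=M+\tau(M)+\cdots+\tau^k(M)$ and suppose $Q_1$ is not $\tau$-stable. Then $Q_1$ and $\tau(Q_1)$ are distinct lattices of equal volume containing $\tau(M)$ with index $1$, so $Q_1\cap\tau(Q_1)=\tau(M)$ and $\operatorname{len}(Q_2/\tau(M))=2$; on the other hand the sandwich $\Pi M\subseteq\tau(M)\subseteq\Pi^{-1}M$ and its $\tau$-translate give $\Pi Q_2\subseteq\tau(M)$, and comparing with $\operatorname{len}(Q_2/\Pi Q_2)=2$ forces $Q_2=\Pi^{-1}\tau(M)$. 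Hence $Q_3=Q_2+\tau(Q_2)=\Pi^{-1}\tau(Q_1)$, $Q_4=\Pi^{-2}\tau^2(M)$, and inductively $\operatorname{len}(Q_{k+1}/Q_k)=1$ for every $k$. This contradicts the fact that all $\tau^i(M)$ lie in the fixed lattice $\Pi^{-n}(\Lambda_0\otimes_{O_E}\breve{O}_E)$ for any $O_E$-lattice $\Lambda_0\subseteq C$ and suitable $n$ --- the same ``$\tau$ is isoclinic of slope zero'' input you already invoke for descent. So $Q_1$ is $\tau$-stable, and your own length count then yields part (2). I would replace the final third of your proof by this argument.
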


Under the identification $N = C \otimes_E \breve{E}$, we get $M = \Lambda_0 \otimes_{O_E} \breve{O}_E$ for a $\tau$-stable Dieudonn\'e lattice $M$.
If $M$ is not $\tau$-stable, we have $M + \tau M = \Lambda_{-1} \otimes_{O_E} \breve{O}_E$ and $M \subseteq \Lambda_{-1} \otimes_{O_E} \breve{O}_E$ is a sublattice of index $1$.
The next lemma is the analogue of \cite[Lemma 3.3]{KR14}.

\begin{lem} \label{RP_IS}
	\begin{enumerate}
		\item \label{RP_IS1} Fix a $\Pi^{-1}$-modular lattice $\Lambda_{-1} \subseteq C$.
		There is an injective map
		\begin{equation*}
			i_{\Lambda_{-1}}: \mathbb{P}(\Lambda_{-1} / \Pi \Lambda_{-1})(\overbar{k}) \inj \NEnaive(\overbar{k})
		\end{equation*}
		mapping a line $\ell \subseteq (\Lambda_{-1} / \Pi \Lambda_{-1}) \otimes \overbar{k}$ to its preimage in $\Lambda_{-1} \otimes \breve{O}_E$.
		Identify $\mathbb{P}(\Lambda_{-1} / \Pi \Lambda_{-1})(\overbar{k})$ with its image in $\NEnaive(\overbar{k})$.
		Then $\mathbb{P}(\Lambda_{-1} / \Pi \Lambda_{-1})(k) \subseteq \mathbb{P}(\Lambda_{-1} / \Pi \Lambda_{-1})(\overbar{k})$ is the set of $\tau$-invariant Dieudonn\'e lattices $M \subseteq \Lambda_{-1} \otimes \breve{O}_E$.
		\item The set $\NEnaive(\overbar{k})$ is a union
		\begin{equation} \label{RP_ISunion}
			\NEnaive(\overbar{k}) = \smashoperator[r]{\bigcup_{\Lambda_{-1} \subseteq C}} \mathbb{P}(\Lambda_{-1} / \Pi \Lambda_{-1})(\overbar{k}),
		\end{equation}
		ranging over all $\Pi^{-1}$-modular lattices $\Lambda_{-1} \subseteq C$.
		The projective lines corresponding to the lattices $\Lambda_{-1}$ and $\Lambda_{-1}'$ intersect in $\NEnaive(\overbar{k})$ if and only if $\Lambda_0 = \Lambda_{-1} \cap \Lambda_{-1}'$ is unimodular.
		In this case, their intersection consists of the point $M = \Lambda_0 \otimes \breve{O}_E \in \NEnaive(\overbar{k})$.
	\end{enumerate}
\end{lem}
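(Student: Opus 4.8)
The plan is to reduce everything to the symplectic linear algebra of the $\overbar{k}$-plane $V := L/\Pi L$, where $L := \Lambda_{-1} \otimes_{O_E} \breve{O}_E$, and then to feed in Lemma~\ref{RP_latt}. The starting point is that, since $\Lambda_{-1}$ is $\Pi^{-1}$-modular, $L^{\sharp} = \Pi L$, so $h(L,L) \subseteq \Pi^{-1}\breve{O}_E$, where $\sharp$ is the dual with respect to $h$ taken with values in $\breve{O}_E$ (Lemma~\ref{LA_diff}). Hence $(\bar x, \bar y) \mapsto \Pi\, h(x,y) \bmod \Pi\breve{O}_E$ is well defined on $V$; since the conjugation of $\breve{O}_E$ becomes trivial modulo $\Pi$ (as $\overbar{\Pi} = -\Pi$), this descends to an honest $\overbar{k}$-bilinear form $\overline{h}$ on $V$. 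First I would check that $\overline{h}$ is \emph{alternating}: for $x \in L$ one has $h(x,x) \in \breve{F} \cap \Pi^{-1}\breve{O}_E = \breve{O}_F$, so $\Pi\, h(x,x) \in \Pi\breve{O}_E$ and $\overline{h}(\bar x, \bar x) = 0$. Nondegeneracy is immediate: $\overline{h}(\bar x, -) \equiv 0$ means exactly $h(x,L) \subseteq \breve{O}_E$, i.e.\ $x \in L^{\sharp} = \Pi L$. Thus $\overline{h}$ is a symplectic form on the plane $V$, and \emph{every} line $\ell \subseteq V$ is a Lagrangian, $\ell^{\perp} = \ell$.

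With this in hand, part~(1) is almost formal. Given a line $\ell$, its preimage $M$ satisfies $\Pi L \subseteq M \subseteq L$, hence is an $\breve{O}_E$-lattice; I would verify $M^{\sharp} = M$ by computing $M^{\sharp}/\Pi L = \ell^{\perp} = \ell = M/\Pi L$ (using $(\Pi L)^{\sharp} = L$ and translating the condition $h(x,M) \subseteq \breve{O}_E$ into $\overline{h}(\bar x, \ell) = 0$). The Dieudonn\'e conditions are automatic: writing $\mathbf{V} = \Pi\tau^{-1}$ and using that $L$ is $\tau$-stable one gets $\mathbf{V}M = \Pi\tau^{-1}M \subseteq \Pi L \subseteq M$ and $\pi_0 M \subseteq \pi_0 L = -\Pi^2 L \subseteq \mathbf{V}M$, so $M \in \NEnaive(\overbar{k})$ by the description \eqref{RP_geompts}. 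Injectivity of $i_{\Lambda_{-1}}$ holds because $\ell = M/\Pi L$ is recovered from $M$. Finally $\tau$ preserves $L$ and $\Pi L$, hence induces the Frobenius $\overline{\tau}$ on $V = (\Lambda_{-1}/\Pi\Lambda_{-1}) \otimes_k \overbar{k}$; then $M$ is $\tau$-stable iff $\ell$ is $\overline{\tau}$-stable iff $\ell \in \mathbb{P}(\Lambda_{-1}/\Pi\Lambda_{-1})(k)$, matching the $\tau$-stable lattices $M = \Lambda_0 \otimes \breve{O}_E$ of Lemma~\ref{RP_latt}.

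For part~(2), the union \eqref{RP_ISunion} follows directly from Lemma~\ref{RP_latt} and the discussion after it: if $M$ is $\tau$-stable I take any $\Pi^{-1}$-modular $\Lambda_{-1} \supseteq \Lambda_0 = M^{\tau}$ (such a lattice exists by Proposition~\ref{LA_lattRP}), and $M$ is the preimage of the $k$-rational line $M/\Pi L$; if $M$ is not $\tau$-stable then $\Lambda_{-1} = (M + \tau M)^{\tau}$ works, since $M + \tau M = \Lambda_{-1} \otimes \breve{O}_E$ and $M$ has index $1$, so $\Pi L \subseteq M \subseteq L$. The intersection statement I would prove by first noting that a \emph{non}-$\tau$-stable $M$ lies on a unique line, because Lemma~\ref{RP_latt} recovers $\Lambda_{-1} = (M + \tau M)^{\tau}$ from $M$; hence any common point of two distinct lines is $\tau$-stable, of the form $M = \Lambda_0' \otimes \breve{O}_E$ with $\Lambda_0' = M^{\tau}$ unimodular and $\Lambda_0' \subseteq \Lambda_{-1} \cap \Lambda_{-1}'$. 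A colength count (each inclusion $\Lambda_0' \subseteq \Lambda_{-1}$, $\Lambda_0' \subseteq \Lambda_{-1}'$ has index $1$) forces $\Lambda_0' = \Lambda_{-1} \cap \Lambda_{-1}'$, which is therefore unimodular. Conversely, if $\Lambda_0 = \Lambda_{-1} \cap \Lambda_{-1}'$ is unimodular, dualizing $\Lambda_0 \subseteq \Lambda_{-1}$ gives $\Pi\Lambda_{-1} \subseteq \Lambda_0$, so $M = \Lambda_0 \otimes \breve{O}_E$ satisfies $\Pi L \subseteq M \subseteq L$, defines a $\overline{\tau}$-stable line in each plane, and is the unique common point.

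The main obstacle is the first paragraph: getting the form $\overline{h}$ on $V$ exactly right — in particular verifying that it is alternating and nondegenerate despite the residue characteristic being $2$ (so that ``antisymmetric'' alone would be too weak), and keeping the $\Pi$-scalings, the value $\overbar{\Pi} = -\Pi$, and the inverse different straight. Once $V$ is identified as a symplectic plane in which every line is Lagrangian, the selfduality $M^{\sharp} = M$ and the entire incidence geometry of the lines fall out formally, and the remaining work is the elementary lattice bookkeeping already packaged in Lemma~\ref{RP_latt} and Proposition~\ref{LA_lattRP}.
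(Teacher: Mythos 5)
Your argument is correct, and for the key step -- well-definedness of $i_{\Lambda_{-1}}$, i.e.\ the selfduality $M^{\sharp}=M$ -- it takes a genuinely different route from the paper. The paper chooses a basis $(e_1,e_2)$ of $\Lambda_{-1}$ adapted to the line $\ell$ (with $e_1$ lifting a generator of $\ell$), writes the Gram matrix of $h$ as $\begin{pmatrix} x & -\Pi^{-1}\\ \Pi^{-1} & y\end{pmatrix}$ with $x,y\in\Pi^{-1}\breve{O}_E\cap\breve{O}_F=\breve{O}_F$, and observes that after rescaling $e_2$ by $\Pi$ the Gram matrix becomes integral, whence $M\subseteq M^{\sharp}$ and equality follows by an index count. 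Your reduction $\overline{h}(\bar x,\bar y)=\Pi h(x,y)\bmod\Pi$ packages exactly the same two facts basis-freely: the integrality of the diagonal entries $x,y$ is your statement that $\overline{h}$ is alternating (the delicate point in residue characteristic $2$), and the unit antidiagonal entries are your nondegeneracy; the conclusion $\ell^{\perp}=\ell$ for every line then replaces the index count. Amusingly, your symplectic-plane argument is essentially what the paper itself does in the (R-U) case (the form $s$ in the proof of Lemma \ref{RU_IS}), so the two cases end up more parallel in your treatment. You also spell out several things the paper leaves implicit by reference to \cite[Lemma 3.3]{KR14} and Lemma \ref{RP_latt}: the Dieudonn\'e conditions $\pi_0 M\subseteq\mathbf{V}M\subseteq M$ via $\mathbf{V}=\Pi\tau^{-1}$, the identification of $k$-rational lines with $\tau$-stable lattices, the covering \eqref{RP_ISunion}, and the intersection statement (uniqueness of the line through a non-$\tau$-stable point because $\Lambda_{-1}=(M+\tau M)^{\tau}$ is recovered from $M$, plus the colength count identifying $M^{\tau}$ with $\Lambda_{-1}\cap\Lambda_{-1}'$). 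All of these check out; the only external inputs you use are Lemma \ref{RP_latt}, the existence of a $\Pi^{-1}$-modular overlattice of any unimodular lattice from Proposition \ref{LA_lattRP}, and the fact that the inclusions in question have index $1$, all of which the paper provides.
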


\begin{proof}
	We only have to prove that the map $i_{\Lambda_{-1}}$ is well-defined.
	Denote by $M$ the preimage of $\ell \subseteq (\Lambda_{-1} / \Pi \Lambda_{-1}) \otimes \overbar{k}$ in $\Lambda_{-1} \otimes \breve{O}_E$.
	We need to show that $M$ is an element in $\NEnaive(\overbar{k})$ under the identification of \eqref{RP_geompts}.
	It is clearly a sublattice of index $1$ in $\Lambda_{-1} \otimes \breve{O}_E$, stable under the actions of ${\bf F}$, ${\bf V}$ and $O_E$.
	
	Let $e_1 \in \Lambda_{-1}  \otimes \breve{O}_E$ such that $e_1 \otimes \overbar{k}$ generates $\ell$.
	We can extend this to a basis $(e_1,e_2)$ of $\Lambda_{-1}$ and with respect to this basis, $h$ is represented by a matrix of the form
	\begin{equation*}
		\begin{pmatrix}
					x & -\Pi^{-1} \\
			\Pi^{-1} &		   y \\
		\end{pmatrix},
	\end{equation*}
	with $x, y \in \Pi^{-1} \breve{O}_E \cap \breve{O}_F = \breve{O}_F$.
	The lattice $M \subseteq \Lambda_{-1} \otimes \breve{O}_E$ is generated by $e_1$ and $\Pi e_2$.
	With respect to this new basis, $h$ is now given by the matrix
	\begin{equation*}
		\begin{pmatrix*}
			x & 	   1 \\
			1 & \pi_0 y \\
		\end{pmatrix*}.
	\end{equation*}
	Since all entries of the matrix are integral, we have $M \subseteq M^{\sharp}$.
	But this already implies $M^{\sharp} = M$, because they both have index $1$ in $\Lambda_{-1} \otimes \breve{O}_E$.
	Thus $M \in \NEnaive(\overbar{k})$ and $i_{\Lambda_{-1}}$ is well-defined.
\end{proof}

\begin{rmk} \label{RP_rmklatt}
	\begin{enumerate}
		\item \label{RP_rmklatt1} Recall from Proposition \ref{LA_latt} that the isomorphism type of a $\Pi^i$-modular lattice $\Lambda \subseteq C$ only depends on its norm ideal $\Nm(\Lambda) = \langle \{ h(x,x) | x \in \Lambda \} \rangle = \pi_0^{\ell} O_F \subseteq F$.
		In the case that $\Lambda = \Lambda_0$ or $\Lambda_{-1}$ is unimodular or $\Pi^{-1}$-modular, $\ell$ can be any integer such that $|1| \geq |\pi_0|^{\ell} \geq |2|$.
		In particular, there are always at least two possible values for $\ell$.
		Recall from Lemma \ref{LA_hyp}, that $\Lambda$ is \emph{hyperbolic} if and only if $\Nm(\Lambda) = 2 O_F$.
		\item The intersection behaviour of the projective lines in $\NEnaive(\overbar{k})$ can be deduced from Proposition \ref{LA_lattRP}.
		In particular, for a given unimodular lattice $\Lambda_0 \subseteq C$ with $\Nm(\Lambda_0) \subseteq \pi_0 O_F$, there are $q+1$ lines intersecting in $M = \Lambda_0 \otimes \breve{O}_E$.
		If $\Nm(\Lambda_0) = O_F$, the lattice $M = \Lambda_0 \otimes \breve{O}_E$ is only contained in one projective line.
		On the other hand, a projective line $\mathbb{P}(\Lambda_{-1} / \Pi \Lambda_{-1})(\overbar{k}) \subseteq  \NEnaive(\overbar{k})$ contains $q+1$ points corresponding to unimodular lattices in $C$.
		By Lemma \ref{RP_IS} \eqref{RP_IS1}, these are exactly the $k$-rational points of $\mathbb{P}(\Lambda_{-1} / \Pi \Lambda_{-1})$.
		\item \label{RP_rmklatt3} If we restrict the union at the right hand side of \eqref{RP_ISunion} to hyperbolic $\Pi^{-1}$-modular lattices $\Lambda_{-1} \subseteq C$ (\emph{i.e.}, $\Nm(\Lambda_{-1}) = 2 O_F$, see Lemma \ref{LA_hyp}), we obtain a canonical subset $\NE(\overbar{k}) \subseteq \NEnaive(\overbar{k})$ and there is a description of $\NE$ as a pro-representable functor on $\Nilp$ (see below).
		We will see later (Theorem \ref{RP_thm}) that $\NE$ is isomorphic to the Drinfeld moduli space $\MDr$, described in \cite[I.3]{BC91}.
		In particular, the underlying topological space of $\NE$ is connected.
		(The induced topology on the projective lines is the Zariski topology, see Proposition \ref{RP_NEnaiveRL}.)
		Moreover, each projective line in $\NE(\overbar{k})$ has $q+1$ intersection points and there are $2$ projective lines intersecting in each such point (see also Proposition \ref{LA_lattRP}). \\
		We fix such an intersection point $P \in \NE(\overbar{k})$.
		Now going back to $\NEnaive(\overbar{k})$, there are $q-1$ additional lines going through $P \in \NEnaive(\overbar{k})$ that correspond to non-hyperbolic lattices in $C$ (see Proposition \ref{LA_lattRP}).
		Each of these additional lines contains $P$ as its only ``hyperbolic'' intersection point, all other intersection points on this line and the line itself correspond to unimodular resp.\ $\Pi^{-1}$-modular lattices $\Lambda \subseteq C$ of norm $\Nm (\Lambda) = (2 / \pi_0)O_F$ (whereas all hyperbolic lattices occuring have the norm ideal $2 O_F$, see Lemma \ref{LA_hyp}).
		Assume $\mathbb{P}(\Lambda / \Pi \Lambda)(\overbar{k}) \subseteq \NEnaive(\overbar{k})$ is such a line and let $P' \in \mathbb{P}(\Lambda / \Pi \Lambda) (\overbar{k})$ be an intersection point, where $P \neq P'$.
		There are again $q$ more lines going through $P'$ (always $q+1$ in total) that correspond to lattices with norm ideal $\Nm (\Lambda) = (2 / \pi_0^2)O_F$, and these lines again have more intersection points and so on.
		This goes on until we reach lines $\mathbb{P}(\Lambda' / \Pi \Lambda')(\overbar{k})$ with $\Nm (\Lambda') = O_F$.
		Each of these lines contains $q$ points that correspond to unimodular lattices $\Lambda_0 \subseteq C$ with $\Nm(\Lambda_0) = O_F$.
		Such a lattice is only contained in one $\Pi^{-1}$-modular lattice (see part \ref{LA_lattRPmin} of Proposition \ref{LA_lattRP}).
		Hence, these points are only contained in one projective line, namely $\mathbb{P}(\Lambda' / \Pi \Lambda')(\overbar{k})$. \\	
		In other words, each intersection point $P \in \NE(\overbar{k})$ has a ``tail'', consisting of finitely many projective lines, which is the connected component of $P$ in $(\NEnaive (\overbar{k}) \setminus \NE(\overbar{k})) \cup \{ P \}$.
		Figure \ref{RP_NEnaivejpg} shows a drawing of $(\NEnaive)_{\red}$ for the cases $F = \mathbb{Q}_2$ (on the left hand side) and $F|\mathbb{Q}_2$ a ramified quadratic extension (on the right hand side).
		The ``tails'' are indicated by dashed lines.
	\end{enumerate}
\end{rmk}

\begin{figure}[hbt]
	\centering
	\begin{subfigure}[b]{0.4\textwidth}
		\includegraphics[width = \textwidth]{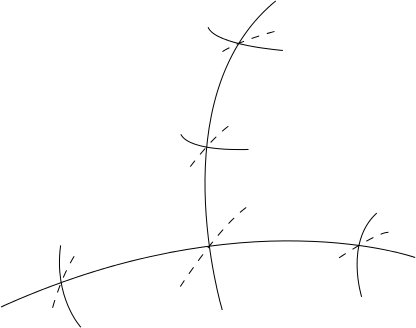}
		\caption{$e = 1$, $f = 1$.}
	\end{subfigure}
	\hspace{1cm}
	\begin{subfigure}[b]{0.4\textwidth}
		\includegraphics[width = \textwidth]{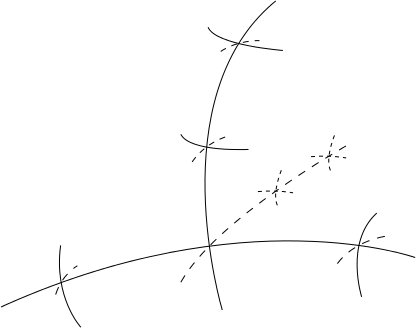}
		\caption{$e= 2$, $f = 1$.}
	\end{subfigure}
	\caption{
		The reduced locus of $\NEnaive$ for $E|F$ of type (R-P) where $F|\mathbb{Q}_2$ has ramification index $e$ and inertia degree $f$.
		Solid lines are given by subschemes $\NEL$ for hyperbolic lattices $\Lambda$.
	}
	\label{RP_NEnaivejpg}
\end{figure}

Fix a $\Pi^{-1}$-modular lattice $\Lambda = \Lambda_{-1} \subseteq C$.
Let $X_{\Lambda}^{+}$ be the formal $O_F$-module over $\Spec \overbar{k}$ associated to the Dieudonn\'e lattice $M = \Lambda \otimes \breve{O}_E \subseteq N$.
It comes with a canonical quasi-isogeny
\begin{equation*}
	\varrho_{\Lambda}^{+} : \mathbb{X} \to X_{\Lambda}^{+}
\end{equation*}
of $F$-height $1$.
%Let $(\NEnaive)_{\red}$ be the reduced subscheme of $\NEnaive$.
We define a subfunctor $\NEL \subseteq \NEnaive$ by mapping $S \in \Nilp$ to
\begin{equation} \label{RP_NEL}
	\NEL(S) = \{ (X,\iota,\lambda,\varrho) \in \NEnaive(S) \mid (\varrho_{\Lambda}^{+} \times S) \circ \varrho \text{ is an isogeny} \}.
\end{equation}
Note that the condition of \eqref{RP_NEL} is closed, \emph{cf.}\ \cite[Prop.\ 2.9]{RZ96}.
Hence $\NEL$ is representable by a closed formal subscheme of $\NEnaive$.
On geometric points, we have a bijection
\begin{equation} \label{RP_NEL=P1geom}
	\NEL(\overbar{k}) \isoarrow \mathbb{P}(\Lambda / \Pi \Lambda)(\overbar{k}),
\end{equation}
as a consequence of Lemma \ref{RP_IS} \eqref{RP_IS1}.

\begin{prop} \label{RP_NEnaiveRL}
	The reduced locus of $\NEnaive$ is given by
	\begin{equation*}
		(\NEnaive)_{\red} = \smashoperator[r]{\bigcup_{\Lambda \subseteq C}} \NEL,
	\end{equation*}
	where $\Lambda$ runs over all $\Pi^{-1}$-modular lattices in $C$.
	For each $\Lambda$, there is an isomorphism of reduced schemes
	\begin{equation*}
		\NEL \isoarrow \mathbb{P}(\Lambda / \Pi \Lambda),
	\end{equation*}
	inducing the map \eqref{RP_NEL=P1geom} on $\overbar{k}$-valued points.
\end{prop}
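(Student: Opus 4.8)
The plan is to prove the statement one lattice at a time and then glue. Fix a $\Pi^{-1}$-modular lattice $\Lambda \subseteq C$ and write $\mathbb{P} = \mathbb{P}(\Lambda/\Pi\Lambda)$. By Lemma~\ref{RP_IS} we already have, on $\overbar{k}$-points, the bijection $\NEL(\overbar{k}) \isoarrow \mathbb{P}(\overbar{k})$ of \eqref{RP_NEL=P1geom}, and $\NEnaive(\overbar{k})$ is the union of these over all $\Lambda$; moreover, by Proposition~\ref{LA_lattRP} each point lies on only finitely many of the lines. Granting for a moment that each reduced scheme $(\NEL)_{\red}$ is isomorphic to $\mathbb{P}$ via a map inducing \eqref{RP_NEL=P1geom}, the $(\NEL)_{\red}$ are closed, irreducible, one-dimensional, cover all $\overbar{k}$-points, and meet pairwise in at most a point; hence they are precisely the irreducible components of $(\NEnaive)_{\red}$, and a reduced scheme locally of finite type over $\overbar{k}$ is the union (as reduced schemes) of its irreducible components. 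This yields the global equality $(\NEnaive)_{\red} = \bigcup_\Lambda \NEL$. So everything reduces to constructing, for fixed $\Lambda$, an isomorphism $\mathbb{P} \isoarrow (\NEL)_{\red}$ restricting to \eqref{RP_NEL=P1geom}.

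First I would construct a morphism $g\colon (\NEL)_{\red} \to \mathbb{P}$. Over $\NEL$ the universal object $(X,\iota,\lambda,\varrho)$ comes, by the defining condition \eqref{RP_NEL}, with an honest isogeny $\phi = (\varrho_{\Lambda}^{+} \times S)\circ \varrho\colon X \to X_{\Lambda}^{+}$ of $F$-height $1$. Its kernel is a finite locally free subgroup scheme of order $q$, annihilated by $\Pi$; passing to the associated Dieudonné/Lie data it determines a locally free rank-one $\mathcal{O}_S$-submodule of $(\Lambda/\Pi\Lambda)\otimes_{\overbar{k}}\mathcal{O}_S$, i.e.\ an $S$-point of $\mathbb{P}$. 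This defines $g$ (restricted to the reduced locus, which lives over $\overbar{k}$), and by construction $g(\overbar{k})$ is the inverse of \eqref{RP_NEL=P1geom}.

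Conversely I would construct $f\colon \mathbb{P} \to (\NEL)_{\red}$ as the tautological family. On $\mathbb{P}$ the tautological rank-one subbundle $\ell \subseteq (\Lambda/\Pi\Lambda)\otimes\mathcal{O}_{\mathbb{P}}$ prescribes, via the theory of relative Dieudonné crystals / Zink displays over the $\overbar{k}$-scheme $\mathbb{P}$, a modification of the constant crystal of $X_{\Lambda}^{+}\times\mathbb{P}$, hence a formal $O_F$-module $X$ over $\mathbb{P}$ together with an isogeny $X \to X_{\Lambda}^{+}\times\mathbb{P}$ cutting out $\ell$ and a height-$0$ quasi-isogeny $\varrho$ to $\mathbb{X}$. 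The $O_E$-action $\iota$, the Kottwitz condition of signature $(1,1)$, and the polarization $\lambda = \varrho^{\ast}(\lambda_{\mathbb{X}})$ with the correct Rosati behaviour are all visible on the crystal and are preserved in families, so $(X,\iota,\lambda,\varrho)$ is a $\mathbb{P}$-point of $\NEnaive$; since it satisfies \eqref{RP_NEL} by construction, $f$ factors through $\NEL$, and through $(\NEL)_{\red}$ because $\mathbb{P}$ is reduced. Tautologically $g\circ f = \id_{\mathbb{P}}$, since the kernel of the isogeny underlying $f$ recovers $\ell$.

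Finally, $f$ is proper (its source is proper over $\overbar{k}$ and the target is separated) and, being split by $g$, is a monomorphism; a proper monomorphism is a closed immersion. Its image is a closed, reduced subscheme isomorphic to $\mathbb{P} \cong \mathbb{P}^1$ whose $\overbar{k}$-points exhaust $\NEL(\overbar{k})$, hence it equals $(\NEL)_{\red}$, and $f$ induces the desired isomorphism. I expect the genuine work to be in the third step: checking that the crystal-level modification really descends to a formal $O_F$-module over all of $\mathbb{P}$ carrying the full PEL structure integrally — equivalently, that $(\NEL)_{\red}$ is smooth of dimension one with the tangent spaces predicted by $\mathbb{P}^1$. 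This is where a Dieudonné/display tangent-space computation, or a direct comparison with the naive local model, must be carried out, and where the compatibility of the polarization in residue characteristic $2$ is the delicate point; for the reduced-locus statement, however, it can be handled on Dieudonné modules exactly as in \cite[Lemma~3.2]{KR14}.
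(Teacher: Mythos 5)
Your overall strategy coincides with the paper's: decompose $(\NEnaive)_{\red}$ into the closed subschemes $\NEL$ indexed by $\Pi^{-1}$-modular lattices, reduce the global equality to a pointwise statement plus general facts about reduced schemes locally of finite type over $\overbar{k}$, and identify each $\NEL$ with $\mathbb{P}(\Lambda/\Pi\Lambda)$ by reading off a line in $(\Lambda/\Pi\Lambda)\otimes\mathcal{O}_S$ from the crystal of the isogeny relating the universal object to the fixed module $X_{\Lambda}^{+}$. (For the global part the paper argues slightly more directly: the union of the $(\NEL)_{\red}$ is a closed subscheme hitting every $\overbar{k}$-point of the reduced, Jacobson target, hence equals it; this does not need to know in advance that each line is irreducible of dimension one, but your irreducible-components variant works equally well once the local statement is in hand.)

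Two points of divergence are worth recording. First, your map $g$ is extracted from $\ker\phi$ where $\phi=(\varrho_{\Lambda}^{+}\times S)\circ\varrho\colon X\to X_{\Lambda}^{+}$; but this kernel is a subgroup scheme of the \emph{varying} object $X$, so it does not by itself produce a submodule of the \emph{constant} sheaf $(\Lambda/\Pi\Lambda)\otimes\mathcal{O}_S$. The paper resolves exactly this by introducing the dual object $X_{\Lambda}^{-}$ (attached to $M^{\sharp}=\Pi M$), the natural isogeny $\nat_{\Lambda}\colon X_{\Lambda}^{-}\to X_{\Lambda}^{+}$, and the isogeny $\varrho_{\Lambda,X}^{-}\colon X_{\Lambda}^{-}\times S\to X$: the kernel of $\mathbb{D}(\varrho_{\Lambda,X}^{-})(S)$ sits inside $\ker\mathbb{D}(\nat_{\Lambda})(S)=(\Lambda/\Pi\Lambda)\otimes_{O_F}\mathcal{O}_S$, and the nontrivial input \cite[Cor.\ 4.7]{VW11} guarantees it is locally a direct summand of rank $1$. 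Your construction needs this dualization (or an equivalent device) to be literally correct. Second, for the isomorphism itself the paper constructs only the one morphism $\NEL(S)\to\mathbb{P}(\Lambda/\Pi\Lambda)(S)$ and invokes the arguments of \cite[4.7]{VW11} to see it is an isomorphism, whereas you propose to build a section $f$ by writing down the tautological display over $\mathbb{P}(\Lambda/\Pi\Lambda)$ and then conclude via the proper-monomorphism argument. That route is viable, but the construction of $f$ with its full PEL structure is precisely the hard content and is only sketched; note also that your closing remark that the reduced-locus statement ``can be handled on Dieudonn\'e modules exactly as in \cite[Lemma 3.2]{KR14}'' is too optimistic, since classical Dieudonn\'e theory over $\overbar{k}$ only controls $\overbar{k}$-points and cannot distinguish the reduced scheme structure from, say, a purely inseparable modification; one genuinely needs the crystalline/display argument over non-reduced or non-perfect bases that \cite{VW11} supplies.
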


\begin{proof}
	The embedding
	\begin{equation} \label{RP_NEnaivered}
		\smashoperator[r]{\bigcup_{\Lambda \subseteq C}} (\NEL)_{\red} \inj (\NEnaive)_{\red}
	\end{equation}
	is closed, because each embedding $\NEL \subseteq \NEnaive$ is closed and, locally on $(\NEnaive)_{\red}$, the left hand side is always only a finite union of $(\NEL)_{\red}$.
	It follows already that \eqref{RP_NEnaivered} is an isomorphism, since it is a bijection on $\overbar{k}$-valued points (see the equations \eqref{RP_ISunion} and \eqref{RP_NEL=P1geom}) and $(\NEnaive)_{\red}$ is reduced by definition and locally of finite type over $\Spec \overbar{k}$ by Remark \ref{RP_loftnaive} \eqref{RP_loftnaive2}.
	
	For the second part of the proposition, we follow the proof presented in \cite[4.2]{KR14}.
	Fix a $\Pi^{-1}$-modular lattice $\Lambda \subseteq C$ and let $M = \Lambda \otimes \breve{O}_E \subseteq N$, as above.
	Now $X_{\Lambda}^{+}$ is the formal $O_F$-module associated to $M$, but we also get a formal $O_F$-module $X_{\Lambda}^{-}$ associated to the dual $M^{\sharp} = \Pi M$ of $M$.
	This comes with a natural isogeny
	\begin{equation*}
		\nat_{\Lambda}: X_{\Lambda}^{-} \to X_{\Lambda}^{+}
	\end{equation*}
	and a quasi-isogeny $\varrho_{\Lambda}^{-} : X_{\Lambda}^{-} \to \mathbb{X}$ of $F$-height $1$.
	For $(X,\iota,\lambda,\varrho) \in \NEnaive(S)$ where $S \in \Nilp$, we consider the composition 
	\begin{equation*}
		\varrho_{\Lambda, X}^{-} = \varrho^{-1} \circ (\varrho_{\Lambda}^{-} \times S): (X_{\Lambda}^{-} \times S) \to X.
	\end{equation*}
	By \cite[Lemma 4.2]{KR14}, this composition is an isogeny if and only if $(\varrho_{\Lambda}^{+} \times S) \circ \varrho$ is an isogeny, or, in other words, if and only if $(X,\iota,\lambda,\varrho) \in \NEL(S)$.
	Let $\mathbb{D}_{X_{\Lambda}^{-}}(S)$ be the (relative) Grothendieck-Messing crystal of $X_{\Lambda}^{-}$ evaluated at $S$ (\emph{cf.}\ \cite[Def.\ 3.24]{ACZ} or \cite[5.2]{Ahs11}).
	This is a locally free $\mathcal{O}_S$-module of rank $4$, isomorphic to $\Lambda / \pi_0 \Lambda \otimes_{O_F} \mathcal{O}_S$.
	The kernel of $\mathbb{D}(\nat_{\Lambda})(S)$ is given by $(\Lambda / \Pi \Lambda) \otimes_{O_F} \mathcal{O}_S$, locally a direct summand of rank $2$ of $\mathbb{D}_{X_{\Lambda}^{-}}(S)$.
	For any $(X,\iota,\lambda,\varrho) \in \NEL(S)$, the kernel of $\varrho_{\Lambda, X}^{-}$ is contained in $\ker (\nat_{\Lambda})$.
	It follows from \cite[Cor.\ 4.7]{VW11} (see also \cite[Prop.\ 4.6]{KR14}) that $\ker \mathbb{D}(\varrho_{\Lambda, X}^{-})(S)$ is locally a direct summand of rank $1$ of $(\Lambda / \Pi \Lambda) \otimes_{O_F} \mathcal{O}_S$.
	This induces a map
	\begin{equation*} %\label{RP_NEL=P1red}
		\NEL(S) \to \mathbb{P}(\Lambda / \Pi \Lambda)(S),
	\end{equation*}
	functorial in $S$, and the arguments of \cite[4.7]{VW11} show that it is an isomorphism.
	(One easily checks that their results indeed carry over to the relative setting over $O_F$.)
\end{proof}

\subsection{Construction of the closed formal subscheme $\NE \subseteq \NEnaive$}
\label{RP2}

We now use a result from section \ref{POL}.
By Theorem \ref{POL_thm} and Remark \ref{POL_rmk} \eqref{POL_rmk2}, there exists a principal polarization $\widetilde{\lambda}_{\mathbb{X}}: \mathbb{X} \to \mathbb{X}^{\vee}$ on $(\mathbb{X},\iota_{\mathbb{X}},\lambda_{\mathbb{X}})$, unique up to a scalar in $O_E^{\times}$, such that the induced Rosati involution is the identity on $O_E$.
Furthermore, for any $(X,\iota,\lambda,\varrho) \in \NEnaive(S)$, the pullback $\widetilde{\lambda} = \varrho^{\ast}(\widetilde{\lambda}_{\mathbb{X}})$ is a principal polarization on $X$.

The next proposition is crucial for the construction of $\NE$.
Recall the notion of a \emph{hyperbolic} lattice from Proposition \ref{LA_latt} and the subsequent discussion.

\begin{prop} \label{RP_strprop}
	It is possible to choose $(\mathbb{X},\iota_{\mathbb{X}},\lambda_{\mathbb{X}})$ and $\widetilde{\lambda}_{\mathbb{X}}$ such that
	\begin{equation*}
		\lambda_{\mathbb{X},1} = \frac{1}{2} (\lambda_{\mathbb{X}} + \widetilde{\lambda}_{\mathbb{X}}) \in \Hom(\mathbb{X},\mathbb{X}^{\vee}).
	\end{equation*}
	Fix such a choice and let $(X,\iota,\lambda,\varrho) \in \NEnaive(\overbar{k})$.
	Then, $\frac{1}{2} (\lambda + \widetilde{\lambda}) \in \Hom(X,X^{\vee})$ if and only if $(X,\iota,\lambda,\varrho) \in \NEL(\overbar{k})$ for some hyperbolic lattice $\Lambda \subseteq C$.
\end{prop}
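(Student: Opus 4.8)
The plan is to translate the condition on $\lambda_1=\tfrac12(\lambda+\widetilde\lambda)$ into a statement about the Dieudonn\'e lattice $M=M_X\subseteq N$ and thence into lattice theory in $(C,h)$. Since both $\lambda$ and $\widetilde\lambda$ are principal polarizations (the latter by Theorem \ref{POL_thm}), $M$ is self-dual both for the alternating form $\langle\,,\rangle$ attached to $\lambda$ and for the form $\langle\,,\rangle^{\sim}$ attached to $\widetilde\lambda$. As the Rosati involution of $\widetilde\lambda$ fixes $O_E$ (so that $\Pi$ is self-adjoint), the same argument as in \eqref{RP_herm}--\eqref{RP_altherm} gives $\langle\,,\rangle^{\sim}=\Tr_{\breve E|\breve F}\bigl(\tfrac1{2\Pi}b\bigr)$ for an $E$-bilinear alternating form $b$ on $N$. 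First I would record the elementary fact that $\lambda_1\in\Hom(X,X^{\vee})$ if and only if $\langle x,y\rangle+\langle x,y\rangle^{\sim}\in 2\breve O_F$ for all $x,y\in M$: both summands are already integral on $M$, and a quasi-homomorphism $\tfrac12\psi$ with $\psi\in\Hom$ is genuine exactly when the form of $\psi$ is divisible by $2$ on $M$. Using $\langle\,,\rangle+\langle\,,\rangle^{\sim}=\Tr_{\breve E|\breve F}\bigl(\tfrac1{2\Pi}(h+b)\bigr)$, a short computation shows this value equals the $\Pi$-component of $(h+b)(x,y)$, so the condition is that the $\Pi$-component of $h+b$ be divisible by $2$ on $M$. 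A cleaner bookkeeping device, which I would use to organise the argument, is the involution $\phi=\lambda^{-1}\widetilde\lambda$ on $N$: both polarizations being principal, $\phi$ is an $\breve O_F$-linear automorphism of $M$; it is $\langle\,,\rangle$-self-adjoint, anticommutes with $\Pi$, commutes with $\tau$, and $\phi^2=1$. Its eigenspaces $N=N^{+}\oplus N^{-}$ descend to isotropic, $h$-dually paired $E$-planes $C=C^{+}\oplus C^{-}$ with $\Pi C^{+}=C^{-}$, and since $M=M^{\vee}$ the condition above is equivalent to $(1+\phi)M\subseteq 2M$, i.e.\ to $M$ being $\phi$-\emph{split}: $M=(M\cap N^{+})\oplus(M\cap N^{-})$.

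For the first assertion I would exhibit an explicit framing object. Take a hyperbolic basis $(e_1,e_2)$ of $C$ with $h(e_i,e_i)=0$, $h(e_1,e_2)=1$, let $\Lambda=O_E e_1\oplus O_E e_2$ (unimodular, hyperbolic), and build $(\mathbb{X},\iota_{\mathbb{X}},\lambda_{\mathbb{X}})$ from $M=\Lambda\otimes\breve O_E$ as in the proof of Proposition \ref{RP_frnaive}. By Theorem \ref{POL_thm} the resulting $\widetilde\lambda_{\mathbb{X}}$ corresponds to an alternating $b$ that is unimodular on $\Lambda$, hence $b=\bigl(\begin{smallmatrix}0&\beta\\-\beta&0\end{smallmatrix}\bigr)$ with $\beta\in O_E^{\times}$; rescaling $\widetilde\lambda_{\mathbb{X}}$ by $\beta^{-1}\in O_E^{\times}$ normalises $\beta=1$ (bilinearity of $b$ keeps $\Pi$ self-adjoint, so the Rosati condition is preserved). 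Then for $x=a_1e_1+a_2e_2$, $y=c_1e_1+c_2e_2$ one finds $(h+b)(x,y)=a_1\Tr_{E|F}(c_2)-a_2(c_1-\overbar{c_1})$, and since $\Tr_{E|F}(O_E)=2O_F$ in case (R-P) the $\Pi$-component lies in $2\breve O_F$; thus $\lambda_{\mathbb{X},1}$ is integral.

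For the second assertion I would run the dichotomy of Lemma \ref{RP_latt}. If $M$ is $\tau$-stable, then $M=\Lambda_0\otimes\breve O_E$ with $\Lambda_0$ unimodular and everything descends to $C$; if $M$ is not $\tau$-stable, it is an index-one sublattice of $\Lambda_{-1}\otimes\breve O_E$ and one argues on $M$ directly. In either case I would choose a basis adapted to $H_{\Lambda}$ as in \eqref{LA_lattmatrix} (cf.\ the proof of Lemma \ref{RP_IS}) and test $\phi$-splitness. One implication is structural: if $M$ (resp.\ $\Lambda_0$) is $\phi$-split, then $h(x,x)=\Tr_{E|F}\bigl(h(x^{+},x^{-})\bigr)$ for $x=x^{+}+x^{-}$, and $h$-unimodularity makes the pairing $C^{+}\times C^{-}\to O_E$ perfect, forcing $\Nm(\Lambda)=\Tr_{E|F}(O_E)=2O_F$, i.e.\ $\Lambda$ hyperbolic (Lemma \ref{LA_hyp}). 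For the converse I would show that a non-minimal norm produces a vector on which the $\Pi$-component of $h+b$ is a unit multiple of the entry $x$ of $H_{\Lambda}$, so that $(1+\phi)M\not\subseteq 2M$. Finally I would invoke Proposition \ref{LA_lattRP} to pass from lattices to lines: for a non-$\tau$-stable point ``lying on a hyperbolic line'' means precisely that $\Lambda_{-1}$ is hyperbolic, while for a $\tau$-stable point parts (3)--(4) of that proposition show that the $\Pi^{-1}$-modular lattices containing $\Lambda_0$ are hyperbolic if and only if $\Lambda_0$ itself is. Hence in both cases ``$(X,\iota,\lambda,\varrho)\in\NEL(\overbar{k})$ for some hyperbolic $\Lambda$'' coincides with ``the lattice attached to $M$ is hyperbolic'', which is what the $\phi$-split criterion detects.

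The main obstacle is the norm computation matching the mod-$2$ divisibility of $h+b$ (equivalently, $\phi$-splitness) with minimality of $\Nm(\Lambda)$. The subtlety is that each condition carries a factor of $2$---one from the $\tfrac12$ in $\lambda_1$, the other from $\Tr_{E|F}(O_E)=2O_F$---and the statement rests entirely on these two factors cancelling; in particular the converse direction (non-hyperbolic $\Rightarrow\lambda_1\notin\Hom$) has to be verified by an explicit computation in the basis adapted to $H_{\Lambda}$, where only the $\Pi$-component of $h+b$ is constrained. A secondary difficulty is the non-$\tau$-stable case, where $M$ is merely an index-one sublattice of $\Lambda_{-1}\otimes\breve O_E$ and does not itself descend to $C$, so the reduction of $\phi$-splitness of $M$ to hyperbolicity of $\Lambda_{-1}=(M+\tau M)^{\tau}$ must be done by hand rather than by descent.
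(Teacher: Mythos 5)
Your overall strategy---reduce $\lambda_1\in\Hom(X,X^{\vee})$ to an integrality statement for $h$ together with the $E$-alternating form $b$ attached to $\widetilde{\lambda}$, normalize $b$ on a hyperbolic unimodular lattice defining the framing object, and then detect hyperbolicity through the norm ideal---is essentially the paper's. But there is a genuine logical gap: the implication ``$\Lambda$ hyperbolic $\Rightarrow\lambda_1\in\Hom(X,X^{\vee})$'' is never proved. Your ``structural'' implication is ``$\phi$-split $\Rightarrow$ hyperbolic'', and what you label the converse, ``non-minimal norm $\Rightarrow(1+\phi)M\not\subseteq 2M$'', is the contrapositive of that very same implication; you even restate it at the end as ``non-hyperbolic $\Rightarrow\lambda_1\notin\Hom$''. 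The missing direction is precisely the one that cannot be read off from ``the basis adapted to $H_{\Lambda}$'': in such a basis only $h$ is in normal form, while $b$ has matrix $\bigl(\begin{smallmatrix}0&\beta\\-\beta&0\end{smallmatrix}\bigr)$ with $\beta$ determined only up to the determinant of the change of basis, i.e.\ up to a norm-one unit, and the needed congruence on $\beta$ modulo $2$ is not visible from $H_{\Lambda}$ alone. The paper's proof closes exactly this gap by verifying integrality of $\frac12(h+b)$ on one explicit hyperbolic $\Pi^{-1}$-modular lattice and transporting it to an arbitrary one by an element $g\in\SU(C,h)$: since $\det g=1$, the alternating form $b$ is preserved and integrality transfers. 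Some such transitivity argument (or an explicit control of $1+\beta$ for the relevant norm-one units $\beta$) must be supplied.

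There are also several fixable inaccuracies in the bookkeeping device $\phi=\lambda^{-1}\widetilde{\lambda}$. In general $\phi^2\neq 1$: for the Drinfeld framing object $\phi=\iota(\delta)$ with $\delta^2=1+4u$ a non-square unit, so the eigenvalues are $\pm\delta$, and $(1+\phi)M\subseteq 2M$ is equivalent to splitness only because $\frac{1+\delta}{2}$ is a unit. Moreover $\phi$ \emph{anti}commutes with $\tau$ (it anticommutes with $\Pi$ and commutes with ${\bf V}$); the eigenspaces are nevertheless $\tau$-stable, but only because $\delta^{\sigma}=-\delta$, and since $\Pi$ swaps them they are $F$-planes, not $E$-planes. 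Relatedly, the untwisted form $(\Pi x,y)+\Pi(x,y)$ satisfies $b(\tau x,\tau y)=-b(x,y)^{\sigma}$ and therefore takes values in $\delta E$ on $C$; the twist by $\delta$ (as in the paper, with the compensating term $\Tr_{\breve{E}|\breve{F}}\bigl(\frac{1-\delta}{4\Pi\delta}\,b(x,y)\bigr)$ shown to be integral separately) is needed before one can speak of $b$ as an $E$-valued form that is $O_E^{\times}$-unimodular on $\Lambda$ and rescale $\widetilde{\lambda}_{\mathbb{X}}$ by $\beta^{-1}\in O_E^{\times}$.
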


\begin{proof}
	The polarization $\widetilde{\lambda}_{\mathbb{X}}$ on $\mathbb{X}$ induces an alternating form $(\,,)$ on the rational Dieudonn\'e module $N = M_{\mathbb{X}} \otimes_{\breve{O}_F} \breve{F}$.
	For all $x,y \in N$, the form $(\,,)$ satisfies the equations
	\begin{align*}
		({\bf F}x, y) &= (x, {\bf V}y)^{\sigma}, \\
		(\Pi x, y) &= (x, \Pi y).
	\end{align*}
	It induces an $\breve{E}$-alternating form $b$ on $N$ via
	\begin{equation*}
		b(x,y) = \delta((\Pi x, y) + \Pi (x,y)),
	\end{equation*}
	where $\delta \in \breve{O}_F$ is a unit generating the unramified quadratic extension of $F$, chosen such that $\delta^{\sigma} = -\delta$ and $\frac{1+\delta}{2} \in \breve{O}_F$, see page \pageref{LA_quadext}.
	On the other hand, we can describe $(\,,)$ in terms of $b$,
	\begin{equation} \label{RP_altalt}
		(x,y) = \Tr_{\breve{E}|\breve{F}} \left( \frac{1}{2\Pi\delta} \cdot b(x,y) \right).
	\end{equation}
	The form $b$ is invariant under $\tau = \Pi {\bf V}^{-1}$, since
	\begin{equation*}
		b(\tau x, \tau y) = b(-{\bf F} \Pi^{-1} x, \Pi {\bf V}^{-1} y) = b(\Pi^{-1} x, \Pi y)^{\sigma} = b(x,y)^{\sigma}.
	\end{equation*}
	Hence $b$ defines an $E$-linear alternating form on $C = N^{\tau}$, which we again denote by $b$.
	Denote by $\langle \,, \rangle$ the alternating form on $M_{\mathbb{X}}$ induced by the polarization $\lambda_{\mathbb{X}}$ and let $h$ be the corresponding hermitian form, see \eqref{RP_herm}.
	On $N_{\mathbb{X}}$, we define the alternating form $\langle \,, \rangle_1$ by
	\begin{equation*}
		\langle x, y \rangle_1 = \frac{1}{2} (\langle x, y \rangle + (x,y)).
	\end{equation*}
	This form is integral on $M_{\mathbb{X}}$ if and only if $\lambda_{\mathbb{X},1} = \frac{1}{2} (\lambda_{\mathbb{X}} + \widetilde{\lambda}_{\mathbb{X}})$ is a polarization on $\mathbb{X}$.
	
	We choose $(\mathbb{X},\iota_{\mathbb{X}},\lambda_{\mathbb{X}})$ such that it corresponds to a unimodular hyperbolic lattice $\Lambda_0 \subseteq (C,h)$ under the identifications of \eqref{RP_geompts} and Lemma \ref{RP_latt}.
	There exists a basis $(e_1,e_2)$ of $\Lambda_0$ such that
	\begin{equation} \label{RP_matrices}
		h \, \weq \begin{pmatrix}
			& 1 \\
			1 & \\
		\end{pmatrix}, \quad b \, \weq \begin{pmatrix}
			& u \\
			-u & \\
		\end{pmatrix},
	\end{equation}
	for some $u \in E^{\times}$.
	Since $\widetilde{\lambda}_{\mathbb{X}}$ is principal, the alternating form $b$ is perfect on $\Lambda_0$, thus $u \in O_E^{\times}$.
	After rescaling $\widetilde{\lambda}_{\mathbb{X}}$, we may assume that $u = 1$.
	We now have
	\begin{equation*}
		\frac{1}{2} (h(x,y) + b(x,y)) \in O_E,
	\end{equation*}
	for all $x,y \in \Lambda_0$.
	Thus $\frac{1}{2} (h + b)$ is integral on $M_{\mathbb{X}} = \Lambda_0 \otimes_{O_E} \breve{O}_E$.
	This implies that
	\begin{align*}
		\langle x, y \rangle_1 & = \frac{1}{2} (\langle x, y \rangle + (x,y)) = \frac{1}{2} \Tr_{\breve{E}|\breve{F}} \left(\frac{1}{2 \Pi} \cdot h(x.y) + \frac{1}{2\Pi \delta} \cdot b(x,y) \right) \\
		& = \Tr_{\breve{E}|\breve{F}} \left(\frac{1}{4 \Pi} (h(x,y) + b(x,y)) \right) + \Tr_{\breve{E}|\breve{F}} \left(\frac{1-\delta}{4\Pi \delta} \cdot b(x,y) \right) \in \breve{O}_F,
	\end{align*}
	for all $x,y \in M_{\mathbb{X}}$.
	Indeed, in the definition of $b$, the unit $\delta$ has been chosen such that $\frac{1+\delta}{2} \in \breve{O}_F$, so the second summand is in $\breve{O}_F$.
	The first summand is integral, since $\frac{1}{2} (h + b)$ is integral.
	It follows that $\lambda_{\mathbb{X},1} = \frac{1}{2} (\lambda_{\mathbb{X}} + \widetilde{\lambda}_{\mathbb{X}})$ is a polarization on $\mathbb{X}$.
	
	Let $(X,\iota,\lambda,\varrho) \in \NEnaive(\overbar{k})$ and assume that $\lambda_1 = \frac{1}{2} (\lambda + \widetilde{\lambda}) = \varrho^{\ast}(\lambda_{\mathbb{X},1})$ is a polarization on $X$.
	Then $\langle \,, \rangle_1$ is integral on the Dieudonn\'e module $M \subseteq N$ of $X$.
	By the above calculation, this is equivalent to $\frac{1}{2}(h + b)$ being integral on $M$.
	In particular, this implies that
	\begin{equation*}
		h(x,x) = h(x,x) + b(x,x) \in 2 \breve{O}_F,
	\end{equation*}
	for all $x \in M$.
	Let $\Lambda = (M +\tau(M))^{\tau}$.
	Then $h(x,x) \in 2O_F$ for all $x \in \Lambda$, hence $\Nm (\Lambda) \subseteq 2O_F$.
	By Lemma \ref{LA_hyp} and the bound of norm ideals, we have $\Nm (\Lambda) = 2O_F$ and $\Lambda$ is a hyperbolic lattice.
	It follows that $(X,\iota,\lambda,\varrho) \in \NELpr(\overbar{k})$ for some hyperbolic $\Pi^{-1}$-modular lattice $\Lambda' \subseteq C$.
	Indeed, if $M^{\tau} \subsetneq \Lambda$ then $\Lambda$ is $\Pi^{-1}$-modular and $\Lambda' = \Lambda$. If $M^{\tau} = \Lambda$ then it is contained in some $\Pi^{-1}$-modular hyperbolic lattice $\Lambda'$ by Proposition \ref{LA_lattRP}.
	
	Conversely, assume that $(X,\iota,\lambda,\varrho) \in \NEL(\overbar{k})$ for some hyperbolic lattice $\Lambda \subseteq C$.
	It suffices to show that $\frac{1}{2} (h + b)$ is integral on $\Lambda$.
	Indeed, it follows that $\frac{1}{2} (h + b)$ is integral on the Dieudonn\'e module $M$.
	Thus $\langle \,, \rangle_1$ is integral on $M$ and this is equivalent to $\lambda_1 = \frac{1}{2} (\lambda + \widetilde{\lambda}) \in \Hom(X,X^{\vee})$.
	
	Let $\Lambda' \subseteq C$ be the $\Pi^{-1}$-modular lattice generated by $e_1$ and $\Pi^{-1} e_2$, where $(e_1,e_2)$ is the basis of the lattice $\Lambda_0$ corresponding to the framing object $(\mathbb{X},\iota_{\mathbb{X}},\lambda_{\mathbb{X}})$.
	By \eqref{RP_matrices}, $h$ and $b$ have the following form with respect to the basis $(e_1, \Pi^{-1} e_2)$,
	\begin{equation*}
		h \, \weq \begin{pmatrix}
			& -\Pi^{-1} \\
			\Pi^{-1} & \\
		\end{pmatrix}, \quad b \, \weq \begin{pmatrix}
			& \Pi^{-1} \\
			-\Pi^{-1} & \\
		\end{pmatrix}.
	\end{equation*}
	In particular, $\Lambda'$ is hyperbolic and $\frac{1}{2} (h + b)$ is integral on $\Lambda'$.
	By Proposition \ref{LA_latt}, there exists an automorphism $g \in \SU(C,h)$ mapping $\Lambda$ onto $\Lambda'$.
	Since $\det g = 1$, the alternating form $b$ is invariant under $g$.
	It follows that $\frac{1}{2} (h + b)$ is also integral on $\Lambda$.
\end{proof}

From now on, we assume $(\mathbb{X},\iota_{\mathbb{X}},\lambda_{\mathbb{X}})$ and $\widetilde{\lambda}_{\mathbb{X}}$ chosen in a way such that
\begin{equation*}
	\lambda_{\mathbb{X},1} = \frac{1}{2} (\lambda_{\mathbb{X}} + \widetilde{\lambda}_{\mathbb{X}}) \in \Hom(\mathbb{X},\mathbb{X}^{\vee}).
\end{equation*}
Note that this determines the polarization $\widetilde{\lambda}_{\mathbb{X}}$ up to a scalar in $1 + 2O_E$.
If we replace $\widetilde{\lambda}_{\mathbb{X}}$ by $\widetilde{\lambda}_{\mathbb{X}}' = \widetilde{\lambda}_{\mathbb{X}} \circ \iota_{\mathbb{X}}(1 +  2u)$ for some $u \in O_E$, then $\lambda_{\mathbb{X},1}' = \lambda_{\mathbb{X},1} + \widetilde{\lambda}_{\mathbb{X}} \circ \iota_{\mathbb{X}}(u)$.

We can now formulate the straightening condition.

\begin{defn} \label{RP_strdef}
	Let $S \in \Nilp$.
	An object $(X,\iota,\lambda,\varrho) \in \NEnaive(S)$ satisfies the \emph{straightening condition} if
	\begin{equation} \label{RP_strcond}
		\lambda_1 \in \Hom(X,X^{\vee}),
	\end{equation}
	where $\lambda_1 = \frac{1}{2} (\lambda + \widetilde{\lambda}) = \varrho^{\ast}(\lambda_{\mathbb{X},1})$.
\end{defn}

This definition is clearly independent of the choice of the polarization $\widetilde{\lambda}_{\mathbb{X}}$.
We define $\NE$ as the functor that maps $S \in \Nilp$ to the set of all tuples $(X,\iota,\lambda,\varrho) \in \NEnaive(S)$ that satisfy the straightening condition.
By \cite[Prop.\ 2.9]{RZ96}, $\NE$ is representable by a closed formal subscheme of $\NEnaive$.

\begin{rmk} \label{RP_rmkNE}
	The reduced locus of $\NE$ can be written as
	\begin{equation*}
		(\NE)_{\red} = \smashoperator[r]{\bigcup_{\Lambda \subseteq C}} \NEL \simeq \smashoperator[r]{\bigcup_{\Lambda \subseteq C}} \mathbb{P}(\Lambda / \Pi \Lambda),
	\end{equation*}
	where we take the unions over all \emph{hyperbolic} $\Pi^{-1}$-modular lattices $\Lambda \subseteq C$.
	By Proposition \ref{LA_lattRP} and Lemma \ref{RP_IS}, each projective line contains $q+1$ points corresponding to unimodular lattices and there are two lines intersecting in each such point.
	Recall from Remark \ref{RP_rmklatt} \eqref{RP_rmklatt1} that there exist non-hyperbolic $\Pi^{-1}$-modular lattices $\Lambda \subseteq C$, thus we have $\NE(\overbar{k}) \neq \NEnaive(\overbar{k})$, and in particular $(\NE)_{\red} \neq (\NEnaive)_{\red}$.
\end{rmk}

\begin{rmk} \label{RP_genfiber}
	As has been pointed out to the author by A.\ Genestier, the straightening condition is not trivial on the rigid-analytic generic fiber of $\NEnaive$.
	However, we can show that it is open and closed.
	Since a proper study of the generic fiber would go beyond the scope of this paper, we restrain ourselves to indications rather than complete proofs.
	
	Let $C$ be an algebraically closed extension of $F$ and $\mathcal{O}_C$ its ring of integers. 
	Take a point $x = (X,\iota,\lambda,\varrho) \in \NEnaive(\mathcal{O}_C)$ and consider its $2$-adic Tate module $T_2(x)$.
	It is a free $O_E$-module of rank $2$ and $\lambda$ endows $T_2(x)$ with a perfect (non-split) hermitian form $h$.
	If $x \in \NE(\mathcal{O}_C)$, then the straightening condition implies that $(T_2(x),h)$ is a lattice with minimal norm\footnote{Calling this lattice ``hyperbolic'' doesn't make much sense here since it is anisotropic.}
	$\Nm(T_2(x))$ in the vector space $V_2(x) = T_2(x) \otimes_{O_E} E$ (see Proposition \ref{LA_latt} and \cite{Jac62}).
	But $V_2(x)$ also contains selfdual lattices with non-minimal norm ideal.
	Let $\Lambda \subseteq V_2(x)$ be such a lattice with $\Nm(\Lambda) \neq \Nm(T_2(x))$.
	Let $\Lambda'$ be the intersection of $T_2(x)$ and $\Lambda$ in $V_2(x)$.
	The inclusions $\Lambda' \inj \Lambda$ and $\Lambda' \inj T_2(x)$ define canonically a formal $O_F$-module $Y$ with $T_2(Y) = \Lambda'$ and a quasi-isogeny $\varphi: X \to Y$.
	By inheriting all data, $Y$ becomes a point in $\NEnaive(\mathcal{O}_C)$ that does not satisfy the straightening condition.
	
	To see that the straightening condition is open and closed on the generic fiber, consider the universal formal $O_F$-module $\mathcal{X} = (\mathcal{X},\iota_{\mathcal{X}},\lambda_{\mathcal{X}})$ over $\NEnaive$ and let $T_2(\mathcal{X})$ be its Tate module.
	Then $T_2(\mathcal{X})$ is a locally constant sheaf over $\NEnaiverig$ with respect to the \'etale topology.
	The polarization $\lambda_{\mathcal{X}}$ defines a hermitian form $h$ on $T_2(\mathcal{X})$.
	Since $T_2(\mathcal{X})$ is a locally constant sheaf, the norm ideal $\Nm(T_2(\mathcal{X}))$ with respect to $h$ (see Proposition \ref{LA_latt}) is locally constant as well.
	Hence the locus where $\Nm(T_2(\mathcal{X}))$ is minimal is open and closed in $\NEnaiverig$.
	But this is exactly $\NErig \subseteq \NEnaiverig$.
\end{rmk}

\subsection{The isomorphism to the Drinfeld moduli problem}
\label{RP3}

We now recall the Drinfeld moduli problem $\MDr$ on $\Nilp$.
Let $B$ be the quaternion division algebra over $F$ and $O_B$ its ring of integers.
Let $S \in \Nilp$.
Then $\MDr(S)$ is the set of equivalence classes of objects $(X,\iota_B,\varrho)$ where
\begin{itemize}
	\item $X$ is a formal $O_F$-module over $S$ of dimension $2$ and height $4$,
	\item $\iota_B: O_B \to \End(X)$ is an action of $O_B$ on $X$ satisfying the \emph{special} condition, \emph{i.e.}, $\Lie X$ is, locally on $S$, a free $\mathcal{O}_S \otimes_{O_F} O_F^{(2)}$-module of rank 1, where $O_F^{(2)} \subseteq O_B$ is any embedding of the unramified quadratic extension of $O_F$ into $O_B$ (\emph{cf.}\ \cite{BC91}),
	\item $\varrho: X \times_S \overbar{S} \to \mathbb{X} \times_{\Spec \overbar{k}} \overbar{S}$ is an $O_B$-linear quasi-isogeny of height $0$ to a fixed framing object $(\mathbb{X},\iota_{\mathbb{X}}) \in \MDr(\overbar{k})$.
\end{itemize}

Such a framing object exists and is unique up to isogeny.
By a proposition of Drinfeld, \emph{cf.}\ \cite[p.\ 138]{BC91}, there always exist polarizations on these objects, as follows:

\begin{prop}[Drinfeld] \label{RP_Dr}
	Let $\Pi \in O_B$ a uniformizer with $\Pi^2 \in O_F$ and let $b \mapsto b'$ be the standard involution of $B$.
	Then $b \mapsto b^{\ast} = \Pi b' \Pi^{-1}$ is another involution on $B$.
	\begin{enumerate}
		\item \label{RP_Dr1} There exists a principal polarization $\lambda_{\mathbb{X}}: \mathbb{X} \to \mathbb{X}^{\vee}$ on $\mathbb{X}$ with associated Rosati involution $b \mapsto b^{\ast}$.
		It is unique up to a scalar in $O_F^{\times}$.
		\item \label{RP_Dr2} Let $\lambda_{\mathbb{X}}$ as in \eqref{RP_Dr1}.
		For $(X,\iota_B,\varrho) \in \MDr(S)$, there exists a unique principal polarization
		\begin{equation*}
			\lambda: X \to X^{\vee}
		\end{equation*}
		with Rosati involution $b \mapsto b^{\ast}$ such that $\varrho^{\ast}(\lambda_{\mathbb{X}}) = \lambda$ on $\overbar{S}$.
	\end{enumerate}
\end{prop}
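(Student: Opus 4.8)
The plan is to pass to (relative) Dieudonné modules and to reduce the construction of the polarization to the construction of a suitable alternating form, exactly in the spirit of the proof of Proposition \ref{RP_frnaive}. For part \eqref{RP_Dr1}, I would first record that a principal polarization $\lambda_{\mathbb{X}}$ on $\mathbb{X}$ with Rosati involution $b\mapsto b^\ast$ is the same datum as a perfect $\breve{O}_F$-bilinear \emph{alternating} form $\langle\,,\rangle$ on the Dieudonné module $M=M_{\mathbb{X}}$ satisfying the Frobenius compatibility $\langle {\bf F}x,y\rangle=\langle x,{\bf V}y\rangle^{\sigma}$ together with the equivariance $\langle \iota_{\mathbb{X}}(b)x,y\rangle=\langle x,\iota_{\mathbb{X}}(b^\ast)y\rangle$ for all $b\in O_B$. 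Rationally, $N=M\otimes_{\breve{O}_F}\breve{F}$ is free of rank one over $B\otimes_F\breve{F}$, so after fixing a generator every candidate form is of the shape $\langle a,c\rangle=\Trd(a\,\theta\,c')$ for some $\theta$, where $b\mapsto b'$ is the standard involution. A short cyclicity computation shows that the equivariance with respect to $\ast$ holds for \emph{every} $\theta$, that the form is alternating precisely when $\theta^\ast=-\theta$, and that $\theta=\Pi^{-1}$ does the job (using $(\Pi^{-1})'=-\Pi^{-1}$, since $\Trd(\Pi)=0$). The resulting form is $\langle a,c\rangle=\Trd(a c'\Pi^{-1})$, which is genuinely alternating because $\langle a,a\rangle=\Nrd(a)\,\Trd(\Pi^{-1})=0$.

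It then remains to verify two integrality statements. First, this form is perfect on the lattice $O_B\otimes\breve{O}_F$: using that the inverse different of the maximal order $O_B$ with respect to $\Trd$ is $\mathfrak{D}_{B|F}^{-1}=\Pi^{-1}O_B$, one identifies the dual lattice of $O_B$ under $\langle\,,\rangle$ with $O_B$ itself, so the polarization is principal. Second, one must arrange ${\bf F}$-${\bf V}$-compatibility; here I would plug in the explicit Dieudonné module of the special formal $O_B$-module $\mathbb{X}$ (as in \cite{BC91}) and check the compatibility by a direct computation, rescaling $\theta$ by a unit if necessary. For the uniqueness up to $O_F^{\times}$: if $\lambda_{\mathbb{X}}$ and $\lambda_{\mathbb{X}}'$ both have Rosati involution $\ast$, then $g=\lambda_{\mathbb{X}}^{-1}\lambda_{\mathbb{X}}'$ is an $O_B$-linear automorphism of $\mathbb{X}$ fixed by the Rosati involution; since the latter restricts to the main involution on $\End^0_{O_B}(\mathbb{X})\cong M_2(F)$, whose fixed locus is the center $F$, we get $g\in F\cap\Aut_{O_B}(\mathbb{X})=O_F^{\times}$.

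For part \eqref{RP_Dr2}, I would set $\lambda=\varrho^{\ast}(\lambda_{\mathbb{X}})$. By rigidity of quasi-isogenies this is the unique quasi-isogeny $X\to X^{\vee}$ whose reduction over $\overbar{S}$ equals $\varrho^{\vee}\circ\lambda_{\mathbb{X}}\circ\varrho$, which immediately yields the uniqueness asserted in the proposition and, by functoriality, the fact that its Rosati involution is $b\mapsto b^\ast$. The substantive point — and the step I expect to be the main obstacle — is to upgrade this a priori quasi-polarization to an honest \emph{principal} polarization over an arbitrary base $S\in\Nilp$, not merely on $\overbar{k}$-valued points. I would argue this through Grothendieck–Messing theory: $\lambda$ induces a pairing on the crystal $\mathbb{D}(X)$, and one must check that it is perfect and respects the Hodge filtration. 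Compatibility with the filtration is forced by the \emph{special} condition, since the $O_B$-action (through which $\lambda$ is $\ast$-equivariant) already pins down $\Lie X$ as a locally free $\mathcal{O}_S\otimes_{O_F}O_F^{(2)}$-module of rank one. Perfectness holds on the special fibre by part \eqref{RP_Dr1} and is preserved under the (formally smooth) polarized deformation problem; equivalently, $\ker\lambda$ is a finite flat group scheme that is trivial over $\overbar{S}$ and hence trivial over $S$ by Nakayama. Thus $\lambda$ is an isomorphism and a polarization, completing the proof. Alternatively, one may simply invoke Drinfeld's original argument, \emph{cf.}\ \cite[p.~138]{BC91}, which is exactly of this shape.
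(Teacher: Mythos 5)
The paper itself offers no proof of this proposition: it is quoted as ``a proposition of Drinfeld'' with the reference \cite[p.~138]{BC91}, so there is no internal argument to compare yours against, and your closing fallback (``invoke Drinfeld's original argument'') is exactly what the paper does. Your reconstruction follows the standard route — a reduced-trace pairing on the rank-one $B\otimes_F\breve{F}$-module for existence, the inverse different $\Pi^{-1}O_B$ for principality, the symplectic-type involution on the commutant $\End^0_{O_B}(\mathbb{X})\cong M_2(F)$ for uniqueness — and that architecture is sound. But one intermediate claim is false as stated: the $\ast$-equivariance $\langle\iota(b)x,y\rangle=\langle x,\iota(b^{\ast})y\rangle$ does \emph{not} hold for every $\theta$. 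Using $(b^{\ast})'=\Pi^{-1}b\Pi$ and the nondegeneracy of $\Trd$, the equivariance forces $\theta$ to lie on the line $F\Pi^{-1}$ (with the opposite convention for the module structure it even fails for all nonzero $\theta$, so the convention must be fixed carefully). This is not fatal — your $\theta=\Pi^{-1}$ lies on that line, and the constraint $\theta\in F\Pi^{-1}$ is in fact the cleanest route to the uniqueness up to $O_F^{\times}$ — but as written the assertion would admit symmetric, non-equivariant forms.

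The genuine gap is in part \eqref{RP_Dr2}. The shortcut ``$\ker\lambda$ is a finite flat group scheme that is trivial over $\overbar{S}$ and hence trivial over $S$ by Nakayama'' is circular: $\ker\lambda$ is a finite locally free group scheme only once $\lambda$ is known to be an isogeny, and that is precisely what must be proved; a priori $\lambda=\varrho^{\ast}(\lambda_{\mathbb{X}})$ is only a quasi-isogeny. Moreover, ``$\lambda$ is a polarization'' is a \emph{closed} condition on $S$ (\cite[Prop.\ 2.9]{RZ96}), so its validity on $\overbar{S}$ (equivalently, at all $\overbar{k}$-points, where it follows from part \eqref{RP_Dr1}) does not propagate to $S$. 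The substantive step is the infinitesimal one: every $O_B$-stable lift of the Hodge filtration satisfying the special condition is automatically totally isotropic for the induced pairing, so that the polarized and unpolarized deformation functors have the same completed local rings. Your phrase ``forced by the special condition'' names the right mechanism but replaces the verification by an assertion; this is exactly the analogue of the Deligne--Pappas computation (Proposition \ref{POL_loc}) that Section \ref{POL} of the paper carries out, via the local model diagram, to prove Theorem \ref{POL_thm}. Either supply that computation for the Drinfeld local model, or keep the citation to \cite[p.~138]{BC91} and drop the Nakayama sentence.
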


We now relate $\MDr$ and $\NE$.
For this, we fix an embedding $E \inj B$.
Any choice of a uniformizer $\Pi \in O_E$ with $\Pi^2 \in O_F$ induces the same involution $b \mapsto b^{\ast} = \Pi b' \Pi^{-1}$ on $B$.

For the framing object $(\mathbb{X},\iota_{\mathbb{X}})$ of $\MDr$, let $\lambda_{\mathbb{X}}$ be a polarization associated to this involution by Proposition \ref{RP_Dr} \eqref{RP_Dr1}.
Denote by $\iota_{\mathbb{X},E}$ the restriction of $\iota_{\mathbb{X}}$ to $O_E \subseteq O_B$.
For any object $(X,\iota_B,\varrho) \in \MDr(S)$, let $\lambda$ be the polarization with Rosati involution $b \mapsto b^{\ast}$ that satisfies $\varrho^{\ast}(\lambda_{\mathbb{X}}) = \lambda$, see Proposition \ref{RP_Dr} \eqref{RP_Dr2}.
Let $\iota_E$ be the restriction of $\iota_B$ to $O_E$.

\begin{lem} \label{RP_clemb}
	$(\mathbb{X},\iota_{\mathbb{X},E},\lambda_{\mathbb{X}})$ is a framing object for $\NEnaive$.
	Furthermore, the map 
	\begin{equation*}
		(X,\iota_B,\varrho) \mapsto (X,\iota_E,\lambda,\varrho)
	\end{equation*}
	induces a closed immersion of formal schemes
	\begin{equation*}
		\eta: \MDr \inj \NEnaive.
	\end{equation*}
\end{lem}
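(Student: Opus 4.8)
The plan is to prove the two assertions in turn: the first by checking that the restricted datum $(\mathbb{X},\iota_{\mathbb{X},E},\lambda_{\mathbb{X}})$ meets the characterisation of Proposition \ref{RP_frnaive}, and the second by exhibiting the image of $\eta$ as a closed subfunctor of $\NEnaive$ on which the full $O_B$-structure can be reconstructed.

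For the first assertion I would begin by recording that $(\mathbb{X},\iota_{\mathbb{X},E},\lambda_{\mathbb{X}})$ is an object of the type parametrised by $\NEnaive$. The Rosati condition is immediate: the Drinfeld involution $b \mapsto b^\ast = \Pi b' \Pi^{-1}$ restricts on the commutative subfield $E$ to $\alpha \mapsto \Pi \overbar{\alpha}\,\Pi^{-1} = \overbar{\alpha}$, since the standard involution $'$ induces the conjugation on $E$ and $\Pi \in E$ commutes with $\overbar{\alpha}$. For the Kottwitz condition one uses the \emph{special} condition: writing $\Lie \mathbb{X}$ as a free module of rank one over $\mathcal{O}_S \otimes_{O_F} O_F^{(2)}$ and using the relation $\Pi\gamma = (1-\gamma)\Pi$ in $O_B$ (Lemma \ref{LA_quat}), the operator $\iota_{\mathbb{X}}(\Pi)$ is semilinear for the conjugation on $O_F^{(2)}$ and squares to $-\pi_0$; a direct computation in an $O_F^{(2)}$-basis then yields $\charp(\Lie \mathbb{X}, T \mid \iota_{\mathbb{X}}(\Pi)) = T^2 + \pi_0 = (T-\Pi)(T-\overbar{\Pi})$, and the condition for general $\alpha \in O_E$ follows by substitution. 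To match Proposition \ref{RP_frnaive}, I would then identify the group of $O_B$-linear self-quasi-isogenies of $\mathbb{X}$ fixing $\lambda_{\mathbb{X}}$. By Drinfeld's theory (\emph{cf.}\ \cite{BC91}) this group is $\SL_2(F)$, which under the exceptional isomorphism $\SL_2 \cong \SU_2$ is $\SU(C,h)$ for the split hermitian space; it sits as a closed subgroup of $\QIsog(\mathbb{X},\iota_{\mathbb{X},E},\lambda_{\mathbb{X}})$ (the latter imposing fewer constraints), so Proposition \ref{RP_frnaive} applies and identifies $(\mathbb{X},\iota_{\mathbb{X},E},\lambda_{\mathbb{X}})$ as a framing object. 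Splitness of $(C,h)$ is forced here because $\SL_2(F)$ is isotropic.

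For the second assertion, well-definedness of $\eta$ on $S$-points combines the above (Kottwitz and Rosati, holding relatively) with the equality $\lambda = \varrho^\ast(\lambda_{\mathbb{X}})$, which is exactly Proposition \ref{RP_Dr}\eqref{RP_Dr2}; the quasi-isogeny $\varrho$ is $O_E$-linear of height $0$ by restriction. The crucial observation is that, because $\varrho$ is $O_B$-linear, the action $\iota_B$ is forced by the pair $(\iota_E,\varrho)$ via $\iota_B(b) = \varrho^{-1}\circ \iota_{\mathbb{X},B}(b) \circ \varrho$; in particular $\eta$ is a monomorphism of functors. I would then characterise the image: since $O_B = O_E + O_E\gamma$, a point $(X,\iota,\lambda,\varrho) \in \NEnaive(S)$ lies in the image precisely when the quasi-endomorphism $g = \varrho^{-1}\circ\iota_{\mathbb{X},B}(\gamma)\circ\varrho$, which by rigidity lifts uniquely to a quasi-endomorphism of $X$ over $S$, is an honest endomorphism and the resulting $O_B$-action is special.

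The main obstacle is to upgrade this set-theoretic description to a closed immersion. By \cite[Prop.\ 2.9]{RZ96} the locus where $g$ is integral is a closed formal subscheme $Z' \subseteq \NEnaive$. On $Z'$ the element $\gamma$ acts, so $\Lie X$ becomes a module over $R = \mathcal{O}_S \otimes_{O_F} O_F^{(2)}$, which over our base splits as $\mathcal{O}_S \times \mathcal{O}_S$ through the two embeddings of $O_F^{(2)} \subseteq \breve{O}_F$; the corresponding idempotents decompose $\Lie X = L_1 \oplus L_2$ into direct summands whose ranks are locally constant. Hence the special condition, namely $\operatorname{rk} L_1 = \operatorname{rk} L_2 = 1$, is open and closed on $Z'$ and therefore cuts out a closed formal subscheme $Z \subseteq Z' \subseteq \NEnaive$. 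Finally, on $Z$ the recipe $\iota_B(b) = \varrho^{-1}\iota_{\mathbb{X},B}(b)\varrho$ produces an integral special $O_B$-action (integrality of all of $O_B$ following from that of $O_E$ and of $\gamma$), giving a morphism $Z \to \MDr$ inverse to $\eta$; thus $\eta$ induces an isomorphism $\MDr \isoarrow Z$ and is a closed immersion. I expect the two delicate points to be the clean identification of the quasi-isogeny group in the first assertion and the verification that the special condition is open and closed on $Z'$.
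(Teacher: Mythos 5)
Your proposal is correct and follows essentially the same route as the paper: verify the framing-object condition via the closed subgroup $\SL_{2,F}\simeq\SU(C,h)$ of $\QIsog(\mathbb{X},\iota_{\mathbb{X}},\lambda_{\mathbb{X}})$, deduce the Kottwitz condition from the special condition, and realize the image as the closed locus where the $O_E$-action extends to $O_B$ (equivalently where $\gamma$ acts integrally) intersected with the open-and-closed special locus, using \cite[Prop.\ 2.9]{RZ96}. The only difference is cosmetic: where the paper cites \cite[Prop.\ 5.8]{RZ14} for the Kottwitz condition and \cite[p.\ 7]{RZ14} for the special condition being open and closed, you supply the direct computations (the semilinear action of $\iota(\Pi)$ and the idempotent decomposition of $\Lie X$), both of which are sound.
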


\begin{proof}
	There are two things to check: that $\QIsog(\mathbb{X},\iota_{\mathbb{X}},\lambda_{\mathbb{X}})$ contains $\SU(C,h)$ as a closed subgroup
	%$G_{(\mathbb{X},\iota_{\mathbb{X},E},\lambda_{\mathbb{X}})} \simeq \SU(C,h)$, with $(C,h)$ split as in Prop.\ \ref{RP_frnaive},
	and that $\iota_E$ satisfies the Kottwitz condition.
	Indeed, once these two assertions hold, we can take $(\mathbb{X},\iota_{\mathbb{X},E},\lambda_{\mathbb{X}})$ as a framing object for $\NEnaive$ and the morphism $\eta$ is well-defined.
	For any $S \in \Nilp$, the map $\eta(S)$ is injective, because $(X,\iota_B,\varrho)$ and $(X',\iota_B',\varrho') \in \MDr(S)$ map to the same point in $\NEnaive(S)$ under $\eta$ if and only if the quasi-isogeny $\varrho'  \circ \varrho$ on $\overbar{S}$ lifts to an isomorphism on $S$, \emph{i.e.}, if and only if $(X,\iota_B,\varrho)$ and $(X',\iota_B',\varrho')$ define the same point in $\MDr(S)$.
	The functor
	\begin{equation*}
		F: S \mapsto \{ (X,\iota,\lambda,\varrho) \in \NEnaive(S) \mid \iota \text{ extends to an } O_B \text{-action} \}
	\end{equation*}
	is pro-representable by a closed formal subscheme of $\NEnaive$ by \cite[Prop.\ 2.9]{RZ96}.
	Now, the formal subscheme $\eta(\MDr) \subseteq F$ is given by the special condition.
	But the special condition is open and closed (see \cite[p.\ 7]{RZ14}), thus $\eta$ is a closed embedding.
	
	It remains to show the two assertions from the beginning of this proof.
	We first check the condition on $\QIsog(\mathbb{X},\iota_{\mathbb{X}},\lambda_{\mathbb{X}})$.
	Let $G_{(\mathbb{X},\iota_{\mathbb{X}})}$ be the group of $O_B$-linear quasi-isogenies $\varphi: (\mathbb{X},\iota_{\mathbb{X}}) \to (\mathbb{X},\iota_{\mathbb{X}})$ of height $0$ such that the induced homomorphism of Dieudonn\'e modules has determinant $1$.
	Then we have (non-canonical) isomorphisms $G_{(\mathbb{X},\iota_{\mathbb{X}})}  \simeq \SL_{2,F}$ and $\SL_{2,F} \simeq \SU(C,h)$, since $h$ is split.
	The uniqueness of the polarization $\lambda_{\mathbb{X}}$ (up to a scalar in $O_F^{\times}$) implies that $G_{(\mathbb{X},\iota_{\mathbb{X}})} \subseteq \QIsog(\mathbb{X},\iota_{\mathbb{X}},\lambda_{\mathbb{X}})$.
	This is a closed embedding of linear algebraic groups over $F$, since a quasi-isogeny $\varphi \in \QIsog(\mathbb{X},\iota_{\mathbb{X}},\lambda_{\mathbb{X}})$ lies in $G_{(\mathbb{X},\iota_{\mathbb{X}})}$ if and only if it is $O_B$-linear and has determinant $1$, and these are closed conditions on $\QIsog(\mathbb{X},\iota_{\mathbb{X}},\lambda_{\mathbb{X}})$.
	%Thus $G_{(\mathbb{X},\iota_{\mathbb{X},E},\lambda_{\mathbb{X}})}$ contains a closed subgroup isomorphic to $\SU(C,h)$ and this already implies that $G_{(\mathbb{X},\iota_{\mathbb{X},E},\lambda_{\mathbb{X}})} \simeq \SU(C,h)$, see Remark \ref{RP_rmknaive}.
	%But the group of $O_B$-linear quasi-isogenies $\varphi: (\mathbb{X},\iota_{\mathbb{X}}) \to (\mathbb{X},\iota_{\mathbb{X}})$ of height $0$ and determinant $1$ is isomorphic to $\SL_2(F)$, and $\SL_2(F) \simeq \SU(C,h)$, since $h$ is split.
	%Thus we get $\SU(C,h) \subseteq G_{(\mathbb{X},\iota_{\mathbb{X},E},\lambda_{\mathbb{X}})}$ and this already implies equality, see Remark \ref{RP_rmknaive}.
	
	Finally, the special condition implies the Kottwitz condition for any element $b \in O_B$ (see \cite[Prop.\ 5.8]{RZ14}), \emph{i.e.}, the characteristic polynomial for the action of $\iota(b)$ on $\Lie X$ is
	\begin{equation*}
		\charp(\Lie X, T \mid \iota(b)) = (T - b)(T - b'),
	\end{equation*}
	where the right hand side is a polynomial in $\mathcal{O}_S[T]$ via the structure homomorphism $O_F \inj \breve{O}_F \to \mathcal{O}_S$.
	From this, the second assertion follows.
\end{proof}

Let $O_F^{(2)} \subseteq O_B$ be an embedding such that conjugation with $\Pi$ induces the non-trivial Galois action on $O_F^{(2)}$, as in Lemma \ref{LA_quat} \eqref{LA_quatRP}.
Fix a generator $\gamma = \frac{1+\delta}{2}$ of $O_F^{(2)}$ with $\delta^2 \in O_F^{\times}$.
On $(\mathbb{X},\iota_{\mathbb{X}})$, the principal polarization $\widetilde{\lambda}_{\mathbb{X}}$ given by
%For any $(X,\iota_B,\varrho) \in \MDr(S)$, the principal polarization $\widetilde{\lambda}$ on $X$ given by
\begin{equation*}
	\widetilde{\lambda}_{\mathbb{X}} = \lambda_{\mathbb{X}} \circ \iota_{\mathbb{X}}(\delta)
\end{equation*}
has a Rosati involution that induces the identity on $O_E$.
For any $(X,\iota_B,\varrho) \in \MDr(S)$, we set $\widetilde{\lambda} = \varrho^{\ast}(\widetilde{\lambda}_{\mathbb{X}}) = \lambda \circ \iota_B(\delta)$.
The tuple $(X,\iota_E,\lambda,\varrho) = \eta(X,\iota_B,\varrho)$ satisfies the straightening condition \eqref{RP_strcond}, since
\begin{equation*}
	\lambda_1 = \frac{1}{2} (\lambda + \widetilde{\lambda}) = \lambda \circ \iota_B(\gamma) \in \Hom(X,X^{\vee}).
\end{equation*}
In particular, the tuple $(\mathbb{X},\iota_{\mathbb{X},E},\lambda_{\mathbb{X}})$ is a framing object of $\NE$ and $\eta$ induces a natural transformation
\begin{equation} \label{RP_MDrtoNE}
	\eta: \MDr \inj \NE.
\end{equation}
Note that this map does not depend on the above choices, as $\NE$ is a closed formal subscheme of $\NEnaive$.

\begin{thm} \label{RP_thm}
	$\eta: \MDr \to \NE$ is an isomorphism of formal schemes.
\end{thm}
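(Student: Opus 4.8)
The plan is to leverage the fact that $\eta$ is already a closed immersion $\MDr \inj \NE$, by Lemma \ref{RP_clemb} together with the factorization \eqref{RP_MDrtoNE}. Since both $\MDr$ and $\NE$ are formal schemes locally formally of finite type over $\breve{O}_F$, the morphism $\eta$ is an isomorphism as soon as (i) it is bijective on $\overbar{k}$-points, and (ii) at every closed point $x$ the completed local ring $\widehat{\mathcal{O}}_{\NE,\eta(x)}$ is abstractly isomorphic to $\widehat{\mathcal{O}}_{\MDr,x}$. Indeed, a closed immersion induces a surjection $\widehat{\mathcal{O}}_{\NE,\eta(x)} \twoheadrightarrow \widehat{\mathcal{O}}_{\MDr,x}$, and a surjection between abstractly isomorphic complete Noetherian local rings is automatically an isomorphism (compose with an abstract isomorphism to obtain a surjective endomorphism of a Noetherian ring, which is injective).

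Claim (i) is the combination of the descriptions already in place. By Remark \ref{RP_rmkNE} the reduced locus $(\NE)_{\red}$ is the union $\bigcup_\Lambda \NEL$ over the hyperbolic $\Pi^{-1}$-modular lattices $\Lambda \subseteq C$, each $\NEL$ being a projective line $\mathbb{P}(\Lambda / \Pi \Lambda)$ by Proposition \ref{RP_NEnaiveRL}; on the other hand, the analysis of Remark \ref{RP_rmklatt} \eqref{RP_rmklatt3} identifies $\eta(\MDr(\overbar{k}))$ with precisely this union of lines over hyperbolic lattices. Hence $\eta(\overbar{k})$ is a bijection, and the induced closed immersion $(\MDr)_{\red} \to (\NE)_{\red}$ between reduced schemes locally of finite type over $\overbar{k}$, being bijective on $\overbar{k}$-points, is an isomorphism; both reduced loci form the same configuration of projective lines (with $q+1$ intersection points on each line and two lines through each intersection point, by Proposition \ref{LA_lattRP}).

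The heart of the argument is claim (ii), the local comparison. I would compute $\widehat{\mathcal{O}}_{\NE,\eta(x)}$ by Grothendieck–Messing deformation theory, extending the crystal computation from the proof of Proposition \ref{RP_NEnaiveRL} from the reduced locus to the full infinitesimal structure: deformations of a point $(X,\iota,\lambda,\varrho)$ over an Artinian thickening correspond to liftings of the Hodge filtration inside the Grothendieck–Messing crystal that are compatible with the $O_E$-action and isotropic for the alternating form determined by $\lambda$, and the straightening condition \eqref{RP_strcond} demands in addition that $\lambda_1 = \tfrac{1}{2}(\lambda + \widetilde{\lambda})$ deform to an integral homomorphism $X \to X^{\vee}$, i.e.\ that the lifted filtration be isotropic for the integral alternating form attached to $\lambda_1$. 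Translating these requirements through the hermitian form $h$ and the alternating form $b$ of Proposition \ref{RP_strprop}, I expect the resulting deformation functor to be representable by $\breve{O}_F[[t]]$ at a smooth point of the special fiber and by $\breve{O}_F[[t_1,t_2]]/(t_1 t_2 - \pi_0)$ at an intersection point — exactly the completed local rings of $\MDr$, the latter being the ordinary double point of its semistable special fiber.

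The main obstacle is this last computation at the intersection points. In the naive space there are $q+1$ branches of the special fiber through such a point, namely the two hyperbolic lines of $\NE$ together with the $q-1$ non-hyperbolic ``tails'' of Remark \ref{RP_rmklatt} \eqref{RP_rmklatt3}, so $\widehat{\mathcal{O}}_{\NEnaive}$ there is strictly larger than the Drinfeld double point, and one must verify that the straightening condition excises exactly the extra tails. Because $p = 2$, the factor $\tfrac{1}{2}$ in $\lambda_1$ and the interplay between $\lambda_{\mathbb{X}}$, $\widetilde{\lambda}_{\mathbb{X}}$ and the $O_B$-structure (via Lemma \ref{LA_quat} \eqref{LA_quatRP}) make the Dieudonné-theoretic translation of the straightening condition delicate; the integrality computation of Proposition \ref{RP_strprop}, now carried out over a general Artinian base rather than over $\overbar{k}$, is the technical core. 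Once the completed local rings are shown to agree at both the smooth and the singular points, the surjectivity-plus-Noetherian argument of the first paragraph promotes the bijection on $\overbar{k}$-points to the desired isomorphism $\eta \colon \MDr \isoarrow \NE$ of formal schemes.
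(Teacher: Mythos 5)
Your overall two-step strategy (bijectivity on $\overbar{k}$-points plus a comparison of completed local rings) is a legitimate route in principle, but as written it has two genuine gaps. First, the bijectivity on geometric points is not free: your citation of Remark \ref{RP_rmklatt} \eqref{RP_rmklatt3} for the identification of $\eta(\MDr(\overbar{k}))$ with the union of hyperbolic lines is circular, since in the paper that identification is itself stated as a consequence of Theorem \ref{RP_thm}. The actual content needed is the surjectivity statement of Lemma \ref{RP_thmgeom}: one must show by a direct Dieudonn\'e-module argument that every lattice $M \in \NE(\overbar{k})$ respects the $\mathbb{Z}/2$-grading $N = N_0 \oplus N_1$ coming from $F^{(2)} \subseteq B$ (this uses the integrality of both $\langle\,,\rangle$ and $\langle\,,\rangle_1$ together with $\gamma^{\sigma} = 1-\gamma$) and satisfies the special condition (via perfectness of $\langle\,,\rangle$ on each graded piece). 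You do not supply this argument. Second, your claim (ii) is only a plan: the computation of $\widehat{\mathcal{O}}_{\NE,x}$ at the intersection points, which you yourself identify as the technical core, is left at the level of ``I expect the resulting deformation functor to be representable by $\breve{O}_F[[t_1,t_2]]/(t_1t_2-\pi_0)$.'' Showing that the straightening condition excises exactly the $q-1$ non-hyperbolic tails infinitesimally is precisely the hard point, and no argument is given.

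The paper avoids the local-ring computation entirely by a different mechanism that you miss: for any $S \in \Nilp$ and $(X,\iota,\lambda,\varrho) \in \NE(S)$, the straightening condition makes $\lambda_1 = \tfrac{1}{2}(\lambda+\widetilde{\lambda})$ integral, so $\varrho^{-1}\circ\iota_{\mathbb{X}}(\gamma)\circ\varrho = \lambda^{-1}\circ\lambda_1$ lies in $\End(X)$; since $O_B = O_F[\Pi,\gamma]$, every $S$-point of $\NE$ thereby canonically carries an $O_B$-action extending $\iota$. Consequently $\eta(\MDr) \subseteq \NE$ is cut out only by the special condition, which is open and closed, so $\eta$ is an open and closed immersion; combined with the bijection on $\overbar{k}$-points and the fact that the reduced loci are locally of finite type over $\overbar{k}$, it is an isomorphism on reduced subschemes, and an open and closed immersion that is an isomorphism on reduced subschemes is an isomorphism. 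This replaces your entire infinitesimal analysis with one formal observation; if you want to salvage your route, you would need to actually carry out the Grothendieck--Messing computation at the double points (in the style of section \ref{LM_naive}), but the paper's argument shows this is unnecessary.
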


We will first prove this on $\overbar{k}$-valued points:

\begin{lem} \label{RP_thmgeom}
	$\eta$ induces a bijection $\eta(\overbar{k}): \MDr(\overbar{k}) \to \NE(\overbar{k})$.
\end{lem}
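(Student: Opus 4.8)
The plan is to show $\eta(\overbar{k})$ is both injective and surjective. Injectivity is immediate: by Lemma \ref{RP_clemb} the map $\eta$ is a closed immersion, hence injective on $\overbar{k}$-valued points, and we already know (from the discussion preceding the theorem) that its image lands in $\NE(\overbar{k})$. The substance is therefore surjectivity, for which I would construct an explicit inverse by recovering the $O_B$-action from the polarization data.

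Take $(X,\iota,\lambda,\varrho)\in\NE(\overbar{k})$ and recall $\lambda_1 = \frac{1}{2}(\lambda+\widetilde{\lambda}) = \varrho^{\ast}(\lambda_{\mathbb{X},1})$. By the straightening condition $\lambda_1\in\Hom(X,X^{\vee})$, and since $\lambda$ is a principal polarization the composite $J := \lambda^{-1}\circ\lambda_1$ is a genuine (integral) endomorphism of $X$; set $J_\delta := 2J-\id = \lambda^{-1}\circ\widetilde{\lambda}$, which is an isomorphism because $\widetilde{\lambda}$ is principal by Theorem \ref{IN_POL}. First I would verify the two defining relations of $O_B = O_F[\Pi,\gamma]$ from Lemma \ref{LA_quat}\eqref{LA_quatRP}. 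The anticommutation $J_\delta\,\iota(\Pi) = -\iota(\Pi)\,J_\delta$ follows purely formally from the Rosati conditions: the Rosati involution of $\lambda$ gives $\iota(\Pi)^{\vee} = -\lambda\,\iota(\Pi)\,\lambda^{-1}$ (using $\overbar{\Pi}=-\Pi$), while that of $\widetilde{\lambda}$ is the identity on $O_E$, so $\iota(\Pi)^{\vee} = \widetilde{\lambda}\,\iota(\Pi)\,\widetilde{\lambda}^{-1}$; equating these and conjugating yields $J_\delta\iota(\Pi)=-\iota(\Pi)J_\delta$, and hence $J\iota(\Pi)+\iota(\Pi)J=\iota(\Pi)$ after cancelling the torsion-free factor $2$. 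For the quadratic relation I would transport $J_\delta$ through $\varrho$: under the induced isomorphism $\End^0(X)\cong\End^0(\mathbb{X})$ it becomes $\lambda_{\mathbb{X}}^{-1}\widetilde{\lambda}_{\mathbb{X}}=\iota_{\mathbb{X}}(\delta)$, so $J_\delta^2=\delta^2=1+4u\in O_F^{\times}$, whence $4(J^2-J-u)=J_\delta^2-\delta^2=0$ and thus $J^2=J+u$. These relations show that $\iota$ extends to an $O_B$-action $\iota_B$ with $\iota_B(\gamma)=J$, and $\varrho$ is automatically $O_B$-linear since it intertwines $\lambda,\lambda_1$ with $\lambda_{\mathbb{X}},\lambda_{\mathbb{X},1}$.

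The key remaining point, and the main obstacle, is the \emph{special} condition for $\iota_B$: that $\Lie X = M/\mathbf{V}M$ is free of rank $1$ over $O_F^{(2)}\otimes_{O_F}\overbar{k}\cong\overbar{k}\times\overbar{k}$, equivalently that $J$ acts on the two-dimensional $\Lie X$ with \emph{both} residue roots $\gamma_1\neq\gamma_2$ of $T^2-T-u$ rather than as a scalar. Here I would exploit residue characteristic $2$: if $J$ acted as a scalar $c$, the relation $J\iota(\Pi)+\iota(\Pi)J=\iota(\Pi)$ would give $(2c-1)\,\iota(\Pi)=0$ on $\Lie X$, and since $2=0$ this forces $\iota(\Pi)=0$ on $\Lie X$. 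For a point in the interior of a hyperbolic line (i.e.\ $M$ not $\tau$-stable, so $\iota(\Pi)\neq 0$ on $\Lie X$) this is a contradiction, and the special condition follows. The delicate case is the $\tau$-stable points $M=\Lambda_0\otimes\breve{O}_E$ (the intersection points), where $\mathbf{V}M=\Pi M$ and $\iota(\Pi)$ does vanish on $\Lie X$; there I would verify directly from the explicit hyperbolic matrices for $h$ and $b$ in \eqref{RP_matrices} that $J$ acts with distinct residue eigenvalues on $M/\Pi M$, or alternatively invoke that the special locus is open and closed together with the fact that each such intersection point is a specialization of interior points on the two hyperbolic lines through it.

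Finally I would assemble the inverse: the reconstructed triple $(X,\iota_B,\varrho)$ is a point of $\MDr(\overbar{k})$, and comparing polarizations via the uniqueness in Drinfeld's Proposition \ref{RP_Dr} (both $\lambda$ and the polarization attached to $(X,\iota_B,\varrho)$ equal $\varrho^{\ast}(\lambda_{\mathbb{X}})$ and carry the Rosati involution $b\mapsto b^{\ast}$) shows $\eta(X,\iota_B,\varrho)=(X,\iota,\lambda,\varrho)$. This exhibits $\eta(\overbar{k})$ as a bijection. As a structural cross-check, both $\NE(\overbar{k})$ and $\MDr(\overbar{k})$ are unions of projective lines whose incidence graph is the Bruhat--Tits tree of $\SL_2(F)$---for $\NE$ by Proposition \ref{LA_lattRP} and Lemma \ref{RP_IS}, for $\MDr$ by \cite{BC91}---and $\eta$ matches them line by line.
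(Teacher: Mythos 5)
Your proof is correct in substance, but it takes a genuinely different route from the paper's. The paper proves this lemma purely with Dieudonn\'e lattices: it fixes the grading $N=N_0\oplus N_1$ attached to $F^{(2)}\subseteq B$, shows that a lattice $M\in\NE(\overbar{k})$ splits as $M=M_0\oplus M_1$ by playing the integrality of $\langle\,,\rangle$ against that of $\langle\,,\rangle_1=\gamma\langle\,,y_0\rangle+(1-\gamma)\langle\,,y_1\rangle$ and using $M=M^{\vee}$ (the determinant $1-2\gamma=-\delta$ being a unit), and then gets the special condition \emph{uniformly} from the fact that $\langle\,,\rangle$ is perfect on each $M_i$, whereas $\langle \mathbf{V}x,\mathbf{V}y\rangle^{\sigma}=\pi_0\langle x,y\rangle$ would force it into $\pi_0\breve{O}_F$ if $M_i=\mathbf{V}M_{i+1}$. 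You instead reconstruct the $O_B$-action on the formal group via $J=\lambda^{-1}\lambda_1$ and $J_\delta=\lambda^{-1}\widetilde{\lambda}$ and verify the relations of $O_F[\Pi,\gamma]$ -- this is essentially the device the paper itself uses later, in the proof of Theorem \ref{RP_thm}, for arbitrary $S$-points, so your argument in effect front-loads that computation; your splitting of $M$ is equivalent to the paper's (integrality of $J$ on $M$ is the same as $M=M_0\oplus M_1$). The one place where your route is weaker is the special condition: your characteristic-$2$ trick ($J$ scalar forces $(2c-1)\iota(\Pi)=0$, hence $\iota(\Pi)=0$ on $\Lie X$) disposes of the non-$\tau$-stable points cleanly, but the $\tau$-stable intersection points need a separate argument that you only sketch. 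Both of your proposed completions do work -- the explicit eigenvalue computation of $J$ on $\Lambda_0/\Pi\Lambda_0$ (transported to an arbitrary hyperbolic lattice by the $\SU(C,h)$-argument of Proposition \ref{RP_strprop}) gives residues differing by the unit $\delta^{-1}$, and alternatively the $O_B$-action exists over the whole reduced line $\NEL\cong\mathbb{P}^1$, so openness and closedness of the special condition plus connectedness propagates it from the interior to the intersection points -- but you should commit to one and carry it out, since this is exactly the delicate case the paper's perfectness argument handles without any case distinction.
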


\begin{proof}
	We can identify the $\overbar{k}$-valued points of $\MDr$ with a subset $\MDr(\overbar{k}) \subseteq \NEnaive(\overbar{k})$.
	The rational Dieudonn\'e-module $N$ of $\mathbb{X}$ is equipped with an action of $B$.
	Fix an embedding $F^{(2)} \inj B$ as in Lemma \ref{LA_quat} \eqref{LA_quatRP}.
	This induces a $\mathbb{Z}/ 2$-grading $N = N_0 \oplus N_1$ of $N$, where
	\begin{align*}
		N_0 & = \{ x \in N \mid \iota(a)x = ax \text{ for all } a \in F^{(2)} \}, \\
		N_1 & = \{ x \in N \mid \iota(a)x = \sigma(a)x \text{ for all } a \in F^{(2)} \},
	\end{align*}
	for a fixed embedding $F^{(2)} \inj \breve{F}$.
	The operators ${\bf V}$ and ${\bf F}$ have degree $1$ with respect to this decomposition.
	Recall that $\lambda$ has Rosati involution $b \mapsto \Pi b' \Pi^{-1}$ on $O_B$ which restricts to the identity on $O_F^{(2)}$.
	The subspaces $N_0$ and $N_1$ are therefore orthogonal with respect to $\langle \,, \rangle$.
	
	Under the identification \eqref{RP_geompts}, a lattice $M \in \MDr(\overbar{k})$ respects this decomposition, \emph{i.e.}, $M = M_0 \oplus M_1$ with $M_i = M \cap N_i$.
	Furthermore it satisfies the special condition:
	\begin{equation*}
		\dim M_0 / {\bf V}M_1 = \dim M_1 / {\bf V}M_0 = 1.
	\end{equation*}
	We already know that $\MDr(\overbar{k}) \subseteq \NE(\overbar{k})$, so let us assume $M \in \NE(\overbar{k})$.
	We want to show that $M \in \MDr(\overbar{k})$, \emph{i.e.}, that the lattice $M$ is stable under the action of $O_B$ on $N$ and satisfies the special condition.
	It is stable under the $O_B$-action if and only if $M = M_0 \oplus M_1$ for $M_i = M \cap N_i$.
	Let $y \in M$ and $y = y_0 + y_1$ with $y_i \in N_i$.
	For any $x \in M$, we have
	\begin{equation} \label{RP_thmeq1}
		\langle x,y \rangle = \langle x,y_0 \rangle + \langle x, y_1 \rangle \in \breve{O}_F.
	\end{equation}
	We can assume that $\lambda_{\mathbb{X},1} = \lambda_{\mathbb{X}} \circ \iota_B(\gamma)$ with $\gamma \in O_F^{(2)}$ under our fixed embedding $F^{(2)} \inj B$.
	Recall that $\gamma^{\sigma} = 1 - \gamma$ from page \pageref{LA_quadext}.
	Let $\langle \,, \rangle_1$ be the alternating form on $M$ induced by $\lambda_{\mathbb{X},1}$.
	Then,
	\begin{equation} \label{RP_thmeq2}
		\langle x, y \rangle_1 = \gamma \cdot \langle x, y_0 \rangle + (1-\gamma) \cdot \langle x, y_1 \rangle \in \breve{O}_F.
	\end{equation}
	From the equations \eqref{RP_thmeq1} and \eqref{RP_thmeq2}, it follows that $\langle x,y_0 \rangle$ and $\langle x, y_1 \rangle$ lie in $\breve{O}_F$.
	Since $x \in M$ was arbitrary and $M = M^{\vee}$, this gives $y_0, y_1 \in M$.
	Hence $M$ respects the decomposition of $N$ and is stable under the action of $O_B$.
	
	It remains to show that $M$ satisfies the special condition:
	The alternating form $\langle \,, \rangle$ is perfect on $M$, thus the restrictions to $M_0$ and $M_1$ are perfect as well.
	If $M$ is not special, we have $M_i = {\bf V}M_{i+1}$ for some $i \in \{0, 1\}$.
	But then, $\langle \,, \rangle$ cannot be perfect on $M_i$.
	In fact, for any $x,y \in M_{i+1}$,
	\begin{equation*}
		\langle {\bf V}x, {\bf V}y \rangle^{\sigma} = \langle {\bf F}{\bf V} x, y \rangle = \pi_0 \cdot \langle x, y \rangle \in \pi_0 \breve{O}_F.
	\end{equation*} 
	Thus $M$ is indeed special, \emph{i.e.}, $M \in \MDr(\overbar{k})$, and this finishes the proof of the lemma.
\end{proof}

\begin{proof}[Proof of Theorem \ref{RP_thm}]
	We already know that $\eta$ is a closed embedding 
	\begin{equation*}
		\eta: \MDr \inj \NE.
	\end{equation*}
	Let $(\mathbb{X},\iota_{\mathbb{X}})$ be the framing object of $\MDr$ and choose an embedding $O_F^{(2)} \subseteq O_B$ and a generator $\gamma$ of $O_F^{(2)}$ as in Lemma \ref{LA_quat} \eqref{LA_quatRP}.
	We take $(\mathbb{X},\iota_{\mathbb{X},E},\lambda_{\mathbb{X}})$ as a framing object for $\NE$ and set $\widetilde{\lambda}_{\mathbb{X}} = \lambda_{\mathbb{X}} \circ \iota_{\mathbb{X}}(\delta)$.
	
	Let $(X,\iota,\lambda,\varrho) \in \NE(S)$ and $\widetilde{\lambda} = \varrho^{\ast}(\widetilde{\lambda}_{\mathbb{X}})$.
	We have
	\begin{equation*}
		\varrho^{-1} \circ \iota_{\mathbb{X}} (\gamma) \circ \varrho = \varrho^{-1} \circ \lambda_{\mathbb{X}}^{-1} \circ \lambda_{\mathbb{X},1} \circ \varrho = \lambda^{-1} \circ \lambda_1 \in \End(X),
	\end{equation*}
	where $\lambda_{\mathbb{X},1} = \frac{1}{2} (\lambda_{\mathbb{X}} + \widetilde{\lambda}_{\mathbb{X}})$ and $\lambda_1 = \frac{1}{2} (\lambda + \widetilde{\lambda})$.
	Since $O_B = O_F[\Pi, \gamma]$, this induces an $O_B$-action $\iota_B$ on $X$ and makes $\varrho$ an $O_B$-linear quasi-isogeny.
	We have to check that $(X,\iota_B,\varrho)$ satisfies the special condition.
	
	Recall that the special condition is open and closed (see \cite[p.\ 7]{RZ14}), so $\eta$ is an open and closed embedding.
	Furthermore, $\eta(\overbar{k})$ is bijective and the reduced loci $(\MDr)_{\red}$ and $(\NE)_{\red}$ are locally of finite type over $\Spec \overbar{k}$.
	Hence $\eta$ indcues an isomorphism on reduced subschemes.
	But any open and closed embedding of formal schemes, that is an isomorphism on the reduced subschemes, is already an isomorphism.
\end{proof}

%%%%%%%%%%%%%%%%%%%%%%%%%%%%%%%%%%%%%%%%%%%%%%%%%%%%%%%%%%%%%%%%%%%%%%%%%%%%%%%

\section{The moduli problem in the case (R-U)}
\label{RU}

Let $E|F$ be a quadratic extension of type (R-U), generated by a uniformizer $\Pi$ satisfying an Eisenstein equation of the form $\Pi^2 - t\Pi + \pi_0 = 0$ where $t \in O_F$ and $\pi_0|t|2$.
% a unit $\vartheta \in O_E^{\times}$ with $\vartheta^2 = 1 + \pi_0^{2k+1} \varepsilon$ for some $\varepsilon \in O_F^{\times}$ and for an integer $k$ such that $|2| < |\pi_0|^k \leq |1|$ for the (normalized) absolute value $|\cdot|$ of $F$.
%A uniformizer of $E$ is given by $\Pi = (1 + \vartheta) / \pi_0^k$ and for the rings of integers $O_F$ and $O_E$ of $F$ and $E$, 
Let $O_F$ and $O_E$ be the rings of integers of $F$ and $E$. We have $O_E = O_F[\Pi]$.
As in the case (R-P), let $k$ be the common residue field, $\overbar{k}$ an algebraic closure, $\breve{F}$ the completion of the maximal unramified extension with ring of integers $\breve{O}_F = W_{O_F}(\overbar{k})$ and $\sigma$ the lift of the Frobenius in $\Gal(\overbar{k}|k)$ to $\Gal(\breve{O}_F|O_F)$.

\subsection{The naive moduli problem} \label{RU1}

Let $S \in \Nilp$.
Consider tuples $(X,\iota,\lambda)$, where
\begin{itemize}
	\item $X$ is a formal $O_F$-module over $S$ of dimension $2$ and height $4$.
	\item $\iota: O_E \to \End(X)$ is an action of $O_E$ on $X$ satisfying the \emph{Kottwitz condition}:
	The characteristic polynomial of $\iota(\alpha)$ for some $\alpha \in O_E$ is given by
	\begin{equation*}
		\charp(\Lie X, T \mid \iota(\alpha)) = (T - \alpha)(T - \overbar{\alpha}).
	\end{equation*}
	Here $\alpha \mapsto \overbar{\alpha}$ is the Galois conjugation of $E|F$ and the right hand side is a polynomial in $\mathcal{O}_S[T]$ via the structure morphism $O_F \inj \breve{O}_F \to \mathcal{O}_S$.
	\item $\lambda: X \to X^{\vee}$ is a polarization on $X$ with kernel $\ker \lambda = X[\Pi]$, where $X[\Pi]$ is the kernel of $\iota(\Pi)$.
	Further we demand that the Rosati involution of $\lambda$ satisfies $\iota(\alpha)^{\ast} = \iota(\overbar{\alpha})$ for all $\alpha \in O_E$.
\end{itemize}

We define quasi-isogenies $\varphi: (X,\iota,\lambda) \to (X',\iota',\lambda')$ and the group $\QIsog(X,\iota,\lambda)$ as in Definition \ref{RP_isonaive}.
%\begin{defn} \label{RU_isonaive}
%	A \emph{quasi-isogeny} (resp.\ an \emph{isomorphism}) $\varphi: (X,\iota,\lambda) \to (X',\iota',\lambda')$ of two tuples $(X,\iota,\lambda)$ and $(X',\iota',\lambda')$ over $S \in \Nilp$ is an $O_E$-linear quasi-isogeny of height $0$ (resp.\ an $O_E$-linear isomorphism) $\varphi: X \to X'$ such that $\lambda = \varphi^{\ast}(\lambda')$.
%\end{defn}
%
%Let $\QIsog^0(X,\iota,\lambda)$ be the group of quasi-isogenies $\varphi: (X,\iota,\lambda) \to (X,\iota,\lambda)$.
%We have:

\begin{prop} \label{RU_frnaive}
	Up to isogeny, there exists exactly one such tuple $(\mathbb{X},\iota_{\mathbb{X}},\lambda_{\mathbb{X}})$ over $S = \Spec \overbar{k}$ under the condition that the group $\QIsog(\mathbb{X}, \iota_{\mathbb{X}}, \lambda_{\mathbb{X}})$
%	\begin{equation}
%		G_{(\mathbb{X},\iota_{\mathbb{X}},\lambda_{\mathbb{X}})} = \{ \varphi \in \QIsog^0(\mathbb{X}, \iota_{\mathbb{X}}, \lambda_{\mathbb{X}}) \mid \det \varphi = 1 \}
%	\end{equation}
	contains a closed subgroup isomorphic to $\SU(C,h)$ for a $2$-dimensional $E$-vector space $C$ with split $E|F$-hermitian form $h$.
\end{prop}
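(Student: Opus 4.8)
The plan is to follow the proof of Proposition \ref{RP_frnaive} almost verbatim, tracking the three features that distinguish the (R-U) case: the Galois relation $\Pi + \overbar{\Pi} = t$, the inverse different $\mathfrak{D}_{E|F}^{-1} = \frac{1}{t}O_E$ (Lemma \ref{LA_diff}), and the fact that $\lambda$ is no longer principal but has $\ker\lambda = X[\Pi]$. I would start with an arbitrary tuple $(X,\iota,\lambda)$ over $\Spec\overbar{k}$ and pass to its rational Dieudonn\'e module $N_X$, a $4$-dimensional $\breve{F}$-vector space with $E$-action, Frobenius $\mathbf{F}$, Verschiebung $\mathbf{V}$ (with $\mathbf{F}\mathbf{V}=\mathbf{V}\mathbf{F}=\pi_0$) and the alternating form $\langle\,,\rangle$ coming from $\lambda$. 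The compatibility $\iota(\alpha)^{\ast}=\iota(\overbar{\alpha})$ now reads $\langle\Pi x,y\rangle+\langle x,\Pi y\rangle=t\langle x,y\rangle$, which is the (R-U) replacement for \eqref{RP_alt}.

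Next I would reconstruct a hermitian form. Fixing a trace-zero generator $\xi_0$ of $\mathfrak{D}_{E|F}^{-1}=\frac{1}{t}O_E$ — one may take $\xi_0 = \frac{2\Pi - t}{t^2}$, which has $\Tr_{E|F}(\xi_0)=0$ and generates $\frac{1}{t}O_E$ since $\frac{2\Pi-t}{t}$ is a unit — there is a unique $\breve{E}|\breve{F}$-hermitian form $h$ on $N_X$, viewed as a $2$-dimensional $\breve{E}$-space, with $\langle x,y\rangle=\Tr_{\breve{E}|\breve{F}}(\xi_0\,h(x,y))$. Because $\xi_0$ generates the inverse different, this identifies the $\langle\,,\rangle$-dual of any $\breve{O}_E$-lattice with its $h$-dual. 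Setting $\tau=\Pi\mathbf{V}^{-1}=\mathbf{F}\overbar{\Pi}^{-1}$ and using $\pi_0=\Pi\overbar{\Pi}$, the same computation as in loc.\ cit.\ gives $h(\tau x,\tau y)=h(x,y)^{\sigma}$; moreover the condition on $\QIsog(\mathbb{X},\iota_{\mathbb{X}},\lambda_{\mathbb{X}})$ forces $N_X$ to be isotypical of slope $\tfrac12$, so $\tau$ has all slopes $0$. Hence $C=N_X^{\tau}$ is a $2$-dimensional $E$-vector space carrying an $E|F$-hermitian form $h$, with $N_X=C\otimes_E\breve{E}$. As before, the requirement that $\QIsog(\mathbb{X},\iota_{\mathbb{X}},\lambda_{\mathbb{X}})$ contain $\SU(C,h)$ as a closed subgroup rules out the non-split space — the split and non-split unitary groups being non-isomorphic with neither embedding in the other — so $(C,h)$ is the split space. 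Uniqueness then follows exactly as in Proposition \ref{RP_frnaive}: an isometry between the spaces $(C,h)$ attached to two such tuples extends to an isomorphism of rational Dieudonn\'e modules respecting all structure, i.e.\ to a quasi-isogeny.

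For existence I would run this construction in reverse, and here the choice of lattice is the one genuine difference from the (R-P) case. Under the dictionary above, the condition $\ker\lambda=X[\Pi]$ translates, via $M^{\vee}=M^{\sharp}$, into the statement that the Dieudonn\'e lattice $M_{\mathbb{X}}\subseteq N$ is \emph{$\Pi$-modular} with respect to $h$, i.e.\ $M_{\mathbb{X}}=\Pi M_{\mathbb{X}}^{\sharp}$, rather than selfdual as in the (R-P) case — this is precisely the shift introduced by the non-principal polarization. I would therefore start from a $\Pi$-modular hyperbolic lattice $\Lambda\subseteq C$, which exists with $\Nm(\Lambda)=tO_F\subseteq\pi_0 O_F$ by Lemma \ref{LA_hyp} and Proposition \ref{LA_latt}, put $M=\Lambda\otimes_{O_E}\breve{O}_E$, and choose $\mathbf{F},\mathbf{V}$ with $\mathbf{F}\mathbf{V}=\mathbf{V}\mathbf{F}=\pi_0$ and $\Lambda=M^{\tau}$; recovering $\langle\,,\rangle$ from $h$ via $\xi_0$ turns $M$ into a (relative) Dieudonn\'e module with $M^{\vee}=\Pi^{-1}M$, whose associated formal $O_F$-module is a tuple of the required kind, the Kottwitz condition of signature $(1,1)$ being read off from $\pi_0 M\subseteq\mathbf{V}M\subseteq M$ as in the (R-P) case.

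The step I expect to require the most care is verifying that the resulting polarization has kernel exactly $X[\Pi]$ rather than being principal. This rests on the interplay between the normalization $\xi_0$ — which is forced to be both trace-zero (so that $\langle\,,\rangle$ is alternating) and a generator of $\frac{1}{t}O_E$ (so that $M^{\vee}=M^{\sharp}$) — and the $\Pi$-modularity of $M$. Concretely, a trace-zero element always has even $\Pi$-adic valuation, so no normalization makes a selfdual $M$ compatible with $\ker\lambda=X[\Pi]$; the correct and only consistent picture is that $M$ is $\Pi$-modular in $N$ while the larger lattice $\Lambda_0=(M+\tau M)^{\tau}$ is the selfdual lattice in $C$ indexing the components of $\NEnaive(\overbar{k})$. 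Once this matching of modularities is in place, the remainder of the argument is formally identical to the (R-P) proof.
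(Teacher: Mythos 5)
Your proposal is correct and follows essentially the same route as the paper: pass to the rational Dieudonn\'e module, recover the hermitian form $h$ from $\langle\,,\rangle$ via a trace-zero generator of the inverse different (your $\xi_0=(2\Pi-t)/t^2$ agrees with the paper's $1/(t\vartheta)$ up to a unit in $O_F^{\times}$), take $\tau$-invariants to get $(C,h)$, use the $\QIsog$ condition to force $h$ split, and reverse the construction from a $\Pi$-modular lattice for existence. Your closing discussion of why $\ker\lambda=X[\Pi]$ matches $\Pi$-modularity of $M$ is exactly the verification the paper leaves to the reader.
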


\begin{rmk} \label{RU_rmknaive}
	%Similar to the case (R-P), existence will be seen via an explicit construction using the Drinfeld moduli problem later on.
	As in the case (R-P), we have $\QIsog(\mathbb{X}, \iota_{\mathbb{X}}, \lambda_{\mathbb{X}}) \cong \U(C,h)$ for $(\mathbb{X},\iota_{\mathbb{X}},\lambda_{\mathbb{X}})$ as in the Proposition.
	%we consider $G_{(\mathbb{X},\iota_{\mathbb{X}},\lambda_{\mathbb{X}})}$ and $\SU(C,h)$ as linear algebraic groups over $F$.
	%We will prove uniqueness for the slightly weakened condition that $\SU(C,h)$ is only a Zariski-closed subgroup of $G_{(\mathbb{X},\iota_{\mathbb{X}},\lambda_{\mathbb{X}})}$ (and it will follow implicitly that this is already an isomorphism).
\end{rmk}

\begin{proof}[Proof of Proposition \ref{RU_frnaive}]
	We first show uniqueness of the object.
	Let $(X,\iota,\lambda) / \Spec \overbar{k}$ be a tuple as in the proposition and consider its rational Dieudonn\'e-module $N_X$.
	This is a $4$-dimensional vector space over $\breve{F}$ equipped with an action of $E$ and an alternating form $\langle \,,\rangle$ such that
	\begin{equation}
		\langle x, \Pi y \rangle = \langle \overbar{\Pi} x, y \rangle
	\end{equation}
	for all $x,y \in N_X$.
%	Let $E^{(2)} = F^{(2)} \otimes_F E$ the unramified quadratic extension of $E$ and choose elements $c_1, c_2 \in E^{(2)}$ as follows:
%	The element $\pi_0 \in O_F$ has even valuation in $E$, hence we can find an element $c_2 \in E^{(2)}$ with $\Nm_{E^{(2)}|E}(c_2) = \pi_0$.
%	Note that
%	\begin{equation*}
%		\Nm_{E^{(2)}|E} \left( \frac{\sigma(c_2)}{\overbar{c_2}} \right) = \frac{\sigma(c_2) \cdot c_2}{\overbar{c_2} \cdot \sigma(\overbar{c_2})} = \frac{\pi_0}{\overbar{\pi_0}} = 1,
%	\end{equation*}
%	where $\alpha \mapsto \overbar{\alpha}$ is the conjugation in $\Gal(\breve{E}|\breve{F})$ and $\sigma$ is the lift of the Frobenius inducing the conjugation in $\Gal(E^{(2)}|E)$.
%	By Hilbert 90, there exists a unit $c_1 \in E^{(2)}$ with
%	\begin{equation*}
%		\frac{c_1}{\sigma(c_1)} = \frac{\sigma(c_2)}{\overbar{c_2}}.
%	\end{equation*}
	Let $\breve{E} = \breve{F} \otimes_F E$.
	We can see $N_X$ as $2$-dimensional vector space over $\breve{E}$ with a hermitian form $h$ given by
	\begin{equation} \label{RU_herm}
		h(x,y) = \langle \Pi x, y \rangle - \overbar{\Pi} \langle x,y \rangle.
	\end{equation}
	Let ${\bf F}$ and ${\bf V}$ be the $\sigma$-linear Frobenius and the $\sigma^{-1}$-linear Verschiebung on $N_X$.
	We have ${\bf F}{\bf V} = {\bf V}{\bf F} = \pi_0$ and, since $\langle \,, \rangle$ comes from a polarization,
	\begin{equation*}
		\langle {\bf F}x, y \rangle = \langle x, {\bf V}y \rangle^{\sigma}.
	\end{equation*}
	Consider the $\sigma$-linear operator $\tau = \Pi {\bf V}^{-1} = {\bf F} \overbar{\Pi}^{-1}$.
	The hermitian form $h$ is invariant under $\tau$:
	\begin{equation*}
		h(\tau x, \tau y) = h({\bf F} \overbar{\Pi}^{-1} x, \Pi {\bf V}^{-1} y) = h({\bf F} x, {\bf V}^{-1} y) = h(x,y)^{\sigma}.
	\end{equation*}
	From the condition on $\QIsog(\mathbb{X}, \iota_{\mathbb{X}}, \lambda_{\mathbb{X}})$ it follows that $N_X$ is isotypical of slope $\frac{1}{2}$ and thus the slopes of $\tau$ are all zero.
	Let $C = N_X^{\tau}$.
	This is a $2$-dimensional vector space over $E$ with $N_X = C \otimes_E \breve{E}$ and $h$ induces an $E|F$-hermitian form on $C$.
	A priori, there are two possibilities for $(C,h)$, either $h$ is split or non-split.
	The group $\U(C,h)$ of automorphisms is isomorphic to $\QIsog(\mathbb{X}, \iota_{\mathbb{X}}, \lambda_{\mathbb{X}})$.
	But the unitary groups for $h$ split and $h$ non-split are not isomorphic and do not contain each other as a closed subgroup.
	Thus the condition on $\QIsog(\mathbb{X}, \iota_{\mathbb{X}}, \lambda_{\mathbb{X}})$ implies that $h$ is split.
	
	Assume we are given two different objects $(X,\iota,\lambda)$ and $(X',\iota',\lambda')$ as in the proposition.
	Then there is an isomorphism between the spaces $(C,h)$ and $(C',h')$ extending to an isomorphism of $N_X$ and $N_{X'}$ respecting all structure.
	This corresponds to a quasi-isogeny $\varphi: (X,\iota,\lambda) \to (X',\iota',\lambda')$.
	
	Now we prove the existence of $(\mathbb{X},\iota_{\mathbb{X}},\lambda_{\mathbb{X}})$.
	We start with a $\Pi$-modular lattice $\Lambda$ in a $2$-dimensional vector space $(C,h)$ over $E$ with split hermitian form.
	Then $M = \Lambda \otimes_{O_E} \breve{O}_E$ is an $\breve{O}_E$-lattice in $N = C \otimes_{E} \breve{E}$.
	The $\sigma$-linear operator $\tau = 1 \otimes \sigma$ on $N$ has slopes are all $0$.
	We can extend $h$ to $N$ such that
	\begin{equation*}
		h(\tau x, \tau y) = h(x,y)^{\sigma},
	\end{equation*}
	for all $x,y \in N$.
%	Furthermore, we choose $c_1, c_2 \in E^{(2)}$ as in the first part of the proof.
	The operators ${\bf F}$ and ${\bf V}$ are given by the equations $\tau = \Pi {\bf V}^{-1} = {\bf F} \overbar{\Pi}^{-1}$.
	Finally, the alternating form $\langle \,, \rangle$ is defined via
	\begin{equation*}
		\langle x,y \rangle = \Tr_{\breve{E}|\breve{F}} \left(\frac{1}{t\vartheta} \cdot h(x,y) \right),
	\end{equation*}
	for $x,y \in N$.
	The lattice $M \subseteq N$ is the Dieudonn\'e module of the object $(\mathbb{X},\iota_{\mathbb{X}},\lambda_{\mathbb{X}})$.
	We leave it to the reader to check that this is indeed an object as considered above.
\end{proof}

We fix such an object $(\mathbb{X},\iota_{\mathbb{X}},\lambda_{\mathbb{X}})$ over $\Spec \overbar{k}$ from the proposition.
We define the functor $\NEnaive$ on $\Nilp$ as in Definition \ref{RP_defnaive}.

%Let $S \in \Nilp$ and write $\overbar{S} = S \times_{\Spf \breve{O}_F} \Spec \overbar{k}$.
%Then $\NEnaive(S)$ is the set of equivalence classes of tuples $(X,\iota,\lambda,\varrho)$ over $S$ where $(X,\iota,\lambda)$ is a tuple as above and $\varrho$ is a quasi-isogeny
%\begin{equation*}
%	\varrho: (X,\iota,\lambda) \times_S \overbar{S} \to (\mathbb{X},\iota_{\mathbb{X}},\lambda_{\mathbb{X}}) \times_{\Spec \overbar{k}} \overbar{S}
%\end{equation*}
%in the sense of Definition \ref{RP_isonaive}.
%Two objects $(X,\iota,\lambda,\varrho)$ and $(X',\iota',\lambda',\varrho')$ are equivalent if there exists an isomorphism $\varphi: (X,\iota,\lambda) \to (X',\iota',\lambda')$, such that $\varrho = \varrho' \circ (\varphi \times_S \overbar{S})$.
%
%We refer to Remark \ref{RP_loftnaive} \eqref{RP_loftnaive1} for a comparison of the definition of $\NEnaive$ with \cite{RZ96} (strict equality versus equality up to a unit).

\begin{rmk}
	$\NEnaive$ is pro-representable by a formal scheme, formally locally of finite type over $\Spf \breve{O}_F$, \emph{cf.}\ \cite[Thm.\ 3.25]{RZ96}.
\end{rmk}

We now study the $\overbar{k}$-valued points of the space $\NEnaive$.
Let $N = N_{\mathbb{X}}$ be the rational Dieudonn\'e-module of $(\mathbb{X},\iota_{\mathbb{X}},\lambda_{\mathbb{X}})$.
This is a $4$-dimensional vector space over $\breve{F}$, equipped with an action of $E$, with two operators ${\bf F}$ and ${\bf V}$ and an alternating form $\langle \,, \rangle$.

Let $(X, \iota, \lambda, \varrho) \in \NEnaive(\overbar{k})$.
This corresponds to an $\breve{O}_F$-lattice $M = M_X \subseteq N$ which is stable under the actions of ${\bf F}$, ${\bf V}$ and $O_E$.
The condition on the kernel of $\lambda$ implies that $M = \Pi M^{\vee}$ for
\begin{equation*}
	M^{\vee} = \{ x \in N \mid \langle x,y \rangle \in \breve{O}_F \text{ for all } y \in M \}.
\end{equation*}
The alternating form $\langle \,, \rangle$ induces an $\breve{E}|\breve{F}$-hermitian form $h$ on $N$, seen as $2$-dimensional vector space over $\breve{E}$ (see equation \eqref{RU_herm}):
\begin{equation*}
	h(x,y) = \langle \Pi x, y \rangle - \overbar{\Pi} \langle x,y \rangle.
\end{equation*}
%Recall that $c_1$ is a unit in $\breve{E}$ and that
We can recover the form $\langle \,, \rangle$ from $h$ via
\begin{equation} \label{RU_altherm}
	\langle x, y \rangle = \Tr_{\breve{E}|\breve{F}} \left( \frac{1}{t\vartheta} \cdot h(x,y) \right).
\end{equation}
Since the inverse different of $E|F$ is $\mathfrak{D}_{E|F}^{-1} = \frac{1}{t}O_E$ (see Lemma \ref{LA_diff}), this implies that $M$ is $\Pi$-modular with respect to $h$, as $\breve{O}_E$-lattice in $N$.
We denote the dual of $M$ with respect to $h$ by $M^{\sharp}$.
There is a natural bijection
\begin{equation} \label{RU_geompts}
	\NEnaive (\overbar{k}) = \{ \breve{O}_E \text{-lattices } M \subseteq N \mid M = \Pi M^{\sharp}, \pi_0 M \subseteq {\bf V}M \subseteq M \}.
\end{equation}
Recall that $\tau = \Pi {\bf V}^{-1}$ is a $\sigma$-linear operator on $N$ with slopes all $0$.
Further $C = N^{\tau}$ is a $2$-dimensional $E$-vector space with hermitian form $h$.

\begin{lem} \label{RU_latt}
	Let $M \in \NEnaive(\overbar{k})$.
	Then:
	\begin{enumerate}
		\item $M + \tau(M)$ is $\tau$-stable.
		\item Either $M$ is $\tau$-stable and $\Lambda_1 = M^{\tau} \subseteq C$ is $\Pi$-modular with respect to $h$, or $M$ is not $\tau$-stable and then $\Lambda_0 = (M + \tau(M))^{\tau} \subseteq C$ is unimodular.
	\end{enumerate}
\end{lem}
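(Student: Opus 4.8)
The plan is to rephrase the two defining conditions on $M$ entirely in terms of the $\sigma$-linear operator $\tau = \Pi {\bf V}^{-1} = {\bf F}\overbar{\Pi}^{-1}$ and then argue by elementary divisor theory over the discrete valuation ring $\breve{O}_E$. First I would record two structural facts about $\tau$. Since $\pi_0 = \Pi\overbar{\Pi}$ and $\overbar{\Pi}$ is associate to $\Pi$ in $\breve{O}_E$, rewriting ${\bf V} = \Pi\tau^{-1}$ and ${\bf F} = \overbar{\Pi}\tau$ turns the Dieudonné conditions ${\bf V}M \subseteq M$ and $\pi_0 M \subseteq {\bf V}M$ into the single chain
\begin{equation*}
	\Pi M \subseteq \tau M \subseteq \Pi^{-1} M,
\end{equation*}
which is symmetric under $\tau \leftrightarrow \tau^{-1}$. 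Second, the identity $h(\tau x, \tau y) = h(x,y)^\sigma$ shows that $\tau$ commutes with the formation of the $h$-dual, $(\tau M)^\sharp = \tau(M^\sharp)$; combined with the $\Pi$-modularity $M = \Pi M^\sharp$ of every $M \in \NEnaive(\overbar{k})$ (see \eqref{RU_geompts}), this gives $(\tau M)^\sharp = \Pi^{-1}\tau M$, so $\tau M$ is again $\Pi$-modular. Finally, as $\tau$ has all slopes $0$, a lattice is $\tau$-stable if and only if it has the form $\Lambda \otimes_{O_E} \breve{O}_E$ for an $O_E$-lattice $\Lambda \subseteq C$.

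With these facts in hand I would run elementary divisor theory for $\tau M$ relative to $M$ over $\breve{O}_E$: the relative position is $(\Pi^a, \Pi^b)$ with $-1 \le a \le b \le 1$ by the displayed chain, while the equality of discriminants of the two $\Pi$-modular lattices $M$ and $\tau M$ forces $a + b = 0$. Hence either $a = b = 0$, so $\tau M = M$, or the position is $(\Pi^{-1}, \Pi)$. In the first case $M$ is $\tau$-stable, $\Lambda_1 = M^\tau$, and restricting $M = \Pi M^\sharp$ to $C$ shows $\Lambda_1 = \Pi \Lambda_1^\sharp$ is $\Pi$-modular, which settles this alternative.

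In the second (``non-stable'') case I would fix an elementary divisor basis $f_1, f_2$ of $M$ with $\tau M = \langle \Pi^{-1}f_1, \Pi f_2\rangle$. A direct computation then gives $M \cap \tau M = \langle f_1, \Pi f_2\rangle$ and $M + \tau M = \langle \Pi^{-1}f_1, f_2\rangle = \Pi^{-1}(M \cap \tau M)$, whence
\begin{equation*}
	(M + \tau M)^\sharp = M^\sharp \cap (\tau M)^\sharp = \Pi^{-1}(M \cap \tau M) = M + \tau M,
\end{equation*}
so $M + \tau M$ is unimodular; granting part (1), $\Lambda_0 = (M + \tau M)^\tau$ is then unimodular.

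It remains to prove part (1), that $P := M + \tau M$ is $\tau$-stable, and this is the main obstacle. The clean reduction is that $\tau P = \tau M + \tau^2 M$ is again unimodular (by the dual-compatibility of $\tau$), so it suffices to establish a single inclusion, say $\tau^2 M \subseteq P$: two unimodular lattices one contained in the other have equal covolume and hence coincide, giving $\tau P = P$. Equivalently, dualizing, one must show $M \cap \tau M \subseteq \tau^2 M$, that is $\tau^{-1}M \cap M \subseteq \tau M$. The difficulty is that the basis diagonalizing $\tau M$ inside $M$ need not diagonalize $\tau^{-1}M$ inside $M$, so this inclusion cannot be read off from the coordinates directly; it has to be forced by the perfectness of $h$ on the $\Pi$-modular lattices $M$ and $\tau M$, which links the two relative positions through $h(\tau^{-1}x, y) = h(x, \tau y)^{\sigma^{-1}}$. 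I expect this to come out of the same Gram matrix analysis used for the type (R-P) statement, Lemma \ref{RP_latt}, which is modeled on \cite[Lemma 3.2]{KR14}; the only changes are bookkeeping, replacing the relations $\overbar{\Pi} = -\Pi$ and $\pi_0 = -\Pi^2$ of the (R-P) case by $\pi_0 = \Pi\overbar{\Pi}$ and using that $\Pi$ and $\overbar{\Pi}$ are associate.
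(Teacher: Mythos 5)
Your reduction of the lemma is essentially correct as far as it goes: the chain $\Pi M \subseteq \tau M \subseteq \Pi^{-1}M$, the fact that $(\tau M)^{\sharp} = \tau(M^{\sharp})$ so that $\tau M$ is again $\Pi$-modular of the same volume, the resulting elementary-divisor dichotomy $(0,0)$ versus $(-1,1)$, the self-duality of $M+\tau M$ in the second case, and the observation that part (1) reduces to the single inclusion $\tau^{2}M \subseteq M + \tau M$ are all fine. (The paper itself offers no argument here; it only points to \cite[Lemma 3.2]{KR14}.) But part (1) is the one non-formal assertion of the lemma, and you do not prove it: you correctly isolate it as the main obstacle and then defer to an unspecified ``Gram matrix analysis''. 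That is a genuine gap, and moreover the mechanism you propose cannot close it. The natural duality argument of the kind you describe shows that the images of ${\bf V}$ and ${\bf F}$ in $M/\Pi M$ are isotropic lines for the symmetric form induced by $\overbar{\Pi}^{-1}h$; but for a $\Pi$-modular $M$ one has $h(x,x) \in \pi_0\breve{O}_F$ for all $x \in M$, so in residue characteristic $2$ that form is alternating and \emph{every} line is isotropic --- the hermitian form gives no information. Indeed, one can write down a $\sigma$-linear $\tau$ on $\breve{E}^2$ with slopes $\pm\tfrac12$ (e.g.\ $\tau e_1 = \Pi^{-1}e_1$, $\tau e_2 = \Pi e_2$), a compatible split hermitian form, and a $\Pi$-modular $M$ satisfying the chain condition, for which $M + \tau M$ is \emph{not} $\tau$-stable; so no argument using only $h$, the chain, and modularity can succeed.

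The missing input is precisely that $\tau$ is isoclinic of slope $0$, which you invoke only to identify $\tau$-stable lattices with $\Lambda \otimes_{O_E}\breve{O}_E$. Here is how to finish. Choose a $\tau$-stable lattice $L_0 = \Lambda\otimes_{O_E}\breve{O}_E$ and $a \geq 0$ with $M \subseteq \Pi^{-a}L_0$; then $\tau^{k}M \subseteq \Pi^{-a}L_0$ for all $k$, so $L = \sum_{k\geq 0}\tau^{k}M$ is a lattice. Suppose $B = M+\tau M$ and $\tau B$ were distinct. Both are unimodular and contain the $\Pi$-modular lattice $\tau M$ with index one, so $B\cap \tau B = \tau M$ and hence
\begin{equation*}
	B + \tau B \;=\; B^{\sharp} + (\tau B)^{\sharp} \;=\; (B\cap\tau B)^{\sharp} \;=\; (\tau M)^{\sharp} \;=\; \Pi^{-1}\tau M.
\end{equation*}
Applying $\tau^{k}$ to this identity and summing over $k\geq 0$ yields $L \supseteq \Pi^{-1}\tau L$, i.e.\ $\tau L \subseteq \Pi L$, which is absurd because the $\sigma$-linear bijection $\tau$ preserves indices of lattices, so $\tau L$ and $L$ have the same volume while $\Pi L$ is a proper sublattice of $L$. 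Hence $\tau B = B$, which is part (1); with this in place the rest of your argument goes through.
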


The proof is the same as that of \cite[Lemma 3.2]{KR14}.
We identify $N$ with $C \otimes_{E} \breve{E}$.
For any $\tau$-stable lattice $M \in \NEnaive(\overbar{k})$, we have $M = \Lambda_1 \otimes_{O_E} \breve{O}_E$.
If $M \in \NEnaive(\overbar{k})$ is not $\tau$-stable, there is an inclusion $M \subseteq \Lambda_0 \otimes_{O_E} \breve{O}_E$ of index $1$.
Recall from Proposition \ref{LA_latt} that the isomorphism class of a $\Pi$-modular or unimodular lattice $\Lambda \subseteq C$ is determined by the norm ideal
\begin{equation*}
	\Nm(\Lambda) = \langle \{ h(x,x) | x \in \Lambda \} \rangle.
\end{equation*}
There are always at least two types of unimodular lattices.
However, not all of them appear in the description of $\NEnaive(\overbar{k})$.

\begin{lem} \label{RU_IS}
	\begin{enumerate}
		\item \label{RU_IS1} Let $\Lambda \subseteq C$ be a unimodular lattice with $\Nm (\Lambda) \subseteq \pi_0 O_F$.
		There is an injection
		\begin{equation*}
			i_{\Lambda}: \mathbb{P}(\Lambda / \Pi \Lambda)(\overbar{k}) \inj \NEnaive(\overbar{k}),
		\end{equation*}
		that maps a line $\ell \subseteq \Lambda / \Pi \Lambda \otimes_k \overbar{k}$ to its inverse image under the canonical projection
		\begin{equation*}
			\Lambda \otimes_{O_E} \breve{O}_E \to \Lambda / \Pi \Lambda \otimes_k \overbar{k}.
		\end{equation*}
		The $k$-valued points $\mathbb{P}(\Lambda / \Pi \Lambda)(k) \subseteq \mathbb{P}(\Lambda / \Pi \Lambda)(\overbar{k})$ are mapped to $\tau$-invariant Dieudonn\'e modules $M \subseteq \Lambda \otimes_{O_E} \breve{O}_E$ under this embedding.
		\item Identify $\mathbb{P}(\Lambda / \Pi \Lambda)(\overbar{k})$ with its image under $i_{\Lambda}$.
		The set $\NEnaive(\overbar{k})$ can be written as
		\begin{equation*}
			\NEnaive(\overbar{k}) = \smashoperator[r]{\bigcup_{\Lambda \subseteq C}} \mathbb{P}(\Lambda / \Pi \Lambda)(\overbar{k}),
		\end{equation*}
		where the union is taken over all lattices $\Lambda \subseteq C$ with $\Nm(\Lambda) \subseteq \pi_0 O_F$.
	\end{enumerate}
\end{lem}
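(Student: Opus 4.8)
The plan is to mirror the proof of the analogous Lemma~\ref{RP_IS} from the (R-P) case. As there, the essential point is the well-definedness of $i_{\Lambda}$; once this is established, injectivity and the description of $\mathbb{P}(\Lambda/\Pi\Lambda)(k)$ follow formally, and part~(2) is a consequence of the lattice dichotomy in Lemma~\ref{RU_latt} together with the norm computations of Section~\ref{LA}. So first I would fix a line $\ell \subseteq (\Lambda/\Pi\Lambda)\otimes\overbar{k}$, pick $f_1 \in \Lambda\otimes\breve{O}_E$ reducing to a generator of $\ell$, extend to an $\breve{O}_E$-basis $(f_1, g)$ of $\widetilde{\Lambda} := \Lambda\otimes_{O_E}\breve{O}_E$, and set $M = \breve{O}_E f_1 + \breve{O}_E\, \Pi g$, the preimage of $\ell$. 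Since $\Pi\widetilde{\Lambda} \subseteq M \subseteq \widetilde{\Lambda}$ and $\widetilde{\Lambda}$ is $\tau$-stable, the relations $\mathbf{F} = \overbar{\Pi}\tau$ and $\mathbf{V} = \tau^{-1}\Pi$ immediately give $\mathbf{F}M, \mathbf{V}M \subseteq M$ and $\pi_0 M = \mathbf{V}\mathbf{F}M \subseteq \mathbf{V}M$, so the Dieudonné conditions in \eqref{RU_geompts} hold and $M$ is $O_E$-stable by construction.

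The heart of the matter is to verify that $M = \Pi M^{\sharp}$. Here I would compute the Gram matrix $H_M$ of $h$ with respect to the basis $(f_1, \Pi g)$: scaling the second basis vector by $\Pi$ multiplies the relevant entries by $\Pi$ resp.\ $\overbar{\Pi}$, so $\det H_M = \pi_0 \det H_{\Lambda}$, a unit multiple of $\pi_0 = \Pi\overbar{\Pi}$. The key use of the hypothesis $\Nm(\Lambda)\subseteq\pi_0 O_F$ is that the diagonal entry $h(f_1,f_1)$ then lies in $\pi_0 O_F \subseteq \Pi\breve{O}_E$; the off-diagonal entries lie in $\overbar{\Pi}\breve{O}_E = \Pi\breve{O}_E$ and the remaining diagonal entry in $\pi_0\breve{O}_E$. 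Hence $H_M \equiv 0 \pmod{\Pi}$, so both elementary divisors of $H_M$ are divisible by $\Pi$; since $\det H_M$ is a unit times $\Pi^2$, they are each exactly $\Pi$, which is precisely the statement that $M$ is $\Pi$-modular, i.e.\ $M = \Pi M^{\sharp}$. This shows $M \in \NEnaive(\overbar{k})$ and proves $i_{\Lambda}$ is well-defined. Injectivity follows because $M$ determines $\ell$ as its image in $\widetilde{\Lambda}/\Pi\widetilde{\Lambda}$, and the $\tau$-invariance claim follows because $\tau$ acts on $\widetilde{\Lambda}/\Pi\widetilde{\Lambda}$ as the $\overbar{k}/k$-Frobenius, so $M$ is $\tau$-stable exactly when $\ell$ is defined over $k$.

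For part~(2), the inclusion $\supseteq$ is given by part~(1). For $\subseteq$, take $M \in \NEnaive(\overbar{k})$ and apply Lemma~\ref{RU_latt}. If $M$ is not $\tau$-stable, then $\Lambda_0 = (M+\tau M)^{\tau}$ is unimodular and $M = i_{\Lambda_0}(\ell)$ for the line $\ell = M/\Pi\widetilde{\Lambda}_0$; I would then run the Gram-matrix computation in reverse to see that $M = \Pi M^{\sharp}$ forces the diagonal entry $h(f_1,f_1)$ into $\pi_0\breve{O}_F$. Since this entry differs from $h(e_1,e_1)$, for a suitable basis of $\Lambda_0$ in the normal form of \eqref{LA_lattmatrix}, only by a trace term lying in $\Tr_{\breve{E}|\breve{F}}(\breve{O}_E) = t\breve{O}_F \subseteq \pi_0\breve{O}_F$, one gets $h(e_1,e_1)\in\pi_0 O_F$ and hence $\Nm(\Lambda_0) = h(e_1,e_1)O_F + tO_F \subseteq \pi_0 O_F$. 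If instead $M$ is $\tau$-stable, then $\Lambda_1 = M^{\tau}$ is $\Pi$-modular, and thus automatically satisfies $\Nm(\Lambda_1)\subseteq\pi_0 O_F$ by the bounds of Proposition~\ref{LA_latt}; by Proposition~\ref{LA_lattRU} there is at least one unimodular $\Lambda_0 \supseteq \Lambda_1$ with $\Nm(\Lambda_0)\subseteq\pi_0 O_F$, and $M = \widetilde{\Lambda}_1$ is then the $k$-rational point of $\mathbb{P}(\Lambda_0/\Pi\Lambda_0)$ corresponding to the line $\Lambda_1/\Pi\Lambda_0$. In all cases $M$ lies in the asserted union.

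The main obstacle I anticipate is bookkeeping around the norm condition rather than any single hard step. Unlike the (R-P) case, where every $\Pi^{-1}$-modular lattice contributes a line, here one must check that the restriction to unimodular lattices with $\Nm(\Lambda)\subseteq\pi_0 O_F$ is exactly right: the same inequality appears as the \emph{necessary} condition for $M = \Pi M^{\sharp}$ (via the elementary-divisor computation) and, through Proposition~\ref{LA_lattRU}, as the \emph{sufficient} condition guaranteeing that every $\tau$-stable vertex still lies on at least one admissible line. Keeping track of which of the $q+1$ lattices through a vertex survive the norm bound, and confirming that at least one always does, is the delicate part of the argument.
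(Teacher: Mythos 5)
Part (1) of your proposal is correct and is essentially the paper's argument in different clothing: the paper reduces $h$ modulo $\Pi$ to a symmetric form $s$ on $\Lambda/\Pi\Lambda$ and observes that when $\Nm(\Lambda)\subseteq\pi_0 O_F$ this form is alternating, so every line is isotropic and hence every preimage $M$ satisfies $M=\Pi M^{\sharp}$; your Gram-matrix/elementary-divisor computation of $H_M$ with respect to $(f_1,\Pi g)$ is the same verification. The stability under ${\bf F},{\bf V},O_E$ and the $\tau$-invariance statement are also handled as in the paper.

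There is, however, a genuine gap in your part (2), in the non-$\tau$-stable case. From $M=\Pi M^{\sharp}$ you correctly get $h(f_1,f_1)\in\Pi\breve{O}_E\cap\breve{O}_F=\pi_0\breve{O}_F$, but the inference that this forces $h(e_1,e_1)\in\pi_0 O_F$ for a normal-form basis $(e_1,e_2)$ of $\Lambda_0$ is false: writing $f_1=ae_1+be_2$ one has $h(f_1,f_1)=a\overbar{a}\,h(e_1,e_1)+\Tr_{\breve{E}|\breve{F}}(a\overbar{b})$, and if $a$ is not a unit (e.g.\ $f_1=e_2$) the value $h(f_1,f_1)$ tells you nothing about $h(e_1,e_1)$. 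Concretely, for $\Nm(\Lambda_0)=O_F$ with $h\,\weq\,\bigl(\begin{smallmatrix}1&1\\1&0\end{smallmatrix}\bigr)$ the vector $e_2$ is primitive and isotropic, so a single isotropic vector with norm in $\pi_0\breve{O}_F$ exists even though $\Nm(\Lambda_0)\not\subseteq\pi_0 O_F$. You cannot in general choose the normal-form basis so that $e_1$ lies on the given line, which is what your ``suitable basis'' step implicitly requires. The missing ingredient is precisely the non-$\tau$-stability of $M$, which your norm computation never uses: the paper's argument is that when $\Nm(\Lambda_0)=O_F$ the reduced form satisfies $s(v,v)=a^2$, so the \emph{only} isotropic line is $\overbar{k}\cdot e_2$, which is defined over $k$ and therefore corresponds to a $\tau$-stable $M$; hence a non-$\tau$-stable point of $\NEnaive(\overbar{k})$ can only arise from a unimodular $\Lambda_0$ with $\Nm(\Lambda_0)\subseteq\pi_0 O_F$. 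With that substitution your treatment of the $\tau$-stable case (via Proposition \ref{LA_lattRU}) and the rest of part (2) go through.
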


\begin{proof}
	Let $\Lambda \subseteq C$ be a unimodular lattice.
	For any line $\ell \in \mathbb{P}(\Lambda / \Pi \Lambda)(\overbar{k})$, denote its preimage in $\Lambda \otimes \breve{O}_E$ by $M$.
	The inclusion $M \subseteq \Lambda \otimes \breve{O}_E$ has index $1$ and $M$ is an $\breve{O}_E$-lattice with $\Pi(\Lambda \otimes \breve{O}_E) \subseteq M$.
	Furthermore $\Lambda \otimes \breve{O}_E$ is $\tau$-invariant by construction, hence $\Pi(\Lambda \otimes \breve{O}_E) = {\bf V}(\Lambda \otimes \breve{O}_E) = {\bf F}(\Lambda \otimes \breve{O}_E)$.
	It follows that $M$ is stable under the actions of ${\bf F}$ and ${\bf V}$.
	Thus $M \in \NEnaive(\overbar{k})$ if and only if $M = \Pi M^{\sharp}$.
	The hermitian form $h$ induces a symmetric form $s$ on $\Lambda / \Pi \Lambda$.
	Now $M$ is $\Pi$-modular if and only if it is the preimage of an isotropic line $\ell \subseteq \Lambda / \Pi \Lambda \otimes \overbar{k}$.
	Note that $s$ is also anti-symmetric since we are in characteristic $2$.
	
	We first consider the case $\Nm (\Lambda) \subseteq \pi_0 O_F$.
	We can find a basis of $\Lambda$ such that $h$ has the form
	\begin{equation*}
		H_{\Lambda} = \begin{pmatrix}
			x & 1 \\
			1 &   \\
		\end{pmatrix}, \quad x \in \pi_0 O_F,
	\end{equation*}
	see \eqref{LA_lattmatrix}.
	It follows that the induced form $s$ is even alternating (because $x \equiv 0 \mod \pi_0$).
	Hence any line in $\Lambda / \Pi \Lambda \otimes \overbar{k}$ is isotropic.
	This implies that $i_{\Lambda}$ is well-defined, proving part \ref{RU_IS1} of the Lemma.
	
	Now assume that $\Nm (\Lambda) = O_F$.
	There is a basis $(e_1,e_2)$ of $\Lambda$ such that $h$ is represented by
	\begin{equation*}
		H_{\Lambda} = \begin{pmatrix}
			1 & 1 \\
			1 &   \\
		\end{pmatrix}.
	\end{equation*}
	The induced form $s$ is given by the same matrix and $\ell = \overbar{k} \cdot e_2$ is the only isotropic line in $\Lambda / \Pi \Lambda$.
	Since $\ell$ is already defined over $k$, the corresponding lattice $M \in \NEnaive(\overbar{k})$ is of the form $M = \Lambda_1 \otimes \breve{O}_E$ for a $\Pi$-modular lattice $\Lambda_1 \subseteq \Lambda$.
	But, by Proposition \ref{LA_lattRU}, any $\Pi$-modular lattice in $C$ is contained in a unimodular lattice $\Lambda'$ with $\Nm (\Lambda') \subseteq \pi_0 O_F$.
	
	It follows that we can write $\NEnaive(\overbar{k})$ as a union
	\begin{equation*}
		\NEnaive(\overbar{k}) = \smashoperator[r]{\bigcup_{\Lambda \subseteq C}} \mathbb{P}(\Lambda / \Pi \Lambda)(\overbar{k}),
	\end{equation*}
	where the union is taken over all unimodular lattices $\Lambda \subseteq C$ with $\Nm(\Lambda) \subseteq \pi_0 O_F$.
	This shows the second part of the Lemma.
\end{proof}

\begin{rmk} \label{RU_ISrmk}
	We can use Proposition \ref{LA_lattRU} to describe the intersection behaviour of the projective lines in $\NEnaive(\overbar{k})$.
	A $\tau$-invariant point $M \in \NEnaive(\overbar{k})$ corresponds to the $\Pi$-modular lattice $\Lambda_1 = M^{\tau} \subseteq C$.
	If $\Nm(\Lambda_1) \subseteq \pi_0^2 O_F$, there are $q+1$ lines going through $M$.
	If $\Nm(\Lambda_1) = \pi_0 O_F$, the point $M$ is contained in one or $2$ lines, depending on whether $\Lambda_1$ is hyperbolic or not, see part \eqref{LA_lattRUh} and \eqref{LA_lattRUnh} of Proposition \ref{LA_lattRU}.
	The former case (\emph{i.e.}, $\Lambda_1$ is hyperbolic) appears if and only if $\pi_0 O_F = \Nm(\Lambda_1) = tO_F$ (see Lemma \ref{LA_hyp}). %, which is equivalent to $|\pi_0^{k+1}| = |2|$.
	This happens only for a specific type of (R-U) extension $E|F$, see page \pageref{LA_quadext}.
	We refer to Remark \ref{RU_rmkred}, Remark \ref{RU_rmkNE} and Section \ref{LM_naive} for a further discussion of this special case.\\
	On the other hand, each projective line in $\NEnaive(\overbar{k})$ contains $q+1$ $\tau$-invariant points.
	Such a $\tau$-invariant point $M$ is an intersection point of $2$ or more projective lines if and only if $|t| = |\pi_0|$ or $\Lambda_1 = M^{\tau} \subseteq C$ has a norm ideal satisfying $\Nm(\Lambda_1) \subseteq \pi_0^2 O_F$.
\end{rmk}

\begin{figure}[hbt]
	\centering
	\begin{subfigure}[b]{0.4\textwidth}
		\includegraphics[width = \textwidth]{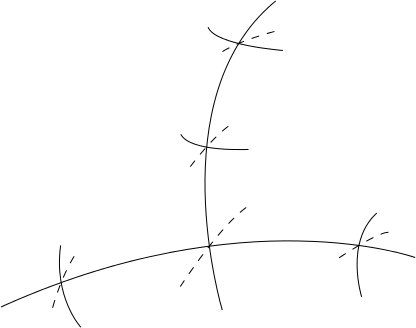}
		\caption{$e = 2$, $f = 1$, $v(t) = 2$.}
		\label{RU_NEnaivejpg1}
	\end{subfigure}
	\hspace{1cm}
	\begin{subfigure}[b]{0.4\textwidth}
		\includegraphics[width = \textwidth]{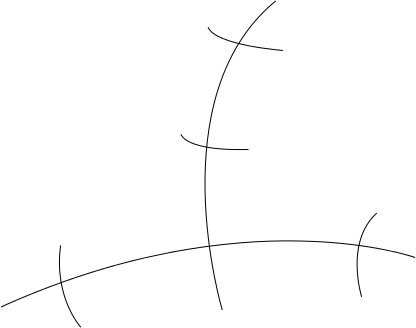}
		\caption{$e = 2$, $f = 1$, $v(t) = 1$.}
		\label{RU_NEnaivejpg2}
	\end{subfigure}
	\caption{
		The reduced locus of $\NEnaive$ for an (R-U) extension $E|F$ where $e$ and $f$ are the ramification index and the inertia degree of $F|\mathbb{Q}_2$ and $v(t)$ is the $\pi_0$-adic valuation of $t$.
		We always have $1 \leq v(t) \leq e$.
		The solid lines lie in $\NE \subseteq \NEnaive$.
	}
	\label{RU_NEnaivejpg}
\end{figure}

Let $\Lambda \subseteq C$ as in Lemma \ref{RU_IS}.
We denote by $X_{\Lambda}^{+}$ the formal $O_F$-module corresponding to the Dieudonn\'e module $M = \Lambda \otimes \breve{O}_E$.
There is a canonical quasi-isogeny
\begin{equation*}
	\varrho_{\Lambda}^{+}: \mathbb{X} \to X_{\Lambda}^{+}
\end{equation*}
of $F$-height $1$.
For $S \in \Nilp$, we define
\begin{equation*}
	\NEL(S) = \{ (X,\iota,\lambda,\varrho) \in \NEnaive(S) \mid (\varrho_{\Lambda}^{+} \times S) \circ \varrho \text{ is an isogeny} \}.
\end{equation*}
By \cite[Prop.\ 2.9]{RZ96}, the functor $\NEL$ is representable by a closed formal subscheme of $\NEnaive$.
On geometric points, we have
\begin{equation} \label{RU_NEL=P1geom}
	\NEL(\overbar{k}) \isoarrow \mathbb{P}(\Lambda / \Pi \Lambda)(\overbar{k}),
\end{equation}
as follows from Lemma \ref{RU_IS} \eqref{RU_IS1}.

\begin{prop} \label{RU_NEnaiveRL}
	The reduced locus of $\NEnaive$ is a union
	\begin{equation*}
		(\NEnaive)_{\red} = \smashoperator[r]{\bigcup_{\Lambda \subseteq C}} \NEL
	\end{equation*}
	where $\Lambda$ runs over all unimodular lattices in $C$ with $\Nm(\Lambda) \subseteq \pi_0 O_F$.
	For each $\Lambda$, there exists an isomorphism
	\begin{equation*}
		\NEL \isoarrow \mathbb{P}(\Lambda / \Pi \Lambda),
	\end{equation*}
	inducing the bijection \eqref{RU_NEL=P1geom} on $\overbar{k}$-valued points.
\end{prop}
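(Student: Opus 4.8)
The plan is to mirror, \emph{mutatis mutandis}, the proof of Proposition \ref{RP_NEnaiveRL} in the (R-P) case, which itself follows \cite[4.2]{KR14}. The two statements are structurally identical; the only substantive change is that the role played in the (R-P) case by $\Pi^{-1}$-modular lattices $\Lambda$ (whose associated $\overbar{k}$-points are unimodular Dieudonn\'e sublattices) is now played by unimodular lattices $\Lambda$ with $\Nm(\Lambda) \subseteq \pi_0 O_F$ (whose associated $\overbar{k}$-points are $\Pi$-modular Dieudonn\'e sublattices), in accordance with Lemma \ref{RU_latt} and Lemma \ref{RU_IS}.

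First I would establish the union. Each $\NEL$ is a closed formal subscheme of $\NEnaive$ by \cite[Prop.\ 2.9]{RZ96}, so the inclusion $\bigcup_{\Lambda} (\NEL)_{\red} \inj (\NEnaive)_{\red}$ is a closed immersion: the ambient space is locally of finite type over $\Spec\overbar{k}$, so over any quasi-compact open only finitely many $\NEL$ contribute, whence the union is locally finite, and each term is closed. By Lemma \ref{RU_IS} for the target and \eqref{RU_NEL=P1geom} for the source, both sides have $\overbar{k}$-points equal to $\bigcup_{\Lambda} \mathbb{P}(\Lambda/\Pi\Lambda)(\overbar{k})$, so this map is bijective on $\overbar{k}$-valued points. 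Since both schemes are reduced and locally of finite type over $\Spec\overbar{k}$, a bijective closed immersion is an isomorphism, which gives the first assertion.

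For the isomorphism $\NEL \isoarrow \mathbb{P}(\Lambda/\Pi\Lambda)$, fix a unimodular $\Lambda$ with $\Nm(\Lambda) \subseteq \pi_0 O_F$ and set $M = \Lambda \otimes \breve{O}_E$. In addition to $X_\Lambda^+$ (Dieudonn\'e module $M$) and $\varrho_\Lambda^+$, I would introduce the formal $O_F$-module $X_\Lambda^-$ associated to $\Pi M = \Pi\Lambda \otimes \breve{O}_E$, together with the natural isogeny $\nat_\Lambda : X_\Lambda^- \to X_\Lambda^+$ induced by the inclusion $\Pi M \subseteq M$ and a quasi-isogeny $\varrho_\Lambda^- : X_\Lambda^- \to \mathbb{X}$ of $F$-height $1$. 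This is the one place where the bookkeeping differs from the (R-P) case: there $X_\Lambda^-$ was the module of the dual $M^\sharp = \Pi M$, whereas here $M^\sharp = M$ and one must use $\Pi M$ directly; the geometry is unchanged because the $\overbar{k}$-points $M_X$ are precisely the $\Pi$-modular lattices with $\Pi M \subseteq M_X \subseteq M$, of index one on each side. For $(X,\iota,\lambda,\varrho) \in \NEnaive(S)$ one forms $\varrho_{\Lambda,X}^- = \varrho^{-1} \circ (\varrho_\Lambda^- \times S)$; the analogue of \cite[Lemma 4.2]{KR14}, whose proof only uses the index relations among $\Pi M$, $M_X$ and $M$ and is insensitive to the modular type of the polarization, shows that $\varrho_{\Lambda,X}^-$ is an isogeny precisely when $(X,\iota,\lambda,\varrho) \in \NEL(S)$. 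Finally I would run the Grothendieck--Messing argument of \cite[4.7]{VW11}: the crystal $\mathbb{D}_{X_\Lambda^-}(S)$ is locally free of rank $4$, isomorphic to $\Lambda/\pi_0\Lambda \otimes_{O_F}\mathcal{O}_S$, and $\ker\mathbb{D}(\nat_\Lambda)(S)$ is the rank-$2$ local direct summand $(\Lambda/\Pi\Lambda)\otimes_{O_F}\mathcal{O}_S$. For a point of $\NEL(S)$ the kernel of $\varrho_{\Lambda,X}^-$ lies in $\ker\nat_\Lambda$, so by \cite[Cor.\ 4.7]{VW11} the submodule $\ker\mathbb{D}(\varrho_{\Lambda,X}^-)(S)$ is a rank-$1$ local direct summand of $(\Lambda/\Pi\Lambda)\otimes_{O_F}\mathcal{O}_S$; this yields a functorial morphism $\NEL(S) \to \mathbb{P}(\Lambda/\Pi\Lambda)(S)$ inducing \eqref{RU_NEL=P1geom} on $\overbar{k}$-points, and the deformation-theoretic comparison of \cite[4.7]{VW11} shows it is an isomorphism.

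The main obstacle is verifying that the crystal-theoretic inputs borrowed from \cite{KR14} and \cite{VW11}, originally developed for the principally polarized $\Pi^{-1}$-modular situation, transfer to the present unimodular/$\Pi$-modular configuration with the non-principal polarization $\ker\lambda = X[\Pi]$. The decisive local fact is that every line in $\Lambda/\Pi\Lambda$ gives rise to a genuine $\Pi$-modular Dieudonn\'e lattice, so that the map to $\mathbb{P}(\Lambda/\Pi\Lambda)$ is surjective on deformations; this is exactly Lemma \ref{RU_IS} \eqref{RU_IS1} and is precisely where the hypothesis $\Nm(\Lambda)\subseteq\pi_0 O_F$ enters (it forces the symmetric form induced by $h$ on $\Lambda/\Pi\Lambda$ to be alternating, hence totally isotropic). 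Once this is granted, the remaining steps are formal and carry over essentially verbatim from the (R-P) case.
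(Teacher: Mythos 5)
Your proposal is correct and follows exactly the route the paper intends: the paper's own proof of this proposition consists of the single remark that it is analogous to Proposition \ref{RP_NEnaiveRL}, and your write-up is precisely that analogy spelled out, including the one genuine point of divergence (using $\Pi M$ in place of the dual $M^{\sharp}$, since here $M^{\sharp}=M$) and the role of the hypothesis $\Nm(\Lambda)\subseteq\pi_0 O_F$ via Lemma \ref{RU_IS}.
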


The proof is analogous to that of Proposition \ref{RP_NEnaiveRL}.

\begin{rmk} \label{RU_rmkred}
	Similar to Remark \ref{RP_rmklatt} \eqref{RP_rmklatt3}, we let $(\NE)_{\red} \subseteq (\NEnaive)_{\red}$ be the union of all projective lines $\NEL$ corresponding to \emph{hyperbolic} unimodular lattices $\Lambda \subseteq C$.
	Later, we will define $\NE$ as a functor on $\Nilp$ and show that $\NE \simeq \MDr$, where $\MDr$ is the Drinfeld moduli problem (see Theorem \ref{RU_thm}, a description of the formal scheme $\MDr$ can be found in \cite[I.3]{BC91}).
	In particular, $(\NE)_{\red}$ is connected and each projective line in $(\NE)_{\red}$ has $q+1$ intersection points and there are $2$ lines intersecting in each such point. \\
	It might happen that $(\NE)_{\red} = (\NEnaive)_{\red}$ (see, for example, Figure \ref{RU_NEnaivejpg2}), if there are no non-hyperbolic unimodular lattices $\Lambda \subseteq C$ with $\Nm (\Lambda) \subseteq \pi_0 O_F$.
	In fact, this is the case if and only if $|t| = |\pi_0|$, see Proposition \ref{LA_latt} and Lemma \ref{LA_hyp}.
	(Note however that we still have $\NE \neq \NEnaive$, see Remark \ref{RU_rmkNE} and Section \ref{LM_naive}.)
	
	Assume $|t| \neq |\pi_0|$ and let $P \in \NE(\overbar{k})$ be an intersection point.
	Then, as in the case where $E|F$ is of type (R-P) (compare Remark \ref{RP_rmklatt} \eqref{RP_rmklatt3}), the connected component of $P$ in $((\NEnaive)_{\red} \setminus (\NE)_{\red}) \cup \{ P \}$ consists of a finite union of projective lines (corresponding to non-hyperbolic lattices, by definition of $(\NE)_{\red}$).
	In Figure \ref{RU_NEnaivejpg1}, these components are indicated by dashed lines (they consist of just one projective line in that case).
\end{rmk}

\subsection{The straightening condition}
\label{RU2}

As in the case (R-P), see section \ref{RP2}, we use the results of section \ref{POL} to define the straightening condition on $\NEnaive$.
By Theorem \ref{POL_thm} and Remark \ref{POL_rmk} \eqref{POL_rmk2}, there exists a principal polarization $\widetilde{\lambda}^0_{\mathbb{X}}$ on the framing object $(\mathbb{X},\iota_{\mathbb{X}},\lambda_{\mathbb{X}})$ such that the Rosati involution is the identity on $O_E$.
We set $\widetilde{\lambda}_{\mathbb{X}} = \widetilde{\lambda}^0_{\mathbb{X}} \circ \iota_{\mathbb{X}}(\Pi)$, which is again a polarization on $\mathbb{X}$ with the Rosati involution inducing the identity on $O_E$, but with kernel $\ker \widetilde{\lambda}_{\mathbb{X}} = \mathbb{X}[\Pi]$.
This polarization is unique up to a scalar in $O_E^{\times}$, \emph{i.e.}, any two polarizations $\widetilde{\lambda}_{\mathbb{X}}$ and $\widetilde{\lambda}_{\mathbb{X}}'$ with these properties satisfy
\begin{equation*}
	\widetilde{\lambda}_{\mathbb{X}}' = \widetilde{\lambda}_{\mathbb{X}} \circ \iota(\alpha),
\end{equation*}
for some $\alpha \in O_E^{\times}$.
For any $(X,\iota,\lambda,\varrho) \in \NEnaive(S)$,
\begin{equation*}
	\widetilde{\lambda} = \varrho^{\ast}(\widetilde{\lambda}_{\mathbb{X}}) = \varrho^{\ast}(\widetilde{\lambda}^0_{\mathbb{X}}) \circ \iota(\Pi)
\end{equation*}
is a polarization on $X$ with kernel $\ker \widetilde{\lambda} = X[\Pi]$, see Theorem \ref{POL_thm} \eqref{POL_thm2}.

Recall that a unimodular or $\Pi$-modular lattice $\Lambda \subseteq C$ is called \emph{hyperbolic} if there exists a basis $(e_1,e_2)$ of $\Lambda$ such that, with respect to this basis, $h$ has the form
\begin{equation*}
	\begin{pmatrix}
						& \Pi^i \\
		\overbar{\Pi}^i &		\\
	\end{pmatrix},
\end{equation*}
for $i = 0$ resp.\ $1$.
By Lemma \ref{LA_hyp}, this is the case if and only if $\Nm (\Lambda) = tO_F$.

\begin{prop} \label{RU_strprop}
	For a suitable choice of $(\mathbb{X},\iota_{\mathbb{X}},\lambda_{\mathbb{X}})$ and $\widetilde{\lambda}_{\mathbb{X}}$, the quasi-polarization
	\begin{equation*}
		\lambda_{\mathbb{X},1} = \frac{1}{t} (\lambda_{\mathbb{X}} + \widetilde{\lambda}_{\mathbb{X}})
	\end{equation*}
	is a polarization on $\mathbb{X}$.
	Let $(X,\iota,\lambda,\varrho) \in \NEnaive(\overbar{k})$ and $\widetilde{\lambda} = \varrho^{\ast}(\widetilde{\lambda}_{\mathbb{X}})$.
	Then $\lambda_1 = \frac{1}{t} (\lambda + \widetilde{\lambda})$ is a polarization if and only if $(X,\iota,\lambda,\varrho) \in \NEL(\overbar{k})$ for a hyperbolic unimodular lattice $\Lambda \subseteq C$.
\end{prop}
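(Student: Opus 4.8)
The plan is to follow closely the proof of Proposition \ref{RP_strprop}, carrying out everything on the rational Dieudonn\'e module $N = N_{\mathbb{X}}$ and translating the existence of $\lambda_{\mathbb{X},1}$ and of $\lambda_1$ into integrality statements for bilinear forms. First I would let $\langle\,,\rangle$ be the alternating form on $N$ induced by $\lambda_{\mathbb{X}}$, with associated $\breve{E}|\breve{F}$-hermitian form $h$ as in \eqref{RU_herm} and the trace expression \eqref{RU_altherm}, and let $(\,,)$ be the alternating form induced by $\widetilde{\lambda}_{\mathbb{X}}$. Since the Rosati involution of $\widetilde{\lambda}_{\mathbb{X}}$ is the identity on $O_E$, the form $(\,,)$ satisfies $(\alpha x, y) = (x, \alpha y)$ for $\alpha \in O_E$ and $(\mathbf{F}x, y) = (x, \mathbf{V}y)^{\sigma}$. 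Mirroring the formula for $h$, I would attach to $(\,,)$ the $\breve{E}$-bilinear alternating form $b_0(x,y) = (\Pi x, y) - \overbar{\Pi}(x,y)$; its $\breve{E}$-linearity in the first variable follows from $\Pi^2 = t\Pi - \pi_0$ together with $t - \overbar{\Pi} = \Pi$, and alternation from $(\Pi x, x) = (x, \Pi x) = -(\Pi x, x)$.

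The main obstacle, and the essential difference from the case (R-P), appears at the descent of $b_0$ to $C = N^{\tau}$. In (R-P) the companion form of $\widetilde{\lambda}_{\mathbb{X}}$ becomes $\tau$-invariant after multiplication by the scalar $\delta$ with $\delta^{\sigma} = -\delta$, precisely because there $\overbar{\Pi}/\Pi = -1$. In the present case a direct computation, using $(\tau x, \tau y) = (x, (\Pi/\overbar{\Pi})y)^{\sigma}$, yields only $b_0(\tau x, \tau y) = (\Pi/\overbar{\Pi})\, b_0(x,y)^{\sigma}$, and the twisting unit $\Pi/\overbar{\Pi} \in O_E^{\times}$ is no longer $-1$. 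To remove this twist I would choose $\nu \in \breve{O}_E^{\times}$ with $\nu^{\sigma}/\nu = \Pi/\overbar{\Pi}$; such a $\nu$ exists by Lang's theorem, since $\Pi/\overbar{\Pi}$ is a principal unit (it reduces to $1$ modulo the maximal ideal, the residue field having characteristic $2$) and $\sigma$ is the Frobenius lift over the algebraically closed field $\overbar{k}$. Setting $b = \nu\, b_0$ then gives a $\tau$-invariant $\breve{E}$-bilinear alternating form, hence an $E$-bilinear alternating form on $C$, which is nondegenerate because $\widetilde{\lambda}_{\mathbb{X}}$ is a polarization. I expect this step, producing an honest $E$-linear alternating form $b$ on $C$ out of $\widetilde{\lambda}_{\mathbb{X}}$, to be the technical heart of the argument; with it, $(\,,)$ is recovered from $b$ by a trace formula analogous to \eqref{RU_altherm}.

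With $b$ in hand I would proceed exactly as in the (R-P) case. Choosing the framing object to correspond to a hyperbolic $\Pi$-modular lattice $\Lambda$ under \eqref{RU_geompts} and Lemma \ref{RU_latt}, I can fix a basis in which $h$ and $b$ are simultaneously of hyperbolic shape, rescale $\widetilde{\lambda}_{\mathbb{X}}$ by a unit in $O_E^{\times}$ so that $b$ is perfect on $\Lambda$, and then verify that $\tfrac{1}{t}(h+b)$ is integral; via \eqref{RU_altherm} and the analogous expression of $(\,,)$ through $b$, this is exactly the assertion that $\langle\,,\rangle_1 = \tfrac{1}{t}(\langle\,,\rangle + (\,,))$ is integral on $M_{\mathbb{X}}$, i.e.\ that $\lambda_{\mathbb{X},1}$ is a polarization. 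For the equivalence, given $(X,\iota,\lambda,\varrho) \in \NEnaive(\overbar{k})$ with Dieudonn\'e lattice $M$, integrality of $\langle\,,\rangle_1$ on $M$ forces $h(x,x) = h(x,x) + b(x,x) \in t\breve{O}_F$ for all $x \in M$ (using $b(x,x) = 0$), whence $\Nm(\Lambda) \subseteq tO_F$ for $\Lambda = M^{\tau}$ or $\Lambda = (M + \tau(M))^{\tau}$; by Lemma \ref{LA_hyp} this lattice is hyperbolic, so $M$ lies in some $\NEL(\overbar{k})$ for a hyperbolic unimodular $\Lambda$, passing from the $\Pi$-modular to a containing unimodular hyperbolic lattice by Proposition \ref{LA_lattRU} as in the (R-P) argument. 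Conversely, for hyperbolic $\Lambda$ I would check integrality of $\tfrac{1}{t}(h+b)$ on a standard hyperbolic basis and transport it to an arbitrary hyperbolic lattice using that $\SU(C,h)$ acts transitively on these (Proposition \ref{LA_latt}) and preserves the $E$-bilinear alternating form $b$, since determinant-one automorphisms fix $b$ on the two-dimensional space $C$.
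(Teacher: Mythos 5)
Your strategy is the same as the paper's, and you have correctly located the technical heart: the paper likewise defines $b(x,y)=c\bigl((\Pi x,y)-\overbar{\Pi}(x,y)\bigr)$ with $c\in\breve{O}_E^{\times}$ satisfying $\sigma(c)/c=\Pi\overbar{\Pi}^{-1}$, which is precisely your Lang element $\nu$. There is, however, one genuine gap in the way you use it. You claim that integrality of $\tfrac{1}{t}(h+b)$ on $M$ ``is exactly'' the integrality of $\langle\,,\rangle_1$. For an arbitrary solution $\nu$ of the Lang equation this is false: since $(x,y)=\Tr_{\breve{E}|\breve{F}}\bigl(\tfrac{1}{t\vartheta\nu}\,b(x,y)\bigr)$, one gets
\begin{equation*}
	\langle x,y\rangle_1=\Tr_{\breve{E}|\breve{F}}\Bigl(\tfrac{1}{t^{2}\vartheta}\bigl(h(x,y)+b(x,y)\bigr)\Bigr)+\Tr_{\breve{E}|\breve{F}}\Bigl(\tfrac{1-\nu}{t^{2}\vartheta\nu}\,b(x,y)\Bigr),
\end{equation*}
and the second summand is an error term. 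With $b(x,y)\in\Pi\breve{O}_E$ on a $\Pi$-modular lattice and $\mathfrak{D}_{E|F}^{-1}=t^{-1}O_E$, its integrality requires $1-\nu\in t\Pi^{-1}\breve{O}_E$. Your observation that $\Pi/\overbar{\Pi}$ is a principal unit (congruent to $1$ modulo the maximal ideal) is too weak to guarantee this, since $t\Pi^{-1}$ has $\Pi$-adic valuation $2v_\Pi(t)-1$, which can be large. What saves the argument, and what the paper uses, is the sharper congruence $\overbar{\Pi}/\Pi=1+(t/\Pi)\vartheta\in 1+t\Pi^{-1}\breve{O}_E$, which lets one choose the Lang element inside the congruence subgroup $1+t\Pi^{-1}\breve{O}_E$ (where $x\mapsto\sigma(x)/x$ is still surjective over $\overbar{k}$). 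Without this refinement the equivalence you rely on in both directions of the proof does not hold.

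A second, smaller slip: you propose to rescale $\widetilde{\lambda}_{\mathbb{X}}$ so that $b$ is \emph{perfect} on the $\Pi$-modular lattice $\Lambda$. Since $\ker\widetilde{\lambda}_{\mathbb{X}}=\mathbb{X}[\Pi]$, the form $b$ is necessarily $\Pi$-modular on $\Lambda$, not perfect; the correct normalization is $b\,\weq\begin{pmatrix}&\overbar{\Pi}\\-\overbar{\Pi}&\end{pmatrix}$ against $h\,\weq\begin{pmatrix}&\Pi\\\overbar{\Pi}&\end{pmatrix}$, so that the off-diagonal entries of $h+b$ become $\Pi+\overbar{\Pi}=t$ and $0$. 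If $b$ were unimodular on $\Lambda$, the entry $\tfrac{1}{t}(\Pi+u)$ with $u\in O_E^{\times}$ would not be integral, and the base case of your computation would fail.
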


\begin{proof}
	On the rational Dieudonn\'e module $N = M_{\mathbb{X}} \otimes_{\breve{O}_F} \breve{F}$, denote by $\langle \,, \rangle$, $(\,,)$ and $\langle \,, \rangle_1$ the alternating forms induced by $\lambda_{\mathbb{X}}$, $\widetilde{\lambda}_{\mathbb{X}}$ and $\lambda_{\mathbb{X},1}$, respectively.
	The form $\langle \,, \rangle_1$ is integral on $M_{\mathbb{X}}$ if and only if $\lambda_{\mathbb{X},1}$ is a polarization on $\mathbb{X}$.
	We have
	\begin{align*}
		({\bf F}x, y) & = (x,{\bf V}y)^{\sigma}, \\
		(\Pi x ,y) & = (x, \Pi y), \\
		\langle x, y \rangle_1 & = \frac{1}{t} (\langle x,y \rangle + (x,y)),
	\end{align*}
	for all $x,y \in N$.
	The form $(\,,)$ induces an $\breve{E}$-bilinear alternating form $b$ on $N$ by the formula
	\begin{equation} \label{RU_altb}
		b(x,y) = c((\Pi x,y) - \overbar{\Pi} (x,y)).
	\end{equation}
	Here, $c$ is a unit in $\breve{O}_E$ such that $c \cdot \sigma(c)^{-1}= \overbar{\Pi}\Pi^{-1}$.
%	By Hilbert $90$, such a unit exists in the quadratic unramified extension of $E$.
	Since $\frac{\overbar{\Pi}}{\Pi} = \frac{t - \Pi}{\Pi} \in 1 + \frac{t}{\Pi}\breve{O}_E$, we can even choose $c \in 1 + t\Pi^{-1}\breve{O}_E$.
	The dual of $M$ with respect to this form is again $M^{\sharp} = \Pi^{-1} M$, since
	\begin{equation*}
		(x,y) = \Tr_{\breve{E}|\breve{F}} \left(\frac{1}{t\vartheta c} \cdot b(x,y) \right),
	\end{equation*}
	and the inverse different of $E|F$ is given by $\mathfrak{D}_{E|F}^{-1} = t^{-1}O_E$, see Lemma \ref{LA_diff}.
	Now $b$ is invariant under the $\sigma$-linear operator $\tau = \Pi {\bf V}^{-1} = {\bf F} \overbar{\Pi}^{-1}$, because
	\begin{equation*}
		b(\tau x, \tau y) = b({\bf F} \overbar{\Pi}^{-1} x, \Pi {\bf V}^{-1} y) = \frac{c}{\sigma(c)} \cdot b(\overbar{\Pi}^{-1} x, \Pi y)^{\sigma} = b(x,y)^{\sigma}.
	\end{equation*}
	Hence $b$ defines an $E$-linear alternating form on $C$.
	
	We choose the framing object $(\mathbb{X},\iota_{\mathbb{X}},\lambda_{\mathbb{X}})$ such that $M_{\mathbb{X}}$ is $\tau$-invariant (see Lemma \ref{RU_latt}) and such that $\Lambda_1 = M_{\mathbb{X}}^{\tau}$ is hyperbolic.
	We can find a basis $(e_1,e_2)$ of $\Lambda_1$ such that
	\begin{equation*}
		h \, \weq \begin{pmatrix}
						  & \Pi \\
			\overbar{\Pi} &		\\
		\end{pmatrix}, \quad b \, \weq \begin{pmatrix}
			& u \\
			-u & \\
		\end{pmatrix},
	\end{equation*}
	for some $u \in E^{\times}$.
	Since $\widetilde{\lambda}_{\mathbb{X}}$ has the same kernel as $\lambda_{\mathbb{X}}$, we have $u = \overbar{\Pi} u'$ for some unit $u' \in O_E^{\times}$.
	We can choose $\widetilde{\lambda}_{\mathbb{X}}$ such that $u' = 1$ and $u = \overbar{\Pi}$.
	Now $\frac{1}{t} (h(x,y) + b(x,y))$ is integral for all $x,y \in \Lambda_1$.
	Hence $\frac{1}{t} (h(x,y) + b(x,y))$ is also integral for all $x,y \in M_{\mathbb{X}}$.
	For all $x,y \in M_{\mathbb{X}}$, we have
	\begin{align*}
		\langle x,y \rangle_1 & = \frac{1}{t} (\langle x,y \rangle + (x,y)) = \frac{1}{t} \Tr_{\breve{E}|\breve{F}} \left(\frac{1}{t\vartheta} \cdot h(x,y) + \frac{1}{t\vartheta c} \cdot b(x,y) \right) \\
		& = \Tr_{\breve{E}|\breve{F}} \left(\frac{1}{t^{2}\vartheta} \cdot (h(x,y) + b(x,y)) \right) + \Tr_{\breve{E}|\breve{F}} \left(\frac{1-c}{t^2\vartheta c} \cdot b(x,y) \right).
	\end{align*}
	The first summand is integral since $\frac{1}{t} (h(x,y) + b(x,y))$ is integral.
%	Recall that $c_1$ is a unit in $\breve{O}_E^{\times}$ with $c_1 / \sigma(c_1) = \sigma(c_2) / \overbar{c_2}$.
%	After multiplying with a suitable scalar in $O_E^{\times}$, we may assume that $c_1$ is of the form $c_1 = c_3 + \frac{2}{\pi_0^k} \alpha$ for some $\alpha \in O_E$.
%	Indeed, $c_1 c_3^{-1}$ satisfies the equation
%	\begin{equation*}
%		\frac{c_1 c_3^{-1}}{\sigma(c_1 c_3^{-1})} = \frac{\sigma(c_2) \cdot c_2}{\overbar{c_2} \cdot \sigma(c_2)} = \frac{c_2}{\overbar{c_2}} = 1 \mod 2 / \pi_0^k,
%	\end{equation*}
%	thus we can modify $c_1$ by a scalar in $O_E^{\times}$ such that $c_1 c_3^{-1} = 1 \mod 2 / \pi_0^k$.
	The second summand is integral since $1-c$ is divisible by $t\Pi^{-1}$ and $b(x,y)$ lies in $\Pi \breve{O}_E$.
	It follows that the second summand above is integral as well.
	Hence $\langle \,, \rangle_1$ is integral on $M_{\mathbb{X}}$ and this implies that $\lambda_{\mathbb{X},1}$ is a polarization on $\mathbb{X}$.
	
	Now let $(X,\iota,\lambda,\varrho) \in \NEnaive(\overbar{k})$ and denote by $M \subseteq N$ its Dieudonn\'e module.
	Assume that $\lambda_1 = t^{-1}(\lambda + \widetilde{\lambda})$ is a polarization on $X$.
	Then $\langle \,, \rangle_1$ is integral on $M$.
	But this is equivalent to $t^{-1}(h(x,y) + b(x,y))$ being integral for all $x,y \in M$.
	For $x= y$, we have
	\begin{equation*}
		h(x,x) = h(x,x) + b(x,x) \in t\breve{O}_F.
	\end{equation*}
	Let $\Lambda \subseteq C$ be the unimodular or $\Pi$-modular lattice given by $\Lambda = M^{\tau}$ resp.\ $\Lambda = (M + \tau(M))^{\tau}$, see Lemma \ref{RU_latt}.
	Then $h(x,x) \in tO_F$ for all $x \in \Lambda$.
	Thus $\Nm(\Lambda) \subseteq tO_F$ and, by minimality, this implies that $\Nm(\Lambda) = tO_F$ and $\Lambda$ is hyperbolic (see Lemma \ref{LA_hyp}).
	Hence, in either case, the point corresponding to $(X,\iota,\lambda,\varrho)$ lies in $\NELpr$ for a hyperbolic lattice $\Lambda'$.

	Conversely, assume that $(X,\iota,\lambda,\varrho) \in \NEL(\overbar{k})$ for some hyperbolic lattice $\Lambda \subseteq C$.
	We want to show that $\lambda_1$ is a polarization on $X$.
	This follows if $\langle \,, \rangle_1$ is integral on $M$, or equivalently, if $t^{-1} (h(x,y) + b(x,y))$ is integral on $M$.
	For this, it is enough to show that $t^{-1} (h(x,y) + b(x,y))$ is integral on $\Lambda$.
	Let $\Lambda' \subseteq C$ be the unimodular lattice generated by $\overbar{\Pi}^{-1} e_1$ and $e_2$, where $(e_1,e_2)$ is the basis of the $\Pi$-modular lattice $\Lambda_1 = M_{\mathbb{X}}$.
	With respect to the basis $(\overbar{\Pi}^{-1} e_1, e_2)$, we have
	\begin{equation*}
		h \, \weq \begin{pmatrix}
			& 1 \\
			1 & \\
		\end{pmatrix}, \quad b \, \weq \begin{pmatrix}
			& 1 \\
			-1 & \\
		\end{pmatrix}.
	\end{equation*}
	In particular, $\Lambda'$ is a hyperbolic lattice and $t^{-1}(h + b)$ is integral on $\Lambda'$.
	By Proposition \ref{LA_latt}, there exists an element $g \in \SU(C,h)$ with $g\Lambda = \Lambda'$.
	Since $\det g = 1$, the alternating form $b$ is invariant under $g$.
	Thus $t^{-1} (h + b)$ is also integral on $\Lambda$.
\end{proof}

From now on, we assume that $(\mathbb{X},\iota_{\mathbb{X}},\lambda_{\mathbb{X}})$ and $\widetilde{\lambda}_{\mathbb{X}}$ are chosen in a way such that
\begin{equation*}
	\lambda_{\mathbb{X},1} = \frac{1}{t} (\lambda_{\mathbb{X}} + \widetilde{\lambda}_{\mathbb{X}}) \in \Hom(\mathbb{X},\mathbb{X}^{\vee}).
\end{equation*}

\begin{defn} \label{RU_strdef}
	A tuple $(X,\iota,\lambda,\varrho) \in \NEnaive(S)$ satisfies the \emph{straightening condition} if
	\begin{equation} \label{RU_strcond}
		\lambda_1 = \frac{1}{t} (\lambda + \widetilde{\lambda}) \in \Hom(X,X^{\vee}).
	\end{equation}
\end{defn}

This condition is independent of the choice of $\widetilde{\lambda}_{\mathbb{X}}$.
In fact, we can only change $\widetilde{\lambda}_{\mathbb{X}}$ by a scalar of the form $1 + t\Pi^{-1}u$, $u \in O_E$.
But if $\widetilde{\lambda}'_{\mathbb{X}} = \widetilde{\lambda}_{\mathbb{X}} \circ \iota(1 + t\Pi^{-1}u)$, then $\lambda'_{\mathbb{X},1} = \lambda_{\mathbb{X},1} + \widetilde{\lambda}_{\mathbb{X}} \circ \iota(\Pi^{-1}u) =  \lambda_{\mathbb{X},1} + \widetilde{\lambda}^0_{\mathbb{X}} \circ \iota(u)$ and $\lambda'_1 = \lambda_1 + \varrho^{\ast}(\widetilde{\lambda}^0_{\mathbb{X}}) \circ \iota(u)$.
Clearly, $\lambda'_1$ is a polarization if and only if $\lambda_1$ is one.

For $S \in \Nilp$, let $\NE(S)$ be the set of all tuples $(X,\iota,\lambda,\varrho) \in \NEnaive(S)$ that satisfy the straightening condition.
By \cite[Prop.\ 2.9]{RZ96}, the functor $\NE$ is representable by a closed formal subscheme of $\NEnaive$.

\begin{rmk} \label{RU_rmkNE}
	The reduced locus of $\NE$ is given by
	\begin{equation*}
		(\NE)_{\red} = \smashoperator[r]{\bigcup_{\Lambda \subseteq C}} \NEL \simeq \smashoperator[r]{\bigcup_{\Lambda \subseteq C}} \mathbb{P}(\Lambda / \Pi \Lambda),
	\end{equation*}
	where the union goes over all \emph{hyperbolic} unimodular lattices $\Lambda \subseteq C$.
	Note that, depending on the form of the (R-U) extension $E|F$, it may happen that all unimodular lattices are hyperbolic (when $|t| = |\pi_0|$) and in that case, we have $(\NE)_{\red} = (\NEnaive)_{\red}$.
	However, the equality does not extend to an isomorphism between $\NE$ and $\NEnaive$.
	This will be discussed in section \ref{LM_naive}.
\end{rmk}

\subsection{The main theorem for the case (R-U)}

As in the case (R-P), we want to establish a connection to the Drinfeld moduli problem.
Therefore, fix an embedding of $E$ into the quaternion division algebra $B$.
Let $(\mathbb{X}, \iota_{\mathbb{X}})$ be the framing object of the Drinfeld problem.
We want to construct a polarization $\lambda_{\mathbb{X}}$ on $\mathbb{X}$ with $\ker \lambda_{\mathbb{X}} = \mathbb{X}[\Pi]$ and Rosati involution given by $b \mapsto \vartheta b' \vartheta^{-1}$ on $B$.
Here $b \mapsto b'$ denotes the standard involution on $B$.

By Lemma \ref{LA_quat} \eqref{LA_quatRU}, there exists an embedding $E_1 \inj B$ of a ramified quadratic extension $E_1|F$ of type (R-P), such that $\Pi_1 \vartheta = - \vartheta \Pi_1$ for a prime element $\Pi_1 \in E_1$.
From Proposition \ref{RP_Dr} \eqref{RP_Dr1} we get a principal polarization $\lambda_{\mathbb{X}}^0$ on $\mathbb{X}$ with associated Rosati involution $b \mapsto \Pi_1 b' \Pi_1^{-1}$.
If we assume fixed choices of $E_1$ and $\Pi_1$, this is unique up to a scalar in $O_F^{\times}$.
We define
\begin{equation*}
	\lambda_{\mathbb{X}} = \lambda_{\mathbb{X}}^0 \circ \iota_{\mathbb{X}} (\Pi_1 \vartheta).
\end{equation*}
Since $\lambda_{\mathbb{X}}^0$ is a principal polarization and $\Pi_1 \vartheta$ and $\Pi$ have the same valuation in $O_B$, we have $\ker \lambda_{\mathbb{X}} = \mathbb{X}[\Pi]$. The Rosati involution of $\lambda_{\mathbb{X}}$ is $b \mapsto \vartheta b' \vartheta^{-1}$.
On the other hand, any polarization on $\mathbb{X}$ satisfying these two conditions can be constructed in this way (using the same choices for $E_1$ and $\Pi_1$).
Hence:

\begin{lem} \label{RU_pol}
	\begin{enumerate}
		\item \label{RU_pol1} There exists a polarization $\lambda_{\mathbb{X}}: \mathbb{X} \to \mathbb{X}^{\vee}$, unique up to a scalar in $O_F^{\times}$, with $\ker \lambda_{\mathbb{X}} = \mathbb{X}[\Pi]$ and associated Rosati involution $b \mapsto \vartheta b' \vartheta^{-1}$.
		\item \label{RU_pol2} Fix $\lambda_{\mathbb{X}}$ as in \eqref{RU_pol1} and let $(X,\iota_B,\varrho) \in \MDr(S)$.
		There exists a unique polarization $\lambda$ on $X$ with $\ker \lambda = X[\Pi]$ and Rosati involution $b \mapsto \vartheta b' \vartheta^{-1}$ such that $\varrho^{\ast}(\lambda_{\mathbb{X}}) = \lambda$ on $\overbar{S} = S \times_{\Spf \breve{O}_F} \overbar{k}$.
	\end{enumerate}
\end{lem}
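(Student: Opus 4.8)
The plan is to reduce this lemma to Drinfeld's Proposition (Proposition \ref{RP_Dr}) by a careful bookkeeping of involutions and kernels, exploiting the fact that $E_1|F$ of type (R-P) was built into $B$ precisely so that Drinfeld's polarization is available. First I would verify part \eqref{RU_pol1}. By Lemma \ref{LA_quat} \eqref{LA_quatRU} we have an embedding $E_1 \inj B$ with uniformizer $\Pi_1$ satisfying $\Pi_1^2 \in O_F$ and $\Pi_1 \vartheta = -\vartheta \Pi_1$. Drinfeld's Proposition \ref{RP_Dr} \eqref{RP_Dr1} then furnishes a principal polarization $\lambda_{\mathbb{X}}^0$ with Rosati involution $b \mapsto \Pi_1 b' \Pi_1^{-1}$, unique up to $O_F^{\times}$. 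Setting $\lambda_{\mathbb{X}} = \lambda_{\mathbb{X}}^0 \circ \iota_{\mathbb{X}}(\Pi_1 \vartheta)$, one must check two things: that the kernel is $\mathbb{X}[\Pi]$ and that the Rosati involution is $b \mapsto \vartheta b' \vartheta^{-1}$.

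\emph{The kernel.} Since $\lambda_{\mathbb{X}}^0$ is principal, $\ker \lambda_{\mathbb{X}} = \ker \iota_{\mathbb{X}}(\Pi_1 \vartheta) = \mathbb{X}[\Pi_1\vartheta]$. The element $\Pi_1\vartheta \in O_B$ has the same normalized valuation as $\Pi$ (both are uniformizers for the maximal order, up to a unit, since $\vartheta$ is a unit and $\Pi_1$ is a uniformizer), so $\mathbb{X}[\Pi_1\vartheta] = \mathbb{X}[\Pi]$ as finite flat group schemes. \emph{The Rosati involution.} The Rosati involution $\ast$ of $\lambda_{\mathbb{X}}$ is computed from that of $\lambda_{\mathbb{X}}^0$, which is $b \mapsto \Pi_1 b' \Pi_1^{-1}$, twisted by the element $c = \Pi_1\vartheta$: one has $b^{\ast} = c^{-1}(\Pi_1 b' \Pi_1^{-1}) c$ up to the subtlety that $\iota_{\mathbb{X}}(c)$ must be \emph{symmetric} for $\lambda_{\mathbb{X}}$ to be a genuine (symmetric) polarization. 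Here I would use that under the involution $b \mapsto \Pi_1 b' \Pi_1^{-1}$ the element $\Pi_1\vartheta$ is fixed: indeed $(\Pi_1\vartheta)' = \vartheta' \Pi_1' = (-\vartheta)(-\Pi_1) = \vartheta\Pi_1 = -\Pi_1\vartheta$, so $\Pi_1(\Pi_1\vartheta)'\Pi_1^{-1} = \Pi_1(-\Pi_1\vartheta)\Pi_1^{-1} = -\Pi_1^2\vartheta\Pi_1^{-1} = \Pi_1\vartheta$ using $\vartheta\Pi_1^{-1} = -\Pi_1^{-1}\vartheta$. Thus $\iota_{\mathbb{X}}(\Pi_1\vartheta)$ is symmetric and $\lambda_{\mathbb{X}}$ is a polarization. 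A direct computation $b^{\ast} = (\Pi_1\vartheta)^{-1}\Pi_1 b' \Pi_1^{-1}(\Pi_1\vartheta) = \vartheta^{-1}\Pi_1^{-1}\Pi_1 b'\Pi_1^{-1}\Pi_1\vartheta = \vartheta^{-1} b' \vartheta = \vartheta b' \vartheta^{-1}$ (the last equality since $\vartheta^2 \in F^{\times}$ is central) gives the claimed involution. Uniqueness up to $O_F^{\times}$ follows from that of $\lambda_{\mathbb{X}}^0$, since any two polarizations with the same kernel and the same Rosati involution differ by an element of $O_B$ that is central and symmetric, i.e. a scalar in $O_F^{\times}$; the converse (that every such $\lambda_{\mathbb{X}}$ arises this way) is seen by reversing the construction, multiplying by $\iota_{\mathbb{X}}(\Pi_1\vartheta)^{-1}$ to recover a principal polarization with involution $b \mapsto \Pi_1 b'\Pi_1^{-1}$.

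For part \eqref{RU_pol2}, I would simply transport the construction through the universal quasi-isogeny, exactly as in Drinfeld's Proposition \ref{RP_Dr} \eqref{RP_Dr2}. Given $(X,\iota_B,\varrho) \in \MDr(S)$, Proposition \ref{RP_Dr} \eqref{RP_Dr2} applied to $\lambda_{\mathbb{X}}^0$ yields a unique principal polarization $\lambda^0$ on $X$ with Rosati involution $b \mapsto \Pi_1 b'\Pi_1^{-1}$ and $\varrho^{\ast}(\lambda_{\mathbb{X}}^0) = \lambda^0$. Setting $\lambda = \lambda^0 \circ \iota_B(\Pi_1\vartheta)$, the same two local computations as above show $\ker \lambda = X[\Pi]$ and that the Rosati involution is $b \mapsto \vartheta b'\vartheta^{-1}$; compatibility $\varrho^{\ast}(\lambda_{\mathbb{X}}) = \lambda$ is immediate since $\varrho$ is $O_B$-linear, so it commutes with $\iota(\Pi_1\vartheta)$. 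Uniqueness of $\lambda$ follows from the uniqueness of $\lambda^0$ together with the injectivity of $\mu \mapsto \mu \circ \iota_B(\Pi_1\vartheta)$ on the relevant $\Hom$-module. The only genuine obstacle I anticipate is the careful verification of the symmetry and kernel claims in the $p=2$ setting, where the distinction between $\mathbb{X}[\Pi_1\vartheta]$ and $\mathbb{X}[\Pi]$ must be controlled at the level of finite flat group schemes rather than merely on Tate modules; this is handled by noting that $\Pi_1\vartheta$ and $\Pi$ generate the same two-sided ideal of $O_B$, so the kernels coincide.
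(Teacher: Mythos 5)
Your proof is correct and is essentially the paper's own argument: the paper likewise constructs $\lambda_{\mathbb{X}} = \lambda_{\mathbb{X}}^0\circ\iota_{\mathbb{X}}(\Pi_1\vartheta)$ from Drinfeld's principal polarization via the (R-P) embedding $E_1\hookrightarrow B$ of Lemma \ref{LA_quat}, with the same kernel and Rosati-involution computations, and obtains uniqueness and part (2) by the same reversal $\lambda\mapsto\lambda\circ\iota(\Pi_1\vartheta)^{-1}$ and transport along the $O_B$-linear quasi-isogeny $\varrho$. (Your side remark that two such polarizations ``differ by an element of $O_B$ that is central and symmetric'' is imprecise---the quotient of two polarizations lives in $\End^0_{O_B}(\mathbb{X})\cong M_2(F)$, not in $O_B$---but this is harmless since it is superseded by the reversal argument, which is exactly what the paper uses.)
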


Note also that the involution $b \mapsto \vartheta b' \vartheta^{-1}$ does not depend on the choice of $\vartheta \in E$.
We write $\iota_{\mathbb{X},E}$ for the restriction of $\iota_{\mathbb{X}}$ to $E \subseteq B$ and, in the same manner, we write $\iota_E$ for the restriction of $\iota_B$ to $E$ for any $(X,\iota_B,\varrho) \in \MDr(S)$.
Fix a polarization $\lambda_{\mathbb{X}}$ of $\mathbb{X}$ as in Lemma \ref{RU_pol} \eqref{RU_pol1}.
Accordingly for a tuple $(X,\iota_B,\varrho) \in \MDr(S)$, let $\lambda$ be the polarization given by Lemma \ref{RU_pol} \eqref{RU_pol2}.

\begin{lem} \label{RU_Dr}
	The tuple $(\mathbb{X},\iota_{\mathbb{X},E}, \lambda_{\mathbb{X}})$ is a framing object of $\NEnaive$.
	Moreover, the map
	\begin{equation*}
		(X,\iota_B,\varrho) \mapsto (X,\iota_E,\lambda,\varrho)
	\end{equation*}
	induces a closed embedding of formal schemes
	\begin{equation*}
		\eta: \MDr \inj \NEnaive.
	\end{equation*}
\end{lem}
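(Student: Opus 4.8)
The plan is to argue exactly as in the proof of Lemma \ref{RP_clemb}, adapting the few places where the non-principality of $\lambda_{\mathbb{X}}$ enters. There are two assertions to establish: first, that $(\mathbb{X},\iota_{\mathbb{X},E},\lambda_{\mathbb{X}})$ is an admissible framing object for $\NEnaive$ in the sense of Proposition \ref{RU_frnaive}; and second, that the stated rule defines a morphism $\eta$ which is injective on $S$-points and is a closed embedding. Once the framing object is identified, the map $\eta$ is manifestly well defined, since for any $(X,\iota_B,\varrho)\in\MDr(S)$ the polarization $\lambda$ supplied by Lemma \ref{RU_pol} \eqref{RU_pol2} satisfies $\ker\lambda = X[\Pi]$, has Rosati involution restricting to $\alpha\mapsto\overbar{\alpha}$ on $O_E$, and pulls back to $\lambda_{\mathbb{X}}$ on $\overbar{S}$.

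For the framing object, I would first check the polarization axioms of Section \ref{RU1}. The kernel condition $\ker\lambda_{\mathbb{X}} = \mathbb{X}[\Pi]$ is part of Lemma \ref{RU_pol} \eqref{RU_pol1}. For the Rosati involution, recall that it is $b\mapsto \vartheta b'\vartheta^{-1}$; since the standard involution restricts to Galois conjugation on $E\subseteq B$ and $\vartheta\in E$ commutes with $E$, this involution restricts to $\alpha\mapsto\overbar{\alpha}$ on $O_E$, as required. Next I would verify the group-theoretic condition of Proposition \ref{RU_frnaive}. As in the (R-P) case, let $G_{(\mathbb{X},\iota_{\mathbb{X}})}$ denote the group of $O_B$-linear height-$0$ quasi-isogenies of $(\mathbb{X},\iota_{\mathbb{X}})$ whose induced map on the rational Dieudonn\'e module has determinant $1$; one has $G_{(\mathbb{X},\iota_{\mathbb{X}})}\simeq \SL_{2,F}$, and this group is identified with $\SU(C,h)$ for the hermitian space $(C,h)$ attached to the framing data, so that $\SL_{2,F}\simeq\SU(C,h)$ forces $h$ to be split. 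The uniqueness of $\lambda_{\mathbb{X}}$ up to a scalar in $O_F^{\times}$ from Lemma \ref{RU_pol} \eqref{RU_pol1} shows $G_{(\mathbb{X},\iota_{\mathbb{X}})}\subseteq\QIsog(\mathbb{X},\iota_{\mathbb{X}},\lambda_{\mathbb{X}})$, and this inclusion is closed because $O_B$-linearity together with the determinant condition are closed conditions. Finally, the Kottwitz condition for $\iota_E$ follows from Drinfeld's special condition via \cite[Prop.\ 5.8]{RZ14}, exactly as before.

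For the second assertion, injectivity of $\eta(S)$ is verbatim the (R-P) argument: two objects of $\MDr(S)$ have the same image precisely when the comparison quasi-isogeny lifts to an isomorphism, i.e.\ when they already agree in $\MDr(S)$. To see that $\eta$ is a closed embedding, I would introduce the functor $F\colon S\mapsto\{(X,\iota,\lambda,\varrho)\in\NEnaive(S)\mid \iota\text{ extends to an }O_B\text{-action}\}$, which is representable by a closed formal subscheme of $\NEnaive$ by \cite[Prop.\ 2.9]{RZ96}; then $\eta(\MDr)\subseteq F$ is cut out by the special condition, which is open and closed, so $\eta$ is a closed immersion.

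The main obstacle I anticipate is the group-theoretic step, namely verifying the closed-subgroup condition of Proposition \ref{RU_frnaive} now that $\lambda_{\mathbb{X}}$ is only $\Pi$-modular rather than principal. The delicate point is that one must use the uniqueness statement of Lemma \ref{RU_pol} \eqref{RU_pol1} to conclude that the determinant-one $O_B$-linear quasi-isogenies actually preserve $\lambda_{\mathbb{X}}$ (up to the allowed $O_F^{\times}$-scaling), and then extract from the resulting identification $G_{(\mathbb{X},\iota_{\mathbb{X}})}\simeq\SL_{2,F}\simeq\SU(C,h)$ that $h$ is split. Everything else is either formal or reduces to facts already recorded in Lemma \ref{RU_pol} and Lemma \ref{LA_quat} \eqref{LA_quatRU}.
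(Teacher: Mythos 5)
Your proposal is correct and follows essentially the same route as the paper: the paper's own proof simply says to repeat the argument of Lemma \ref{RP_clemb}, checking that the special condition gives the Kottwitz condition via \cite[Prop.\ 5.8]{RZ14} and that $G_{(\mathbb{X},\iota_{\mathbb{X}})}\simeq\SL_{2,F}\simeq\SU(C,h)$ sits inside $\QIsog(\mathbb{X},\iota_{\mathbb{X}},\lambda_{\mathbb{X}})$ as a closed subgroup using the uniqueness statement of Lemma \ref{RU_pol}. The details you supply (kernel and Rosati verification, injectivity, the auxiliary functor $F$ and the open-and-closed special condition) are exactly the ones the paper imports from the (R-P) case.
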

%In particular, a framing object for $\NEnaive$ exists.

\begin{proof}
	We follow the same argument as in the proof of Lemma \ref{RP_clemb}.
	Again it is enough to check that $\QIsog(\mathbb{X}, \iota_{\mathbb{X}}, \lambda_{\mathbb{X}})$ contains $\SU(C,h)$ as a closed subgroup and that $\iota_E$ satisfies the Kottwitz condition.
	
	By \cite[Prop.\ 5.8]{RZ14}, the special condition on $\iota_B$ implies the Kottwitz condition for $\iota_E$.
	It remains to show that $\SU(C,h) \subseteq  \QIsog(\mathbb{X}, \iota_{\mathbb{X}}, \lambda_{\mathbb{X}})$.
	But the group $G_{(\mathbb{X}, \iota_{\mathbb{X}})}$ of automorphisms of determinant $1$ of $(\mathbb{X}, \iota_{\mathbb{X}})$ is isomorphic to $\SL_{2,F}$ and $G_{(\mathbb{X}, \iota_{\mathbb{X}})} \subseteq \QIsog(\mathbb{X}, \iota_{\mathbb{X}}, \lambda_{\mathbb{X}})$ is a Zariski-closed subgroup by the same argument as in Lemma \ref{RP_clemb}.
	Hence the statement follows from the exceptional isomorphism $\SL_{2,F} \simeq \SU(C,h)$.
\end{proof}

As a next step, we want to show that this already induces a closed embedding
\begin{equation}
	\eta: \MDr \inj \NE.
\end{equation}
Let $\widetilde{E} \inj B$ an embedding of a ramified quadratic extension $\widetilde{E}|F$ of type (R-U) as in Lemma \ref{LA_quat} \eqref{LA_quatRU}.
On the framing object $(\mathbb{X},\iota_{\mathbb{X}})$ of $\MDr$, we define a polarization $\widetilde{\lambda}_{\mathbb{X}}$ via
%For any $(X,\iota_B,\varrho) \in \MDr(S)$ , we define a polarization $\widetilde{\lambda}$ on $X$ via
%with induced polarization $\lambda$, a straightening decomposition of $\lambda$ is given by
\begin{equation*}
	\widetilde{\lambda}_{\mathbb{X}} = \lambda_{\mathbb{X}} \circ \iota_{\mathbb{X}} (\widetilde{\vartheta}),
\end{equation*}
where $\widetilde{\vartheta}$ is a unit in $\widetilde{E}$ of the form $\widetilde{\vartheta}^2 = 1 + (t^2/ \pi_0)\cdot u$, see Lemma \ref{LA_quat} \eqref{LA_quatRU}.
%Indeed, the condition on $\ker \lambda$ implies that $\lambda_1$ and $\lambda_2$ are principal polarizations, and we have
The Rosati involution of $\widetilde{\lambda}_{\mathbb{X}}$ induces the identity on $O_E$ and  we have
%the tuple $(X,\iota_E,\lambda,\widetilde{\lambda},\varrho)$ satisfies the straightening condition \eqref{RU_strcond}, because
\begin{align*}
	\lambda_{\mathbb{X},1} & = \frac{1}{t} (\lambda_{\mathbb{X}} + \widetilde{\lambda}_{\mathbb{X}}) = \frac{1}{t} \cdot \lambda_{\mathbb{X}} \circ \iota_B (1 + \widetilde{\vartheta}) =  \lambda_{\mathbb{X}} \circ \iota_B (\widetilde{\Pi} / \pi_0) \\
	& =  \lambda_{\mathbb{X}} \circ \iota_B(\Pi^{-1} \gamma) \in \Hom({\mathbb{X}},{\mathbb{X}}^{\vee}),
\end{align*}
using the notation of Lemma \ref{LA_quat} \eqref{LA_quatRU}.
For $(X,\iota_B,\varrho) \in \MDr(S)$, we set $\widetilde{\lambda} = \lambda \circ \iota_B(\widetilde{\vartheta})$.
By the same calculation, we have $\lambda_1 = \frac{1}{t} (\lambda + \widetilde{\lambda}) \in \Hom(X,X^{\vee})$.
Thus the tuple $(X,\iota_E,\lambda,\varrho) = \eta(X,\iota_B,\varrho)$ satisfies the straightening condition.
Hence we get a closed embedding of formal schemes $\eta: \MDr \to \NE$ which is independent of the choice of $\widetilde{E}$.

\begin{thm} \label{RU_thm}
	$\eta: \MDr \to \NE$ is an isomorphism of formal schemes.
\end{thm}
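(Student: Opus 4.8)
The plan is to follow the template of the proof of Theorem~\ref{RP_thm} in the (R-P) case, reducing the statement to three ingredients: that $\eta$ is an \emph{open and closed} embedding, that $\eta(\overbar{k})$ is a bijection, and the general principle that an open and closed embedding of formal schemes which is an isomorphism on the underlying reduced subschemes is already an isomorphism. The closed embedding $\eta:\MDr\inj\NE$ is already in hand from Lemma~\ref{RU_Dr} and the discussion following it, so the work is to upgrade ``closed'' to ``open and closed'' and to analyse the geometric points.

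First I would show that $\NE$ is contained in the locus $F\subseteq\NEnaive$ where the $O_E$-action extends to an $O_B$-action, representable by a closed formal subscheme exactly as in the proof of Lemma~\ref{RP_clemb}. Since $O_B=O_F[\Pi,\gamma]$ with $\gamma=\Pi\widetilde{\Pi}/\pi_0$ by Lemma~\ref{LA_quat}~\eqref{LA_quatRU} and $\iota(\Pi)$ is available, it suffices to realise $\iota(\gamma)$ as an honest endomorphism. The new feature relative to (R-P) is that $\lambda$ is no longer principal. I would get around this as follows: because $\ker\lambda=X[\Pi]=\ker\iota(\Pi)$, the polarization factors as $\lambda=\nu\circ\iota(\Pi)$ with $\nu:X\to X^{\vee}$ a genuine isomorphism (the induced map has trivial kernel and degree $1$). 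Setting $\iota(\gamma)=\nu^{-1}\circ\lambda_1$, where $\lambda_1=\varrho^{\ast}(\lambda_{\mathbb{X},1})$ is the straightening homomorphism, a computation on the framing object using $\lambda_{\mathbb{X},1}=\lambda_{\mathbb{X}}\circ\iota_B(\Pi^{-1}\gamma)$ shows this equals $\varrho^{-1}\circ\iota_{\mathbb{X}}(\gamma)\circ\varrho$. It is an honest endomorphism precisely because the straightening condition makes $\lambda_1$ a homomorphism; this is where membership in $\NE$ enters. Together with $\iota(\Pi)$ this produces the $O_B$-action and makes $\varrho$ an $O_B$-linear quasi-isogeny, so $\NE\subseteq F$. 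As the special condition is open and closed (\cite[p.~7]{RZ14}) and cuts out $\eta(\MDr)$ inside $F$, it follows that $\eta:\MDr\to\NE$ is an open and closed embedding.

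Next I would prove that $\eta(\overbar{k}):\MDr(\overbar{k})\to\NE(\overbar{k})$ is a bijection, the analogue of Lemma~\ref{RP_thmgeom}. Working with the rational Dieudonn\'e module $N=N_{\mathbb{X}}$ and its $B$-action, the embedding $F^{(2)}\inj B$ furnishes a $\mathbb{Z}/2$-grading $N=N_0\oplus N_1$, with respect to which ${\bf F}$ and ${\bf V}$ are homogeneous of degree $1$; a lattice $M$ lies in $\MDr(\overbar{k})$ if and only if it respects this grading and satisfies the special condition. Given $M\in\NE(\overbar{k})$, I would use the integrality on $M$ of both the form $\langle\,,\rangle$ attached to $\lambda$ and the straightening form $\langle\,,\rangle_1$ attached to $\lambda_1$, together with the identity $\lambda_{\mathbb{X},1}=\lambda_{\mathbb{X}}\circ\iota_B(\Pi^{-1}\gamma)$ and the relation $\gamma^{\sigma}=1-\gamma$, to force each homogeneous component of a vector $y\in M$ to lie again in $M$; the resulting invertibility of the relevant coefficient matrix over $\breve{O}_F$ then yields $M=M_0\oplus M_1$, i.e.\ $O_B$-stability. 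The special condition is then automatic from the modularity of $\langle\,,\rangle$ on the graded pieces, exactly as in the (R-P) argument. Since $\MDr(\overbar{k})\subseteq\NE(\overbar{k})$ is already known, this gives the desired equality.

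Finally, combining these two steps with the fact that $(\MDr)_{\red}$ and $(\NE)_{\red}$ are locally of finite type over $\Spec\overbar{k}$, the open and closed embedding $\eta$ is a bijection on reduced geometric points, hence an isomorphism $(\MDr)_{\red}\isoarrow(\NE)_{\red}$, and therefore an isomorphism of formal schemes. I expect the main obstacle to be the geometric-point analysis of the second paragraph: the non-principality of $\lambda$ means that $\langle\,,\rangle$ is only $\Pi$-modular rather than unimodular, so one must check with care that the orthogonality and splitting arguments of Lemma~\ref{RP_thmgeom} survive, keeping track of the extra factor $\Pi^{-1}$ in the relation between $\lambda_1$ and $\lambda$. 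The reconstruction of $\iota(\gamma)$ in the first paragraph, by contrast, is clean once the factorisation $\lambda=\nu\circ\iota(\Pi)$ is isolated, and the open-closed formalism transfers verbatim from the (R-P) case.
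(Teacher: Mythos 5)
Your proposal follows essentially the same route as the paper's proof: the $O_B$-action on a point of $\NE(S)$ is recovered from the straightening condition exactly as in the paper (your factorisation $\lambda=\nu\circ\iota(\Pi)$ is just a rewriting of the paper's $\iota(\Pi)\circ\lambda^{-1}\circ\lambda_1$), the geometric-point bijection is the paper's Lemma \ref{RU_thmgeom}, and the conclusion via the open-and-closed special condition together with the isomorphism on reduced loci is verbatim. The one point to watch in your sketch of the geometric-point step is that the special condition is deduced in the paper from the perfectness of the form $\langle\,,\rangle_1$ attached to the \emph{principal} polarization $\lambda_{\mathbb{X},1}$ (via $\langle M_0,M_1\rangle_1\subseteq\pi_0\breve{O}_F$, using $\Pi\equiv\overbar{\Pi}\bmod\pi_0$), not directly from the merely $\Pi$-modular form $\langle\,,\rangle$ --- but you flag precisely this caveat yourself.
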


We first check this for $\overbar{k}$-valued points:

\begin{lem} \label{RU_thmgeom}
	$\eta$ induces a bijection $\eta(\overbar{k}): \MDr(\overbar{k}) \to \NE(\overbar{k})$.
\end{lem}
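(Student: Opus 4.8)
The plan is to mirror the proof of Lemma~\ref{RP_thmgeom}. By Lemma~\ref{RU_Dr} the map $\eta$ is a closed embedding, so $\eta(\overbar{k})$ is injective, and since we already know $\MDr(\overbar{k}) \subseteq \NE(\overbar{k})$ it remains to prove the reverse inclusion $\NE(\overbar{k}) \subseteq \MDr(\overbar{k})$ inside $\NEnaive(\overbar{k})$. Let $N$ be the rational Dieudonn\'e module of $\mathbb{X}$; it carries the $B$-action coming from the Drinfeld framing object. Fixing the unramified subfield $F^{(2)} = F[\gamma] \subseteq B$ from Lemma~\ref{LA_quat}~\eqref{LA_quatRU} (so that $O_B = O_F[\Pi,\gamma]$) yields a $\mathbb{Z}/2$-grading $N = N_0 \oplus N_1$ for which $\mathbf{F}$ and $\mathbf{V}$ have degree $1$. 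A lattice $M \in \NEnaive(\overbar{k})$ lies in $\MDr(\overbar{k})$ if and only if it is $O_B$-stable --- equivalently $\iota(\gamma) M \subseteq M$, equivalently $M = (M \cap N_0) \oplus (M \cap N_1)$ --- and satisfies the special condition $\dim M_i / \mathbf{V} M_{i+1} = 1$.

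First I would establish $O_B$-stability, which is where the straightening condition enters and which is the clean part of the argument. Recall the computation preceding the theorem, $\lambda_{\mathbb{X},1} = \lambda_{\mathbb{X}} \circ \iota_B(\Pi^{-1}\gamma)$, so that the alternating form attached to $\lambda_{\mathbb{X},1}$ is $\langle x, y \rangle_1 = \langle \iota(\Pi^{-1}\gamma) x, y \rangle$. For $M \in \NE(\overbar{k})$ the straightening condition says exactly that $\langle\,,\rangle_1$ is integral on $M$. Since $\ker \lambda = X[\Pi]$ translates into $M^{\vee} = \Pi^{-1} M$ with respect to $\langle\,,\rangle$, this integrality is equivalent to $\iota(\Pi^{-1}\gamma) M \subseteq \Pi^{-1} M$, that is, to $\iota(\gamma) M \subseteq M$. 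As $M$ is already $O_E$-stable and $O_B = O_F[\Pi,\gamma]$, it follows that $M$ is $O_B$-stable; hence $M = M_0 \oplus M_1$ and $\varrho$ upgrades to an $O_B$-linear quasi-isogeny.

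It remains to prove that such an $M$ is special, and this is the main obstacle. Following the second half of the proof of Lemma~\ref{RP_thmgeom}, I would argue by contradiction: if $M$ is not special then $M_i = \mathbf{V} M_{i+1}$ for one $i$, and the identity $\langle \mathbf{V} a, \mathbf{V} b \rangle^{\sigma} = \pi_0 \langle a, b \rangle$ shows that any polarization form is divisible by $\pi_0$ on $M_i$; if in addition $N_0$ and $N_1$ are orthogonal for a \emph{perfect} form, then that form cannot be perfect on $M_i$, a contradiction. The difficulty, absent in the (R-P) case, is producing such an orthogonal perfect form: here the Rosati involution $b \mapsto \vartheta b' \vartheta^{-1}$ of $\lambda$ (and equally the involution $b \mapsto \widetilde{\vartheta} b' \widetilde{\vartheta}^{-1}$ of the principal polarization $\widetilde{\lambda}^0$ from Theorem~\ref{POL_thm}) does not fix any unramified quadratic subfield, so neither induced form makes $N_0 \perp N_1$. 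I would resolve this by using the freedom in both the unramified embedding (the special condition is independent of it) and the polarization: by Drinfeld's Proposition~\ref{RP_Dr} there is a principal polarization with Rosati involution $b \mapsto \Pi_1 b' \Pi_1^{-1}$ for \emph{any} uniformizer $\Pi_1 \in O_B$ with $\Pi_1^2 \in O_F$, and a short computation in $B$ (using that $\Pi_1^2$ has odd valuation, which flips the relevant Hilbert symbol) produces a uniformizer $\Pi_1$ together with an unramified generator $\gamma_0$ anticommuting with it. For the corresponding grading the involution $b \mapsto \Pi_1 b' \Pi_1^{-1}$ fixes $F[\gamma_0]$, so $N_0$ and $N_1$ are orthogonal for the Drinfeld form; one then verifies that this form is perfect on $M$ (using that $M$ is self-dual for the principal polarization of Theorem~\ref{POL_thm}), and the $\pi_0$-divisibility contradiction goes through as in the (R-P) case. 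This shows $M$ is special, hence $M \in \MDr(\overbar{k})$, completing the proof of bijectivity.
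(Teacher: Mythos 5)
Your proof is correct, and its two halves have different relationships to the paper's argument. The first half ($O_B$-stability) is essentially the paper's argument in streamlined form: the paper decomposes $y = y_0 + y_1$ and plays the integrality of $\langle\,,\rangle$ against that of $\langle\,,\rangle_1$ to conclude $y_0, y_1 \in M$, whereas you observe directly that integrality of $\langle\,,\rangle_1 = \langle\,, \iota(\Pi^{-1}\gamma)\,\cdot\,\rangle$ on $M$, combined with $M^{\vee} = \Pi^{-1}M$, is literally the statement $\iota(\gamma)M \subseteq M$; these are the same computation packaged differently. For the special condition you genuinely diverge. The paper stays with the grading attached to $\gamma = \Pi\widetilde{\Pi}/\pi_0$ and the principal polarization $\lambda_1 = \lambda \circ \iota(\Pi^{-1}\gamma)$, for which $N_0$ and $N_1$ are \emph{not} orthogonal; it substitutes the weaker fact $\langle M_0, M_1\rangle_1 \subseteq \pi_0\breve{O}_F$, proved by comparing $\langle x,y\rangle_1 = (1-\gamma)\langle x,\iota(\Pi)^{-1}y\rangle$ with $\langle x,y\rangle_1 = \gamma\langle x,\iota(\overbar{\Pi})^{-1}y\rangle$ and using $\Pi \equiv \overbar{\Pi} \bmod \pi_0$. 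That "orthogonality mod $\pi_0$" is exactly enough to see that perfectness of $\langle\,,\rangle_1$ on $M$ forces perfectness on each $M_i$, and the $\pi_0$-divisibility of $\langle \mathbf{V}\cdot,\mathbf{V}\cdot\rangle_1$ then gives the contradiction. You instead change both the unramified embedding (to $F[\gamma_0]$ with $\delta_0$ anticommuting with an (R-P)-type uniformizer $\Pi_1$, which exists by Lemma \ref{LA_quat}~\eqref{LA_quatRP}) and the polarization (to the Drinfeld principal polarization with Rosati involution $b \mapsto \Pi_1 b'\Pi_1^{-1}$, which fixes $F[\gamma_0]$ pointwise and $O_E$ pointwise), obtaining exact orthogonality and importing perfectness on $M$ from Theorem \ref{POL_thm}. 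This works — the special condition is indeed independent of the embedding, $O_B$-stability of $M$ gives compatibility with the new grading, and the uniqueness statement of Theorem \ref{POL_thm}~\eqref{POL_thm1} identifies your auxiliary polarization with $\widetilde{\lambda}^0_{\mathbb{X}}$ up to $O_E^{\times}$ so that its pullback is principal on $X$ — but it costs you these extra existence and uniqueness inputs, where the paper's mod-$\pi_0$ trick uses only the data already in play. One small imprecision: it is $\delta_0 = 2\gamma_0 - 1$, not $\gamma_0$ itself, that anticommutes with $\Pi_1$ (conjugation by $\Pi_1$ sends $\gamma_0$ to $1-\gamma_0$); this does not affect the argument.
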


\begin{proof}
	We only have to show surjectivity and we will use for this the Dieudonn\'e theory description of $\NEnaive(\overbar{k})$, see \eqref{RU_geompts}.
	The rational Dieudonn\'e-module $N = N_{\mathbb{X}}$ of $\mathbb{X}$ now carries additionally an action of $B$.
	The embedding $F^{(2)} \inj B$ given by
	\begin{equation}
		\gamma \mapsto \frac{\Pi \cdot \widetilde{\Pi}}{\pi_0},
	\end{equation}
	(see Lemma \ref{LA_quat} \eqref{LA_quatRU}) induces a $\mathbb{Z} / 2$-grading $N = N_0 \oplus N_1$.
	Here,
	\begin{align*}
		N_0 & = \{ x \in N \mid \iota(a) x = a x \text{ for all } a \in F^{(2)} \}, \\
		N_1 & = \{ x \in N \mid \iota(a) x = \sigma(a) x \text{ for all } a \in F^{(2)} \},
	\end{align*}
	for a fixed embedding $F^{(2)} \inj \breve{F}$.
	The operators ${\bf F}$ and ${\bf V}$ have degree $1$ with respect to this grading.
	The principal polarization 
	\begin{equation*}
		\lambda_{\mathbb{X},1} = \frac{1}{t} (\lambda_{\mathbb{X}} + \widetilde{\lambda}_{\mathbb{X}}) = \lambda_{\mathbb{X}} \circ \iota_{\mathbb{X}}(\Pi^{-1} \gamma)
	\end{equation*}
	induces an alternating form $\langle \,, \rangle_1$ on $N$ that satisfies
	\begin{equation*}
		\langle x,y \rangle_1 = \langle x, \iota(\Pi^{-1} \gamma) \cdot  y \rangle,
	\end{equation*}
	for all $x,y \in N$.
	Let $M \in \NE(\overbar{k}) \subseteq \NEnaive(\overbar{k})$ be an $\breve{O}_F$-lattice in $N$.
	We claim that $M \in \MDr(\overbar{k})$.
	For this, it is necessary that $M$ is stable under the action of $O_F^{(2)}$ (since $O_B = O_F[\Pi, \gamma] = O_F^{(2)}[\Pi]$, see Lemma \ref{LA_quat} \eqref{LA_quatRU}) or equivalently, that $M$ respects the grading of $N$, \emph{i.e.}, $M = M_0 \oplus M_1$ for $M_i = M \cap N_i$.
	Furthermore $M$ has to satisfy the \emph{special} condition:
	\begin{equation*}
		\dim M_0 / {\bf V}M_1 = \dim M_1 / {\bf V}M_0 = 1.
	\end{equation*}
	We first show that $M = M_0 \oplus M_1$.
	Let $y = y_0 + y_1 \in M$ with $y_i \in N_i$.
	Since $M = \Pi M^{\vee}$, we have
	\begin{equation*}
		\langle x, \iota(\Pi)^{-1} y \rangle = \langle x, \iota(\Pi)^{-1} y_0 \rangle + \langle x, \iota(\Pi)^{-1} y_1 \rangle \in \breve{O}_F,
	\end{equation*}
	for all $x \in M$.
	Together with
	\begin{align*}
		\langle x, y \rangle_1 & = \langle x, y_0 \rangle_1 + \langle x, y_1 \rangle_1 = \langle x, \iota( \widetilde{\Pi} / \pi_0) y_0 \rangle + \langle x, \iota( \widetilde{\Pi} / \pi_0) y_1 \rangle \\
		& = \gamma \cdot \langle x, \iota (\Pi^{-1}) y_0 \rangle + (1-\gamma) \cdot \langle x, \iota (\Pi^{-1}) y_1 \rangle \in \breve{O}_F,
	\end{align*}
	this implies that $\langle x, \iota (\Pi^{-1}) y_0 \rangle$ and $\langle x, \iota (\Pi^{-1}) y_1 \rangle$ lie in $\breve{O}_F$ for all $x \in M$.
	Hence, $y_0, y_1 \in M$ and this means that $M$ respects the grading.
	It follows that $M$ is stable under the action of $O_B$.
	
	In order to show that $M$ is special, note that
	\begin{equation*}
		\langle {\bf V}x, {\bf V}y \rangle_1^{\sigma} = \langle {\bf F}{\bf V}x, y \rangle_1 = \pi_0 \cdot \langle x,y \rangle_1 \in \pi_0 \breve{O}_F,
	\end{equation*}
	for all $x,y \in M$.
	The form $\langle \,, \rangle_1$ comes from a principal polarization, so it induces a perfect form on $M$.
	Now it is enough to show that also the restrictions of $\langle \,, \rangle_1$ to $M_0$ and $M_1$ are perfect.
	Indeed, if $M$ was not special, we would have $M_i = {\bf V}M_{i+1}$ for some $i$ and this would contradict $\langle \,, \rangle_1$ being perfect on $M_i$.
	We prove that $\langle \,, \rangle_1$ is perfect on $M_i$ by showing $\langle M_0, M_1 \rangle_1 \subseteq \pi_0 \breve{O}_F$.
	
	Let $x \in M_0$ and $y \in M_1$.
	Then,
	\begin{align*}
		\langle x, y \rangle_1 & = (1-\gamma) \cdot \langle x, \iota(\Pi)^{-1} y \rangle, \\
		\langle x, y \rangle_1 & = - \langle y, x \rangle_1 = - \gamma \cdot \langle y, \iota(\Pi)^{-1} x \rangle = \gamma \cdot \langle x, \iota(\overbar{\Pi})^{-1} y \rangle.
	\end{align*}
	We take the difference of these two equations.
	From $\Pi \equiv \overbar{\Pi} \mod \pi_0$, it follows that $\langle x, \iota(\Pi)^{-1} y \rangle \equiv 0 \mod \pi_0$ and thus also $\langle x, y \rangle_1 \equiv 0 \mod \pi_0$.
	The form $\langle \,, \rangle_1$ is hence perfect on $M_0$ and $M_1$ and the special condition follows.
	This finishes the proof of Lemma \ref{RU_thmgeom}.
\end{proof}

\begin{proof}[Proof of Theorem \ref{RU_thm}]
	Let $(\mathbb{X}, \iota_{\mathbb{X}})$ be a framing object for $\MDr$ and let further
	\begin{equation*}
		\eta(\mathbb{X}, \iota_{\mathbb{X}}) = (\mathbb{X}, \iota_{\mathbb{X},E}, \lambda_{\mathbb{X}})
	\end{equation*}
	be the corresponding framing object for $\NE$.
	We fix an embedding $F^{(2)} \inj B$ as in Lemma \ref{LA_quat} \eqref{LA_quatRU}.
	For $S \in \Nilp$, let $(X,\iota,\lambda,\varrho) \in \NE(S)$ and $\widetilde{\lambda} = \varrho^{\ast}(\widetilde{\lambda}_{\mathbb{X}})$.
	We have
	\begin{align*}
		\varrho^{-1} \circ \iota_{\mathbb{X}} (\gamma) \circ \varrho & = \varrho^{-1} \circ \iota_{\mathbb{X}} (\Pi) \circ \lambda_{\mathbb{X}}^{-1} \circ \lambda_{\mathbb{X},1} \circ \varrho \\
		& = \iota(\Pi) \circ \lambda^{-1} \circ \lambda_1 \in \End(X),
	\end{align*}
	for $\lambda_1 = t^{-1} (\lambda + \widetilde{\lambda})$, since $\ker \lambda = X[\Pi]$.
	But $O_B = O_F[\Pi, \gamma	]$ (see Lemma \ref{LA_quat} \eqref{LA_quatRU}), so this already induces an $O_B$-action $\iota_B$ on $X$.
	It remains to show that $(X,\iota_B,\varrho)$ satisfies the \emph{special} condition (see the discussion before Proposition \ref{RP_Dr} for a definition).
	
	The special condition is open and closed (see \cite[p.\ 7]{RZ14}) and $\eta$ is bijective on $\overbar{k}$-points.
	Hence $\eta$ induces an isomorphism on reduced subschemes
	\begin{equation*}
		(\eta)_{\red}: (\MDr)_{\red} \isoarrow (\NE)_{\red},
	\end{equation*}
	because $(\MDr)_{\red}$ and $(\NE)_{\red}$ are locally of finite type over $\Spec \overbar{k}$.
	It follows that $\eta: \MDr \to \NE$ is an isomorphism.
\end{proof}

\subsection{Deformation theory of intersection points} \label{LM_naive}

In this section, we will study the deformation rings of certain geometric points in $\NEnaive$ with the goal of proving that $\NE \subseteq \NEnaive$ is a strict inclusion even in the case $|t| = |\pi_0|$.
In contrast to the non-$2$-adic case, we are not able to use the theory of local models (see \cite{PRS13} for a survey) since there is in general no normal form for the lattices $\Lambda \subseteq C$, see Proposition \ref{LA_latt} and \cite[Thm.\ 3.16]{RZ96}.\footnote{It is possible define a local model for the non-naive spaces $\NE$ (also in the case (R-P)) and establish a local model diagram as in \cite[3.27]{RZ96}. The local model is then isomorphic to the local model of the Drinfeld moduli problem. This will be part of a future paper of the author.}
Thus we will take the more direct approach of studying the deformations of a fixed point $(X,\iota,\lambda,\varrho) \in \NEnaive(\overbar{k})$ and using the theory of Grothendieck-Messing (\cite{Me72}).

Let $\Lambda \subseteq C$ be a $\Pi$-modular hyperbolic lattice.
By Lemma \ref{RU_IS}, there is a unique point $x = (X,\iota,\lambda,\varrho) \in \NEnaive(\overbar{k})$ with a $\tau$-stable Dieudonn\'e module $M \subseteq C \otimes_E \breve{E}$ and $M^{\tau} = \Lambda$.
Since $\Lambda$ is hyperbolic, $x$ satisfies the straightening condition, \emph{i.e.}, $x \in \NE(\overbar{k})$.
(In Figure \ref{RU_NEnaivejpg}, $x$ would lie on the intersection of two solid lines.)

Let $\widehat{\mathcal{O}}_{\NEnaive,x}$  be the formal completion of the local ring at $x$.
It represents the following deformation functor $\Defo_x$.
For an artinian $\breve{O}_F$-algebra $R$ with residue field $\overbar{k}$, we have
\begin{equation*}
	\Defo_x(R) = \{ (Y,\iota_Y,\lambda_Y) / R \mid Y_{\overbar{k}} \cong X \},
\end{equation*}
where $(Y,\iota_Y,\lambda_Y)$ satisfies the usual conditions (see section \ref{RU1}) and the isomorphism $Y_{\overbar{k}} \cong X$ is actually an isomorphism of tuples $(Y_{\overbar{k}},\iota_Y,\lambda_Y) \cong (X,\iota,\lambda)$ as in Definition \ref{RP_isonaive}.

Now assume the quotient map $R \to \overbar{k}$ is an $O_F$-pd-thickening (cf.\ \cite{Ahs11}).
For example, this is the case when $\mathfrak{m}^2 = 0$ for the maximal ideal $\mathfrak{m}$ of $R$.
Then, by Grothendieck-Messing theory (see \cite{Me72} and \cite{Ahs11}), we get an explicit description of $\Defo_x(R)$ in terms of liftings of the Hodge filtration:

The (relative) Dieudonn\'e crystal $\mathbb{D}_X(R)$ of $X$ evaluated at $R$ is naturally isomorphic to the free $R$-module $\Lambda \otimes_{O_F} R$ and this isomorphism is equivariant under the action of $O_E$ induced by $\iota$ and respects the perfect form $\Phi = \langle \,, \rangle \circ (1,\Pi^{-1})$ induced by $\lambda \circ \iota(\Pi^{-1})$.
The Hodge-filtration of $X$ is given by $\mathcal{F}_X = V \cdot \mathbb{D}_X(\overbar{k}) \cong \Pi \cdot (\Lambda \otimes_{O_F} \overbar{k}) \subseteq \Lambda \otimes_{O_F} \overbar{k}$.

A point $Y \in \Defo_x(R)$ now corresponds, via Grothendieck-Messing, to a direct summand $\mathcal{F}_Y \subseteq \Lambda \otimes_{O_F} R$ of rank $2$ lifting $\mathcal{F}_X$, stable under the $O_E$-action and totally isotropic with respect to $\Phi$.
Furthermore, it has to satisfy the Kottwitz condition (see section \ref{RU1}): For the action of $\alpha \in O_E$ on $\Lie Y = (\Lambda \otimes_{O_F} R) / \mathcal{F}_Y$, we have
\begin{equation*}
	\charp(\Lie Y, T \mid \iota(\alpha)) = (T - \alpha)(T - \overbar{\alpha}).
\end{equation*}
Let us now fix an $O_E$-basis $(e_1, e_2)$ of $\Lambda$ and let us write everything in terms of the $O_F$-basis $(e_1,e_2,\Pi e_1, \Pi e_2)$.
Since $\Lambda$ is hyperbolic, we can fix $(e_1,e_2)$ such that $h$ is represented by the matrix
\begin{equation*}
	h \weq \left( \begin{array}{cc}
		& \Pi \\
		\overbar{\Pi} & \\
	\end{array} \right),
\end{equation*}
and then
\begin{equation*}
	\Phi = \Tr_{E|F} \frac{1}{t\vartheta} h(\cdot, \Pi^{-1} \cdot) \weq
	\left( \begin{array}{cc|cc}
			    &   t/\pi_0 &   & 1 \\
			    &    & -1 &   \\ \hline
			    & -1 + t^2/\pi_0 &   & t  \\
			 1 &    &   &   \\
		\end{array} \right).
\end{equation*}
An $R$-basis $(v_1,v_2)$ of $\mathcal{F}_Y$ can now be chosen such that
\begin{equation*}
	(v_1 v_2) = \begin{pmatrix}
				y_{11} & y_{12} \\
				y_{21} & y_{22} \\
				1	   &		\\
					   & 1		\\
			\end{pmatrix},
\end{equation*}
with $y_{ij} \in R$.
As an easy calculation shows, the conditions on $\mathcal{F}_Y$ above are now equivalent to the following conditions on the $y_{ij}$:
\begin{align*}
	y_{11} + y_{22} & = t, \\
	y_{11} y_{22} - y_{12}y_{21} &= \pi_0,\\
	t(\frac{ty_{22}}{\pi_0} + 2) = y_{11}(\frac{ty_{22}}{\pi_0} + 2) &= y_{21}(\frac{ty_{22}}{\pi_0} + 2) = y_{12} (\frac{ty_{22}}{\pi_0} + 2) = 0.
\end{align*}
Let $T$ be the closed subscheme of $\Spec O_F[y_{11},y_{12},y_{21},y_{22}]$ given by these equations.
Let $T_y$ be the formal completion of the localization at the ideal generated by the $y_{ij}$ and $\pi_0$.
Then we have $\Defo_x(R) \cong T_y(R)$ for any $O_F$-pd-thickening $R \to \overbar{k}$.
In particular, the first infinitesimal neighborhoods of $\Defo_x$ and $T_y$ coincide.
The first infinitesimal neighborhood of $T_y$ is given by $\Spec O_F[y_{ij}]/((y_{ij})^2,y_{11}+y_{22}-t,\pi_0)$, hence $T_y$ has Krull dimension $3$ and so has $\Defo_x$. However, $\MDr$ is regular of dimension $2$, cf.\ \cite{BC91}.
Thus,

\begin{prop}
	$\NEnaive \neq \MDr$, even when $|t| = |\pi_0|$.
\end{prop}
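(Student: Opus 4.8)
The plan is to compare local dimensions: I would exhibit a point of $\NEnaive$ at which the complete local ring has dimension strictly larger than $2$, and then invoke the regularity of $\MDr$. Concretely, fix the point $x = (X,\iota,\lambda,\varrho) \in \NEnaive(\overbar{k})$ attached by Lemma \ref{RU_IS} to a $\Pi$-modular \emph{hyperbolic} lattice $\Lambda \subseteq C$. Since $\Lambda$ is hyperbolic, $x$ satisfies the straightening condition, so $x \in \NE(\overbar{k})$; geometrically it is an intersection point of two of the solid lines in $(\NEnaive)_{\red}$. Because $\widehat{\mathcal{O}}_{\NEnaive,x}$ pro-represents the deformation functor $\Defo_x$, everything reduces to computing the dimension of $\Defo_x$.

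For this I would use Grothendieck--Messing theory (\cite{Me72}, \cite{Ahs11}) to describe $\Defo_x(R)$, for an $O_F$-pd-thickening $R \to \overbar{k}$, as the set of rank-$2$ direct summands $\mathcal{F}_Y \subseteq \Lambda \otimes_{O_F} R$ lifting the Hodge filtration $\Pi\cdot(\Lambda \otimes_{O_F} \overbar{k})$, stable under $O_E$, totally isotropic for the perfect form $\Phi$ induced by $\lambda \circ \iota(\Pi^{-1})$, and satisfying the Kottwitz condition on $\Lie Y$. Choosing an $O_E$-basis of $\Lambda$ in which $h$ is in hyperbolic normal form and writing $\mathcal{F}_Y$ in the $O_F$-basis $(e_1,e_2,\Pi e_1,\Pi e_2)$ by the four entries $y_{ij}$, these conditions unwind into the explicit polynomial system displayed above, cutting out a closed subscheme $T \subseteq \Spec O_F[y_{11},y_{12},y_{21},y_{22}]$; its completion $T_y$ satisfies $\Defo_x(R) \cong T_y(R)$. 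The decisive observation is that the cubic-type relations each carry the factor $\tfrac{ty_{22}}{\pi_0}+2$, which lies in the maximal ideal (as $2 \in tO_F \subseteq \pi_0 O_F$), so they contribute nothing to the tangent space; the relation $y_{11}y_{22}-y_{12}y_{21}=\pi_0$ puts $\pi_0$ into $\mathfrak{m}^2$; and $y_{11}+y_{22}=t$ is then the only surviving linear constraint. Hence the first infinitesimal neighborhood is $\Spec O_F[y_{ij}]/((y_{ij})^2,y_{11}+y_{22}-t,\pi_0)$, the tangent space is $3$-dimensional, and $\Defo_x$, equivalently $\widehat{\mathcal{O}}_{\NEnaive,x}$, has Krull dimension $3$.

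Finally I would conclude by dimension count: by Drinfeld (\emph{cf.}\ \cite{BC91}) the formal scheme $\MDr$ is regular of dimension $2$, so each of its complete local rings has dimension $2$. Since $\widehat{\mathcal{O}}_{\NEnaive,x}$ has dimension $3$, the two formal schemes cannot be isomorphic, proving $\NEnaive \neq \MDr$ regardless of whether $|t|=|\pi_0|$; in view of Theorem \ref{RU_thm} this simultaneously shows $\NE \subsetneq \NEnaive$. I expect the genuine work, and the main obstacle, to lie entirely in the second step: faithfully translating the isotropy, $O_E$-linearity and Kottwitz conditions into the polynomial system in the $y_{ij}$, and checking that Grothendieck--Messing really identifies $\Defo_x$ with $T_y$ on pd-thickenings in this wildly ramified $2$-adic situation. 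Once that identification is in hand, reading off the three-dimensional tangent space and comparing with the regular surface $\MDr$ is immediate.
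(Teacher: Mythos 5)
Your proposal is correct and follows essentially the same route as the paper's own argument in Section \ref{LM_naive}: the same intersection point attached to a $\Pi$-modular hyperbolic lattice, the same Grothendieck--Messing computation of $\Defo_x$ via the polynomial system in the $y_{ij}$, the same first infinitesimal neighborhood $\Spec O_F[y_{ij}]/((y_{ij})^2,y_{11}+y_{22}-t,\pi_0)$, and the same comparison with the regular $2$-dimensional $\MDr$.
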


Indeed, $\dim \widehat{\mathcal{O}}_{\NEnaive,x} = \dim \Defo_x = 3 > 2 = \dim \widehat{\mathcal{O}}_{\NE,x}$.

\section{A theorem on the existence of polarizations}
\label{POL}

In this section, we will prove the existence of the polarization $\widetilde{\lambda}$ for any $(X,\iota,\lambda,\varrho) \in \NEnaive(S)$ as claimed in the sections \ref{RP2} and \ref{RU2} in both the cases (R-P) and (R-U).
In fact, we will show more generally that $\widetilde{\lambda}$ exists even for the points of a larger moduli space $\ME$ where we forget about the polarization $\lambda$.

We start with the definition of the moduli space $\ME$.
Let $F|\mathbb{Q}_p$ be a finite extension (not necessarily $p=2$) and let $E|F$ be a quadratic extension (not necessarily ramified).
We denote by $O_F$ and $O_E$ the rings of integers, by $k$ the residue field of $O_F$ and by $\overbar{k}$ the algebraic closure of $k$.
Furthermore, $\breve{F}$ is the completion of the maximal unramified extension of $F$ and $\breve{O}_F$ its ring of integers.
Let $B$ be the quaternion division algebra over $F$ and $O_B$ the ring of integers.

If $E|F$ is unramified, we fix a common uniformizer $\pi_0 \in O_F \subseteq O_E$.
If $E|F$ is ramified and $p>2$, we choose a uniformizer $\Pi \in O_E$ such that $\pi_0 = \Pi^2 \in O_F$.
If $E|F$ is ramified and $p=2$, we use the notations of section \ref{LA} for the cases (R-P) and (R-U).

For $S \in \Nilp$, let $\ME(S)$ be the set of isomorphism classes of tuples $(X,\iota_E,\varrho)$ over $S$.
Here, $X$ is a formal $O_F$-module of dimension $2$ and height $4$ and $\iota_E$ is an action of $O_E$ on $X$ satisfying the Kottwitz condition for the signature $(1,1)$, \emph{i.e.}, the characteristic polynomial for the action of $\iota_E(\alpha)$ on $\Lie(X)$ is
\begin{equation} \label{POL_Kottwitz}
	\charp(\Lie X, T \mid \iota(\alpha)) = (T - \alpha)(T - \overbar{\alpha}),
\end{equation}
for any $\alpha \in O_E$, compare the definition of $\NEnaive$ in the sections \ref{RP} and \ref{RU}.
The last entry $\varrho$ is an $O_E$-linear quasi-isogeny 
\begin{equation*}
	\varrho: X \times_S \overbar{S} \to \mathbb{X} \times_{\Spec \overbar{k}} \overbar{S},
\end{equation*}
of height $0$ to the framing object $(\mathbb{X},\iota_{\mathbb{X},E})$ defined over $\Spec \overbar{k}$.
The framing object for $\ME$ is the Drinfeld framing object $(\mathbb{X},\iota_{\mathbb{X},B})$ where we restrict the $O_B$-action to $O_E$ for an arbitrary embedding $O_E \inj O_B$.
The special condition on $(\mathbb{X},\iota_{\mathbb{X},B})$ implies the Kottwitz condition for any $\alpha \in O_E$ by \cite[Prop.\ 5.8]{RZ14}.

\begin{rmk} \label{POL_rmk}
	\begin{enumerate}
		\item Up to isogeny, there is more than one pair $(X,\iota_E)$ over $\Spec \overbar{k}$ satisfying the conditions above.
		Indeed, let $N_X$ be the rational Dieudonn\'e module of $(X,\iota_E)$.
		This is a $4$-dimensional $\breve{F}$-vector space with an action of $O_E$.
		The Frobenius ${\bf F}$ on $N_X$ commutes with the action of $O_E$.
		For a suitable choice of a basis of $N_X$, it may be of either of the following two forms,
		\begin{equation*}
			{\bf F} = \begin{pmatrix}
				  & & 1 & \\
				&   & & 1 \\
			\pi_0 & &   & \\
				& \pi_0 & & \\
			\end{pmatrix} \! \sigma \; \text{ or } \; {\bf F} = \begin{pmatrix}
				\pi_0 & & & \\
				& \pi_0 & & \\
				& & 1 & \\
				& & & 1 \\
			\end{pmatrix} \! \sigma.
		\end{equation*}
		This follows from the classification of isocrystals, see for example \cite[p.\ 3]{RZ96}.
		In the left case, ${\bf F}$ is isoclinic of slope $1/2$ (the supersingular case), and in the right case, the slopes are $0$ and $1$.
		Our choice of the framing object above assures that we are in the supersingular case, since the framing object for the Drinfeld moduli problem can be written as a product of two formal $O_F$-modules of dimension $1$ and height $2$ (\emph{cf.}\ \cite[p.\ 136-137]{BC91}).
	
		\item \label{POL_rmk2} Let $p=2$ and $E|F$ ramified of type (R-P) or (R-U).
		We can identify the framing objects $(\mathbb{X},\iota_{\mathbb{X},E})$ for $\NEnaive$, $\MDr$ and $\ME$ by Lemma \ref{RP_Dr} and Lemma \ref{RU_Dr}.
		In this way, we obtain a forgetful morphism $\NEnaive \to \ME$.
		This is a closed embedding, since the existence of a polarization $\lambda$ for $(X,\iota_E,\varrho) \in \ME(S)$ is a closed condition by \cite[Prop.\ 2.9]{RZ96}.
	\end{enumerate}
\end{rmk}

By \cite[Thm.\ 3.25]{RZ96}, $\ME$ is pro-representable by a formal scheme over $\Spf \breve{O}_F$.
We will prove the following theorem in this section.

\begin{thm} \label{POL_thm}
	\begin{enumerate}
		\item \label{POL_thm1} There exists a principal polarization $\widetilde{\lambda}_{\mathbb{X}}$ on $(\mathbb{X},\iota_{\mathbb{X},E})$ such that the Rosati involution induces the identity on $O_E$, \emph{i.e.}, $\iota(\alpha)^{\ast} = \iota(\alpha)$ for all $\alpha \in O_E$.
		This polarization is unique up to a scalar in $O_E^{\times}$, that is, for any two polarizations $\widetilde{\lambda}_{\mathbb{X}}$ and $\widetilde{\lambda}_{\mathbb{X}}'$ of this form, there exists an element $\alpha \in O_E^{\times}$ such that $\widetilde{\lambda}_{\mathbb{X}}' = \widetilde{\lambda}_{\mathbb{X}} \circ \iota_{\mathbb{X},E}(\alpha)$.
		\item \label{POL_thm2} Fix $\widetilde{\lambda}_{\mathbb{X}}$ as in part \eqref{POL_thm1}.
		For any $S \in \Nilp$ and $(X,\iota_E,\varrho) \in \ME(S)$, there exists a unique principal polarization $\widetilde{\lambda}$ on $X$ such that the Rosati involution induces the identity on $O_E$ and such that $\widetilde{\lambda} = \varrho^{\ast}(\widetilde{\lambda}_{\mathbb{X}})$.
	\end{enumerate}
\end{thm}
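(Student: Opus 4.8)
The plan is to translate the existence and uniqueness of $\widetilde{\lambda}_{\mathbb{X}}$ into a statement about bilinear forms on the rational Dieudonné module, the key simplification being that everything is governed by a $2$-dimensional $E$-vector space. Let $N = N_{\mathbb{X}}$ be the rational Dieudonné module, a $4$-dimensional $\breve{F}$-space with its $O_E$-action, Frobenius $\mathbf{F}$ and Verschiebung $\mathbf{V}$. A principal polarization $\widetilde{\lambda}_{\mathbb{X}}$ whose Rosati involution is the \emph{identity} on $O_E$ is the same datum as a perfect alternating $\breve{F}$-bilinear form $\langle\,,\rangle$ on the Dieudonné lattice with $\langle\mathbf{F}x,y\rangle = \langle x,\mathbf{V}y\rangle^{\sigma}$ and the symmetry $\langle\iota(\alpha)x,y\rangle = \langle x,\iota(\alpha)y\rangle$. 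This last condition says exactly that $\langle\,,\rangle = \Tr_{\breve{E}|\breve{F}}(\mu\cdot b)$ for an $\breve{E}$-bilinear \emph{alternating} form $b$, where $\mu$ generates the inverse different $\mathfrak{D}_{E|F}^{-1}$ of Lemma \ref{LA_diff}; this is the exact analogue of the passage between $h$ and $b$ used in Proposition \ref{RP_strprop}. Thus part \eqref{POL_thm1} is reduced to producing such a $b$.

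For existence I would form the $\sigma$-linear operator $\tau$ of slope $0$ (equal to $\Pi\mathbf{V}^{-1}$ in the ramified cases, and the analogous operator in the unramified case) and set $C = N^{\tau}$, a $2$-dimensional $E$-vector space with $N = C\otimes_E\breve{E}$, just as in the proofs of Propositions \ref{RP_frnaive} and \ref{RU_frnaive}. The decisive point is that the space of alternating $E$-bilinear forms on the $2$-dimensional space $C$ is $1$-dimensional over $E$ --- spanned by the determinant form --- so a nondegenerate $b$ exists and is unique up to $E^{\times}$. Extending $b$ to $N$ $\breve{E}$-bilinearly, its $\tau$-invariance yields the compatibility $\langle\mathbf{F}x,y\rangle = \langle x,\mathbf{V}y\rangle^{\sigma}$ by the same computation as in Proposition \ref{RP_strprop}. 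Rescaling $b$ by a suitable element of $E^{\times}$ makes it unimodular on the lattice $\Lambda_{\mathbb{X}}\subseteq C$ attached to $M_{\mathbb{X}}$, hence $\langle\,,\rangle$ unimodular on $M_{\mathbb{X}}$, and after adjusting the sign (replacing $b$ by $-b$, which lies in the same $O_E^{\times}$-orbit) the resulting symmetric isomorphism is an honest principal polarization. Uniqueness up to $O_E^{\times}$ follows from the same $1$-dimensionality: two such polarizations differ by an $O_E$-linear automorphism of $\mathbb{X}$ that is symmetric for $\widetilde{\lambda}_{\mathbb{X}}$, and translating this back to $C$ forces it to be $\iota(\alpha)$ for some $\alpha\in O_E^{\times}$.

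For part \eqref{POL_thm2} I would set $\widetilde{\lambda} = \varrho^{\ast}(\widetilde{\lambda}_{\mathbb{X}})$, which is a priori only a quasi-polarization of height $0$; the content is that it is an honest principal polarization, its Rosati involution being the identity on $O_E$ by $O_E$-linearity of $\varrho$. On $\overbar{k}$-points this is clean: any $M\in\ME(\overbar{k})$ is an $\breve{O}_E$-stable lattice of the same $\breve{O}_F$-volume as $M_{\mathbb{X}}$ (height $0$), and since $b$ is a rank-$2$ determinant form, unimodularity for $b$ depends only on the $\breve{O}_E$-volume; hence $\langle\,,\rangle$ is unimodular on every such $M$. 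To propagate this over an arbitrary $S\in\Nilp$ I would invoke Grothendieck--Messing theory: over an $O_F$-pd-thickening the lift of the quasi-polarization to an honest polarization exists precisely when the Hodge filtration $\mathcal{F}\subseteq\mathbb{D}_X$ is isotropic for the induced form; but the Kottwitz signature $(1,1)$ condition forces $\mathcal{F}$ to be an $O_E\otimes\mathcal{O}_S$-line, and such a submodule is automatically isotropic for the $E$-bilinear \emph{alternating} form $b$. Uniqueness of $\widetilde{\lambda}$ is then the rigidity of lifts of quasi-isogenies.

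The step I expect to be the main obstacle is this last integrality/lifting argument in part \eqref{POL_thm2}: showing that $\varrho^{\ast}(\widetilde{\lambda}_{\mathbb{X}})$ is a genuine principal polarization over all of $S$ and not merely on the reduced locus. This is exactly where the signature $(1,1)$ condition and the \emph{$E$-bilinearity} of $b$ must be combined --- it is the $E$-bilinearity that makes the $E$-line $\mathcal{F}$ automatically $b$-isotropic, a feature \emph{not} available for the original hermitian form $h$. A secondary technical point is setting up the slope-$0$ operator $\tau$ and the descent $N = C\otimes_E\breve{E}$ uniformly in the unramified case, where $\breve{E}$ is no longer a field but a product of two copies of $\breve{F}$.
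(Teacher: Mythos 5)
Your treatment of part (1) and of the $\overbar{k}$-valued points in part (2) matches the paper's argument (Lemma \ref{POL_geompts}): reduce to the $2$-dimensional $E$-space $C=N^{\tau}$, use that alternating $E$-bilinear forms on $C$ form a line, and observe that perfectness of the determinant form on an $\breve{O}_E$-lattice depends only on its volume, which is fixed because $\varrho$ has height $0$. The gap is in the lifting step of part (2), exactly where you anticipated trouble. Your key claim --- that the Kottwitz signature $(1,1)$ condition forces the Hodge filtration $\mathcal{F}\subseteq\Lambda\otimes_{O_F}\mathcal{O}_S$ to be a free $O_E\otimes_{O_F}\mathcal{O}_S$-module of rank $1$, whence automatically $b$-isotropic --- is false in the ramified case. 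Over a ring where $\pi_0=0$ the Kottwitz condition for $\Pi$ only says $\charp(\Lie X,T\mid\iota(\Pi))=T^2$ (R-P case), and this is satisfied both by the free module $\overbar{k}[\Pi]/(\Pi^2)$ and by $\mathcal{F}=\Pi\cdot(\Lambda\otimes\overbar{k})$, on which $\Pi$ acts as zero; the latter is precisely the Hodge filtration at the $\tau$-invariant (intersection) points studied in Section \ref{LM_naive}, so the non-free locus is nonempty and is exactly where isotropy has real content. Your argument covers only the locus where $\mathcal{F}$ is free, i.e.\ the ``ordinary-looking'' part of the local model.

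The paper closes this gap by quoting the theorem of Deligne and Pappas (Proposition \ref{POL_loc}): \emph{every} $\mathcal{F}\in\MEloc(R)$ satisfying the Kottwitz condition is totally isotropic for the form $(\,,)=\Tr_{E|F}(\mu\cdot b)$, so that $\MEpolloc=\MEloc$; it then transports this equality of local models to the formal schemes via the local model diagram \eqref{POL_LMD} and \cite[Prop.\ 3.33]{RZ96}, obtaining $\widehat{\mathcal{O}}_{\MEpol,x}\cong\widehat{\mathcal{O}}_{\ME,x}$ at every point and hence $\MEpol=\ME$. This second step also repairs a secondary weakness of your sketch: Grothendieck--Messing as you invoke it only treats square-zero ($O_F$-pd) thickenings, whereas the local-model formalism packages the infinitesimal statements into an identification of completed local rings, which together with the bijection on $\overbar{k}$-points yields the statement for arbitrary $S\in\Nilp$. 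To make your proof correct you would need to either cite the Deligne--Pappas result (or reprove it, e.g.\ by exhibiting the isotropy equations as consequences of the Kottwitz/wedge conditions on all of $\MEloc$, not just on the free locus) and then supply the passage from local models back to $\ME$.
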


\begin{rmk} \label{POL_p>2}
	\begin{enumerate}
		\item We will see later that this theorem describes a natural isomorphism between $\ME$ and another space $\MEpol$ which solves the moduli problem for tuples $(X,\iota_E,\widetilde{\lambda},\varrho)$ where $\widetilde{\lambda}$ is a principal polarization with Rosati involution the identity on $O_E$.
		This is an RZ-space for the symplectic group $\GSp_2(E)$ and thus the theorem gives us another geometric realization of an exceptional isomorphism of reductive groups, in this case $\GSp_2(E) \cong \GL_2(E)$.
		
		Since there is no such isomorphism in higher dimensions, the theorem does not generalize to these cases and a different approach is needed to formulate the straightening condition.
		\item \label{POL_p>2eq} With the Theorem \ref{POL_thm} established, one can give an easier proof of the isomorphism $\NE \isoarrow \MDr$ for the cases where $E|F$ is unramified or $E|F$ is ramified and $p >2$, which is the main theorem of \cite{KR14}.
	Indeed, the main part of the proof in loc.\ cit.\ consists of the Propositions 2.1 and 3.1, which claim the existence of a certain principal polarization $\lambda_X^0$ for any point $(X,\iota,\lambda,\varrho) \in \NE(S)$.
	But there is a canonical closed embedding $\NE \inj \ME$ and under this embedding, $\lambda_X^0$ is just the polarization $\widetilde{\lambda}$ of Theorem \ref{POL_thm}, for a suitable choice of $\widetilde{\lambda}_{\mathbb{X}}$ on the framing object.
	More explicitly, using the notation on page $2$ of loc.\ cit., we take $\widetilde{\lambda}_{\mathbb{X}} = \lambda_{\mathbb{X}} \circ \iota_{\mathbb{X}}^{-1}(\Pi) = \lambda_{\mathbb{X}}^{0} \circ \iota_{\mathbb{X}}(-\delta)$ in the unramified case and $\widetilde{\lambda}_{\mathbb{X}} = \lambda_{\mathbb{X}} \circ \iota_{\mathbb{X}}(\zeta^{-1})$ in the ramified case.
	\end{enumerate}
\end{rmk}

We will split the proof of this theorem into several lemmata.
As a first step, we use Dieudonn\'e theory to prove the statement for all geometric points.

\begin{lem} \label{POL_geompts}
	Part \eqref{POL_thm1} of theorem holds.
	Furthermore, for a fixed polarization $\widetilde{\lambda}_{\mathbb{X}}$ on $(\mathbb{X},\iota_{\mathbb{X},E})$ and for any $(X,\iota_E,\varrho) \in \ME(\overbar{k})$, the pullback $\widetilde{\lambda} = \varrho^{\ast}(\widetilde{\lambda}_{\mathbb{X}})$ is a polarization on $X$.
\end{lem}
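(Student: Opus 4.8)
The plan is to run the entire argument through rational Dieudonn\'e theory, exactly in the style of the proofs of Propositions \ref{RP_frnaive} and \ref{RP_strprop}, uniformly for the unramified, (R-P) and (R-U) cases. Let $N = N_{\mathbb X}$ be the rational Dieudonn\'e module of the framing object, a $4$-dimensional $\breve F$-vector space carrying the $E$-action together with ${\bf F}$, ${\bf V}$, ${\bf F}{\bf V} = {\bf V}{\bf F} = \pi_0$; by the supersingularity recorded in Remark \ref{POL_rmk} it is isotypical of slope $\tfrac12$, and in each case the associated slope-zero $\sigma$-linear operator $\tau$ (as in Proposition \ref{RP_frnaive}, its (R-U) analogue, and the construction of \cite{KR14} in the unramified case) has a $2$-dimensional fixed space $C = N^\tau$ over $E$ with $N = C\otimes_E\breve E$. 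A principal polarization $\widetilde\lambda_{\mathbb X}$ whose Rosati involution is the identity on $O_E$ corresponds to a perfect alternating $\breve F$-form $\langle\,,\rangle$ on $M_{\mathbb X}$ with $\langle{\bf F}x,y\rangle = \langle x,{\bf V}y\rangle^\sigma$ and the self-adjointness $\langle\iota(\alpha)x,y\rangle = \langle x,\iota(\alpha)y\rangle$ for $\alpha\in O_E$. The latter is precisely the condition that $\langle x,y\rangle = \Tr_{\breve E|\breve F}\!\left(\tfrac{1}{c}\,b(x,y)\right)$ for an $\breve E$-bilinear alternating form $b$ on $N$, where $\tfrac1c$ generates the inverse different $\mathfrak D_{\breve E|\breve F}^{-1}$ of Lemma \ref{LA_diff}; integrality of $\langle\,,\rangle$ on a lattice is then equivalent to integrality of $b$.

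First I would prove part \eqref{POL_thm1}. The compatibility $\langle{\bf F}x,y\rangle = \langle x,{\bf V}y\rangle^\sigma$ translates, just as in Proposition \ref{RP_strprop}, into the $\tau$-invariance $b(\tau x,\tau y) = b(x,y)^\sigma$, so that $b$ descends to an $E$-bilinear alternating form on $C$. Since $C$ is $2$-dimensional over $E$, the space of alternating $E$-bilinear forms on $C$ is $1$-dimensional, which yields both the existence and the uniqueness of $b$ up to a scalar in $E^\times$. Rescaling by a suitable element of $E^\times$ so that the resulting $\langle\,,\rangle$ is perfect on $M_{\mathbb X}$ — possible because an alternating form on a $2$-dimensional space is a determinant pairing, for which every $\breve O_E$-lattice is unimodular up to one scalar — produces the desired $\widetilde\lambda_{\mathbb X}$, and the remaining freedom is exactly multiplication by $O_E^\times$. (Concretely, existence can also be seen by twisting Drinfeld's polarization of Proposition \ref{RP_Dr}, as in the formulas of Remark \ref{POL_p>2}, but the uniqueness and the next step are cleanest through $b$.)

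For the statement on geometric points, fix $b$, hence $\widetilde\lambda_{\mathbb X}$, and let $(X,\iota_E,\varrho)\in\ME(\overbar k)$ with Dieudonn\'e lattice $M_X\subseteq N$. The pullback $\widetilde\lambda = \varrho^{\ast}(\widetilde\lambda_{\mathbb X})$ is a polarization on $X$ if and only if $\langle\,,\rangle$ is integral on $M_X$, i.e. $M_X\subseteq M_X^\sharp$, where $\sharp$ is the dual with respect to $b$; so the task is to show that every Dieudonn\'e lattice arising from $\ME(\overbar k)$ is self-dual for $b$. I would establish this via the lattice analysis of Lemmas \ref{RP_latt} and \ref{RU_latt}: $M_X + \tau(M_X)$ is $\tau$-stable, so $M_X$ is squeezed between $\Lambda\otimes_{O_E}\breve O_E$ and its $\tau$-transform for a lattice $\Lambda\subseteq C$, and one reads off the $b$-modularity of $M_X$ from that of $\Lambda$. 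The height $0$ condition, together with the Kottwitz signature $(1,1)$, then pins down this modularity so that $\det\big(b|_{M_X}\big)$ is a unit, whence $M_X = M_X^\sharp$.

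The main obstacle is exactly this last self-duality. In the ramified cases $\breve E$ is a field, and the argument is clean: for an alternating form on a $2$-dimensional $\breve E$-vector space the $b$-modularity of a lattice is measured by a single determinant, and height $0$ forces that determinant to have valuation $0$, giving perfectness at once. The delicate case is the unramified one, where $\breve E\cong\breve F\times\breve F$ and $N = N_0\oplus N_1$ splits into the two isotypic pieces on which ${\bf F}$ and ${\bf V}$ act with degree $1$; here height $0$ only controls the \emph{sum} of the two valuations, so one must additionally invoke the Kottwitz condition $\dim M_0/{\bf V}M_1 = \dim M_1/{\bf V}M_0 = 1$ to see that the two factors are balanced and hence each of $M_0$, $M_1$ is separately self-dual — an argument of the same flavour as the perfectness computation $\langle{\bf V}x,{\bf V}y\rangle^\sigma = \pi_0\langle x,y\rangle$ used in Lemmas \ref{RP_thmgeom} and \ref{RU_thmgeom}. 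Carrying out this balancing, most transparently via the explicit normal form of ${\bf F}$ in Remark \ref{POL_rmk}, is the technical heart of the lemma; everything else is the formal translation recorded above.
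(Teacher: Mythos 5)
Your proposal is correct and follows essentially the same route as the paper: the paper's own proof is exactly this Dieudonn\'e-theoretic reduction to a perfect $E$-alternating (determinant) form on the $2$-dimensional space $C$, with existence and uniqueness up to $O_E^{\times}$ read off from $\Lambda^2 C^{\ast}$ being one-dimensional, and integrality on $M$ deduced from the height-$0$ index equality $[M : M \cap (\Lambda \otimes \breve{O}_E)] = [\Lambda \otimes \breve{O}_E : M \cap (\Lambda \otimes \breve{O}_E)]$. Your one genuine addition is the observation that in the unramified case, where $\breve{E} \cong \breve{F} \times \breve{F}$ and $b$ splits as $b_0 \oplus b_1$, height $0$ only balances the sum of the two graded indices and one must use the Kottwitz condition $\dim M_0/{\bf V}M_1 = \dim M_1/{\bf V}M_0 = 1$ (applied to both $M$ and $\Lambda \otimes \breve{O}_E$, transporting indices through the bijective ${\bf V}$) to force each summand to be separately self-dual --- a step the paper compresses into ``one easily checks,'' and which your outline correctly isolates and resolves.
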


\begin{proof}
This follows almost immediately from the theory of affine Deligne-Lusztig varieties (see, for example, \cite{CV}) since we are comparing the geometric points of RZ-spaces for the isomorphic groups $\GL_2(E)$ and $\GSp_2(E)$.

It is also possible to check this via a more direct computation using Dieudonn\'e theory, as we will indicate briefly.
Proceeding very similarly to Proposition \ref{RP_frnaive} or Proposition \ref{RU_frnaive} (cf.\ \cite{KR14} in the unramified case), we can associate to $\mathbb{X}$ a lattice $\Lambda$ in the $2$-dimensional $E$-vector space $C$ (the Frobenius invariant points of the (rational) Dieudonn\'e module).
The choice of a principal polarization on $\mathbb{X}$ with trivial Rosati involution corresponds now exactly to a choice of perfect alternating form on $\Lambda$.
It immediately follows that such a polarization exists and that it is unique up to a scalar in $O_E^{\times}$.

For the second part, let $X \in \ME(\overbar{k})$ and $M \subseteq C \otimes_E \breve{E}$ be its Dieudonn\'e module.
Since $\varrho$ has height $0$, we have
\begin{equation*}
	[M : M \cap (\Lambda \otimes_E \breve{E})] = [(\Lambda \otimes_E \breve{E}) : M \cap (\Lambda \otimes_E \breve{E})],
\end{equation*}
and one easily checks that a perfect alternating form $b$ on $\Lambda$ is also perfect on $M$.
\end{proof}

In the following, we fix a polarization $\widetilde{\lambda}_{\mathbb{X}}$ on $(\mathbb{X},\iota_{\mathbb{X},E})$ as in Theorem \ref{POL_thm} \eqref{POL_thm1}.
Let $(X,\iota_E,\varrho) \in \ME(S)$ for $S \in \Nilp$ and consider the pullback $\widetilde{\lambda} = \varrho^{\ast}(\widetilde{\lambda}_{\mathbb{X}})$.
In general, this is only a quasi-polarization.
It suffices to show that $\widetilde{\lambda}$ is a polarization on $X$.
Indeed, since $\varrho$ is $O_E$-linear and of height $0$, this is then automatically a principal polarization on $X$ such that the Rosati involution is the identity on $O_E$.

Define a subfunctor $\MEpol \subseteq \ME$ by
\begin{equation*}
	\MEpol(S) = \{ (X,\iota_E,\varrho) \in \ME(S) \mid \widetilde{\lambda} = \varrho^{\ast}(\widetilde{\lambda}_{\mathbb{X}}) \text{ is a polarization on } X \}.
\end{equation*}
This is a closed formal subscheme by \cite[Prop.\ 2.9]{RZ96}.
Moreover, Lemma \ref{POL_geompts} shows that $\MEpol(\overbar{k}) = \ME(\overbar{k})$.

\begin{rmk}
	Equivalently, we can describe $\MEpol$ as follows.
	For $S \in \Nilp$, we define $\MEpol(S)$ to be the set of equivalence classes of tuples $(X,\iota_E,\widetilde{\lambda},\varrho)$ where
	\begin{itemize}
		\item $X$ is a formal $O_F$-module over $S$ of height $4$ and dimension $2$,
		\item $\iota_E$ is an action of $O_E$ on $X$ that satisfies the Kottwitz condition in \eqref{POL_Kottwitz} and
		\item $\widetilde{\lambda}$ is a principal polarization on $X$ such that the Rosati involution induces the identity on $O_E$.
		\item Furthermore, we fix a framing object $(\mathbb{X},\iota_{\mathbb{X},E},\widetilde{\lambda}_{\mathbb{X}})$ over $\Spec \overbar{k}$, where $(\mathbb{X},\iota_{\mathbb{X},E})$ is the framing object for $\ME$ and $\widetilde{\lambda}_{\mathbb{X}}$ is a polarization as in Theorem \ref{POL_thm} \eqref{POL_thm1}.
		Then $\varrho$ is an $O_E$-linear quasi-isogeny
		\begin{equation*}
				\varrho: X \times_S \overbar{S} \to \mathbb{X} \times_{\Spec \overbar{k}} \overbar{S},
		\end{equation*}
		of height $0$ such that, locally on $\overbar{S}$, the (quasi-)polarizations $\varrho^{\ast}(\widetilde{\lambda}_{\mathbb{X}})$ and $\widetilde{\lambda}$ on $X$ only differ by a scalar in $O_E^{\times}$, \emph{i.e.}, there exists an element $\alpha \in O_E^{\times}$ such that $\varrho^{\ast}(\widetilde{\lambda}_{\mathbb{X}}) = \widetilde{\lambda} \circ \iota_E(\alpha)$.
		Two tuples $(X,\iota_E,\widetilde{\lambda},\varrho)$ and $(X',\iota_E',\widetilde{\lambda}',\varrho')$ are equivalent if there exists an $O_E$-linear isomorphism $\varphi: X \isoarrow X'$ such that $\varphi^{\ast}(\widetilde{\lambda}')$ and $\widetilde{\lambda}$ only differ by a scalar in $O_E^{\times}$.
	\end{itemize}
	In this way, we gave a definition for $\MEpol$ by introducing extra data on points of the moduli space $\ME$, instead of extra conditions.
	It is now clear, that $\MEpol$ describes a moduli problem for $p$-divisible groups of (PEL) type.
	It is easily checked that the two descriptions of $\MEpol$ give rise to the same moduli space.
\end{rmk}

Theorem \ref{POL_thm} now holds if and only if $\MEpol = \ME$.
This equality is a consequence of the following statement.

\begin{lem} \label{POL_rings}
	For any point $x = (X,\iota_E,\varrho) \in \MEpol(\overbar{k})$, the embedding $\MEpol  \inj \ME$ induces an isomorphism of completed local rings $\widehat{\mathcal{O}}_{\MEpol, x} \cong \widehat{\mathcal{O}}_{\ME,x}$.
\end{lem}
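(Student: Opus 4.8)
The plan is to deduce the isomorphism of completed local rings from an equality of the two associated deformation functors. Since $\MEpol \inj \ME$ is a closed immersion, the induced map $\widehat{\mathcal{O}}_{\ME,x} \to \widehat{\mathcal{O}}_{\MEpol,x}$ is surjective, and it is an isomorphism exactly when $\MEpol(R) = \ME(R)$ for every artinian local $\breve{O}_F$-algebra $R$ with residue field $\overbar{k}$. By Lemma \ref{POL_geompts} this holds over $R = \overbar{k}$, and by d\'evissage it suffices to treat a square-zero extension $R' \twoheadrightarrow R$ with kernel $I$ satisfying $\mathfrak{m}_{R'} I = 0$; such a thickening carries the trivial divided-power structure, so Grothendieck--Messing theory (\cite{Me72}, \cite{Ahs11}) applies. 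I would then identify $\ME(R')$, over $\ME(R)$, with the set of $O_E$-stable lifts $\mathcal{F}' \subseteq \mathbb{D}_Y(R')$ of the Hodge filtration that satisfy the Kottwitz condition, and $\MEpol(R')$ with those lifts for which $\widetilde{\lambda} = \varrho^{\ast}(\widetilde{\lambda}_{\mathbb{X}})$ remains a polarization. Hence it is enough to prove that \emph{every} such lift $\mathcal{F}'$ automatically keeps $\widetilde{\lambda}$ integral.

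Next I would reduce the polarization-lifting to an isotropy statement. The polarization $\widetilde{\lambda}_{\mathbb{X}}$ equips $\mathbb{D}_Y(R')$ with a perfect alternating $R'$-bilinear form $\Phi'$ whose Rosati involution is the identity on $O_E$; equivalently $O_E$ acts self-adjointly, so $\Phi'$ descends to a perfect alternating $A'$-bilinear form $b'$ through a trace-over-the-inverse-different formula exactly as in \eqref{RP_altherm} and \eqref{RU_altherm}, where $A' = O_E \otimes_{O_F} R'$. Here $\mathbb{D}_Y(R')$ is free of rank $2$ over $A'$ (the framing module is free of rank $2$ over $\breve{E}$, cf.\ the proofs of Propositions \ref{RP_frnaive} and \ref{RU_frnaive}, and $A'$ is semilocal, so projectives of constant rank are free), and $b'$ is the standard symplectic form $b'(x,y) = u \cdot \det_{A'}(x \mid y)$ for a unit $u \in (A')^{\times}$. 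The lift $\mathcal{F}'$ is an $A'$-submodule which is an $R'$-direct summand of rank $2$, and $\widetilde{\lambda}$ lifts to $Y'$ precisely when $\mathcal{F}'$ is isotropic for $\Phi'$, i.e.\ for $b'$. The lemma thus reduces to the linear-algebra assertion that any such $\mathcal{F}'$, whose quotient $\mathbb{D}_Y(R')/\mathcal{F}'$ satisfies the Kottwitz condition of signature $(1,1)$, is $b'$-isotropic.

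I would first settle the case $R' = \overbar{k}$, where the Kottwitz condition is exactly what forces isotropy and where the exceptional isomorphism $\GSp_2(E) \cong \GL_2(E)$ becomes visible. In the unramified case $A_0 := O_E \otimes_{O_F} \overbar{k} \cong \overbar{k} \times \overbar{k}$ splits $\mathbb{D}_0$ into two planes; the signature-$(1,1)$ condition forces each idempotent component of $\mathcal{F}_0$ to be a line, and a line is automatically isotropic for $\det$, so $b'(\mathcal{F}_0,\mathcal{F}_0)=0$. In the ramified case $A_0 = \overbar{k}[\Pi]/(\Pi^2)$ is local, and a short argument shows that every $A_0$-stable half-dimensional subspace is either $A_0$-cyclic or equal to $\Pi \mathbb{D}_0$; both are isotropic since $b'$ is alternating and $A_0$-bilinear. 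This base case already recovers Lemma \ref{POL_geompts} and exhibits, at the level of the special fibre, the coincidence of the EL and PEL moduli.

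The main obstacle is the passage from $\overbar{k}$ to a general artinian base, i.e.\ showing that isotropy lifts along the square-zero extension. By induction on the length of $R'$ I may assume that $b'(\mathcal{F}',\mathcal{F}')$ already vanishes modulo $I$, so that it lands in $O_E \otimes_{O_F} I \cong A_0 \otimes_{\overbar{k}} I$. I would then view $\mathcal{F}'$ as a deformation of $\mathcal{F}_0$: forgetting the form, the $O_E$-equivariant lifts form a torsor under $\Hom_{A_0}(\mathcal{F}_0, \mathbb{D}_0/\mathcal{F}_0) \otimes_{\overbar{k}} I$, while the isotropic (Lagrangian) ones correspond to the \emph{symmetric} homomorphisms under the identification $\mathbb{D}_0/\mathcal{F}_0 \cong \mathcal{F}_0^{\ast}$ furnished by $b'$. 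The heart of the matter is to show that the Kottwitz condition forces every $O_E$-equivariant, Kottwitz-compatible deformation class to be symmetric, so that the $O_E$-stable Kottwitz lift $\mathcal{F}'$ is necessarily Lagrangian and hence isotropic; this is the infinitesimal incarnation of $\GSp_2(E) \cong \GL_2(E)$, and it is where the rank-$2$ situation is used decisively. Granting this, every deformation in $\ME$ lies in $\MEpol$, so $\MEpol = \ME$ as functors on artinian algebras and $\widehat{\mathcal{O}}_{\MEpol,x} \cong \widehat{\mathcal{O}}_{\ME,x}$. I would close by remarking that this is precisely the deformation-theoretic comparison that \emph{fails} for $\NE \subseteq \NEnaive$ in the equal-valuation (R-U) case, where the analogous spaces differ by the non-flatness analysed in \S\ref{LM_naive}.
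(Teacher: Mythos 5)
Your overall scaffolding is sound and is in fact a legitimate alternative to what the paper does: the paper never argues directly with deformations of $p$-divisible groups, but instead sets up local model diagrams for $\ME$ and $\MEpol$, invokes \cite[Prop.\ 3.33]{RZ96} to produce formally \'etale maps identifying $\widehat{\mathcal{O}}_{\ME,x}$ and $\widehat{\mathcal{O}}_{\MEpol,x}$ with completed local rings of $\MElochat$ and $\MEpollochat$, and then concludes from the equality $\MEpolloc = \MEloc$ of Proposition \ref{POL_loc}. Your Grothendieck--Messing d\'evissage over square-zero $O_F$-pd-thickenings is a reasonable substitute for that diagram. But the load-bearing statement is the same in both routes: that an $O_E$-stable rank-$2$ direct summand satisfying the Kottwitz condition of signature $(1,1)$ is \emph{automatically} totally isotropic for the alternating form, over an arbitrary (artinian) base. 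The paper obtains this by citing Deligne--Pappas \cite{DP}; you reduce to it and then write ``the heart of the matter is to show\dots Granting this\dots''. That sentence is the entire content of the lemma, so as written the proposal has a genuine gap: the claim that $O_E$-equivariance plus the Kottwitz condition forces the deformation class in $\Hom_{A_0}(\mathcal{F}_0,\mathbb{D}_0/\mathcal{F}_0)\otimes I$ to be symmetric is precisely the nontrivial (and, in residue characteristic $2$, delicate) point, and it is asserted rather than proved. You must either prove it or cite \cite{DP} (equivalently, Proposition \ref{POL_loc}) for it.

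A secondary warning: your base-case argument over $\overbar{k}$ is not as short as you suggest in the ramified case. For a cyclic $A_0$-submodule $\mathcal{F}_0 = A_0 v$ with $A_0 = \overbar{k}[\Pi]/(\Pi^2)$, isotropy requires $\Phi(v,\Pi v)=0$; the self-adjointness of $\Pi$ only gives $2\,\Phi(v,\Pi v)=0$, which is vacuous in characteristic $2$, and ``$b'$ is alternating'' in the sense of $b'(x,x)=0$ does not follow from skew-symmetry there either. So even the special fibre genuinely uses the Kottwitz condition (or the lattice-theoretic argument of Lemma \ref{POL_geompts}), not just multilinear algebra. This is another symptom of the same missing ingredient: in residue characteristic $2$ the symmetric/alternating dichotomy degenerates, which is exactly why the paper outsources this step to \cite{DP} rather than treating it as formal.
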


For the proof of this Lemma, we use the theory of local models, \emph{cf.}\ \cite[Chap.\ 3]{RZ96}.
We postpone the proof of this lemma to the end of this section and we first introduce the local models $\MEloc$ and $\MEpolloc$ for $\ME$ and $\MEpol$.

%\begin{rmk} \label{POL_p=2}
%	Note here that $\ME$ is an RZ-space of (EL) type.
%	The local model $\MEloc$ has been defined in \cite[Chap.\ 3]{RZ96} for any prime $p$.
%	Furthermore, loc.\ cit.\ establishes a local model diagram connecting $\ME$ and $\MEloc$.
%	In particular, it follows that the completed local rings at corresponding geometric points of $\ME$ and $\MEloc$ are isomorphic. \\
%	On the other hand, $\MEpol$ is an RZ-space of (PEL) type.
%	In loc.\ cit., the authors always make the assumption that $p>2$ for the (PEL) case.
%	Hence, for $p>2$, the definition of the local model $\MEpolloc$ is known and there also is a local model diagram.
%	In this case, Lemma \ref{POL_rings} will already follow from the fact that $\MEpolloc = \MEloc$, see Proposition \ref{POL_loc}.
%	For the case $p=2$, we give a definition of the local model $\MEpolloc$ and establish a local model diagram, see Prop.\ \ref{POL_fpol} below.
%	Lemma \ref{POL_rings} then again follows from the equality $\MEpolloc = \MEloc$ in Proposition \ref{POL_loc}.
%	The definition of the local model is completely analogous to the case $p>2$.
%\end{rmk}

%We now give the definition of the local models $\MEloc$ and $\MEpolloc$ for $\ME$ and $\MEpol$.
Let $C$ be a $4$-dimensional $F$-vector space with an action of $E$ and let $\Lambda \subseteq C$ be an $O_F$-lattice that is stable under the action of $O_E$.
Furthermore, let $(\,,)$ be an $F$-bilinear alternating form on $C$ with
\begin{equation} \label{POL_altloc}
	(\alpha x,y) = (x,\alpha y),
\end{equation}
for all $\alpha \in E$ and $x,y\in C$ and such that $\Lambda$ is unimodular with respect to $(\,,)$.
It is easily checked that $(\,,)$ is unique up to an isomorphism of $C$ that commutes with the $E$-action and that maps $\Lambda$ to itself.

For an $O_F$-algebra $R$, let $\MEloc(R)$ be the set of all direct summands $\mathcal{F} \subseteq \Lambda \otimes_{O_F} R$ of rank $2$ that are $O_E$-linear and satisfy the \emph{Kottwitz condition}.
That means, for all $\alpha \in O_E$, the action of $\alpha$ on the quotient $(\Lambda \otimes_{O_F} R) / \mathcal{F}$ has the characteristic polynomial
\begin{equation*}
	\charp(\Lie X, T \mid \alpha) = (T - \alpha)(T - \overbar{\alpha}).
\end{equation*}
The subset $\MEpolloc(R) \subseteq \MEloc(R)$ consists of all direct summands $\mathcal{F} \in \MEloc(R)$ that are in addition totally isotropic with respect to $(\,,)$ on $\Lambda \otimes_{O_F} R$.

The functor $\MEloc$ is representable by a closed subscheme of $\Gr(2,\Lambda)_{O_F}$, the Grassmanian of rank $2$ direct summands of $\Lambda$, and $\MEpolloc$ is representable by a closed subscheme of $\MEloc$.
In particular, both $\MEloc$ and $\MEpolloc$ are projective schemes over $\Spec O_F$.

These local models have already been studied by Deligne and Pappas. In particular, we have:

\begin{prop}[\cite{DP}] \label{POL_loc}
	$\MEpolloc = \MEloc$.
	In other words, for an $O_F$-algebra $R$, any direct summand $\mathcal{F} \in \MEloc(R)$ is totally isotropic with respect to $(\,,)$.
\end{prop}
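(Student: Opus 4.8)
The inclusion $\MEpolloc \subseteq \MEloc$ is a closed immersion, so the content of the proposition is that the total-isotropy condition cutting out $\MEpolloc$ is redundant. The plan is to verify this first on the generic fibre, where it reduces to a transparent piece of linear algebra, and then to propagate it over $\Spec O_F$ using the flatness of $\MEloc$ established by Deligne and Pappas.

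First I would treat the generic fibre. For any $F$-algebra $R$ the extension $E|F$ is separable, so $E \otimes_F R$ is étale over $R$ and the two $F$-embeddings of $E$ split the $O_E$-action: writing $\theta$ for a generator of $O_E$ with conjugate $\overbar{\theta}$, the element $\theta - \overbar{\theta}$ is a unit and $\Lambda \otimes_{O_F} R = P^{+} \oplus P^{-}$, where $P^{\pm}$ are the summands on which $\theta$ acts as $\theta$ resp.\ $\overbar{\theta}$, each locally free of rank $2$. The relation \eqref{POL_altloc} gives, for $x \in P^{+}$ and $y \in P^{-}$, the identity $\theta (x,y) = (\theta x, y) = (x, \theta y) = \overbar{\theta}(x,y)$, whence $(x,y) = 0$; thus $P^{+}$ and $P^{-}$ are orthogonal, and since $(\,,)$ is perfect it is perfect on each of them. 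The Kottwitz condition forces $\mathcal{F} \in \MEloc(R)$, being $O_E$-stable, to split as $\mathcal{F} = (\mathcal{F} \cap P^{+}) \oplus (\mathcal{F} \cap P^{-})$ with each summand locally free of rank $1$. An alternating form vanishes on a locally free rank-one module, and the two summands are mutually orthogonal, so $(\,,)$ vanishes identically on $\mathcal{F}$. As this holds functorially in $R$, we obtain $\MEpolloc \otimes_{O_F} F = \MEloc \otimes_{O_F} F$ as schemes.

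To conclude integrally I would invoke the structural result of Deligne and Pappas \cite{DP}: the scheme $\MEloc$ is flat over $\Spec O_F$. Consequently the generic fibre $\MEloc \otimes_{O_F} F$ is schematically dense in $\MEloc$. Since $\MEpolloc \hookrightarrow \MEloc$ is a closed immersion containing this schematically dense open subscheme, it must be an isomorphism, i.e.\ $\MEpolloc = \MEloc$.

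The main obstacle is precisely the passage from the generic fibre to the whole of $\MEloc$, i.e.\ the flatness input. The eigenspace decomposition used above degenerates exactly over the locus where $E|F$ ramifies: there $\theta \equiv \overbar{\theta}$ modulo the maximal ideal, $\theta - \overbar{\theta}$ fails to be a unit, and $P^{+}, P^{-}$ collapse, so isotropy can no longer be read off fibrewise. One genuinely needs to know that $\MEloc$ acquires no extra or embedded components in this ramified special fibre, which is the substance of the Deligne–Pappas analysis and the reason the result is quoted from \cite{DP} rather than settled by the fibrewise computation alone.
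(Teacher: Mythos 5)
The paper does not actually prove this proposition --- it quotes it from \cite{DP} with no argument --- so your proposal has to stand on its own, and as it stands it has a genuine gap at the decisive step. Your reduction is: (a) the two local models agree on the generic fibre, (b) $\MEloc$ is flat over $O_F$, hence its generic fibre is schematically dense, hence the closed subscheme $\MEpolloc$ containing it is everything. The logic of (b) is fine, but the input you invoke --- flatness over $O_F$ of $\MEloc$, the model cut out by the Kottwitz condition \emph{alone} --- is not what Deligne--Pappas establish. Their flatness theorem concerns the model defined \emph{with} the polarization/isotropy condition, i.e.\ $\MEpolloc$; indeed the entire point of their paper is that one must impose that condition to get a flat (l.c.i.) moduli space. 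Flatness of $\MEpolloc$ is useless for your argument (it only says $\MEpolloc$ is the flat closure of the common generic fibre inside $\MEloc$, not that $\MEloc$ carries nothing more), while flatness of $\MEloc$ is, given (a), logically \emph{equivalent} to the proposition you are trying to prove. So the appeal to \cite{DP} at this point is essentially an appeal to the conclusion. This is not a pedantic worry: the whole theme of the surrounding paper is that naive Kottwitz-condition moduli problems fail to be flat in the ramified $2$-adic setting (see the computation in Section \ref{LM_naive}, where a naive deformation ring is shown to have dimension $3$), so ``the Kottwitz model is flat'' is exactly the kind of statement that must be proved, not assumed. A secondary, fixable point: your eigenspace decomposition $\Lambda \otimes_{O_F} R = P^{+} \oplus P^{-}$ is asserted ``for any $F$-algebra $R$'', but $E \otimes_F R$ need not split (take $R = F$ itself); you should base change along the faithfully flat map $R \to R \otimes_F E$, under which the isotropy condition can be checked, before splitting.

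What actually makes the proposition work is a direct integral argument, with no flatness input, and it is worth seeing why. Writing $A = O_E \otimes_{O_F} R$ and using that $(\,,)$ is the trace form $\Tr_{A/R}(\varepsilon\, b(x,y))$ of the $A$-bilinear determinant form $b$ on $\Lambda \otimes_{O_F} R \cong A^2$ (with $\varepsilon$ a generator of the inverse different), the $O_E$-stability of $\mathcal{F}$ and perfectness of the trace pairing show that total isotropy of $\mathcal{F}$ for $(\,,)$ is equivalent to $b(v,w)=\det(v\,|\,w)=0$ in $A$ for $R$-generators $v,w$ of $\mathcal{F}$, i.e.\ to the vanishing of the Fitting ideal $\operatorname{Fitt}_0^A(Q)$ of the quotient $Q = A^2/\mathcal{F}$. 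Computing this Fitting ideal from the presentation of $Q$ as an $R$-module of rank $2$ with $\Pi$ acting by a matrix $P$, one finds it is generated by $\det_A(\Pi\cdot \mathrm{id} - P) = \charp(Q, \Pi \mid \Pi)$, and the Kottwitz condition says precisely that this characteristic polynomial is the Eisenstein polynomial of $\Pi$, which vanishes at $\Pi$ in $A$. This is the kind of argument the citation to \cite{DP} stands for; it works uniformly over every $O_F$-algebra $R$ and is what your generic-fibre-plus-flatness strategy is implicitly trying to substitute for.
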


The moduli spaces $\ME$ and $\MEpol$ are related to the local models $\MEloc$ and $\MEpolloc$ via local model diagrams, \emph{cf.}\ \cite[Chap.\ 3]{RZ96}.
Let $\MElarge$ be the functor that maps a scheme $S \in \Nilp$ to the set of isomorphism classes of tuples $(X,\iota_E,\varrho; \gamma)$.
Here, 
\begin{equation*}
	(X,\iota_E,\varrho) \in \ME(S),
\end{equation*}
and $\gamma$ is an $O_E$-linear isomorphism
\begin{equation*}
	\gamma: \mathbb{D}_X(S) \isoarrow \Lambda \otimes_{O_F} \mathcal{O}_S.
\end{equation*}
On the left hand side, $\mathbb{D}_X(S)$ denotes the (relative) Grothendieck-Messing crystal of $X$ evaluated at $S$, \emph{cf.}\ \cite[5.2]{Ahs11}.

Let $\MElochat$ be the $\pi_0$-adic completion of $\MEloc \otimes_{O_F} \breve{O}_F$.
Then there is a local model diagram:
\begin{equation*}
	\xymatrix{
		& \MElarge \ar[ld]_f \ar[rd]^g & \\
		\ME & & \MElochat
	}
\end{equation*}
The morphism $f$ on the left hand side is the projection $(X,\iota_E,\varrho; \gamma) \mapsto (X,\iota_E,\varrho)$.
The morphism $g$ on the right hand side maps $(X,\iota_E,\varrho; \gamma) \in \MElarge(S)$ to
\begin{equation*}
	\mathcal{F} = \ker (\Lambda \otimes_{O_F} \mathcal{O}_S \xrightarrow{\gamma^{-1}} \mathbb{D}_X(S) \to \Lie X) \subseteq \Lambda \otimes_{O_F} \mathcal{O}_S.
\end{equation*}
By \cite[Thm.\ 3.11]{RZ96}, the morphism $f$ is smooth and surjective.
The morphism $g$ is formally smooth by Grothendieck-Messing theory, see \cite[V.1.6]{Me72}, resp.\ \cite[Chap.\ 5.2]{Ahs11} for the relative setting (\emph{i.e.}, when $O_F \neq \mathbb{Z}_p$).

%\begin{rmk} \label{POL_rmksurj}
%	Recall from \cite[3.29]{RZ96} that $\MElarge$ is a torsor over $\ME$ via $f$ for (the $\pi_0$-adic completion of) the smooth affine group scheme representing the functor
%	\begin{equation*}
%		S \mapsto \{ O_E \text{-linear automorphisms of } \Lambda \otimes_{O_F} \mathcal{O}_S \}.
%	\end{equation*}
%	In particular, $\MElarge$ is representable by a formal scheme and $f$ is relatively representable in the category of schemes.
%	Now, $f$ is smooth in the sense that for any scheme $S$ and any morphism $S \to \ME$, the morphism of schemes $S \times_{\ME} \MElarge \to S$ is smooth.
%	
%	When we say that the morphism $f$ is surjective, we mean here that $f$ is surjective as a map of \'etale sheaves.
%	However, since $f$ is smooth, this is equivalent to saying that $f$ is surjective on geometric points or, again equivalently, that $f$ induces a surjective map on underlying topological spaces.
%\end{rmk}

We also have a local model diagram for the space $\MEpol$.
We define $\MEpollarge$ as the fiber product $\MEpollarge = \MEpol \times_{\ME} \MElarge$.
Then $\MEpollarge$ is closed formal subscheme of $\MElarge$ with the following moduli description.
A point $(X,\iota_E,\varrho; \gamma) \in \MElarge(S)$ lies in $\MEpollarge(S)$ if $\widetilde{\lambda} = \varrho^{\ast}(\widetilde{\lambda}_{\mathbb{X}})$ is a principal polarization on $X$.
In that case, $\widetilde{\lambda}$ induces an alternating form $(\,,)^X$ on $\mathbb{D}_X(S)$ which, under the isomorphism $\gamma$, is equal to the form $(\,,)$ on $\Lambda \otimes_{O_F} \mathcal{O}_S$, up to a unit in $O_E \otimes_{O_F} \mathcal{O_S}$.
%We define $\MEpollarge$ as the subfunctor of $\MElarge$ that maps $S \in \Nilp$ to the set of tuples $(X,\iota_E,\varrho; \gamma) \in \MElarge(S)$ where $(X,\iota_E,\varrho) \in \MEpol(S)$ and where $\gamma$ satisfies the following compatibility condition.
%The polarization $\widetilde{\lambda} = \varrho^{\ast}(\widetilde{\lambda}_{\mathbb{X}})$ on $X$ induces an alternating form $(\,,)^X$ on $\mathbb{D}_X(S)$.
%We demand in addition that $(\,,)^X$ is the pullback of the alternating form $\varepsilon \cdot (\,,)$ on $\Lambda \otimes_{O_F} \mathcal{O}_S$ under the isomorphism $\gamma$, for a unit $\varepsilon \in O_E \otimes_{O_F} \mathcal{O_S}$.

The local model diagram for $\MEpol$ now looks as follows.
\begin{equation} \label{POL_LMD}
	\begin{aligned}
		\xymatrix{
			& \MEpollarge \ar[ld]_{f_{\mathrm{pol}}} \ar[rd]^{g_{\mathrm{pol}}} & \\
			\MEpol & & \MEpollochat
		}
	\end{aligned}
\end{equation}
Here, $\MEpollochat$ is the $\pi_0$-adic completion of $\MEpolloc \otimes_{O_F} \breve{O}_F$ and $f_{\mathrm{pol}}$ and $g_{\mathrm{pol}}$ are the restrictions of the morphisms $f$ and $g$ above.
Again, $g_{\mathrm{pol}}$ is formally smooth by Grothendieck-Messing theory and $f_{\mathrm{pol}}$ is smooth and surjective by construction.
%For $p>2$, the morphism $f_{\mathrm{pol}}$ is smooth and surjective by \cite[Thm.\ 3.16]{RZ96}.
%For $p=2$, we prove the analogous result in Prop.\ \ref{POL_fpol} below.

We can now finish the proof of Lemma \ref{POL_rings}.

\begin{proof}[Proof of Lemma \ref{POL_rings}]
	We have the following commutative diagram.
	\begin{equation} \label{POL_commdiag}
		\begin{aligned}
			\xymatrix{
				\MEpol \, & \MEpollarge \ar[l]_{f_{\mathrm{pol}}} \ar[r]^{g_{\mathrm{pol}}} & \; \MEpollochat \ar@{=}[d] \\
				\ME & \MElarge \ar[l]_f \ar[r]^g & \MElochat
				\ar@{_{(}->}"1,1"*+\frm{};"2,1"*+\frm{}
				\ar@{_{(}->}"1,2"*+\frm{};"2,2"*+\frm{}
			}
		\end{aligned}
	\end{equation}
	The equality on the right hand side follows from Proposition \ref{POL_loc}.
	The other vertical arrows are closed embeddings.
	
	Let $x \in \MEpol(\overbar{k})$.
	By \cite[Prop.\ 3.33]{RZ96}, there exists an \'etale neighbourhood $U$ of $x$ in $\ME$ and section $s: U \to \MElarge$ such that $g \circ s$ is formally \'etale.
	Similarly, $U_{\mathrm{pol}} = U \times_{\ME} \MEpol$ and $s_{\mathrm{pol}}$ is the base change of $s$ to $U_{\mathrm{pol}}$.
	Then the composition $g_{\mathrm{pol}} \circ s_{\mathrm{pol}}$ is also formally \'etale.
	This formally \'etale maps induce isomorphism of local rings $\widehat{\mathcal{O}}_{\ME,x} \isoarrow \widehat{\mathcal{O}}_{\MElochat,x'}$ and $\widehat{\mathcal{O}}_{\MEpol,x} \isoarrow \widehat{\mathcal{O}}_{\MEpollochat,x'}$, $x' = s(g(x))$.
	By Proposition \ref{POL_loc}, we have $\widehat{\mathcal{O}}_{\MElochat,x'} = \widehat{\mathcal{O}}_{\MEpollochat,x'}$ and since this identification commutes with $g \circ s$ (resp.\ $g_{\mathrm{pol}} \circ s_{\mathrm{pol}}$), we get the desired isomorphism $\widehat{\mathcal{O}}_{\MEpol,x} \cong \widehat{\mathcal{O}}_{\ME,x}$.
%	Since $f_{\mathrm{pol}}$ is surjective, we can choose $y \in \MEpollarge(\overbar{k})$ such that $f_{\mathrm{pol}}(y) = x$.
%	Moreover, we set $x' = g_{\mathrm{pol}}(y) \in \MEpollochat$.
%	The theory of local model diagrams (\emph{cf.}\ \cite[Chap.\  3]{RZ96}) induces an isomorphism of completed local rings,
%	\begin{equation*}
%		\widehat{\mathcal{O}}_{\MEpol,x} \simeq \widehat{\mathcal{O}}_{\MEpollochat,x'}.
%	\end{equation*}
%	We have an analogous isomorphism for the bottom row,
%	\begin{equation*}
%		\widehat{\mathcal{O}}_{\ME,x} \simeq \widehat{\mathcal{O}}_{\MElochat,x'}.
%	\end{equation*}
%	The equality $\MEpollochat = \MElochat$ on the right hand side implies that $\widehat{\mathcal{O}}_{\MEpollochat,x'} = \widehat{\mathcal{O}}_{\MElochat,x'}$, and this in turn implies that $\widehat{\mathcal{O}}_{\MEpol,x} = \widehat{\mathcal{O}}_{\ME,x}$.
\end{proof}

\bibliography{master.bib}

\end{document}